\numberwithin{equation}{subsection}
\newtheorem{theorem}{Theorem}[section]
\newtheorem*{itheorem}{Theorem}
\newtheorem{definition}[theorem]{Definition}
\newtheorem{remark}[theorem]{Remark}
\newtheorem{proposition}[theorem]{Proposition}
\newtheorem{corollary}[theorem]{Corollary}
\newtheorem{lemma}[theorem]{Lemma}
\newtheorem{example}[theorem]{Example}
\def\Q{\mathbb{Q}}
\def\F{\mathbb{F}}
\def\R{\mathbb{R}}
\def\Z{\mathbb{Z}}
\def\N{\mathbb{N}}
\def\E{\mathcal{E}}
\def\Fc{\mathcal{F}}
\def\cN{\mathcal{N}}
\def\cO{\mathcal{O}}
\def\Bun{\mathrm{Bun}}
\def\Ext{\mathrm{Ext}}
\def\Fil{\mathrm{Fil}}
\def\Hom{\mathrm{Hom}}
\def\Lie{\mathrm{Lie}}
\def\Gal{\mathrm{Gal}}
\def\Proj{\mathrm{Proj}}
\def\Rep{\mathrm{Rep}}
\def\gcd{\mathrm{gcd}}
\def\Spa{\mathrm{Spa}}
\def\Spec{\mathrm{Spec}}
\def\deg{\mathrm{deg}}
\def\ra{\rightarrow}
\def\Fbar{\bar{F}}
\def\Phic{\Phi^\vee}
\def\s{\sigma}
\def\Gr{\mathrm{Gr}}
\def\ua{\underline{a}}
\def\ub{\underline{b}}
\def\uc{\underline{c}}
\def\ud{\underline{d}}
\def\ue{\underline{e}}
\newcommand{\mar}[1]{\marginpar{\tiny #1}}
\renewcommand\appendix{\par \setcounter{section}{0} \setcounter{subsection}{0} \gdef\thesection{ \Alph{section}}}
\begin{document}

\title{Weakly admissible locus and Newton stratification in $p$-adic Hodge theory}
\author{Miaofen Chen, Jilong Tong}
\date{}
\address{School of Mathematical Sciences\\
	Shanghai Key Laboratory of PMMP\\
	East China Normal University\\
	No. 500, Dong Chuan Road\\
	Shanghai 200241, China}\email{mfchen@math.ecnu.edu.cn}

\address{School of Mathematical Sciences \\ Capital Normal University \\ 105, Xi San Huan Bei Lu \\ Beijing, 100048, China}\email{jilong.tong@cnu.edu.cn}

\dedicatory{Dedicated to Luc Illusie, with our utmost gratitude} 
	
\renewcommand\thefootnote{}
\footnotetext{2010 Mathematics Subject Classification. Primary: 11G18; Secondary: 14G20.}

\renewcommand{\thefootnote}{\arabic{footnote}}

\begin{abstract}The basic admissible locus $\mathcal{F}(G, \mu, b)^a$ inside the flag variety $\mathcal{F}(G, \mu)$, attached to a reductive group $G$ with a minuscule cocharacter $\mu$ of $G$, is a $p$-adic analogue of the complex analytic
period spaces. It has an algebraic approximation $\mathcal{F}(G, \mu, b)^{wa}$ inside the flag variety, called the weakly admissible locus. On the flag variety $\mathcal{F}(G, \mu)$, we have the Newton stratification which has the admissible locus as its unique open stratum. In this paper, we study the relation between the Newton strata and the weakly admissible locus. We show that $\mathcal{F}(G, \mu, b)^{wa}$ is maximal (in the sense that it's a union of Newton strata) is equivalent to $(G, \mu)$ weakly fully HN-decomposable, it's also equivalent to the condition that the Newton stratification is finer than the Harder-Narasimhan stratification. These equivalent conditions are generalizations of the fully HN-decomposable condition and the weakly accessible condition. Moreover, we give a criterion to determine whether a Newton stratum is completely contained in the weakly admissible locus involving $G$-bundles as extensions of $M$-bundles over the Fargues-Fontaine curve, where $M$ is a Levi subgroup of $G$. When $G=\mathrm{GL}_n$, we also give a combinatorial inductive criterion to determine whether a vector bundle over the Fargues-Fontaine curve is an extension of two given vector bundles.
\end{abstract}

\maketitle
\setcounter{tocdepth}{1}
\tableofcontents

\section*{Introduction}

Let $F$ be a finite extension of $\mathbb Q_p$, $\bar{F}$ an algebraic closure of $F$ with $C=\widehat{\bar F}$ its $p$-adic completion. Let $G$ be a connected reductive group over $F$, and $\{\mu\}$ the geometric conjugacy class of a minuscule cocharacter $\mu$. Let $\breve{F}$ be the completion of the maximal unramified extension $F^{un}$ of $F$ inside $\bar F$ , and we write $\sigma$ the Frobenius on $\breve{F}$ relative to $F$. Fix $b\in G(\breve{F})$. Let $E=E(G,\{\mu\})$ be the reflex field, that is, the field of definition of the geometric conjugacy class $\{\mu\}$, which is a finite extension of $F$ and is viewed as a subfield of $\bar F$. Attached to the pair $(G,\{\mu\})$, we have the flag variety $\mathcal F(G,\mu)$ which is a projective variety defined over the reflex field $E$. In the following, we shall consider its associated adic space, still denoted by $\mathcal F(G,\mu)$, defined over $\breve{E}$, the $p$-adic completion of the maximal unramified extension $E\cdot F^{un}\subset \bar F$ of $E$. Using the element $b\in G(\breve{F})$, Rapoport and Zink defined an open subset
\[
\mathcal F(G,\mu,b)^{wa}
\]
of the adic space $\mathcal F(G,\mu)$, called the \emph{weakly admissible locus} (historically also called the  \emph{$p$-adic period domain}, see also \cite{DOR}), as a vast generalization of the Drinfeld upper half plane. For $K/\breve{E}$ a complete field extension, the points of $\mathcal F(G,\mu,b)^{wa}(K)$ correspond to the weakly admissible filtered isocrystals equipped with a $G$-structure, whose underlying isocrystals with a $G$-structure are induced by $b$. If $K/\breve{E}$ is finite, by a fundamental result of Colmez-Fontaine in $p$-adic Hodge theory (\cite{CoFo}), such filtered isocrystals are \emph{admissible} in the sense that they come from crystalline Galois representations. If $K/\breve{E}$ is an arbitrary complete field extension, we cannot always get Galois representations attached to points in $\mathcal F(G,\mu,b)^{wa}(K)$. This phenomenon leads Rapoport and Zink to conjecture that there exists an open subspace, called \emph{admissible locus}
\[
\mathcal F(G,\mu,b)^{a}
\]
inside $\mathcal F(G,\mu,b)^{wa}$ with the same classical points, i.e., the same points with values in finite extensions of $\breve{E}$ as $\mathcal F(G,\mu,b)^{wa}$, together with a $p$-adic \'etale local system with additional structures on $\mathcal F(G,\mu,b)^a$ interpolating the crystalline Galois representations attached to its classical points. By contrast to the subspace $\mathcal F(G,\mu,b)^{wa}$ of weakly admissible locus, the construction of $\mathcal F(G,\mu,b)^{a}$ is quite mysterious, and was previously known only for certain triples $(G,\mu,b)$ by the work of Hartl (\cite{Har1} \cite{Har}) and Faltings (\cite{Fal}).

Recently, thanks to the new progress in $p$-adic Hodge theory, especially the discovery of Fargues-Fontaine curve, we can associate in a natural way to $b$ a $G$-bundle
\[
\mathcal E_b
\]
on the Fargues-Fontaine curve $X$, whose isomorphism class only depends on the $\sigma$-conjugacy class of $b$. Furthermore we can use each $C$-point $x\in\mathcal F(G,\mu)(C)$ to modify $\mathcal E_b$ \`a la Beauville-Laszlo to get another $G$-bundle
\[
\mathcal E_{b,x}
\]
on $X$. We define $\mathcal F(G,\mu,b)^a$ as the (necessarily open) subspace of $\mathcal F(G,\mu)$ stable under generalization, whose set of $C$-points is given by
\[
\mathcal F(G,\mu,b)^{a}(C)=\{x\in \mathcal F(G,\mu)(C)| \mathcal E_{b,x} \textrm{ is the trivial }G\textrm{-bundle}\}.
\]
The existence of the \'etale local system on $\mathcal F(G,\mu,b)^{a}$ as in the conjecture of Rapoport-Zink is then a consequence of the work of Fargues-Fontaine, Kedlaya-Liu, and Scholze.

As a result, we have two open adic subspaces inside the flag varieties $\mathcal F(G,\mu)$:
\[
\mathcal F(G,\mu,b)^{a}\subseteq \mathcal F(G,\mu,b)^{wa}\subseteq \mathcal F(G,\mu).
\]
It is a natural question to understand the structure of these subspaces. Compared with the weakly admissible locus $\mathcal F(G,\mu,b)^{wa}$, which has been intensively studied since the work of Rapoport-Zink, we don't know much about the admissible locus $\mathcal F(G,\mu,b)^{a}$. As the first step, we would like to see when $\mathcal F(G,\mu,b)^{a}$ coincides with $\mathcal F(G,\mu,b)^{wa}$ (\cite[Question A.20]{Ra2}). If $G=\mathrm{GL}_n$, Hartl gave a complete solution to this question. In particular, when $b\in \mathrm{GL}_n(\breve{F})$ is basic (cf. Section \ref{Section_Newton map}), up to twist by a central cocharacter, the equality
\[
\mathcal F(\mathrm{GL}_n,\mu, b)^a=\mathcal F(\mathrm{GL}_n, \mu, b)^{wa}
\]
holds if and only if
\[
\mu\in \{ (1, 0^{(n-1)}), (1^{(n-1)},0)\},\quad  \textrm{or}\quad n=4  \textrm{ and }\mu=(1,1,0,0).
\]
For a general reductive group $G$, Fargues and Rapoport conjectured that, at least when $b$ is basic, there exists a group-theoretic condition which expresses precisely the coincidence of $\mathcal F(G,\mu,b)^{a}$ and $\mathcal F(G,\mu,b)^{wa}$. By a recent joint work of the first-named author with Fargues and Shen, this conjecture is now a theorem.

\begin{itheorem}[Chen-Fargues-Shen \cite{CFS}] Suppose that $b\in G(\breve{F})$ is basic. Then $\mathcal F(G,\mu,b)^a=\mathcal F(G,\mu,b)^{wa}$ if and only if the pair $(G,\mu)$ is fully Hodge-Newton decomposable.
\end{itheorem}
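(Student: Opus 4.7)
The plan is to analyze the theorem through the Newton stratification on $\mathcal{F}(G,\mu)$. Since $b$ is basic, $\mathcal{E}_b$ is semistable, and the admissible locus $\mathcal{F}(G,\mu,b)^a$ is exactly the open Newton stratum corresponding to the basic class in $B(G,\mu)$ (the class where the modification $\mathcal{E}_{b,x}$ is trivial, up to the canonical central twist). The theorem then becomes: every non-basic Newton stratum is disjoint from $\mathcal{F}(G,\mu,b)^{wa}$ if and only if $(G,\mu)$ is fully Hodge-Newton decomposable. I would treat the two implications separately, in each case focusing on the fate of a single non-basic Newton stratum.

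For the implication from fully Hodge-Newton decomposable to the coincidence of loci, I would take any non-basic $[b'] \in B(G,\mu)$ and a $C$-point $x$ of its Newton stratum. By hypothesis, there exists a proper standard Levi $M \subset G$ with respect to which $([b'],\mu)$ is Hodge-Newton decomposable; this produces a canonical reduction of $\mathcal{E}_{b,x}$ to $M$, and transporting this reduction backwards through the Beauville-Laszlo modification yields a reduction of the filtered isocrystal at $x$ to a parabolic with Levi $M$. The Hodge-Newton inequality then forces the slope inequality defining weak admissibility to fail on this reduction. Hence every non-basic stratum is disjoint from $\mathcal{F}(G,\mu,b)^{wa}$, and combined with the known inclusion $\mathcal{F}(G,\mu,b)^a \subseteq \mathcal{F}(G,\mu,b)^{wa}$ this gives the equality.

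For the converse, assuming $(G,\mu)$ is not fully Hodge-Newton decomposable, I would exhibit a non-basic class $[b'] \in B(G,\mu)$ that is not HN-decomposable, together with a $C$-point $x$ in its Newton stratum such that $x \in \mathcal{F}(G,\mu,b)^{wa}$. The construction starts from $\mathcal{E}_b$ and seeks a type-$\mu$ modification landing in the class $[b']$ whose associated filtered isocrystal is semistable against every parabolic reduction. I would handle $G = \GL_n$ first, using the combinatorial criterion mentioned in the abstract for when a vector bundle on the Fargues-Fontaine curve arises as an extension of two prescribed bundles; this freedom allows one to tune the extension classes so that the Newton polygon realizes $[b']$ while the Hodge filtration avoids every dangerous Levi reduction. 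The general reductive case is then deduced by reduction to a suitable Levi and by case-by-case treatment of the remaining classical group configurations where a non-HN-decomposable non-basic class can exist.

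The main obstacle is this explicit construction in the converse direction. One must simultaneously prescribe the isomorphism class of $\mathcal{E}_{b,x}$ on the Fargues-Fontaine curve and control every parabolic reduction of the associated filtered isocrystal, with nothing more than the failure of Hodge-Newton decomposability as input. The point is that this failure is precisely what provides enough freedom in the space of type-$\mu$ modifications to dodge every potential destabilizing parabolic; making this freedom explicit via the extension criteria for bundles on $X$, and verifying weak admissibility against all parabolics through a careful analysis of the small number of remaining cases, is the technical heart of the argument.
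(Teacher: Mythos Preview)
This theorem is not proved in the present paper; it is quoted from \cite{CFS} as background, so there is no in-paper proof to compare against directly. However, enough of the \cite{CFS} machinery is cited here that the shape of the actual argument is visible, and it differs from your proposal in the converse direction.

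Your ``if'' direction is correct and matches \cite[Theorem~5.1]{CFS}, quoted here as Proposition~\ref{prop_Newton and wa}(1): Hodge--Newton decomposability of a non-basic $[b']$ yields a parabolic reduction of $\mathcal E_{b,x}$ that violates the semistability criterion of Proposition~\ref{prop:criterion-for-wa}, so that stratum misses $\mathcal F(G,\mu,b)^{wa}$.

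Your converse has the right target---produce a weakly admissible point in some non-basic Newton stratum---but the proposed mechanism is off. The extension criteria for vector bundles you invoke are developed in \S\ref{Sec_extension} of \emph{this} paper, for the different purpose of Theorem~\ref{thm:criterion-for-a-singule-stratum}; they play no role in \cite{CFS}. The actual \cite{CFS} argument is uniform in $G$ and purely root-theoretic: by the minute criterion (Proposition~\ref{prop:minute-for-full-HN}), failure of full HN-decomposability means $\langle \mu^\diamond, \tilde\omega_\alpha\rangle + \{\langle \xi^\diamond, \tilde\omega_\alpha\rangle\} > 1$ for some $\alpha \in \Delta_0$. From that single $\alpha$ one writes down explicitly a non-basic HN-indecomposable $[b']$ (with Newton vector supported at $\alpha$) and a point $x$ in its stratum arising from a modification along the Levi $M_\alpha$, then checks weak admissibility of $x$ against every maximal parabolic using Proposition~\ref{prop:criterion-for-wa} and the description of Kottwitz sets in Proposition~\ref{prop:Kottwitz-set-in-NG}. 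The proof of $(1)\Rightarrow(2)$ in Theorem~\ref{thm_main} of this paper follows the same template in the broader weakly-fully-HN-decomposable setting and can be read as a model. No reduction to $\mathrm{GL}_n$ and no Dynkin-type case analysis is needed; your proposed route through extensions and case-by-case verification, even if workable for $\mathrm{GL}_n$, offers no clear path for general $G$.
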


Here the full Hodge-Newton decomposability condition is purely group-theoretic. It was first introduced and systematically studied by G\"ortz, He and Nie in \cite{GoHeNi}. Moreover, they also give equivalent conditions in terms of affine Deligne-Lusztig varieties for this condition. We refer to \S~\ref{sec: weakly fully HN} for the precise definition of the fully HN-decomposability condition. In her succeeding work, the first-named author proved the analogue of the above theorem  in the non-basic case (\cite{Ch1}). Furthermore, Shen generalized the Fargues-Fontaine conjecture for non-minuscule cocharacters (\cite{Sh2}).

On the other hand, Rapoport investigated in \cite{Ra2} the question that in which situations the subspace $\mathcal F(G,\mu,b)^{wa}$ (resp. $\mathcal F(G,\mu,b)^{a}$) is the whole flag variety $\mathcal F(G,\mu)$. If this is indeed the case, we call, following Rapoport, that the triple $(G,\mu, b)$ is \emph{weakly accessible} (resp. \emph{accessible}). Clearly, if $(G,\mu,b)$ is accessible, then it is weakly accessible. In \cite[A.4-A.5]{Ra2}, Rapoport gave a full classification for such triples. Indeed, if $(G, \mu, b)$ is weakly accessible, then $b$ is basic and hence determined by $(G, \mu)$. For instance, in the $\mathrm{GL}_n$-case, up to a twist by a central cocharacter, weak accessibility is equivalent to the condition that $\mu=(1^{(r)},0^{(n-r)})$ with $\mathrm{gcd}(n, r)=1$.


To go further, we consider the Newton stratification on the flag variety $\mathcal F(G,\mu)$:
\[
\mathcal F(G,\mu)=\coprod_{[b']\in B(G)}\mathcal F(G,\mu,b)^{[b']}.
\]
Here $B(G)$ is the set of $\sigma$-conjugacy classes of $G(\breve{F})$ and $\mathcal F(G,\mu,b)^{[b']}$ is a subspace of $\mathcal F(G,\mu)$ stable under generalization, whose $C$-points are those $x\in \mathcal F(G,\mu)(C)$ such that $\mathcal E_{b,x}\simeq \mathcal E_{b'}$. Rapoport determined in \cite{Ra2} which Newton strata $\mathcal F(G,\mu,b)^{[b']}$ are non-empty when $b$ is basic.  Apparently
\[
\mathcal F(G,\mu,b)^a=\mathcal F(G,\mu,b)^{[1]}
\]
is the unique open stratum of this stratification. Naturally, we want to compare the Newton stratification with the weakly admissible locus. For example, we would like to classify the strata which have a non-empty intersection with $\mathcal F(G,\mu,b)^{wa}$ (\cite[Conjecture 5.2]{Ch1}). By the work of Chen-Fargues-Shen mentioned above combined with a recent preprint of Viehmann (\cite{Vi}), we have a complete answer to this question. Suppose a Newton stratum $\mathcal F(G,\mu,b)^{[b']}$ is non-empty, then 

\[
\mathcal F(G,\mu,b)^{[b']}\cap \mathcal F(G,\mu,b)^{wa}\neq \emptyset
\]
if and only the triple $(G,\nu_{b}-\mu,b')$ is Hodge-Newton indecomposable (cf. Definition \ref{def_HN decomp}). In particular,

\[
\mathcal F(G,\mu,b)^{wa}\subseteq \coprod_{\substack{ [b']\in B(G) \textrm{ s.t. }(G,\nu_{b}-\mu,b') \\ \textrm{ is Hodge-Newton indecomposable} } } \mathcal F(G,\mu,b)^{[b']} \subseteq \mathcal F(G,\mu).
\]
We say that the weakly admissible locus $\mathcal F(G,\mu,b)^{wa}$ is \emph{maximal} if the first inclusion above is an equality, or equivalently (by \cite{Vi}), that $\mathcal F(G,\mu,b)^{wa}$ is a union of Newton strata. Note that the weakly admissible locus is maximal if $\mathcal F(G,\mu,b)^{wa}=\mathcal F(G,\mu,b)^{a}$ or $\mathcal F(G,\mu)$. Therefore the maximality condition of the weakly admissible locus can be considered as a uniform generalization of the above two extreme cases that we have discussed. Our first main result is a group theoretic characterization and a  geometric characterization of the  maximality of the weakly admissible locus when $b$ is basic under the minuscule condition.

\begin{itheorem}[Theorem \ref{thm_main}]Suppose $b$ basic and $\mu$ minuscule. Then the following three assertions are equivalent:
\begin{enumerate}
    \item[(a)] $\Fc(G, \mu, b)^{wa}$ is maximal;
    \item[(b)] $(G,\mu)$ is weakly fully Hodge-Newton decomposable (cf. Definition \ref{def:weakly-full-HN});
    \item[(c)] the Newton stratification is finer than the Harder-Narasimhan stratification in the sense that every Harder-Narasimhan stratum is a union of some Newton strata (see \S~\ref{sec:Newton-and-HN} for a brief review of the Harder-Narasimhan stratification). 
\end{enumerate}
\end{itheorem}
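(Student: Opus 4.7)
My strategy is to prove the chain of implications $(c) \Rightarrow (a) \Rightarrow (b) \Rightarrow (c)$.

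The implication $(c) \Rightarrow (a)$ is essentially formal: the weakly admissible locus $\mathcal{F}(G,\mu,b)^{wa}$ coincides with the unique open Harder-Narasimhan stratum of $\mathcal{F}(G,\mu)$, so if the Newton stratification refines the HN stratification then $\mathcal{F}(G,\mu,b)^{wa}$ is automatically a union of Newton strata.

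For $(a) \Rightarrow (b)$ I would argue by contraposition. Suppose $(G,\mu)$ is not weakly fully HN-decomposable. Unpacking Definition \ref{def:weakly-full-HN}, there should exist a class $[b'] \in B(G,\mu)$ such that $(G, \nu_b - \mu, b')$ is Hodge-Newton indecomposable --- so the Newton stratum $\mathcal{F}(G,\mu,b)^{[b']}$ already meets $\mathcal{F}(G,\mu,b)^{wa}$ by the Chen-Fargues-Shen and Viehmann result recalled above --- while the obstruction to full decomposability provides enough flexibility to construct, using the extensions-of-$M$-bundles criterion announced in the abstract, a point $x \in \mathcal{F}(G,\mu,b)^{[b']}$ that is \emph{not} weakly admissible. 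Such an $x$ directly witnesses the failure of maximality of $\mathcal{F}(G,\mu,b)^{wa}$.

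The substantial implication $(b) \Rightarrow (c)$ I plan to handle by induction on the semisimple rank of $G$. A non-open HN stratum of $\mathcal{F}(G,\mu)$ corresponds to a reduction of the filtered isocrystal to a proper standard parabolic $P = MN$; via the Hodge-Newton decomposition this stratum should be identified with (the pullback of) a weakly admissible locus attached to the pair $(M, \mu_M)$ on the Levi side. I would then check that the weakly fully HN-decomposable property descends from $(G,\mu)$ to each relevant $(M, \mu_M)$, so that by the inductive hypothesis the weakly admissible locus on the $M$-side is a union of Newton strata; the extensions-of-$M$-bundles criterion then transfers this back to a union of $G$-side Newton strata inside the original HN stratum.

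The main obstacle I anticipate is precisely this last step: verifying that $(b)$ is stable under reduction to Levi subgroups and, more subtly, that the Newton point of $\mathcal{E}_{b,x}$ on the $G$-side is controlled --- given a reduction to $M$ --- by the Newton data on the $M$-side in the way demanded by the identification of the HN stratum with an $M$-side weakly admissible locus. The bundle-extension criterion will be the central technical vehicle, but matching the combinatorial bookkeeping between filtered isocrystals with $G$-structure, $G$-bundles on the Fargues-Fontaine curve, and their HN reductions to $M$-bundles will require delicate care.
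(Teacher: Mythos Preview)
Your chain $(c)\Rightarrow(a)$ is fine, and your $(a)\Rightarrow(b)$ sketch is in the right spirit: the paper also argues by contraposition, using the minute criterion (Proposition~\ref{prop_minute criterion}) to locate a simple relative root $\alpha$ where the inequality fails, and then builds by hand a point in an HN-indecomposable Newton stratum that is not weakly admissible. The construction is an explicit one-step reduction to a maximal Levi $M_{\alpha^*}$ rather than an appeal to the general extension criterion, but the idea is the same. One caution: you write $[b']\in B(G,\mu)$, but the Newton strata are indexed by $B(G,0,\nu_b-\mu^\diamond)$; the translation between this set and $B(G,\mu)$ goes through the duality $v\mapsto -w_0 v$ (Corollary~\ref{coro_involution_BG}), and the paper tracks this carefully.

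The genuine gap is in your plan for $(b)\Rightarrow(c)$. The induction on semisimple rank via descent to Levi subgroups runs into two problems. First, there is no reason for weak full HN-decomposability to be inherited by the relevant pairs $(M,\mu_M)$: the minute criterion involves the global values $\langle\mu^\diamond,\tilde\omega_\alpha\rangle$ over \emph{all} $\alpha\in\Delta_0$, and restricting to a Levi changes both the root system and the integrality conditions in ways that are not controlled by the hypothesis on $G$. Second, even granting such a descent, passing from Newton strata for $M$ back to Newton strata for $G$ is exactly the extension problem, and a single $M$-Newton stratum typically maps to a \emph{union} of $G$-Newton strata that need not refine a single HN stratum.

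The paper avoids this entirely. The key external input is Proposition~\ref{prop_NGVi} (from Nguyen--Viehmann): if $(G,b',\nu_b-\mu^\diamond)$ is HN-decomposable with respect to $M$, then for every $x$ in that Newton stratum one has $\mathrm{HN}_b(x)\preceq_M\nu_{b'}$. Combined with the minute criterion, a short direct argument then forces $\mathrm{HN}_b(x)=\mathrm{pr}_M\nu_{b'}$, which is independent of $x$. No induction, no Levi descent; the Nguyen--Viehmann bound does all the structural work. You should replace your inductive scheme with this.
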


The weakly fully Hodge-Newton decomposability condition in assertion (b) is a group theoretic condition. This condition is a common generalization of the full Hodge-Newton decomposability and weak accessibility condition, and it includes also some new cases which are not covered by the last two conditions. For example, the following new cases arise in the $\mathrm{GL}_n$-case:
\[
n \textrm{ is even and } \mu\in \{(1^{(2)}, 0^{(n-2)}), (1^{(n-2)},0^{(2)})\}, \quad \textrm{or}\quad n=6 \textrm{ and } \mu=(1,1,1,0,0,0).
\]
In Theorem \ref{thm:classification-weak-HN-dec}, we classify all the weakly fully HN-decomposable pairs when $G$ is absolutely simple and adjoint.

We would also like to have a practical criterion to see if a single Newton stratum $\mathcal F(G,\mu,b)^{[b']}$ is entirely contained in the weakly admissible locus. Our second main result of this paper is such a criterion. 

\begin{itheorem}[Theorem \ref{thm:criterion-for-a-singule-stratum}]Let $\mu$ be a minuscule cocharacter of $G$ and $b\in G(\breve F)$ be basic. Suppose $\Fc(G, \mu, b)^{[b']}\neq \emptyset$. Then
\[
\Fc(G, \mu, b)^{[b']}\nsubseteq\Fc(G, \mu, b)^{wa}
\]
if and only if there exist some maximal proper standard Levi subgroup $M$ of $H$, the quasi-split inner form of $G$ over $F$, and an element $w$ in the Weyl group of $H$, satisfying the following two conditions:
\begin{enumerate}
\item $b$ has a reduction $b_M$ to $M$ and $w$ is $\mu$-negative for $M$ (cf. Definition \ref{def_mu negative}); and
\item $[b']$ is an extension of some $[b'_M]\in B(M, \kappa(b_M)-\mu^{w,\#}, \nu_{b_M}-\mu^{w,\diamond})$, where  $B(M, ...)$ denotes a generalized Kottwitz set (cf. \S \ref{sec:generalized-Kottwitz-sets} and definition \ref{defn_extension}).
\end{enumerate}
\end{itheorem}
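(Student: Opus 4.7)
The plan is to combine the Rapoport--Zink Hilbert--Mumford criterion for weak admissibility with the Beauville--Laszlo description of $G$-bundle modifications in order to translate the failure of weak admissibility into an extension structure on $[b']$ over the Fargues--Fontaine curve.

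First, I would invoke the numerical criterion for $\Fc(G,\mu,b)^{wa}$: a $C$-point $x$ fails to be weakly admissible exactly when there exist a standard parabolic $P\subset H$ (the quasi-split inner form of $G$ over $F$), a reduction $b_P$ of $b$ to $P$ (equivalently a reduction $b_M$ to its Levi quotient $M$), and a relative position $w$ of the filtration induced by $x$ with respect to $P$, such that a slope inequality of the shape $\langle \chi, \mu^{w} - \nu_{b_M}\rangle \leq 0$ is violated for some $P$-dominant character $\chi$ of $P/[P,P]$. A standard convexity/Hilbert--Mumford reduction allows us to restrict to maximal proper standard parabolics, and in that case the character space is one-dimensional, so the failure of the inequality is exactly the condition that $w$ be $\mu$-negative for $M$ in the sense of Definition~\ref{def_mu negative}.

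Second, I would translate this into the language of $G$-bundles on $X$. A reduction $b_M$ of $b$ to $M$ exhibits $\mathcal E_b$ as induced from an $M$-bundle $\mathcal E_{b_M}$, and a point $x$ whose associated filtration has relative position $w$ with respect to $P$ factors through an $M$-modification $\mathcal E_{b_M,x}$ of type $\mu^{w,\diamond}$; inducing back to $G$ recovers $\mathcal E_{b,x}$. Consequently $\mathcal E_{b,x}$ inherits a reduction to $M$ whose Newton point is $\nu_{b_M} - \mu^{w,\diamond}$ and whose Kottwitz invariant is $\kappa(b_M) - \mu^{w,\#}$, which in the terminology of Definition~\ref{defn_extension} says exactly that $[b']$ is an extension of some class $[b'_M] \in B(M, \kappa(b_M) - \mu^{w,\#}, \nu_{b_M} - \mu^{w,\diamond})$. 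The forward implication then follows by picking any non-weakly-admissible $x$ in the given stratum and applying these two steps.

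For the converse, given data $(M, w, [b_M], [b'_M])$ satisfying (1)--(2), I would fix an $M$-reduction $\mathcal E_{b_M}$ of $\mathcal E_b$ and construct a $C$-point parametrizing a modification of $\mathcal E_{b_M}$ of type $\mu^{w,\diamond}$ landing in the class $[b'_M]$; inducing to $G$ produces a point $x \in \Fc(G,\mu,b)^{[b']}(C)$ for which $\mu$-negativity of $w$ witnesses the violated slope inequality, so $x$ is not weakly admissible. The principal obstacle will be this converse direction: one must guarantee that every class in the generalized Kottwitz set $B(M, \kappa(b_M)-\mu^{w,\#}, \nu_{b_M}-\mu^{w,\diamond})$ is actually realized by an $M$-modification of $\mathcal E_{b_M}$ of the prescribed type at a $C$-point. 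This amounts to a non-emptiness statement for Newton strata on $\Fc(M, \mu^w)$, which exploits the fact that the numerical constraints built into the generalized Kottwitz set are precisely the known obstructions coming from the Kottwitz map together with the Newton map; this is where the inductive structure of $M$ (as a Levi of $H$) has to be used carefully to avoid a circular appeal to the theorem being proved.
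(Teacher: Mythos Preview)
Your forward direction is essentially the paper's Proposition~\ref{prop:non-wa-locus} combined with the easy inclusion~\eqref{eqn_comparison Newton stratification}, and is fine. The gap is in your converse, and it is not the obstacle you name.

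You propose to take an $M$-point $x_M\in\Fc(M,\mu^w,b_M)^{[b'_M]}(C)$ and ``induce to $G$'' to obtain $x\in\Fc(G,\mu,b)^{[b']}(C)$. But the natural image of $x_M$ under $\Fc(M,\mu^w)\to\Fc(H,\mu)$ produces the $H$-bundle $\mathcal E_{b_M,x_M}\times^M H\simeq\mathcal E_{b'_M}\times^M H$, i.e.\ the \emph{split} extension of $[b'_M]$; there is no reason this is isomorphic to $\mathcal E_{b'}$. The hypothesis~(2) only says $\mathcal E_{b'}$ admits a $P$-reduction with Levi quotient $\mathcal E_{b'_M}$, and distinct such extensions correspond to different points in the fibre of the affine fibration $\mathrm{pr}_{P,w}:\Fc(H,\mu)_P^w\to\Fc(M,\mu^w)$. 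The real content of the converse (the paper's Proposition~\ref{prop:modification M to G}) is to show that \emph{every} extension $[b']$ of $[b'_M]$ is hit by some point in $\mathrm{pr}_{P,w}^{-1}(x_M)$. This requires two nontrivial steps you do not mention: first, lift the isomorphism $\mathcal E_{b_M}|_U\simeq\mathcal E_{b'_M}|_U$ to a $P$-bundle isomorphism $\mathcal E_{b_M}|_U\times^M P\simeq\mathcal E_{b',P}|_U$ (using that $R_u(P)$-torsors over the affine curve $U=X\setminus\{\infty\}$ are trivial, via Ansch\"utz); second, the resulting modification of $H$-bundles is a~priori \emph{not} minuscule of type $\mu$, and one must correct it by factoring the unipotent part as $u=u'u''$ with $u'\in R_u(P)(U)$ and $u''\in R_u(P)(B_{dR}^+)$.

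By contrast, the obstacle you flag---non-emptiness of the $M$-Newton stratum $\Fc(M,\mu^w,b_M)^{[b'_M]}$---is not the issue: since $b$ is basic in $G$, any reduction $b_M$ is basic in $M$, and non-emptiness then follows from the known description of Newton strata for basic $b_M$ and the very definition of the generalized Kottwitz set.
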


If $G=\mathrm{GL}_n$, the two conditions in the theorem can be reformulated in a more down-to-earth way as follows:
\begin{itemize}
    \item[(i)] the isocrystal $(\breve F^{n},b\circ \sigma)$ has a decomposition by sub-isocrystals $(\breve F^{n},b\circ \sigma)=D_1\oplus D_2$; and 
    \item[(ii)] the vector bundle $\mathcal E_{b'}$ can be written as a extension of $\mathcal E_2$ by $\mathcal E_1$, such that the vector bundle $\mathcal E_1$ is of degree $>0$ and that each $\mathcal E_i$ can be realized as a minuscule modification of $\mathcal E(D_i)$, the vector bundle attached to the isocrystal $D_i$. 
\end{itemize}   
If the Newton stratum $\mathcal F(\mathrm{GL}_n,\mu,b)^{[b']}$ contains a point which is not weakly admissible, such a point produces a decomposition of the isocrystal $(\Breve{F}^n,b\circ \sigma)$ and an extension of vector bundles as required above. Conversely, once these two conditions are fulfilled, it is not hard to realize $\mathcal E_{b'}$ as a modification of $\mathcal E_{b}$ in the way compatible with the two modifications given in (ii). But \`a priori the modification that we get randomly may be \emph{not} minuscule. We need to adjust the initial modification properly to obtain a minuscule one, and this is done in Proposition \ref{prop:modification M to G} for general quasi-split groups. In this way, we obtain a point in $\mathcal F(\mathrm{GL}_n,\mu,b)^{[b']}$ violating the weak admissibility.

It is now obvious to the readers that, our problem of classifying the Newton strata contained in the weakly admissible locus is closely related to the difficult question to determine whether a $G$-bundle is an extension of a $M$-bundle on the Fargues-Fontaine curve, with $M$ a Levi subgroup of the quasi-split inner form of $G$. When the $M$-bundle is semi-stable, this question is studied by \cite{BFH} for $\mathrm{GL}_n$ and \cite{Vi} for arbitrary $G$.   In \S~\ref{Sec_extension}, we study this question for $\mathrm{GL}_n$ for any $M$-bundle. Inspired by the previous work of Schlesinger on the classification of extensions of vector bundles on the projective line (\cite{Schl}), we introduce in Definition \ref{Def_tildeExt}, for two given vector bundles $\mathcal E_i$ ($i=1,2$), a combinatorial constrain on the slopes of a vector bundles $\mathcal E$ which can be realized as an extension of $\mathcal E_2$ by $\mathcal E_1$. We check that this constrain is indeed necessary, and shows that our combinatorial condition coincides with that of \cite{BFH} when the two vector bundles $\mathcal E_i$ ($i=1,2$) are semistable. One might expect that this combinatorial condition is also sufficient to classify all the extensions, but unfortunately this is \emph{not} the case (cf. Example \ref{ex:counter-example-for-ext}). We  can only give an inductive criterion to see if a vector bundle $\mathcal E$ verifying our combinatorial conditions indeed comes from an extension of vector bundles. 

\vskip 2mm

After we finished this work, we noticed that Hong has posted a new preprint \cite{Ho2} on his webpage. In his new work, Hong has classified independently all the extensions of vector bundles over the Fargues-Fontaine curve in a similar way as we did in \S~\ref{Sec_extension}. 

\vskip 2mm

We briefly describe the structure of this article. In \S \ref{Sec_prelim}, we collect some preliminaries about modifications of $G$-bundles on the Fargues-Fontaine curve that we will need in the sequel. In \S \ref{sec_weakly HN decomp}, we discuss the weakly fully HN-decomposable condition and give the  minute criterion for the weakly fully HN-decomosability (Proposition \ref{prop_minute criterion}). Using the minute criterion, we give the classification of the weakly fully HN-decomposable pairs $(G,\mu)$ when $G$ is absolutely simple and adjoint in Theorem \ref{thm:classification-weak-HN-dec}.  In \S \ref{Sec_maximal wa}, we give several equivalent conditions when the weakly admissible locus is maximal in Theorem \ref{thm_main}. In \S \ref{Sec_single Newton}, we give a criterion when a single Newton stratum is contained in the weakly admissible locus in Theorem \ref{thm:criterion-for-a-singule-stratum}. In \S \ref{Sec_extension}, we introduce a combinatorial condition which is necessary for a vector bundle over the Fargues-Fontaine curve which can be realized as an extension of two given vector bundles (Proposition \ref{Prop_tildeExt is Necessary condition}). Moreover, we give a combinatorial inductive criterion to classify all extensions of vector bundles in Proposition \ref{Prop_classification of extension by Ext^1}. At the end of this section, we apply this criterion to determine all the Newton strata contained in the weakly admissible locus for $\mathrm{GL_n}$ in an explicit example.  In the appendix, we show that our combinatorial constrain above is also sufficient in some special cases, yielding hence classifications of extensions in some new cases not yet covered in the literature.
\bigskip

\textbf{Acknowledgments.} We would like to thank Eva Viehmann to share with us the main results of \cite{NGVi} so that we can use them as a tool in the proof of Theorem \ref{thm_main}. We thank Laurent Fargues, Xuhua He and Sian Nie for helpful discussions. We also thank Serin Hong, Luc Illusie and Eva Viehmann for their comments on the previous version of this paper. The first author is partially supported by NSFC grant No.11671136 and No.12071135. The second author is partially supported by One-Thousand-Talents Program of China.

\section*{Notations}
We use the following notations:
\begin{itemize}
\item $F$ is a finite degree extension of $\Q_p$ with residue field $\F_q$ and a uniformizer $\pi_F$.
\item $\Fbar$ is an algebraic closure of $F$ and $\Gamma = \Gal (\Fbar |F)$.
\item $\breve{F} =\widehat{F^{un}}$ is the completion of the maximal unramified extension $F^{un}\subset \Fbar$, with Frobenius $\s$.
\item $G$ is a connected reductive group over $F$, and $H$ is a quasi-split inner form of $G$.
\item $A\subseteq T \subseteq B$, where A is a maximal split torus, $T=Z_H (A)$ is the centralizer of $A$ in $T$, and $B$ is a Borel subgroup in $H$.
\item $(X^* (T),\Phi, X_*(T), \Phic )$ is the absolute root  of $H$ with positive roots $\Phi^+$ and simple roots $\Delta $ with respect to the choice of $B$. Write $\Delta^\vee$ for the corresponding simple coroots. 
\item $W=N_H(T)/T$ is the absolute Weyl group of $T$ in $H$, and $w_0$ is the longest length element in $W$.
\item $(X^*(A), \Phi_0,  X_*(A),\Phic_0)$ is the relative root datum of $H$ with positive roots $\Phi^+_0$ and simple (reduced) roots $\Delta_0$.
\item If $M$ is a standard Levi subgroup in $H$ we denote by $\Phi_M$ the corresponding roots or coroots showing up in $\Lie\, M$, and by $W_M$ the Weyl group of $M$. If $P$ is the standard parabolic subgroup of $H$ with Levi component $M$, sometimes we also write $W_P$ for $W_M$. For $\alpha\in \Delta_0$, let $M_{\alpha}$ be the standard Levi subgroup of $H$ with $\Delta_{M_{\alpha}, 0}=\Delta_0\backslash\{\alpha\}$. Let $P_{\alpha}$ be the standard parabolic subgroup of $H$ with Levi component $M_{\alpha}$.
\end{itemize}

\section{Preliminaries}\label{Sec_prelim}
In this section, we collect some basic definitions and results needed in the sequel. 

\subsection{Generalized Kottwitz sets} Let $B(G)$ be the set of $\sigma$-conjugacy classes of elements in $G(\breve F)$. Kottwitz has defined two maps, the \emph{Newton map} and the \emph{Kottwitz map}, on $B(G)$, which are of fundamental importance in this paper.

\subsubsection{The Newton map}\label{Section_Newton map} Let $\mathbb D$ be the pro-torus whose character group is $\mathbb Q$, and consider
\[
\mathcal N(G):=\left(\Hom(\mathbb D_{\Fbar},G_{\Fbar} )/G(\Fbar)\right)^{\Gamma},
\]
with $\Gamma=\mathrm{Gal}(\Fbar/F)$. The Newton map is a map
\begin{equation}\label{eq:Newton-map}
\nu=\nu_G:B(G)\longrightarrow\mathcal N(G)
\end{equation}
defined as follows. Let $b\in G(\breve{F})$. For $(V,\rho)$ an object in $\Rep_F(G)$, write $V_{\breve{F}}:=V\otimes_F\breve F$. The element $b$ induces an isocrystal
\begin{equation}\label{eq:isocrystal-D-b-rho}
D_{b,\rho}=(D_{b,\rho},\varphi_{b,\rho}):=(V_{\breve{F}},\rho(b)(1\otimes \sigma)),
\end{equation}
whose slope decomposition gives rise to a $\mathbb Q$-graded vector space over $\breve{F}$. In this way, we obtain an exact tensor functor
\[\begin{split}
\Rep_{F}(G)&\longrightarrow \mathbb Q-\mathrm{Grad}_{\breve F},\\ \quad (V,\rho)&\longmapsto \textrm{slope decomposition of }D_{b,\rho},
\end{split}\]
from the category $\Rep_F(G)$ of rational algebraic representations of $G$ over $F$ to the category $\mathbb Q-\mathrm{Grad}_{\breve F}$ of $\Q$-graded $\breve F$-vector spaces. This tensor functor in turn, by Tanaka duality, gives a morphism of algbraic groups
\[
\nu_{b}:\mathbb D_{\breve F}\longrightarrow G_{\breve F}.
\]
As the slope decomposition of $D_{b,\rho}$ is defined over $F^{un}$, the group $\nu_b$ is naturally defined over $F^{un}$, thus can be viewed as an element of $X_*(G)_{\mathbb Q}=\Hom(\mathbb D_{\Fbar},G_{\Fbar})$. Moreover, its $G(\Fbar)$-conjugacy class $[\nu_b]$ depends only on the $\sigma$-conjugacy of $b$, and is invariant under the action of $\Gamma$. The Newton map $\nu$ in \eqref{eq:Newton-map} is then defined by setting $\nu([b])=[\nu_b]\in \mathcal N(G)$.

Recall also that, an element $[b]\in B(G)$ is called \emph{basic} if the rational cocharacter $\nu_b$ factors through the center of $G_{\breve F}$. In this case, we will also say that $b$ is basic. 

\subsubsection{The Kottwitz map}\label{sec:Kottwitz-map} Let $\pi_1(G)$ denote the algebraic fundamental group of $G$. Indeed,
\[
\pi_1(G)=X_*(T)/\langle \Phi^{\vee}\rangle,
\]
where $\langle \Phi^{\vee}\rangle\subset X_*(T)$ is the subgroup generated by $\Phi^{\vee}$. Up to a canonical isomorphism, $\pi_1(G)$ does not depend on the choice of $T$, and is naturally equipped with an action of $\Gamma=\mathrm{Gal}(\Fbar/F)$. When $G=\mathbb {GL}_{n,F}$, we have a canonical identification $\pi_1(G)\simeq \mathbb Z$.

The Kottwitz map, written $\kappa_G$ or even $\kappa$ if there is no confusion, is a map
\[
\kappa=\kappa_G: B(G)\longrightarrow \pi_1(G)_{\Gamma}
\]
which is functorial on $G$ as an $F$-reductive group, such that the following square is commutative:
\[
\xymatrix{\breve{F}^*\ar[r]^{v_{\pi_F}(-)} \ar@{->>}[d]_{\rm can}& \mathbb Z \ar[d]^{\simeq}_{\rm can}\\
 B(\mathbb G_{m,F})\ar[r]^{\kappa_{\mathbb G_{m,F}}}&  \pi_1(\mathbb G_{m,F})_{\Gamma}},
\]where $v_{\pi_F}(-)$ is the $\pi_F$-adic valuation on $F$.
The Kottwitz map $\kappa_G$ is uniquely determined by this property. The reader can find in \cite[\S~1.5]{CFS} a more direct construction of $\kappa_G$ via the abelianization of Kottwitz set \`a la Borovo\"i.

\begin{theorem}[\cite{Kot2} 4.13]\label{thm:Kottwitz} The map
\[
(\nu,\kappa): B(G)\longrightarrow \mathcal N(G)\times \pi_1(G)_{\Gamma}, \quad [b]\mapsto ([\nu_b],\kappa([b]))
\]
is injective.
\end{theorem}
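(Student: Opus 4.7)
The plan is a two-stage reduction: first reduce the injectivity to the case of basic elements via the Levi subgroup associated to the common Newton point, then handle the basic case by identifying basic $\sigma$-conjugacy classes with $\pi_1(-)_\Gamma$, ultimately via the torus case.

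Suppose $[b_1], [b_2] \in B(G)$ map to the same point of $\mathcal{N}(G) \times \pi_1(G)_\Gamma$; the goal is to show $[b_1] = [b_2]$. Since $B(G)$, $\mathcal{N}(G)$ and $\pi_1(G)_\Gamma$ are insensitive to inner twisting, I would replace $G$ by its quasi-split inner form $H$. Let $\nu \in X_*(T)_\Q^+$ be the unique dominant representative of the common image in $\mathcal{N}(H)$; $\Gamma$-invariance of the Newton point forces $\nu$ to be $F$-rational, so $M := Z_H(\nu)$ is a standard $F$-Levi subgroup of $H$. The crucial technical input is that each $[b_i]$ admits a \emph{reduction to $M$}: one finds $b'_i \in M(\breve{F})$ in the $\sigma$-conjugacy class $[b_i]$ with $\nu_{b'_i} = \nu$ as a cocharacter into $M$. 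Since $\nu$ is central in $M$, each $b'_i$ is basic in $M$, and functoriality of $\kappa$ along $M \hookrightarrow H$ shows $\kappa_M(b'_1)$ and $\kappa_M(b'_2)$ agree after pushforward to $\pi_1(H)_\Gamma$. The set of reductions of a given $[b_i]$ is a torsor under the kernel of $\pi_1(M)_\Gamma \to \pi_1(H)_\Gamma$, so one can adjust the choices to enforce $\kappa_M(b'_1) = \kappa_M(b'_2)$.

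It then suffices to prove, for any reductive $F$-group $M$, that $\kappa_M \colon B(M)^{\mathrm{basic}} \to \pi_1(M)_\Gamma$ is injective. Passing through the abelianization $M \twoheadrightarrow M^{\mathrm{ab}} := M/M^{\mathrm{der}}$, any two basic classes in $B(M)$ with equal $\kappa_M$-image project to the same class in $B(M^{\mathrm{ab}})$; their ``difference'' lies in the derived group and has trivial Newton cocharacter, and a short argument shows any such element is $\sigma$-conjugate to the identity. This reduces the claim to tori. For a torus $T$ over $F$, choosing a finite unramified splitting extension gives $T(\breve{F}) \simeq X_*(T)\otimes_\Z \breve{F}^\times$; Lang--Steinberg together with the valuation computation on $\breve{F}^\times$ (the commutative square in \S\ref{sec:Kottwitz-map}) yields $B(T) = T(\breve{F})/(1-\sigma)T(\breve{F}) \xrightarrow{\sim} X_*(T)_\Gamma = \pi_1(T)_\Gamma$, which by construction coincides with $\kappa_T$.

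The main obstacle lies in the first step: controlling the reductions to $M = Z_H(\nu)$. One must show that every $\sigma$-conjugacy class with Newton point $\nu$ meets $M(\breve{F})$, and that the set of $M(\breve{F})$-$\sigma$-conjugacy classes sitting inside a single $H(\breve{F})$-$\sigma$-conjugacy class intersected with $M(\breve{F})$ is precisely a fiber of $\pi_1(M)_\Gamma \to \pi_1(H)_\Gamma$. This rigidity, which is the technical core of Kottwitz's argument in \cite{Kot2}, rests on a careful analysis of $[b]\cap M(\breve{F})$ via the $\sigma$-equivariance built into the Newton decomposition, together with the classification of basic classes on the Levi $M$ that has already been achieved in the torus reduction.
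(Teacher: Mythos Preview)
The paper does not prove this theorem; it is simply quoted as \cite[4.13]{Kot2} and used as a black box. Your outline is essentially Kottwitz's own argument: reduce to the centralizer $M=Z_H(\nu)$ of the common Newton point, where the classes become basic, and then identify $B(M)_{\mathrm{basic}}$ with $\pi_1(M)_\Gamma$.

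One point deserves more care. Your reduction of the basic case to tori via $M\twoheadrightarrow M^{\mathrm{ab}}$ is too quick: the phrase ``their `difference' lies in the derived group'' is not meaningful for $\sigma$-conjugacy in a non-abelian group, and even if one formalizes it via the fibration $B(M^{\mathrm{der}})\to B(M)\to B(M^{\mathrm{ab}})$, the set $B(M^{\mathrm{der}})_{\mathrm{basic}}\simeq \pi_1(M^{\mathrm{der}})_\Gamma$ need not be trivial when $M^{\mathrm{der}}$ is not simply connected. Kottwitz handles this either by passing to a $z$-extension (so that the derived group becomes simply connected and the kernel is an induced torus), or equivalently by working with $M^{\mathrm{sc}}$, for which $\pi_1=0$ and Steinberg's theorem gives the vanishing you want. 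With that correction, your sketch matches the literature.
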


\begin{remark}\label{rem:H1-and-pi} \begin{enumerate}
\item By a classical result of Steinberg, $H^1(\breve F,G)$ is trivial, from where one deduces a natural injective map \emph(\cite[1.8.3]{Kot1}\emph):
\[
H^1(F,G)\rightarrow B(G)
\]
whose image is the set of elements $[b]\in B(G)$ with trivial Newton vector $\nu_b$.
\item Composing the natural map in (1)  with the Kottwitz map $\kappa_G$ above, we get a map
\[
H^1(F,G)\longrightarrow \pi_1(G)_{\Gamma}.
\]
It is known that this map is injective, with image the subgroup $\pi_{1}(G)_{\Gamma,{\rm tor}}$ of torsion elements in $\pi_1(G)_{\Gamma}$. If $G$ is semisimple, the group $\pi_1(G)$ is of torsion. Consequently, the map above is bijective provided $G$ semisimple.
\end{enumerate}
\end{remark}

\subsubsection{Poset structure on $\mathcal N(G)$ and on $B(G)$} 
We have the following well-known bijective maps
\[
X_*(A)_{\mathbb Q}^+\stackrel{\sim}{\longrightarrow }(X_{*}(T)_{\mathbb Q}/W)^{\Gamma}\stackrel{\sim}{\longrightarrow} (X_*(G)_{\mathbb Q}/G(\Fbar))^{\Gamma}.
\]
Here $X_*(A)^+_\Q$ is the closed Weyl chamber attached to the basis $\Delta_0$
\[
X_*(T)_{\mathbb Q}^+=\{\lambda\in X_*(A)_{\mathbb Q} | \langle \lambda, \alpha\rangle\geq 0 \textrm{ for all }\alpha\in\Delta_0\}.
\]
\begin{remark}Up to a canonical bijection, the bijective map $X_{*}(T)_{\mathbb Q}/W\longrightarrow X_*(G)/G(\Fbar)$ does not depend on the choice of the maximal torus $T$. This allows us to equip a group structure on $X_*(G)_{\mathbb Q}/G(\Fbar)$, and the latter becomes a $\Gamma$-module in this way.
\end{remark}
One define a partial order $\preceq $ on $\mathcal N(G)$ (resp. on $B(G)$): for $v,v '\in \mathcal N(G)$ (resp. $[b],[b']\in B(G)$), write $\tilde{v}$ and $\tilde{v}'$ the representatives of $v$ and $v'$ (resp. of $[\nu_b]$ and $[\nu_{b'}]$) in $X_*(A)^+$, and set
\[
v\preceq v' \quad (\textrm{resp. }[b]\preceq  [b'] ) \quad \Longleftrightarrow \quad \tilde{v}'-\tilde{v}\in \left\{\sum_{\alpha\in \Delta}n_{\alpha}\alpha^{\vee}| n_{\alpha}\in \mathbb Q_{\geq 0}\right\}\subset X_*(A)_{\mathbb Q}.
\]
One can check that, the definition of the partial order $\preceq$ only depends on $G$. Using this partial order, we can equip $B(G)$ with the topology such that
\[
[b']\in \overline{\{[b]\}}\quad  \Longleftrightarrow \quad [b]\preceq  [b'].
\]
The set $\mathcal N(G)$ can be topologized in a similar way.
\subsubsection{The generalized Kottwitz sets}\label{sec:generalized-Kottwitz-sets}
Recall the Kottwitz set
\[
B(G,\mu):=\{[b]\in B(G)| [\nu_b]\preceq  \mu^{\diamond}, \ \kappa([b])=\mu^{\#}\},
\]
where \begin{itemize}
    \item $\mu^{\diamond}$ denotes the Galois average of the cocharacter $\mu$ in $X_*(G)_{\mathbb Q}/G(\Fbar)$:
\[
\mu^{\diamond}:=\frac{1}{[\Gamma:\Gamma_{\mu}]}\sum_{\tau\in \Gamma/\Gamma_{\mu}}\mu^{\tau} \in X_*(G)_{\mathbb Q}/G(\Fbar),
\]
with $\Gamma_{\mu}$ the stabilizer of the action $\Gamma$ on $\mu\in X_*(G)$, and 
\item  $\mu^{\#}$ is the image of $\mu$ in $\pi_1(G)_{\Gamma}$ via the canonical maps below
\[
X_*(T)\longrightarrow \pi_1(G)=X_*(T)/\langle\Phi^{\vee}\rangle\longrightarrow \pi_1(G)_{\Gamma}.
\] \end{itemize}
One checks that both $\mu^{\diamond}$ and $\mu^{\#}$ depend only on the conjugacy class $\{\mu\}$ of $\mu$. Set also
\[
A(G,\mu):=\{[b]\in B(G)| [\nu_b]\preceq  \mu^{\diamond}\}.
\]
It is known that $B(G,\mu)\subset A(G,\mu)$ are both finite sets equipped with the partial order $\preceq $ induced from $B(G)$. Later we will need a generalized version of the Kottwitz sets introduced in \cite[\S~4.1]{CFS}.

\begin{definition} Let $\epsilon \in \pi_1(G)_{\Gamma}$ and $\delta\in \mathcal N(G)$. Set
\[
B(G,\epsilon,\delta):=\{[b]\in B(G)| [\nu_b]\preceq \delta, \ \kappa([b])=\epsilon\}.
\]
\end{definition}
If $\epsilon=\mu^{\#}$ and $\delta=\mu^{\diamond}$, we recover the Kottwitz set $B(G,\mu)$ above.

\begin{remark}In this paper, for the generalized Kottwitz set $B(G, \epsilon, \delta)$, $\delta$ is written additively but not multiplicatively such as in \cite{CFS} and \cite{Ch1}. For example, the notation $B(G, 0, \nu_b\mu^{-1, \diamond})$ in \cite{CFS} and \cite{Ch1} is written $B(G, 0, \nu_b-\mu^{\diamond})$ here.
\end{remark}

\subsection{Fargues-Fontaine curves and $G$-bundles}

\subsubsection{Fargues-Fontaine curves}Let $K$ be a perfectoid field over $\mathbb F_q$, and $\omega_K\in K$ with $0<|\omega_K|<1$. Let $W_{\cO_F}(\cO_K):=W(\cO_K)\otimes_{W(\mathbb F_q)}\cO_F$, and
\[
\mathcal Y_K:=\Spa(W_{\cO_F}(\cO_K))\setminus V(\pi_F[\omega_K]).
\]
The latter is an adic space over $F$ equipped with an automorphism $\varphi$ induced from the Frobenius on $K$ relative to $\mathbb F_q$. The quotient
\[
\mathcal X=\mathcal X_K:=\mathcal Y_K/\varphi^{\mathbb Z}
\]
is an adic curve which can be algebraized, and the resulting $F$-scheme, the \emph{Fargues-Fontaine curve} associated with the perfectoid field $K$, is given by
\[
X=X_{K}:=\Proj \left(\bigoplus_{i}B_K^{\varphi=\pi_F^i}\right)
\]
with $B_K:=H^0(\mathcal Y_K,\cO_{\mathcal Y_K})$. It is known that $X$ is a one-dimensional Noetherian scheme. In the following, we shall take $C$ a complete algebraically closed extension of $\Fbar$, and $K=C^{\flat}$ the tilt of $C$. In particular, the curve $X$ is equipped with a closed point $\infty\in X$ with residue field $k(\infty)=C$ and complete local ring $\hat{\cO}_{X,\infty}= B_{dR}^+(C)$. 

\subsubsection{$G$-bundles on Fargues-Fontaine curves}In the following, we shall use intensively the notion of $G$-bundles on $X=X_{C^{\flat}}$ with $C$ as above. Recall first that, a \emph{$G$-bundle} on $X$ is a (right) $G$-torsor on $X$ for the \'etale topology. From a $G$-bundle $\mathcal E$ on $X$, one can construct an exact tensor functor
\[\begin{split}
\mathrm{Rep}_F(G)&\longrightarrow \Bun_X, \\ (V,\rho)&\longmapsto \mathcal E\times^{G,\rho} (\cO_X\otimes_F V).\end{split}
\]
Here $\Bun_X$ is the category of vector bundles on $X$ and $\mathcal E\times^{G,\rho} (\cO_X\otimes_F V)$ denotes the quotient of $\mathcal E\times (\cO_X\otimes_F V)$ by the following action of $G$: for $g$ (resp. $x$) a local section of $G$ (resp. of $\mathcal E$), and $v\in V$, 
\[
g\cdot (x,1\otimes v):=(xg^{-1}, 1\otimes \rho(g)(v)). 
\]
Conversely, every exact tensor functor
\begin{equation}\label{eq:functor-Eb}
\Rep_F(G)\longrightarrow \Bun_X
\end{equation}
arises in this way (see \cite[\S~1]{F3}). Moreover, attached to a $G$-bundle $\mathcal E$, we have the Harder-Narasimhan polygon $\nu_{\mathcal E}\in \mathcal N(G)$ and the $G$-equivariant first Chern class $c_1^G(\mathcal E)\in \pi_1(G)_{\Gamma}$. See \cite[\S~1.4]{CFS} for more details.

Let $b\in G(\breve{F})$. For $(V,\rho)$ an object in $\Rep_F(G)$, consider the isocrystal $
D_{b,\rho}$ of \eqref{eq:isocrystal-D-b-rho}. It gives rise the vector bundle $\mathcal E(D_{b,\rho})$ on $X$ associated with the graded module
\[
M(D_{b,\rho}):=\bigoplus_{i\geq 0}(B_K\otimes_{\breve{F}}D_{b,\rho})^{\varphi\otimes \varphi=\pi_F^i}
\]
over the graded ring $P_K:=\bigoplus_i B_K^{\varphi=\pi_F^i}$. In this way, we obtain an exact tensor functor of the form \eqref{eq:functor-Eb}. The resulting $G$-bundle on $X$ is denoted by 
\[
\mathcal E_b
\]
in the following. The isomorphism class of the isocrystal $(D_{b,\rho},\varphi_{b,\rho})$, and hence the isomorphism class of the $G$-bundle $\mathcal E_b$, depend only on the $\sigma$-conjugacy class of $b$. So we obtain a map
\begin{equation}\label{eq:BG-and-G-bundles}
B(G)\longrightarrow H^1(X,G), \quad [b]\mapsto [\mathcal E_b].
\end{equation}
Here $B(G)$ is the set of $\sigma$-conjugacy classes $G(\breve{F})$.
\begin{theorem}[\cite{F3} Th\'eor\`eme 5.1]\label{thm:Fargues} The map \eqref{eq:BG-and-G-bundles} is bijective. Moreover, for $[b]\in B(G)$, $\nu_{\mathcal E_b}=-w_0[\nu_b]\in \mathcal N(G)$ and $c_1^G(\mathcal E_b)=-\kappa_G([b])$.
\end{theorem}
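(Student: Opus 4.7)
The plan is to reduce to $G=\GL_n$ by Tannakian formalism, then combine the Dieudonn\'e--Manin classification of isocrystals with the Fargues--Fontaine classification of vector bundles on $X$. On the isocrystal side, $B(\GL_n)$ is identified with isomorphism classes of $n$-dimensional isocrystals over $\breve F$, hence with decreasing sequences of rational slopes; on the bundle side, the Fargues--Fontaine structure theorem identifies $H^1(X,\GL_n)$ with such sequences via $\Ec\simeq \bigoplus_i \cO_X(\mu_i)$. A direct computation with the graded ring $\bigoplus_i B_K^{\varphi=\pi_F^i}$ shows that the simple isocrystal $D_\lambda$ of slope $\lambda$ satisfies $\Ec(D_\lambda)\simeq \cO_X(-\lambda)$; this yields bijectivity for $\GL_n$, explains the sign in the Newton-polygon formula, and produces $w_0$ as the element switching the dominant convention used for HN polygons of bundles and the antidominant convention used for Newton polygons of isocrystals.

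For general reductive $G$, a $G$-bundle is equivalent to an exact tensor functor $\Rep_F(G)\to \Bun_X$, so the assignment $[b]\mapsto[\Ec_b]$ is functorial in $G$. The two formulas $\nu_{\Ec_b}=-w_0[\nu_b]$ and $c_1^G(\Ec_b)=-\kappa_G([b])$ then reduce respectively to the $\GL_n$-statement applied on each $(V,\rho)\in \Rep_F(G)$, and to the characterization of $\kappa_G$ by functoriality plus the $\mathbb G_{m,F}$-case in Section~\ref{sec:Kottwitz-map}. Once these are in place, injectivity of $[b]\mapsto[\Ec_b]$ follows immediately from the injectivity of $(\nu,\kappa)$ in Theorem~\ref{thm:Kottwitz}.

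Surjectivity is the main point. My plan is a Harder--Narasimhan reduction: any $G$-bundle $\Ec$ admits a canonical reduction to a standard parabolic $P$ of $H$ with Levi $M$ such that the induced $M$-bundle is semistable, so it would suffice to classify semistable $M$-bundles and show they all arise from basic elements of $B(M)$; inducing back to $G$ then produces the required class in $B(G)$. For a semistable (equivalently basic) $G$-bundle of a fixed first Chern class $\epsilon\in \pi_1(G)_\Gamma$, one wants to match it with the unique basic class in $B(G)$ with Kottwitz invariant $-\epsilon$. This in turn uses the description of the automorphism group of such a bundle as an inner form $G_b$ of $G$, together with Remark~\ref{rem:H1-and-pi}, which identifies $H^1(F,G_b)$ with the fibre of $\kappa$ over $\epsilon$ (in the semisimple case).

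The hard part will be this last step: actually producing every semistable $G$-bundle from a basic $\sigma$-conjugacy class. In Fargues's original argument this rests on a detailed study of Banach--Colmez spaces and periods and on descent along central isogenies from the $\GL_n$-classification. A subtle point along the way is checking that the Harder--Narasimhan reduction is defined over $F$ (as opposed to only geometrically), which uses that $X$ is geometrically connected with a canonical $F$-rational point $\infty$, so that the parabolic reduction produced on geometric fibres descends to $X$.
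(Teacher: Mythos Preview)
The paper does not contain a proof of this theorem: it is cited from \cite{F3}, Th\'eor\`eme~5.1, and is used here as a black box. So there is no ``paper's own proof'' to compare your proposal against.

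That said, your sketch is a reasonable outline of Fargues's original argument in \cite{F3}, and the structure you describe (Tannakian reduction to $\GL_n$, the Dieudonn\'e--Manin and Fargues--Fontaine classifications giving the sign-reversing bijection in that case, functoriality for the two invariants, and then Harder--Narasimhan reduction to the semistable case for surjectivity) is correct in spirit. One caveat: your proposed surjectivity argument is incomplete as stated. The key technical input you allude to but do not supply is the classification of semistable $G$-bundles on $X$; in \cite{F3} this is obtained via a uniformization result for the stack $\Bun_G$ (using the $B_{dR}$-affine Grassmannian and a theorem of Ansch\"utz on local triviality, cf.\ \cite{Ans}), not by a direct descent from $\GL_n$ along central isogenies as you suggest. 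Also, your remark about descent of the HN reduction being subtle because one needs it ``defined over $F$'' is slightly off: the curve $X$ is already an $F$-scheme, and the HN filtration of a $G$-bundle on $X$ is intrinsic; no additional descent is needed there.
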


\begin{remark}\label{rem:Eb-canonically-trivialized} For $(V,\rho)$ an $F$-representation of $G$, the restriction of the vector bundle $\mathcal E(D_{b,\rho})$ to $\Spec(B_{dR}^+)$ via the map
\[
\Spec(B_{dR}^+)=\Spec(\hat{\cO}_{X,\infty})\longrightarrow X
\]
is \emph{canonically} trivialized (see also \cite[p. 278]{FF}). To show this, write $\{\infty\}=V_+(t)$ with $t\in B_K^{\varphi=\pi}$. Then $\cO_{X,\infty}$ is the homogeneous localization of the graded ring $P_K$ at the homogeneous prime ideal $(t)\subset P_K$. In other words, let 
\[
S:=\{b\in  B_K\setminus t\cdot B_K | \varphi(b)=\pi^i b\textrm{ for some }i\in \mathbb N\}, 
\]
then $\cO_{X,\infty}=(S^{-1}B_K)^{\varphi=1}$. Similarly 
\[
\mathcal E(D_{b,\rho})_{\infty}=\left(S^{-1}B_K\otimes_{\breve F}D_{b,\rho}\right)^{\varphi\otimes \varphi=1}.
\]
So it suffices to check that, for any isocrystal $D$ over $\breve F/F$, the natural morphism of $B_{dR}^+$-modules
\[
B_{dR}^+\otimes_{\cO_{X,\infty}} \left(S^{-1}B_K\otimes_{\breve F}D\right)^{\varphi\otimes\varphi =1}\longrightarrow B_{dR}^+\otimes_{\breve F}D
\]
is an isomorphism: one reduces to the case where $D$ is simple of slope $d/h$, with $d\in \mathbb Z,h\in \mathbb Z_{\geq 1}$ and $(h,d)=1$, and the result follows from the fact that the natural map below is an isomorphism:
\[
B_{dR}^+\otimes_{\cO_{X,\infty}} \left(S^{ -1}B_K\right)^{\varphi^h=\pi^d}\longrightarrow B_{dR}^{+,h}, \quad 1\otimes a\mapsto (a,\varphi(a),\ldots, \varphi^{h-1}(a)).
\]
As a corollary, the $G$-bundle $\mathcal E_b$ is \emph{canonically} trivialized over $\Spec(B_{dR}^+)$.

\if false
Using the slope decomposition of $D_{b,\rho}$, it is enough to show this when $D_{b,\rho}$ is isotypic of slope $-d/h$, with $d\in \mathbb Z,h\in \mathbb Z_{\geq 1}$ and $(d,h)=1$. So
\[
(D,\varphi)=(\breve{F}\otimes_{F_h} D^{\varphi^h=\pi^{-d}}, \sigma\otimes 1).
\]
Here $F_h/F$ denotes the unique unramified extension of degree $h$ in $\Fbar$. Write $D_0:=D^{\varphi^h=\pi^{-d}}$, which is a vector space over $F_h$ of dimension $\dim_{F}V$. It follows that the graded module defining $\mathcal E(D)$ can be written as
\[
\left(\bigoplus_i  B_{K}^{\varphi^h=\pi_F^{i+d}}\right)\otimes_{F_h}D_0.
\]
Thus $\mathcal E(D)|_{X\setminus \{\infty\}}$ is the coherent sheaf associated with
\[
B_K[1/t]^{\varphi^h=\pi_F^{d}}\otimes_{F_h}D_0.
\]
On the other hand, we have an isomorphism
\[
u:B_{dR}\otimes_{B_e}\left(B_K[1/t]^{\varphi^h=\pi_F^d}\right)\longrightarrow B_{dR}^h, \quad 1\otimes x\mapsto (x,\varphi(x),\ldots, \varphi^{h-1}(x)),
\]
and thus an isomorphism
\[
B_{dR}\otimes \mathcal E(D)_{\infty}\stackrel{\sim}{\longrightarrow} B_{dR}^h\otimes_{F_h}D_0=B_{dR}^h\otimes_{\breve F}D,
\]
here the structure of $\breve F$-algebra on $B_{dR}^h$ is given by
\[
\breve{F}\longrightarrow B_{dR}^h, \quad a\mapsto (a,\sigma(a),\ldots,\sigma^h(a)).
\]
\fi
\end{remark}

\subsection{Flag varieties and the weakly admissible locus}

Let $\{\mu\}$ be a geometric conjugacy class of a cocharacter
\[
\mu: \mathbb{G}_{m,\bar F}\longrightarrow G_{\bar F}.
\]
Let $E=E(G,\{\mu\})\subset \bar F$ be the field of definition of $\{\mu\}$: so $E/F$ is a finite subextension of $\bar F/F$ such that $\mathrm{Gal}(\bar F/E)$ is the stabilizer of
\[
\{\mu\}\in \Hom_{\bar F}(\mathbb G_{m,\bar F}, G_{\bar F})/G(\bar F)
\]
under the action of $\mathrm{Gal}(\bar F/F)$.


\subsubsection{Flag varieties} There is a projective variety $\mathcal P$ over $F$ parametrizing all parabolic subgroups of $G$, and the geometric connected component of $\mathcal P$ whose set of $\bar F$-points is given by the image of the map
\[
\{\mu\}\longrightarrow \mathcal P(\bar F), \quad \lambda\mapsto P_{\lambda}
\]
is defined over $E$. The corresponding open and closed subscheme of $\mathcal P_{E}$ will be written by
\[
\mathcal F(G,\mu).
\]
Here, for $\lambda:\mathbb G_{m,\bar F}\ra G_{\bar F}$ a cocharacter, $P_{\lambda}$ denotes its associated parabolic subgroup: for any field extension $L/\bar F$,
\[
P_{\lambda}(L)=\left\{g\in G(L)| \lim_{t\to 0}\lambda(t)g\lambda(t)^{-1}\textrm{ exists in }G_L\right\}.
\]
Recall that, for $\lambda':\mathbb G_{m,L}\ra G_L$ a second cocharacter of $G$ over $L$, $P_{\lambda}=P_{\lambda'}$ if and only if there exists $g\in P_{\lambda}(L)$ with $\lambda'=\mathrm{Int}(g)\circ \lambda$. Therefore, with a particular choice of $\mu\in \{\mu\}$, we have an identification
\[\begin{split}
G_{\bar F}/P_{\mu}&\longrightarrow \mathcal F(G,\mu)_{\bar F}, \\ gP_{\mu}&\longmapsto gP_{\mu}g^{-1}.\end{split}
\]
So $\mathcal F(G,\mu)$ is a twisted form of $G_{\bar F}/P_{\mu}$ over $E$.

\subsubsection{Weakly admissible locus in flag varieties} In the following, we shall consider $\mathcal F(G,\mu)$ as an adic space over $\breve{E}$, the completion of the composite $E\cdot F^{un}\subset \bar F$. Let $b\in G(\breve F)$. Rapoport and Zink defined a subspace, called weakly admissible locus
\[
\mathcal F(G,\mu,b)^{wa}\subset \mathcal F(G,\mu)
\]
attached to the triple $(G,\{\mu\},b)$, as a vast generalization of Drinfeld upper half plane. Let us briefly recall its definition. Let $L/\breve E$ be a complete field extension. A point $x\in \mathcal F(G,\mu)(L)$ corresponds to a parabolic subgroup of $G_L$, and thus of the form $P_{\mu_x}$ for some cocharacter $\mu_x:\mathbb{G}_{m,L}\ra G_L$ over $L$. For any finite-dimensional $F$-representation $(V,\rho)$ of $G$, we may consider the filtration on $V_L:=V\otimes_F L$ induced by
the cocharacter $\rho\circ \mu_x$, which does not depend on the auxiliary choice of $\mu_x$. We shall denote this filtration by $\Fil_{x,\rho}^{\bullet}$. Let
\[
\varphi-\mathrm{FilMod}_{L/\breve{F}}
\]
be the category of filtered isocrystals over $L/\breve{F}$. Consider the functor
\begin{equation}\label{eq:I-b-x}\begin{split}
\mathcal I_{b,x}: \Rep_F(G)&\longrightarrow \varphi-\mathrm{FilMod}_{L/\breve{F}}, \quad\\ (V,\rho)&\longmapsto (V_{\breve{F}}, \rho(b)(1\otimes \sigma), \Fil_{x,\rho}^{\bullet} ).\end{split}
\end{equation}

\begin{definition}[{\cite[1.18]{RZ}}] Let $L/\breve{E}$ be a complete field extension. Let $b\in G(\breve F)$ and $x\in \mathcal F(G,\mu)(L)$. We say that $x$ is \emph{weakly admissible} if the following equivalent conditions are satisfied:
\begin{enumerate}
\item for any object $(V,\rho)$ in $\Rep_F(G)$, the filtered isocrystal $\mathcal I_{b,x}(V,\rho)$ over $L/\breve{F}$ is weakly admissible in the sense of Fontaine;
\item there is a faithful finite dimensional $F$-representation $(V,\rho)$ of $G$ such that $\mathcal I_{b,x}(V,\rho)$ is weakly admissible.
\end{enumerate}
\end{definition}

It is a fundamental observation that the weakly admissible locus forms an open adic subspace of the flag variety:

\begin{theorem}[\cite{RZ}] There is a partially proper open subspace $
\mathcal F(G,\mu,b)^{wa}\subset \mathcal F(G,\mu)$
such that for any complete field extension $L/\breve E$,
\[
\mathcal F(G,\mu,b)^{wa}(L)=\{x\in \mathcal F(G,\mu)(L) | x \textrm{ is weakly admissible}\}.
\]
Moreover, $\mathcal F(G,\mu,b)^{wa}\neq \emptyset$ if and only if $
[b]\in A(G,\mu)$.
\end{theorem}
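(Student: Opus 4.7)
The plan is to first reduce the problem to $G=\mathrm{GL}_n$ via a faithful $F$-rational representation $(V,\rho)$ of $G$. The equivalence of the two conditions in the definition of weak admissibility means that it can be tested on a single such $V$, so the natural closed immersion $\mathcal F(G,\mu)\hookrightarrow \mathcal F(\mathrm{GL}(V),\mu_V)$ identifies $\mathcal F(G,\mu,b)^{wa}$ with the preimage of $\mathcal F(\mathrm{GL}(V),\mu_V,\rho(b))^{wa}$. Openness, partial properness and non-emptiness therefore transport from $\mathrm{GL}_n$ to $G$, once we have a handle on the linear case.

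For $G=\mathrm{GL}_n$, I would use the numerical characterization: a filtered isocrystal $(D,\varphi,\Fil^{\bullet})$ is weakly admissible exactly when $t_H(D)=t_N(D)$ together with $t_H(D')\leq t_N(D')$ for every $\varphi$-stable sub-isocrystal $D'\subseteq D$. The equality on the full space depends only on $[b]$ and the conjugacy class of $\mu$, not on $x$. For each fixed $\varphi$-stable $D'$, the assignment $x\mapsto t_H(D'\cap \Fil^{\bullet}_x)$ on $\mathcal F(\mathrm{GL}_n,\mu)$ is upper semi-continuous, because the type of the induced filtration on $D'_L$ can only specialize on closed Schubert-type subvarieties. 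Hence each condition $t_H(D'_x)\leq t_N(D')$ cuts out an open adic subspace, and only finitely many of these conditions can be violated near a given point (since $t_H(D'_x)$ is uniformly bounded in terms of $\dim D'$ and the jumps of $\mu$), so their intersection is open. Partial properness is then immediate, because the defining inequalities are numerical and can be tested on rank-one valuations, which gives stability under generalization together with the usual valuative criterion.

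For the non-emptiness statement, the easy direction is a classical polygon inequality: if $x$ is weakly admissible then for every $(V,\rho)\in\Rep_F(G)$ the Newton polygon of $\mathcal I_{b,x}(V,\rho)$ lies below its Hodge polygon. Translating this inequality from the collection of all representations back into group-theoretic data via the Newton map yields $[\nu_b]\preceq \mu^{\diamond}$, i.e.\ $[b]\in A(G,\mu)$. For the converse one must exhibit at least one weakly admissible point whenever $[b]\in A(G,\mu)$; here I would invoke the Fontaine-Rapoport existence theorem, which asserts that for every $[b]\in B(G,\mu)$ there exists a weakly admissible filtered isocrystal with $G$-structure of the prescribed type. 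The matching of $\kappa$-invariants is handled by selecting the appropriate connected component of $\mathcal F(G,\mu)$, using Theorem \ref{thm:Kottwitz} and Remark \ref{rem:H1-and-pi}.

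The main obstacle is precisely this last existence statement. It rests on Totaro's numerical characterization of weakly admissible filtrations for $\mathrm{GL}_n$ combined with an inductive reduction to standard Levi subgroups of $G$, and in the present paper it is used as a black box, cited from \cite{RZ}.
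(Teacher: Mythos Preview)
The paper does not give a proof of this statement; it is quoted verbatim from \cite{RZ}, and the subsequent remark only records the shape of the complement as a $J_b(F)$-orbit of Schubert varieties. So there is no ``paper's own proof'' to compare against, and your sketch is in effect an outline of the argument in \cite{RZ} and \cite{DOR}.

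Your reduction to $\mathrm{GL}_n$ via a faithful representation and the semi-continuity argument for openness are the standard line of attack and are fine in outline. One point worth tightening: a general isocrystal has infinitely many $\varphi$-stable sub-objects (isotypic pieces of multiplicity $>1$ contribute continuous families), so ``only finitely many conditions can be violated near a point'' needs an argument. The usual fix is to observe that the bad locus is a union of $J_b(F)$-translates of finitely many closed Schubert varieties, and that $J_b(F)$ acts through a $p$-adic group with compact-open stabilizers, so this union is closed in the adic topology.

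Where your sketch goes wrong is the non-emptiness direction. You invoke Fontaine--Rapoport for $[b]\in B(G,\mu)$ and then propose to ``handle the matching of $\kappa$-invariants by selecting the appropriate connected component of $\mathcal F(G,\mu)$''. But $\mathcal F(G,\mu)$ is a partial flag variety and is geometrically connected, so there is no component to select; Theorem~\ref{thm:Kottwitz} and Remark~\ref{rem:H1-and-pi} do not help here. In fact the Fontaine--Rapoport theorem (see \cite[\S 9.5]{DOR}) is stated and proved directly for $[b]\in A(G,\mu)$: the global equality $t_H(V)=t_N(V)$ is automatic once $[\nu_b]\preceq\mu^{\diamond}$, because the dominance order forces $\mu^{\diamond}-[\nu_b]\in\langle\Phi_0^\vee\rangle_{\mathbb Q}$ and hence $\mu^{\diamond}$ and $[\nu_b]$ agree on every character of $G^{ab}$. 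So no separate $\kappa$-matching step is needed, and your proposed mechanism for it does not exist.
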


\begin{remark}The space $\mathcal F(G,\mu,b)^{wa}$ is obtained from $\mathcal F(G,\mu)$ by removing a family of Zariski closed subspaces of $\mathcal F(G,\mu)$. To see this, consider the reductive group $J_b$ over $F$, the $\sigma$-centralizer of $b$: so for any $F$-algebra $R$,
\[
J_b(R)=\{g\in G(\breve F\otimes_FR)|  gb\sigma(g)^{-1}=b\}.
\]
The group $J_b(F)$ acts on $\mathcal F(G,\mu)$ through the natural inclusion $J_b(F)\subset G(\breve F)\subset G(\breve{E})$. The weakly admissible locus $\mathcal F(G,\mu,b)^{wa}$ is stable under this action of $J_b(F)$, and is of the form
\[
\mathcal F(G,\mu)\setminus \bigcup_{i\in I}J_b(F)\cdot Z_i
\]
where $Z_i$, $i\in I$, is a finite collection of Zariski closed Schubert varieties.
\end{remark}

\subsection{Period domains and modifications of $G$-bundles}
Let $C/\bar F$ be a complete algebraically closed field extension, and consider the Fargues-Fontaine curve $X=X_{C^{\flat}}$, with $\infty\in X$ the associated closed point. So
\[
k(\infty)=C\quad \textrm{and}\quad \hat{\cO}_{X,\infty}=B_{dR}^+(C)=:B_{dR}^+.
\]
Let also $\mu:\mathbb{G}_{m,\Fbar}\ra G_{\Fbar}$ be a \emph{minuscule} cocharacter of $G$. Let $b\in G(\breve F)$. In this subsection, we shall construct  modifications of the $G$-bundle $\mathcal E_b$ from $C$-points of the period domain. As an application, we relate the weak admissibility to certain stability condition of $G$-bundles on the Fargues-Fontaine curve.

\begin{remark} The natural ring homomorphism $\theta:B_{dR}^+\longrightarrow C$ of Fontaine has a canonical section over $\Fbar$. Thus $B_{dR}^+$ and its fraction field $B_{dR}$ are naturally algebras over $\Fbar$. Therefore the cocharacter $\mu:\mathbb G_{m}\ra G$ yields a map $\mathbb{G}_m(B_{dR})\rightarrow G(B_{dR})$, and for $t$ a non-zero element of $B_{dR}^+$, we may consider its image $\mu(t)$ in $G(B_{dR})$.
\end{remark}

\subsubsection{Modifications of $G$-bundles}\label{sec:modification-of-G-bundles} Let $t\in B_{dR}^+$ be a uniformizer of the discrete valuation ring $B_{dR}^+$. Consider (the set of $C$-points of) the \emph{affine Schubert cell} $\mathrm{Gr}_{G,\mu}^{B_{dR}}(C)$ associated with $\mu$ (which does not depend on the choice of the uniformizer $t$ above) inside the $B_{dR}^+$-affine Grassmannian $\mathrm{Gr}_{G}^{B_{dR}}(C)$:
\[
\Gr_{G,\mu}^{B_{dR}}(C):=G(B_{dR}^+)\mu(t)^{-1}G(B_{dR}^+)/G(B_{dR}^+)\subset \Gr_G^{B_{dR}}(C):=G(B_{dR})/G(B_{dR}^+),
\]
and (the evaluation on $C$-points of) the \emph{Bialynicki-Birula map} (cf. \cite[3.4.3]{CS})
\[\begin{split}
\pi_{G,\mu}:\Gr_{G,\mu}^{B_{dR}}(C)&\longrightarrow \mathcal F(G,\mu)(C), \\ g\mu(t)^{-1}G(B_{dR}^+)&\longmapsto \theta(g)P_{\mu}\theta(g)^{-1}.\end{split}
\]

\begin{proposition}[{\cite[3.4.4]{CS}}]Assume that $\mu$ is minuscule. Then the map $\pi_{G,\mu}$ above is bijective.
\end{proposition}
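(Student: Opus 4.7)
The plan is to realize both $\mathrm{Gr}_{G,\mu}^{B_{dR}}(C)$ and $\mathcal F(G,\mu)(C)$ as homogeneous spaces under $G(B_{dR}^+)$, making the Bialynicki--Birula map equivariant for the two actions. The source is by construction a $G(B_{dR}^+)$-orbit; the target $G(C)/P_\mu(C)$ receives the action through the reduction homomorphism $\theta : G(B_{dR}^+) \longrightarrow G(C)$, which is surjective on $G$-points because $G$ is smooth over $F$ and $B_{dR}^+$ is a complete discrete valuation ring with residue field $C$. Since $\pi_{G,\mu}$ sends the source base point $\mu(t)^{-1} G(B_{dR}^+)$ to the target base point $P_\mu \in G(C)/P_\mu(C)$, surjectivity follows at once.

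For injectivity, equivariance reduces the problem to identifying the stabilizer
\[
H \,:=\, \{h \in G(B_{dR}^+) \mid \mu(t) h \mu(t)^{-1} \in G(B_{dR}^+)\}
\]
of the source base point with $\theta^{-1}(P_\mu(C))$. Here the minuscule hypothesis is essential: the weights of $\mathrm{Ad}\circ\mu$ on $\mathrm{Lie}(G)$ lie in $\{-1, 0, 1\}$, yielding a decomposition $\mathrm{Lie}(G) = \mathfrak g_{-1}\oplus \mathfrak g_0\oplus \mathfrak g_1$ in which the opposite unipotent radical $U^-_\mu$ of $P_\mu$ is abelian (hence identified with $\mathfrak g_{-1}$ via the exponential), and $\mathrm{Ad}(\mu(t))$ acts on $\mathfrak g_i$ by the scalar $t^i$. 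For the inclusion $\theta^{-1}(P_\mu(C)) \subseteq H$: given $h$ with $\theta(h) \in P_\mu(C)$, lift $\theta(h)$ to $p\in P_\mu(B_{dR}^+)$ via smoothness of $P_\mu$; since $\theta(hp^{-1}) = 1$ lies in the open Bruhat cell $U^-_\mu \cdot P_\mu$ around the identity, one factors $hp^{-1}=u^-\cdot q$ with $u^-\in tU_\mu^-(B_{dR}^+)$ and $q\in P_\mu(B_{dR}^+)$, and then $\mathrm{Ad}(\mu(t))$ sends $u^-$ into $U_\mu^-(B_{dR}^+)$ while preserving $P_\mu(B_{dR}^+)$.

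The main obstacle is the reverse inclusion $H \subseteq \theta^{-1}(P_\mu(C))$, which requires showing $\theta(h) \in P_\mu(C)$ for every $h \in H$, including those whose reduction lies outside the open Bruhat cell. The most efficient approach is via Lie algebras: a direct weight computation gives $\mathrm{Lie}(H) = t\mathfrak g_{-1}\oplus\mathfrak g_0\oplus\mathfrak g_1$ (as a $B_{dR}^+$-submodule of $\mathrm{Lie}(G)_{B_{dR}^+}$), coinciding with the Lie algebra of $\theta^{-1}(P_\mu(C))$, which is the dilatation of $P_\mu \subseteq G_C$ inside $G_{B_{dR}^+}$. Both are $B_{dR}^+$-points of smooth $B_{dR}^+$-group schemes sharing the same generic fiber ($G_{B_{dR}}$) and the same Lie algebra, and combined with the inclusion established above, this forces equality. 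Once $H = \theta^{-1}(P_\mu(C))$ is in place, $\pi_{G,\mu}$ descends to the desired bijection $G(B_{dR}^+)/H \iso G(C)/P_\mu(C)=\mathcal F(G,\mu)(C)$.
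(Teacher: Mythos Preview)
The paper does not give its own proof; it simply cites \cite[3.4.4]{CS}. Your strategy---realize both sides as $G(B_{dR}^+)$-orbits and compare stabilizers---is the standard one and is essentially what Caraiani--Scholze do.

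Your surjectivity argument and your proof of the inclusion $\theta^{-1}(P_\mu(C)) \subseteq H$ via the open Bruhat cell are both correct; the latter is exactly where the minuscule hypothesis enters, and it is the heart of injectivity. However, you have misjudged which inclusion is the ``main obstacle.'' The reverse inclusion $H \subseteq \theta^{-1}(P_\mu(C))$ is nothing but the well-definedness of $\pi_{G,\mu}$ (already granted by the paper when it introduces the map, citing \cite[3.4.3]{CS}), holds for \emph{arbitrary} $\mu$, and admits a one-line proof: pick a faithful representation $G \hookrightarrow \GL(V)$, write $h\in H$ as a block matrix $(A_{ij})$ for the weight decomposition $V=\bigoplus_i V_i$ under $\mu$, and note that $\mu(t)h\mu(t)^{-1}$ has blocks $t^{i-j}A_{ij}$; integrality forces $A_{ij}\in t^{j-i}\mathrm{Mat}(B_{dR}^+)$ whenever $i<j$, so $\theta(h)$ is block upper-triangular and hence lies in $P_\mu(C)$.

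By contrast, your Lie-algebra/dilatation argument for this same inclusion is incomplete: you assert without justification that $H$ arises as the $B_{dR}^+$-points of a smooth group scheme, and the principle ``same generic fibre $+$ same Lie algebra $+$ one inclusion $\Rightarrow$ equality'' is not a lemma one may invoke off the shelf. Since the inclusion in question is both easier and already implicit in the setup, this gap is harmless for the final result, but you should replace that paragraph with the representation-theoretic argument above.
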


In particular, for any $x\in \mathcal F(G,\mu)(C)$, one can use it to modify the $G$-bundle $\mathcal E_b$ as follows. Recall that the pullback of $\mathcal E_b$ along the natural map
\[
\Spec(B_{dR}^+)=\Spec(\hat{\cO}_{X,\infty})\longrightarrow X,
\]
can be \emph{canonically} trivialized (Remark \ref{rem:Eb-canonically-trivialized}). Let $\tilde x=gG(B_{dR}^+)\in \Gr_{G,\mu}^{B_{dR}}(C)$ be the unique element such that $\pi_{G,\mu}(\tilde x)=x$. We glue $\mathcal E|_{X\setminus \{\infty\}}$ and the trivial $G$-bundle $G\times_{\Spec(F)}\Spec(B_{dR}^+) $ over $\Spec(B_{dR}^+)$ through the isomorphism of $G$-bundles below:
\[
\left(\mathcal E|_{X\setminus \{\infty\}}\right)|_{\Spec(B_{dR})}\stackrel{\simeq}{\longrightarrow}G\times_{\Spec F}\Spec(B_{dR})\stackrel{g^{-1}\cdot }{\longrightarrow} G\times_{\Spec F}\Spec(B_{dR}).
\]
The resulting $G$-bundle on $X$ will be denoted by $\mathcal E_{b,x}$ in the sequel. 

\begin{remark}\label{rem:Ebx-depends-on-b} The isomorphism class of the $G$-bundle $\mathcal E_{b}$ depends only on the class $[b]\in B(G)$ of $b$. However, the construction of $\mathcal E_{b,x}$ does depend on $b$ as an element in $G(\breve F)$. For $b_1,g\in G(\breve F)$ with $b_1=gb\sigma(g)^{-1}$, the element $g$ induces an isomorphism $\mathcal E_{b,x}\stackrel{\sim}{\ra}\mathcal E_{b_1,gx}$. 
\end{remark}

\subsubsection{Weak admissibility and admissibility for $G$-bundles}
For $\mathcal E$ a $G$-bundle on $X$, and for $G'\subset G$ an algebraic subgroup of $G$, a \emph{reduction of $\mathcal E$ to $G'$} is a $G'$-bundle $\mathcal E'$ on $X$, together with an isomorphism of $G$-bundles
\[
\mathcal E'\times^{G'}G\stackrel{\sim}{\ra}\mathcal E.
\]
From the isomorphism above, we deduce a morphism $\mathcal E'\ra \mathcal E$ of $X$-schemes, compatible with the action of $G'$. The latter yields a section
\[
s: X\simeq \mathcal E'/G'\longrightarrow\mathcal E/G',
\]
of the map $\mathcal E/G'\ra X$ such that $\mathcal E'\stackrel{\sim}{\ra} X\times_{s,\mathcal E/G'}\mathcal E$. Conversely, every section of $\mathcal E/G'\ra X$ can be obtained in this way.

\begin{proposition}[\cite{CFS}, Lemma 2.4] Let $P\subset G$ be a parabolic subgroup of $G$. Let $\mathcal E$ and $\mathcal {\tilde{E}}$ be two $G$-bundles on $X$, equipped with an isomorphism $\mathcal E|_{U}\stackrel{\sim}{\ra}\tilde{\mathcal E}|_U$ of $G$-bundles over some non-empty open subset $U$ of $X$. Let $\mathcal E_P$ be a reduction of $\mathcal E$ to $P$. Then there is naturally a reduction $\mathcal {\tilde{E}}_P$ of $\mathcal {\tilde{E}}$ to $P$, together with an isomorphism $\mathcal E_P|_U\stackrel{\sim}{\ra}\mathcal {\tilde{E}}_P|_U$ of $P$-bundles satisfying the obvious compatibility condition.
\end{proposition}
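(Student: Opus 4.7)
The plan is to translate the problem about $P$-reductions into a question about sections of quotient bundles, and then extend a section over $U$ to a section over all of $X$ by using the projectivity of $G/P$ together with the fact that $X$ is a regular Noetherian scheme of dimension one.

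Recall, as noted in the paragraph just before the statement, that giving a reduction of a $G$-bundle $\mathcal F$ to $P$ is the same as giving a section of the fppf (or \'etale) quotient $\mathcal F/P \to X$, which is a Zariski-locally trivial $G/P$-bundle over $X$. Starting from $\mathcal E_P$, I would first regard it as a section $s: X \to \mathcal E/P$. The isomorphism $\mathcal E|_U \iso \tilde{\mathcal E}|_U$ induces an isomorphism of quotient $G/P$-bundles $\mathcal E/P|_U \iso \tilde{\mathcal E}/P|_U$ over $U$, hence transports $s|_U$ into a section $\tilde s_U : U \to \tilde{\mathcal E}/P$ of $\tilde{\mathcal E}/P$ over $U$.

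The main step is to extend $\tilde s_U$ uniquely to a section $\tilde s : X \to \tilde{\mathcal E}/P$ defined on all of $X$. For this I would use that $G/P$ is projective over $F$ (since $P$ is a parabolic subgroup), so the structure morphism $\tilde{\mathcal E}/P \to X$ is proper (and even projective Zariski-locally). Since $X$ is Noetherian of Krull dimension one, the complement $X \setminus U$ is a finite set of closed points; at each such point $x$, the local ring $\cO_{X,x}$ is a discrete valuation ring. The section $\tilde s_U$ yields a morphism $\Spec \mathrm{Frac}(\cO_{X,x}) \to \tilde{\mathcal E}/P$ over $\Spec \cO_{X,x}$, and by the valuative criterion of properness applied to $\tilde{\mathcal E}/P \to X$ this morphism extends uniquely to $\Spec \cO_{X,x} \to \tilde{\mathcal E}/P$. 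Glueing these local extensions with $\tilde s_U$ produces the required global section $\tilde s$, hence the desired reduction $\tilde{\mathcal E}_P$ of $\tilde{\mathcal E}$ to $P$.

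Finally, the isomorphism $\mathcal E_P|_U \iso \tilde{\mathcal E}_P|_U$ is built into the construction: by definition of $\tilde s_U$, the two sections $s|_U$ and $\tilde s_U$ correspond under $\mathcal E/P|_U \iso \tilde{\mathcal E}/P|_U$, so pulling back $\mathcal E|_U \iso \tilde{\mathcal E}|_U$ along these sections produces the $P$-bundle isomorphism on $U$. The compatibility with the original isomorphism of $G$-bundles over $U$ is automatic from the fact that $P$-reductions are pulled back from sections in this way, and uniqueness of the extension at each missing point ensures that the construction is canonical (in particular functorial in the data).

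The most delicate point to check is just the applicability of the valuative criterion in the adic/algebraic setup for $X$; since $X$ is one-dimensional Noetherian and its local rings at closed points are DVRs, and $\tilde{\mathcal E}/P \to X$ is proper, this is standard, so I do not expect any serious obstacle beyond being careful that everything is taking place on the scheme $X$ rather than on the adic curve $\mathcal X$.
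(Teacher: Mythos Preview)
Your argument is correct and is the standard one. Note that the paper does not actually prove this proposition: it is stated with a citation to \cite[Lemma 2.4]{CFS} and no proof is given in the present paper. The proof in \cite{CFS} follows exactly the line you describe: identify $P$-reductions with sections of $\mathcal E/P\to X$, transport the section over $U$ via the given isomorphism, and extend across the finitely many missing closed points using the valuative criterion of properness (since $G/P$ is projective and the local rings of $X$ at closed points are DVRs). Your remark about working on the schematic curve $X$ rather than the adic curve is well taken and is precisely why the argument goes through without subtlety.
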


One can reformulate the weak admissibility in terms of $G$-bundles on Fargues-Fontaine curves when $G$ is quasi-split. To see this, we need the following key definition:

\begin{definition}[\cite{CFS}, Definition 2.5] Let $b\in G(\breve F)$ be an element. For a Levi subgroup $M$ of $G$, a \emph{reduction of $b$ to $M$} is an element $b_M\in M(\breve F)$ together with an element $g\in G(\breve F)$ such that $b=gb_M\sigma(g)^{-1}$. By abuse of notation, such a reduction will simply be denoted by $b_M$ in the following if there is no confusion. 
\end{definition}

\begin{proposition}[\cite{CFS}, Proposition 2.7] \label{prop:criterion-for-wa} Assume $G$ quasi-split, and $\mu$ minuscule. Let $b\in G(\breve F)$ with $[b]\in A(G,\mu)$. Then $x\in \mathcal F(G,\mu)(C)$ is weakly admissible if and only if for any standard parabolic subgroup $P$ with associated Levi $M$, any reduction $b_M$ of $b$ to $M$, and any $\chi\in X^*(P/Z_G)^+$, we have
\[
\deg(\chi_*(\mathcal E_{b,x})_P)\leq 0,
\]
where $(\mathcal E_{b,x})|_P$ is the reduction to $P$ of $\mathcal E_{b,x}$ induced by the reduction $\mathcal E_{b_M}\times^MP$ of $\mathcal E_b$ to $P$.
\end{proposition}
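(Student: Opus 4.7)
The plan is to translate the Tannakian definition of weak admissibility---imposed on every filtered isocrystal $\mathcal{I}_{b,x}(V,\rho)$---into a degree inequality for line bundles on the Fargues-Fontaine curve, with the modification $\mathcal{E}_{b,x}$ serving as the bridge. I would proceed in three steps.

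First, I would reformulate weak admissibility group-theoretically. By Fontaine's criterion, $\mathcal{I}_{b,x}(V,\rho)$ is weakly admissible if and only if $t_H(V') \leq t_N(V')$ for every sub-filtered-isocrystal $V'$, with equality for $V' = V_{\breve F}$. Via the Tannakian dictionary and Totaro's slope-theoretic reformulation, as $(V,\rho)$ ranges over $\Rep_F(G)$, such subobjects correspond to pairs (reduction $b_M$ of $b$ to a Levi $M$ of a parabolic $P$, reduction of the cocharacter $\mu_x$ through $P$), and each slope inequality is extracted by pairing with a dominant character $\chi \in X^*(P/Z_G)^+$. Since $G$ is quasi-split, it suffices to restrict to standard parabolics $P$.

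Second, I would identify each side of this pairing inequality with the degree of a line bundle on $X$. The reduction $b_M$ yields a $P$-reduction $\mathcal{E}_{b_M} \times^M P$ of $\mathcal{E}_b$, which by the preceding proposition (applied to the canonical isomorphism $\mathcal{E}_{b,x}|_{X \setminus \{\infty\}} \simeq \mathcal{E}_b|_{X \setminus \{\infty\}}$) transports to the reduction $(\mathcal{E}_{b,x})_P$. By Theorem \ref{thm:Fargues} applied to the $M$-bundle $\mathcal{E}_{b_M}$, the degree of $\chi_* \mathcal{E}_{b_M}$ encodes the Newton pairing with $\nu_{b_M}$. The modification at $\infty$, which by the Bialynicki-Birula bijection (valid because $\mu$ is minuscule) is encoded by the lattice $\mu_x(t)^{-1} G(B_{\mathrm{dR}}^+)$ relative to the canonical trivialization of $\mathcal{E}_b$ at $\infty$ (Remark \ref{rem:Eb-canonically-trivialized}), shifts the $\chi$-degree by a Hodge pairing with $\mu_x^P$, where $\mu_x^P$ denotes the reduction of $\mu_x$ through $P$ implicitly induced at $\infty$. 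Assembling the global and local contributions would yield an identity of the shape
\[
\deg \chi_* (\mathcal{E}_{b,x})_P \;=\; \langle \chi, \mu_x^P \rangle - \langle \chi, \nu_{b_M} \rangle
\]
(up to sign conventions coming from Fargues' theorem), so weak admissibility of $x$ translates exactly into $\deg \chi_* (\mathcal{E}_{b,x})_P \leq 0$ for all such data.

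I expect the main obstacle to be this precise degree computation in Step 2: tracking how modification at $\infty$ by the minuscule cocharacter $\mu_x$ affects each $\chi$-pushforward requires a careful local analysis at $\infty$ in the $B_{\mathrm{dR}}^+$-affine Grassmannian, combined with Fargues' global formula, and fixing sign conventions consistently. A second subtle point is that the proposition quantifies only over reductions $b_M$ and not over independent reductions of $\mu_x$; this is legitimate because, once the $P$-reduction of $\mathcal{E}_b$ is fixed via $b_M$, the induced reduction $(\mathcal{E}_{b,x})_P$ already selects, at $\infty$, the compatible reduction of $\mu_x$ through $P$, and varying $b_M$ over all reductions to Levis of all standard $P$ exhausts the subobjects of $\mathcal{I}_{b,x}(V,\rho)$ relevant to weak admissibility. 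Verifying this dictionary carefully---especially the sign-correct identification of Hodge and Newton pairings---is the technical heart of the argument; once it is in place, both directions of the equivalence follow formally.
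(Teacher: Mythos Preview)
The paper does not give its own proof of this proposition; it is quoted from \cite{CFS}, Proposition 2.7, and used as a black box. So there is no argument in the present paper to compare your proposal against.

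That said, your sketch is a faithful outline of the argument one finds in \cite{CFS}. The key identity you isolate,
\[
\deg \chi_* (\mathcal{E}_{b,x})_P \;=\; \langle \chi, \mu_x^P \rangle - \langle \chi, \nu_{b_M} \rangle,
\]
is exactly the bridge: the right-hand side is $t_H - t_N$ for the sub-filtered-isocrystal cut out by the reduction $b_M$ together with the induced filtration, so Fontaine's weak admissibility inequality becomes the stated degree bound. Your handling of the second subtle point is also correct and is the conceptual core: a subobject of $\mathcal{I}_{b,x}(V,\rho)$ is a $\varphi$-stable subspace of $V_{\breve F}$ equipped with the \emph{induced} filtration, so once the sub-isocrystal (equivalently, the reduction $b_M$) is fixed, the filtration---and hence the relative position $\mu_x^P$ at $\infty$---is determined; there is no independent choice of a reduction of $\mu_x$. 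This is precisely why the criterion quantifies only over $(P, b_M, \chi)$.
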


As mentioned in the introduction, Rapoport and Zink has conjectured the existence of an open subspace $\mathcal F(G,\mu,b)^a\subset \mathcal F(G,\mu,b)^{wa}$ together with an \'etale $G$-local system $\mathcal L$ on $\mathcal{F}(G,\mu,b)^a$ such that these two spaces have the same classical points, that is, points with values in finite field extensions of $\breve E$, and that $\mathcal L$ interpolates the crystalline representations arising from classical points of $\mathcal F(G,\mu,b)^{wa}$. With the above construction, for $[b]\in B(G,\mu)$, we define $\mathcal F(G,\mu,b)^a$, called admissible locus, the subspace of $\mathcal F(G,\mu)$ such that, for any complete algebraically closed field extension $C$ of $\Fbar$
\[
\mathcal F(G,\mu,b)^{a}(C)=\{x\in \mathcal F(G,\mu)(C)| \mathcal E_{b,x} \textrm{ is trivial}\}
\]
The space $\mathcal F(G,\mu,b)^a$ satisfies the properties required by Rapoport-Zink conjecture follows from the work of Fargues-Fontaine, Kedlaya-Liu and Scholze.

\subsubsection{Newton and Harder-Narasimhan stratifications} \label{sec:Newton-and-HN}In the following, we review the definition of the Newton and the Harder-Narasimhan stratification on the flag variety. Recall that $b\in G(\breve F)$.

The Newton stratification on $\mathcal F(G,\mu)$ is indexed by elements in $B(G)$.
More precisely, let $C$ be a complete algebraically closed field extension of $\Fbar$. For each $x\in \mathcal F(G,\mu)(C)$, the $G$-bundle $\mathcal E_{b,x}$ corresponds to a unique element $[b_x']\in B(G)$ by Theorem \ref{thm:Fargues}. So we obtain a map $\mathcal F(G,\mu)(C)\ra B(G)$. Letting $C$ vary, we deduce a map
\begin{eqnarray}\label{eqn_Newton vector}
\mathrm{Newt}_b: |\mathcal F(G,\mu)|\longrightarrow B(G), \quad x\mapsto \mathcal [b_{x}'].
\end{eqnarray}
For $[b']\in B(G)$, the corresponding Newton strata $\mathcal F(G,\mu,b)^{[b']}$ is defined to be the preimage of $[b']\in B(G)$ via the map $\mathrm{New}_b$: so for any complete algebraically closed field extension $C$ of $\Fbar$,
\[
\mathcal F(G,\mu,b)^{[b']}(C)=\{x\in\mathcal F(G,\mu)(C)| \mathcal E_{b,x}\simeq \mathcal E_{b'}\}
\]
In particular, we have a decomposition
\begin{equation}\label{eq:Newton-stratification}
\mathcal F(G,\mu)=\coprod_{[b']\in B(G)} \mathcal F(G,\mu,b)^{[b']}
\end{equation}
of the flag variety $\mathcal F(G,\mu)$.

\begin{remark} Assume that $\mu$ is minuscule. Let $b,b'\in G(\breve F)$.
\begin{enumerate}
\item Each Newton stratum $\mathcal F(G,\mu,b)^{[b']}$ is locally closed in $\mathcal F(G,\mu)$ by the work of Kedlaya-Liu, and is stable under the action of $J_b(F)\subset G(\breve F)$ on $\mathcal F(G,\mu)$. Moreover, if $[b]\in B(G) $ is basic,
\[
\mathcal F(G,\mu,b)^{[b']}\neq \emptyset \quad \Longleftrightarrow \quad [b']\in B(G,\kappa([b])-\mu^{\#},\nu_b-\mu^{\diamond}).
\]
However, when $[b]\in B(G)$ is non-basic, such a description is still unknown.
\item  The decomposition \eqref{eq:Newton-stratification} is a stratification by the work of Viehmann \cite[Corollary 6.7]{Vi}.
\end{enumerate}
\end{remark}

Now we turn to the Harder-Narasimhan stratification on the flag variety. The formalism of Harder-Narasimhan stratification on the flag varieties (or on the $B_{dR}^+$-Grassmannian) was studied by Dat-Orlik-Rapopoport \cite{DOR}, Cornut-Peche Irissarry\cite{CPI}, Shen \cite{Sh2} and Nguyen-Viehmann\cite{NGVi}. We briefly recall the definition and main properties that we will need in the sequel. For $D=(D,\varphi,\Fil^{\bullet})$ a filtered isocrystal over $C/\breve{F}$. We call $\dim_{\breve F}(D)$ the \emph{rank} of $D$, written $\mathrm{rank}(D)$, and
\[
\deg(D):=\sum_i i\cdot \dim_{C}\mathrm{gr}^i_{\Fil^{\bullet}}(D_C)-v_{\pi}(\det(\varphi))
\]
its \emph{degree}. Using these two functions, one can develop a theory  of Harder-Narasimhan filtration on the category $\varphi-\mathrm{FilMod}_{C/\breve{F}}$ of filtered isocrystals over $C/\breve{F}$ (\cite[Proposition 8.1.10]{DOR}), and a remarkable property here is that the Harder-Narasimhan filtration for filtered isocrystals is compatible with tensor products (\cite[Theorem 8.1.9]{DOR}).

Recall that, for $b\in G(\breve F)$ and for $x\in \mathcal F(G,\mu)(C)$, we have the natural functor
\[
\mathcal I_{b,x}: \Rep_F(G)\longrightarrow \varphi-\mathrm{FilMod}_{C/\breve{F}}.
\]
of \eqref{eq:I-b-x}, or equivalently, a functor
\[
\mathcal I_b: \Rep_F(G)\longrightarrow \varphi-\mathrm{Mod}_{\breve F}
\]
together with a filtration $\Fil^{\bullet}_x$ on the fiber functor
\[
\omega^G_C:\Rep_F(G)\longrightarrow \mathrm{Vect}_{C}, \quad (V, \rho)\mapsto V_C:=V\otimes_F C.
\]
Using the formalism of Harder-Narasimhan filtration, we deduce a unique $\mathbb Q$-filtration on $\mathcal I_b$, such that for any $V=(V,\rho)\in \Rep_F(G)$, the induced filtration on the isocrystal $\mathcal I_{b}(V)$ over $\breve F$ is the Harder-Narasimhan filtration of the filtered isocrystal $\mathcal I_{b,x}(V)$ as explained in the above paragraph. In particular, this defines an element $v_{b,x}\in X_{*}(G)_{\mathbb Q}/G$.
It is known that $v_{b,x}$ is invariant under the action of $\Gamma=\mathrm{Gal}(\Fbar/F)$. So
\[
v_{b,x}\in \mathcal N(G)=\left(X_{*}(G)_{\mathbb Q}/G\right)^{\Gamma}.
\]
We call it the \emph{Harder-Narasimhan vector} of $x$. 

\begin{remark} Suppose $b=1\in G(\breve F)$. Recall that we are in the minuscule case, so we can identify the affine Schubert cell $\Gr_{G,\mu}^{B_{dR}}(C)$ with $\mathcal F(G,\mu)(C)$ via the Bialynicki-Birula map. Under this identification, the Harder-Narasimhan vector $v_{b,x}=v_{1,x}$ of $x\in \Gr_{G,\mu}^{B_{dR}}(C)=\mathcal F(G,\mu)(C)$ is denoted by $\mathrm{HN}(\mathcal E_1,x)$ by Nguyen-Viehmann in \cite{NGVi}. 

\end{remark}

Letting $C$ vary, we get the following map on topological spaces
\[
\mathrm{HN}_b: |\mathcal F(G,\mu)|\longrightarrow \mathcal N(G),\quad x\mapsto v_{b,x}^*:=-w_0v_{b,x}. 
\]
\begin{remark}We take the dual of the Harder-Narasimhan vector here is in order to compare it with $\mathrm{New}_b$ in (\ref{eqn_Newton vector}).
\end{remark}

\begin{proposition}[\cite{Sh2} Theorem 3.5 and Theorem 4.4]

The map $\mathrm{HN}_b$ above is upper-continuous. In other words, for any $v\in \mathcal N(G)$, the subset
\[
\mathcal F(G,\mu)^{\mathrm{HN}_b\geq v}:=\{x\in \mathcal F(G,\mu)| v\preceq   \mathrm{HN}_b(x)\}
\]
is closed. Consequently, the subset
\[
\mathcal F(G,\mu)^{\mathrm{HN}_b= v}:=\{x\in \mathcal F(G,\mu)| \mathrm{HN}_b(x)= v\}
\]
is locally closed, and we call it the \emph{Harder-Narasimhan stratum} associated with $v\in \mathcal N(G)$.
\end{proposition}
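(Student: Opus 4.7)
The plan is to reduce the upper-continuity of $\mathrm{HN}_b$ to the upper semi-continuity of Harder-Narasimhan polygons of filtered isocrystals as $x$ varies. By Tanaka duality applied to the tensor functor $\mathcal{I}_{b,x}$, and by the tensor-compatibility of the HN filtration on $\varphi-\mathrm{FilMod}_{C/\breve F}$ (\cite{DOR} Theorem 8.1.9), the vector $v_{b,x}\in \mathcal{N}(G)$ is determined by the HN polygons of the filtered isocrystals $\mathcal{I}_{b,x}(V)$ as $V$ runs through $\Rep_F(G)$. For a fixed $v\in \mathcal{N}(G)$, the condition $v\preceq\mathrm{HN}_b(x)$ can then be rewritten as a finite collection of inequalities of the form ``the HN polygon of $\mathcal I_{b,x}(V)$ lies on or above a specified polygon'' for $V$ ranging over finitely many representations, for instance a faithful representation of $G$ together with its exterior and Schur powers. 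In this way the problem reduces to the corresponding semi-continuity statement for the HN polygon of a single filtered isocrystal that varies with $x$.

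Next I would prove the latter. For a fixed representation $V$ and a fixed polygon $P$, the locus
\[
Z_{V, P} := \{ x \in \mathcal{F}(G,\mu) : \text{HN polygon of } \mathcal{I}_{b,x}(V) \geq P \}
\]
should be shown to be closed. By the standard characterization of the HN polygon via sub-objects, membership in $Z_{V,P}$ is equivalent to the existence, for each breakpoint $(r,s)$ of $P$, of a rank-$r$ sub-filtered-isocrystal of $\mathcal{I}_{b,x}(V)$ of degree at least $s$. Crucially, the underlying isocrystal $(V_{\breve F},\rho(b)(1\otimes \sigma))$ is independent of $x$, and its sub-isocrystals of a given rank form a \emph{finite} set, parametrized discretely via the slope decomposition. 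Hence $Z_{V, P}$ is a finite union, over such sub-isocrystals $D'$, of the loci $\{x : \deg(\Fil^{\bullet}_{x,\rho}|_{D'_C}) \geq s\}$, and it suffices to show that each of these is closed in $|\mathcal{F}(G,\mu)|$.

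Finally, the closedness of $\{x : \deg(\Fil^{\bullet}_{x,\rho}|_{D'_C}) \geq s\}$ is a Schubert-type condition. The degree of an induced filtration on a fixed subspace $D'_C\subset V_C$ is a linear combination of the jump-point dimensions $\dim_C(D'_C\cap \Fil^i_{x,\rho})$, each of which defines a closed Schubert-style subvariety of $\mathcal{F}(G,\mu)$ when bounded below. In the classical algebraic setting this goes back to \cite{DOR} Chapter 8. The main obstacle is extending the statement to the adic space $\mathcal{F}(G,\mu)$ over $\breve E$, where one must check semi-continuity not only at classical points but across all adic generalizations; this is the technical heart of \cite{CPI} and \cite{Sh2}, with further refinements in \cite{NGVi}, and amounts to showing that the various dimension-of-intersection functions remain upper-semi-continuous after passing to the adic topology.
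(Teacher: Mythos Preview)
The paper does not give its own proof of this proposition; it simply cites \cite{Sh2}, Theorems 3.5 and 4.4, so there is no argument in the paper to compare yours against.

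Your outline follows the standard strategy and is broadly on the right track, but there is a genuine gap in the middle step. You claim that the sub-isocrystals of $(V_{\breve F},\rho(b)(1\otimes\sigma))$ of a given rank ``form a \emph{finite} set, parametrized discretely via the slope decomposition.'' This is false whenever the isocrystal has a repeated slope: for instance, the rank-one sub-isocrystals of the trivial isocrystal $(\breve F^2,\sigma\oplus\sigma)$ are parametrized by $\mathbb P^1(F)$, which is infinite. What is finite is the set of \emph{isomorphism classes} of sub-isocrystals, or equivalently the set of $J_b(F)$-orbits of sub-isocrystals, where $J_b$ is the $\sigma$-centralizer of $b$. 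Consequently your $Z_{V,P}$ is not a finite union of Schubert loci but an infinite union of $J_b(F)$-translates of finitely many Zariski-closed Schubert subvarieties, and one must then argue that such a $J_b(F)$-saturated union is closed in the adic topology. This last step is nontrivial---it is exactly where properness of the $J_b(F)$-action, or an explicit profiniteness argument, enters---and is the technical core of the results in \cite{RZ}, \cite{DOR}, \cite{Sh2}, and \cite{NGVi}. Your final paragraph gestures at adic subtleties but misidentifies the source of the difficulty: the issue is not merely passing from classical to adic points, but controlling an infinite union of closed subsets.
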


\begin{remark}Recall that $\mu$ is minuscule.
If $[b]\in B(G,\mu)$, then if $[b']\in B(G,0,\nu_b-\mu^{\diamond})$ is basic, then
\[
\mathcal F(G,\mu)^{\mathrm{HN}_b= [\nu_{b'}]}=\mathcal F(G,\mu,b)^{wa}
\]
is precisely the subspace of weakly admissible locus, thus is open and non-empty.
\end{remark}

In \cite{Sh2} and  \cite{NGVi}, Shen and Nguyen-Viehmann study the geometric properties of the Harder-Narasimhan stratification. For the convenience of the readers, we summarize below some of their results in the form that we will need in the sequel.

\begin{proposition}[\cite{Sh2}, \cite{NGVi}]\label{prop_NGVi}Suppose $[b]\in B(G,\mu)$ is basic and $x\in\mathcal{F}(G, \mu, b)^{[b']}(C)$. Then
\begin{enumerate}
\item  $\mathrm{HN}_b(x)\in B(G,0, \nu_b-\mu^{\diamond})$;
\item  $\mathrm{HN}_b(x)\preceq [\nu_{b'}]$. Moreover, if $(G, b', \nu_b-\mu^{\diamond})$ is Hodge-Newton decomposable with respect to some standard Levi subgroup $M$ inside the quasi-split inner form $H$ of $G$ (cf. definition \ref{def_HN decomp} below), then
\[
\mathrm{HN}_b(x)\preceq_M [\nu_{b'}],
\]
where $\preceq_M$ means that $[\nu_{b'}]-\mathrm{HN}_b(x)$ is a non-negative combination of coroots in $\Delta_{M, 0}^\vee$.
\end{enumerate}
\end{proposition}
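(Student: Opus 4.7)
My plan is to reduce both assertions to the slope inequality $\mathrm{HN}\preceq \mathrm{Newton}$ for $G$-bundles on the Fargues-Fontaine curve, via an identification between the Harder-Narasimhan vector arising from the filtered isocrystal functor $\mathcal{I}_{b,x}$ and the Harder-Narasimhan vector of the $G$-bundle $\mathcal{E}_{b,x}$. Assertion (2) will be addressed first, and assertion (1) will follow essentially formally.

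The first step is to establish that the HN $\Q$-filtration on $\mathcal{I}_{b,x}$, produced through the tensor-compatibility of HN filtrations for filtered isocrystals \cite[Theorem 8.1.9]{DOR}, matches representation-wise the HN filtration of the corresponding vector bundle on $X$. Concretely, for every $V\in \Rep_F(G)$, the HN polygon of the filtered isocrystal $\mathcal{I}_{b,x}(V)$ should agree, after applying $-w_0$, with the HN polygon of the vector bundle $\mathcal{E}_{b,x}\times^{G,V}\cO_X$; this is the Fargues-Fontaine dictionary between filtered isocrystals and modifications of bundles. Granting the identification, the slope inequality applied to the $G$-bundle $\mathcal{E}_{b,x}\simeq \mathcal{E}_{b'}$ immediately yields $\mathrm{HN}_b(x)\preceq [\nu_{b'}]$, which is the first half of (2).

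For the refinement under Hodge-Newton decomposability, I would combine the canonicity of HN reductions of $G$-bundles with the observation that, when $(G, b', \nu_b - \mu^\diamond)$ is HN-decomposable with respect to the standard Levi $M$, the canonical HN reduction of $\mathcal{E}_{b'}$ must factor through a parabolic with Levi component $M$. Consequently the discrepancy $[\nu_{b'}] - \mathrm{HN}_b(x)$ is expressible as a non-negative rational combination of coroots in $\Delta_{M,0}^\vee$, which is precisely the assertion $\mathrm{HN}_b(x)\preceq_M [\nu_{b'}]$.

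Assertion (1) then decomposes into a Kottwitz part and a Newton part. The Kottwitz part $\kappa(\mathrm{HN}_b(x))=0$ is checked character by character: for any $\chi\in X^*(G)^\Gamma$, pairing with $\mathrm{HN}_b(x)$ recovers (up to the standard sign and $-w_0$ conventions) $\deg(\chi_*\mathcal{E}_{b,x})$, which by Theorem \ref{thm:Fargues} combined with the modification formula equals $-\langle\chi,\kappa_G([b])\rangle + \langle\chi,\mu\rangle = 0$, since $[b]\in B(G,\mu)$ forces $\kappa_G([b])=\mu^\#$. The Newton part $\mathrm{HN}_b(x)\preceq \nu_b - \mu^\diamond$ is inherited from (2) by chaining with $[\nu_{b'}]\preceq \nu_b - \mu^\diamond$, the latter coming from the description of non-empty basic Newton strata recalled just before the proposition. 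The main obstacle is the identification at the first step, which requires the non-trivial tensor-compatibility of HN filtrations on both the filtered-isocrystal side and the $G$-bundle side, together with their functorial matching under the passage from filtered isocrystals to vector bundles on the Fargues-Fontaine curve.
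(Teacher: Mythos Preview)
Your central claim---that for every $V\in\Rep_F(G)$ the Harder--Narasimhan polygon of the filtered isocrystal $\mathcal{I}_{b,x}(V)$ agrees (up to $-w_0$) with the Harder--Narasimhan polygon of the vector bundle $\mathcal{E}_{b,x}\times^{G,V}\cO_X$---is false, and this collapses the rest of the argument. Sub-objects in the category of filtered isocrystals are $\varphi$-stable $\breve F$-subspaces equipped with the induced filtration; these correspond only to sub-bundles of $\mathcal{E}_{b,x}$ arising from sub-isocrystals of $(V_{\breve F},\rho(b)\sigma)$, not to arbitrary sub-bundles. Hence the filtered-isocrystal HN polygon can be strictly smaller than the bundle HN polygon. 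The inequality $\mathrm{HN}_b(x)\preceq[\nu_{b'}]$ is exactly the statement that the former is bounded by the latter---it is the \emph{content} of the proposition, not a consequence of an identification. A concrete witness: take any $x\in\mathcal{F}(G,\mu,b)^{wa}\setminus\mathcal{F}(G,\mu,b)^{a}$ (such points exist whenever $(G,\mu)$ is not fully HN-decomposable). Weak admissibility forces $\mathrm{HN}_b(x)$ to be the basic Newton vector, while $\mathcal{E}_{b,x}\simeq\mathcal{E}_{b'}$ is non-trivial, so $[\nu_{b'}]$ is strictly larger.

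Relatedly, there is no ``slope inequality $\mathrm{HN}\preceq\mathrm{Newton}$ for $G$-bundles on the Fargues--Fontaine curve'' to invoke: on $X$ the HN polygon of a $G$-bundle \emph{is} its Newton invariant (up to $-w_0$, Theorem~\ref{thm:Fargues}), so that inequality is vacuous. The actual work, carried out in \cite[Proposition~4.3]{Sh2} and \cite[Theorem~6.4]{NGVi}, is to compare the two different HN formalisms and to show that every destabilizing reduction of the bundle is dominated by one coming from the isocrystal side. The refined statement under Hodge--Newton decomposability requires a further argument (in \cite{NGVi} it passes through a reduction-to-Levi step for the modification itself), and your sentence ``the canonical HN reduction of $\mathcal{E}_{b'}$ must factor through a parabolic with Levi $M$'' does not by itself constrain $\mathrm{HN}_b(x)$, which is defined via filtered isocrystals rather than via that bundle reduction. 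The paper's own proof simply cites these references together with the compatibility under inner twisting; if you want to supply a self-contained argument, you need to prove the comparison inequality between the two HN theories directly.
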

\begin{proof} (1) and the first assertion in (2) are proved in \cite[Proposition 4.3]{Sh2}, and also in \cite[Proposition 3.13 and Theorem 6.4]{NGVi} (via inner twisting). The last assertion of (2) is stated for the modification of the trivial bundle $\E_1$ in \cite[Theorem 6.4]{NGVi}. Their results can be translated in the form mentioned above using the compatibility under inner twisting (cf. loc. cit. Section 8, or \cite[\S~4.5]{Sh2}).



\end{proof}

\section{Weakly fully HN-decomposability}\label{sec_weakly HN decomp}

In this section, we introduce the notion of \emph{weakly fully HN-decomposability} and discuss its minute criterion. We also give the complete classification of the weakly fully HN-decomposable pairs when the group $G$ is absolutely simple and adjoint. In the next section we shall use this group theoretic condition to see when the flag variety $\mathcal F(G,\mu)$ has maximal weakly admissible locus $\mathcal F(G,\mu,b)^{wa}$.

\subsection{Weakly fully HN-decomposability}\label{sec: weakly fully HN}
Recall that the reductive group $G$ is not necessarily quasi-split, and $H$ is the quasi-split inner form of $G$ over $F$. So we have natural identifications
\[
X_*(G)/G(\Fbar)=X_*(H)/H(\Fbar), \quad
\mathcal N(G)=\mathcal N(H)\quad \textrm{and}\quad \pi_1(G)=\pi_1(H).
\]
Moreover, as in Notations, let
\[
A\subset T\subset B
\]
be subgroups of $H$, where $A$ is a maximal split torus of $H$, $T=C_H(A)$ and $B$ a Borel subgroup of $H$ containing $T$. Let $(X^*(T),\Phi,X_*(T),\Phi^{\vee})$ (resp. $(X^*(A),\Phi_0,X_{*}(A),\Phi_0^{\vee})$) be the root datum given respectively by the adjoint action of $T$ (resp. of $A$), with $\Delta$ (resp. $\Delta_0$) a fixed basis. In particular, the Galois group $\Gamma$ acts on $\Phi$ and on $\Delta$, inducing a natural bijection
\[
\Phi/\Gamma\stackrel{\sim}{\longrightarrow}\Phi_0, \quad \Gamma\textrm{-orbit of }\alpha\mapsto \alpha|_{A}.
\]
Similarly, we have $\Delta/\Gamma\stackrel{\sim}{\rightarrow}\Delta_0$.

In order to discuss the concept of weakly fully Hodge-Newton decomposability for the non quasi-split reductive group $G$, we need to relate a generalized Kottwitz set for $G$ to a certain Kottwitz set of $H^{ad}$, the adjoint quotient of $H$. This part is explained in \cite[4.2]{CFS}. For the sake of completeness, let us reproduce their argument here. Since $H^{ad}$ is semisimple, we have natural identifications (see Remark \ref{rem:H1-and-pi})
\[
H^1(F, H^{ad})\stackrel{\sim}{\longrightarrow}B(H^{ad})_{basic}\stackrel{\sim}{\longrightarrow}\pi_1(H^{ad})_{\Gamma}=[\langle \Phi\rangle^{\vee}/\langle \Phi^{\vee}\rangle]_{\Gamma}.
\]
The group $G$, being an inner form of $H$, gives a class $[b_G]\in B(H^{ad})_{basic}$ and a class $[\xi]\in [\langle \Phi\rangle^{\vee}/\langle \Phi^{\vee}\rangle]_{\Gamma}$, with $b_G\in H^{ad}(\breve F)$ and $\xi\in \langle\Phi\rangle^{\vee}$. Moreover, $J_{b_G}=G^{ad}$ and there is a bijection
\begin{equation}\label{eq:BG-and-BH}
B(G^{ad})=B(J_{b_G})\longrightarrow B(H^{ad})
\end{equation}
that sends $[1]$ to $[b_G]$ (\cite[3.4]{Kot2}), which can be inserted into the following commutative diagram
\[
\xymatrix{B(G^{ad})\ar[r]^{\eqref{eq:BG-and-BH}}\ar[d]_{\kappa_{G^{ad}}} & B(H^{ad})\ar[d]^{\kappa_{H^{ad}}}\\ \pi_1(G^{ad})_{\Gamma}=\pi_1(H^{ad})_{\Gamma}\ar[r]^<<<<<{\bullet+\xi} & \pi_1(H^{ad})_{\Gamma}}
\]
As $[b_{G}]\in B(H^{ad})_{basic}$, its Newton vector is trivial. So the map \eqref{eq:BG-and-BH} gives the commutative diagram below
\[
\xymatrix{B(G^{ad})\ar[r]^{\eqref{eq:BG-and-BH}}\ar[d]_{\nu_{G^{ad}}} & B(H^{ad})\ar[d]^{\nu_{H^{ad}}}\\ \mathcal N(G^{ad})\ar@{=}[r] &\mathcal N(H^{ad})}
\]
On the other hand, for $\epsilon\in \pi_1(G)_{\Gamma}$ and $\delta\in \mathcal N(G)$, via the natural map $B(G)\ra B(G^{ad})$, the generalized Kottwitz set $B(G,\epsilon,\delta)$ can be identified with $B(G^{ad},\epsilon^{ad},\delta^{ad})\subset B(G^{ad})$, where $\epsilon^{ad}$ (resp. $\delta^{ad}$) denotes the natural image of $\epsilon$ in $\pi_1(G^{ad})_{\Gamma}$ (resp. in $\mathcal N(G^{ad})$ ): see \cite[4.11]{Kot2}. Thus, by the bijection \eqref{eq:BG-and-BH}, $B(G,\epsilon,\delta)$ can be further identified with
\[
B(H^{ad}, \epsilon^{ad}+\xi, \delta^{ad})\subset B(H^{ad}).
\]

Before giving the definition of the weakly fully HN-decomposability, let us review briefly the definition of the fully HN-decomposability  and its minute criterion.  The fully HN-decomposability condition was first introduced and systematically studied by (\cite{GoHeNi}).

\begin{definition}[full HN-decomposablity \cite{GoHeNi}]\label{def_HN decomp} Let $\{\mu\}\in X_*(G)/G(\Fbar)=X_*(H)/H(\Fbar)$ with $\mu \in X_{*}(T)^+$, $\epsilon \in \pi_1(G)_{\Gamma}=\pi_1(H)_{\Gamma}$, and $\delta\in X_*(A)_{\mathbb Q}^+=\mathcal N(H)=\mathcal N(G)$.
\begin{enumerate}
\item Suppose $[b]\in A(G, \mu)$ \emph(resp. $[b]\in B(G, \epsilon, \delta)$\emph), and view its Newton vector $[\nu_b]$ as an element in $X(A)_{\mathbb Q}^+$. We say that the triple $(G, \mu, b)$ \emph(resp. $(G, \delta, b)$\emph) is \emph{Hodge-Newton decomposable}, or \emph{HN-decomposable} for short, if there exists a standard proper Levi subgroup $M\subsetneq H$ such that:
\begin{itemize}
\item the centralizer of $[\nu_b]$ is contained in $M$; and
\item $\mu^{\diamond}-[\nu_b]\in \langle\Phi_{0, M}^\vee\rangle_{\Q}$ \emph(resp. $ \delta-[\nu_b]\in \langle\Phi_{0, M}^\vee\rangle_{\Q}$\emph).
\end{itemize}
Moreover, for a HN-decomposable triple $(G, \mu, b)$ (resp. $(G, \delta, b)$, we say that the triple is \emph{HN-decomposable with respect to a standard Levi subgroup $M$ in $H$} if $M$ is the unique minimal Levi subgroup satisfying the above conditions.
\item We say that the generalized Kottwitz set $B(G, \epsilon, \delta)$ is \emph{fully HN-decomposable} if for any non-basic $[b]\in B(G, \epsilon, \delta)$, the triple $(G, \delta, b)$ is HN-decomposable. We say that the pair $(G, \mu)$ is \emph{fully HN-decomposable} if so is $B(G, \mu)$.
\end{enumerate}
\end{definition}

\begin{remark}
\begin{enumerate}
    \item The generalized Kottwitz set $B(G,\epsilon,\delta)$ is fully HN-decomposable if and only if the corresponding generalized Kottwitz set $B(H^{ad},\epsilon^{ad}+\xi,\delta^{ad})$ for $H^{ad}$ is fully HN-decomposable.
    \item Suppose $G$ quasi-split. If $(G, \mu, b)$ (resp. $(G, \delta, b)$) is HN-decomposable with respect to $M$. Let $b_M$ be a reduction of $b$ to $M$, then $(M, \mu, b)$ (resp. $(M, \delta, b)$) is HN-indecomposable.
\end{enumerate}
\end{remark}
For $\beta\in \Delta$, let $w_{\beta}\in \langle \Phi\rangle_{\Q}$ be the corresponding fundamental weight. For $\alpha\in\Delta_0$, let
\[
\tilde{\omega}_{\alpha}=\sum_{\beta\in \Delta \ s.t. \ \beta|_A=\alpha}w_{\beta}\in X^*(T)_{\Q}^{\Gamma}=X^*(A)_{\Q}.
\]
In particular, for $\gamma\in \Delta$ we have
\[
\langle \gamma^{\vee,\diamond},\tilde{\omega}_{\alpha}\rangle=\left\{\begin{array}{ll}0, & \textrm{if }\gamma|_A\neq \alpha; \\ 1, & \textrm{otherwise.} \end{array}\right.
\]
Here $\gamma^{\vee,\diamond}$ denotes the Galois average of $\gamma^{\vee} $ in $X_*(T)$. In particular, for the element $\xi\in \langle \Phi\rangle ^{\vee}$ above, the fractional part
\[
\{\langle \xi^{\diamond}, \tilde{\omega}_\alpha\rangle\}\in [0, 1[
\]
of $\langle \xi^{\diamond}, \tilde{\omega}_\alpha\rangle$ only depends on the its $\langle\Phi^{\vee}\rangle$-coset in $\pi_1(H^{ad})=\langle \Phi\rangle^{\vee}/\langle \Phi^{\vee}\rangle$. 

\begin{proposition}[Minute criterion for the full HN-decomposability, {\cite[Theorem 3.3]{GoHeNi}}, {\cite[Proposition 4.12]{CFS}}]\label{prop:minute-for-full-HN} The set $B(G,\mu)$ is fully HN-decomposable if and only if for any $\alpha\in \Delta_0$, $\langle\mu^{\diamond}, \tilde{\omega}_{\alpha}\rangle+\{\langle\xi^{\diamond}, \tilde{\omega}_{\alpha}\rangle\}\leq 1$.
\end{proposition}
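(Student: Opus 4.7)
The plan is to exploit the identification $B(G, \mu) \cong B(H^{ad}, \mu^\# + \xi, \mu^\diamond)$ recalled at the start of this subsection to reduce to the quasi-split semisimple adjoint group $H^{ad}$, and then translate both Hodge-Newton decomposability and the Kottwitz constraint into linear conditions on the simple coroot expansion $\mu^\diamond - \nu_b = \sum_{\alpha \in \Delta_0} c_\alpha \alpha^\vee$ with $c_\alpha \geq 0$ (which is possible since $\nu_b \preceq \mu^\diamond$).

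The first preparation is a combinatorial characterization of Hodge-Newton decomposability. Inspecting the definition, a proper standard Levi $M$ witnesses the HN-decomposability of $(H^{ad}, \mu^\diamond, b)$ exactly when $\Delta_{M, 0}$ contains both $\{\alpha : c_\alpha > 0\}$ and $\{\alpha : \langle \nu_b, \alpha\rangle = 0\}$; therefore a non-basic class $[b]$ is HN-indecomposable if and only if every $\alpha \in \Delta_0$ with $c_\alpha = 0$ satisfies $\langle \nu_b, \alpha\rangle = 0$. The second preparation is an integrality relation. Pairing the Kottwitz congruence $\mu^\diamond - \nu_b \equiv -\xi^\diamond \pmod{\langle \Phi^\vee \rangle_\mathbb{Q}^{\Gamma}}$ with $\tilde{\omega}_\alpha$ and using the formula $\langle \gamma^{\vee, \diamond}, \tilde{\omega}_\alpha\rangle \in \{0, 1\}$ recalled above the statement gives
\[
\langle \mu^\diamond - \nu_b, \tilde{\omega}_\alpha\rangle \equiv -\langle \xi^\diamond, \tilde{\omega}_\alpha\rangle \pmod{\mathbb{Z}}.
\]
Since $\langle \mu^\diamond - \nu_b, \tilde{\omega}_\alpha\rangle$ is a non-negative rational number proportional to $c_\alpha$, this forces the implication $c_\alpha > 0 \Longrightarrow \langle \mu^\diamond - \nu_b, \tilde{\omega}_\alpha\rangle \geq 1 - \{\langle \xi^\diamond, \tilde{\omega}_\alpha\rangle\}$.

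For the implication ``minute $\Rightarrow$ fully HN-decomposable'' I would argue by contradiction. Take $[b]$ non-basic and HN-indecomposable. Since $\nu_b$ is dominant and non-zero, the strict positivity of the inverse Cartan matrix on each irreducible factor of $\Phi_0$ (applied to the factor on which $\nu_b$ is non-trivial) allows one to choose $\alpha_0 \in \Delta_0$ with both $\langle \nu_b, \alpha_0\rangle > 0$ and $\langle \nu_b, \tilde{\omega}_{\alpha_0}\rangle > 0$. HN-indecomposability then gives $c_{\alpha_0} > 0$, and the integrality estimate yields
\[
\langle \mu^\diamond, \tilde\omega_{\alpha_0}\rangle + \{\langle \xi^\diamond, \tilde\omega_{\alpha_0}\rangle\} \geq 1 + \langle \nu_b, \tilde\omega_{\alpha_0}\rangle > 1,
\]
contradicting the minute condition. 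Conversely, if the minute inequality fails at some $\alpha_0$, I would construct a non-basic HN-indecomposable class by choosing $\nu_b \in X_*(A)_\mathbb{Q}^+$ of the form $\mu^\diamond - \sum_\alpha c_\alpha \alpha^\vee$ with $c_\alpha > 0$ for every $\alpha$ (so HN-indecomposability is automatic) and with $c_{\alpha_0}$ minimal subject to the integrality constraint, and then verifying using the excess $\langle \mu^\diamond, \tilde\omega_{\alpha_0}\rangle + \{\langle \xi^\diamond, \tilde\omega_{\alpha_0}\rangle\} - 1 > 0$ that the resulting $\nu_b$ remains in the dominant Weyl chamber and is non-central.

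The main obstacle lies in the delicate extremal choice of $\alpha_0$ in the backward direction and the explicit construction of $\nu_b$ in the forward direction, since both require a case-by-case analysis of the relation between simple (relative) coroots and fundamental coweights. This type-by-type combinatorial bookkeeping is precisely the content of the proof of Theorem 3.3 of \cite{GoHeNi}, with the additional $\xi$-twist accounted for as in Proposition 4.12 of \cite{CFS}.
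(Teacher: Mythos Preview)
The paper does not give its own proof of this proposition; it simply cites \cite[Theorem 3.3]{GoHeNi} and \cite[Proposition 4.12]{CFS}. However, the paper does prove the closely related Proposition~\ref{prop_minute criterion} (the \emph{weakly} fully HN-decomposable minute criterion), and that proof is the natural benchmark.

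Your sufficiency argument (minute $\Rightarrow$ fully HN-decomposable) is essentially correct and matches that benchmark. One wrinkle: the congruence $\langle \mu^\diamond-\nu_b,\tilde\omega_\alpha\rangle\equiv -\langle\xi^\diamond,\tilde\omega_\alpha\rangle\pmod{\Z}$ comes from Proposition~\ref{prop:Kottwitz-set-in-NG} and is only asserted at those $\alpha$ with $\langle\nu_b,\alpha\rangle\neq 0$, not at all $\alpha$ with $c_\alpha>0$ as your displayed implication suggests. This does not affect your argument, since you choose $\alpha_0$ with $\langle\nu_b,\alpha_0\rangle>0$ and then deduce $c_{\alpha_0}>0$ from HN-indecomposability; both hypotheses are in place.

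For the converse, however, your proposed construction is the wrong shape and this is the source of the case analysis you anticipate. Asking for $c_\alpha>0$ for \emph{every} $\alpha$ forces you to satisfy the integrality constraint of Proposition~\ref{prop:Kottwitz-set-in-NG} at every $\alpha$ with $\langle\nu_b,\alpha\rangle\neq 0$ simultaneously, while also keeping $\nu_b$ dominant; there is no reason this should be possible uniformly across types. The argument in \cite{CFS} (mirrored in the paper's proof of Proposition~\ref{prop_minute criterion}) avoids this entirely: one takes $v\in X_*(A)_\Q$ with $\langle v,\alpha'\rangle=0$ for all $\alpha'\neq\alpha_0$ and $\langle v,\tilde\omega_{\alpha_0}\rangle=\langle\mu^\diamond,\tilde\omega_{\alpha_0}\rangle+\{\langle\xi^\diamond,\tilde\omega_{\alpha_0}\rangle\}-1\in\Z_{>0}$. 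Then $v$ is automatically dominant, only the single integrality condition at $\alpha_0$ needs checking, and $(G,\mu,b')$ with $\nu_{b'}=v$ is HN-indecomposable because $\mathrm{Cent}_H(v)=M_{\alpha_0}$ while $\langle\mu^\diamond-v,\tilde\omega_{\alpha_0}\rangle=1-\{\langle\xi^\diamond,\tilde\omega_{\alpha_0}\rangle\}>0$. No type-by-type work is required; your closing remark that such bookkeeping ``is precisely the content'' of the cited proofs is not accurate.
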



\if false
In order to introduce the concept of weakly fully Hodge-Newton decomposability for the non quasi-split reductive group $G$, we need to relate a generalized Kottwitz set for $G$ to a certain Kottwitz set of $H^{ad}$, the adjoint quotient of $H$. This part is explained in \cite[4.2]{CFS}. For the sake of completeness, let us reproduce their argument here. Since $H^{ad}$ is semisimple, we have natural identifications (see Remark \ref{rem:H1-and-pi})
\[
H^1(F, H^{ad})\stackrel{\sim}{\longrightarrow}B(H^{ad})_{basic}\stackrel{\sim}{\longrightarrow}\pi_1(H^{ad})_{\Gamma}=[\langle \Phi\rangle^{\vee}/\langle \Phi^{\vee}\rangle]_{\Gamma}.
\]
The group $G$, being an inner form of $H$, gives a class $[b_G]\in B(H^{ad})_{basic}$ and a class $[\xi]\in [\langle \Phi\rangle^{\vee}/\langle \Phi^{\vee}\rangle]_{\Gamma}$, with $b_G\in H^{ad}(\breve F)$ and $\xi\in \langle\Phi\rangle^{\vee}$. Moreover, $J_{b_G}=G^{ad}$ and there is a bijection
\begin{equation}\label{eq:BG-and-BH}
B(G^{ad})=B(J_{b_G})\longrightarrow B(H^{ad})
\end{equation}
that sends $[1]$ to $[b_G]$ (\cite[3.4]{Kot2}), which can be inserted into the following commutative diagram
\[
\xymatrix{B(G^{ad})\ar[r]^{\eqref{eq:BG-and-BH}}\ar[d]_{\kappa_{G^{ad}}} & B(H^{ad})\ar[d]^{\kappa_{H^{ad}}}\\ \pi_1(G^{ad})_{\Gamma}=\pi_1(H^{ad})_{\Gamma}\ar[r]^<<<<<{\bullet+\xi} & \pi_1(H^{ad})_{\Gamma}}
\]
As $[b_{G}]\in B(H^{ad})_{basic}$, its Newton vector is trivial. So the map \eqref{eq:BG-and-BH} gives the commutative diagram below
\[
\xymatrix{B(G^{ad})\ar[r]^{\eqref{eq:BG-and-BH}}\ar[d]_{\nu_{G^{ad}}} & B(H^{ad})\ar[d]^{\nu_{H^{ad}}}\\ \mathcal N(G^{ad})\ar@{=}[r] &\mathcal N(H^{ad})}
\]
On the other hand, for $\epsilon\in \pi_1(G)_{\Gamma}$ and $\delta\in \mathcal N(G)$, via the natural map $B(G)\ra B(G^{ad})$, the generalized Kottwitz set $B(G,\epsilon,\delta)$ can be identified with $B(G^{ad},\epsilon^{ad},\delta^{ad})\subset B(G^{ad})$, where $\epsilon^{ad}$ (resp. $\delta^{ad}$) denotes the natural image of $\epsilon$ in $\pi_1(G^{ad})_{\Gamma}$ (resp. in $\mathcal N(G^{ad})$ ): see \cite[4.11]{Kot2}. Thus, by the bijection \eqref{eq:BG-and-BH}, $B(G,\epsilon,\delta)$ can be further identified with
\[
B(H^{ad}, \epsilon^{ad}+\xi, \delta^{ad})\subset B(H^{ad}).
\]
\fi

Now we are ready to give the definition of the weakly fully HN-decomposability.

\begin{definition}[Weakly fully HN-decomposability]\label{def:weakly-full-HN}
\ 

\begin{enumerate}
\item Let $\epsilon \in \pi_1(G)_{\Gamma}$, and $\delta\in X_*(A)_{\mathbb Q}^+=\mathcal N(G)$.
We say that the generalized Kottwitz set $B(G, \epsilon, \delta)$ is \emph{weakly fully HN-decomposable} if for every non-basic $[b']\in B(G, \epsilon, \delta)$, either the triple $(G, \delta, b')$ is HN-decomposable or $[b]$ does not have reduction to $\mathrm{Cent}_{H}(\nu_{b'})$, where $[b]$ is the basic element in $B(G, \epsilon, \delta)$. 
\item  We say that the pair $(G, \mu)$ is \emph{weakly fully HN-decomposable} if the Kottwitz set $B(G,\mu)$ is weakly fully HN-decomposable.
\end{enumerate}
\end{definition}

\begin{remark}\begin{enumerate}
\item The generalized Kottwitz set $B(G, \epsilon, \delta)$ is weakly fully HN-decomposable if and only if so is $B(H^{ad}, \epsilon^{ad}+\xi, \delta^{ad})$.
\item If the generalized Kottwitz set $B(G, \epsilon, \delta)$ is full HN-decomposable, then it is weakly fully HN-decomposable.
\item By the uniqueness of superbasic Levi subgroup (cf. \cite{Nie} Lemma 1.5), $[b]$ has a reduction to $\mathrm{Cent}_H(\nu_{b'})$ if and only if $[b]$ has a reduction to all the maximal proper standard Levi subgroup of $H$ containing $\mathrm{Cent}_H(\nu_{b'})$.
\end{enumerate}
\end{remark}

Suppose that $G=H$ is quasi-split. In the following, we will give a criterion when $[b]$ has reduction to a maximal proper standard Levi subgroup. Recall that for $\alpha\in \Delta_0$, $M_{\alpha}$ is the standard Levi subgroup of $H$ with $\Delta_{M_{\alpha}, 0}=\Delta_0\backslash\{\alpha\}$.

\begin{lemma}\label{lemma_b has reduction to Levi}Suppose that $G=H$ is quasi-split. Let $\mu\in X_*(T)^+$ and $[b]\in B(G)_{basic}$ such that $\kappa_G(b)=\mu^{\#}$. For any $\alpha\in \Delta_0$, the element $[b]\in B(G)$ has a reduction to $M_{\alpha}$ if and only if $\langle\mu^{\diamond}, \tilde{\omega}_{\alpha}\rangle\in\Z$.
\end{lemma}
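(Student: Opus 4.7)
The plan is to translate the existence of a reduction of $[b]$ to $M_\alpha$ into a condition on elements of $\pi_1(M_\alpha)_{\Gamma}$, and to then read off that condition by pairing against $\tilde\omega_\alpha$. First I would observe that any reduction of a basic class to $M_\alpha$ is itself basic: if $[b_M]\in B(M_\alpha)$ maps to $[b]\in B(G)$ under $B(M_\alpha)\ra B(G)$, then the image of $\nu_{b_M}$ in $\mathcal N(G)$ equals $\nu_b$; since $[b]$ is basic the latter is $W$-invariant, so the map $\mathcal N(M_\alpha)\ra \mathcal N(G)$ (which conjugates into the $G$-dominant chamber via $W$) forces $\nu_{b_M}=\nu_b\in X_*(Z(G))_{\Q}^{\Gamma}\subset X_*(Z(M_\alpha))_{\Q}^{\Gamma}$. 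Under the bijection $\kappa_{M_\alpha}: B(M_\alpha)_{\mathrm{basic}}\iso \pi_1(M_\alpha)_{\Gamma}$, a reduction of $[b]$ therefore corresponds exactly to an $\epsilon_M\in \pi_1(M_\alpha)_{\Gamma}$ lifting $\mu^{\#}$ whose associated Newton vector $\nu(\epsilon_M)\in X_*(Z(M_\alpha))_{\Q}^{\Gamma}$ actually lies in $X_*(Z(G))_{\Q}^{\Gamma}$.

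Next I would detect this subspace condition via $\tilde\omega_\alpha$. The character $\tilde\omega_\alpha$ is $\Gamma$-invariant and orthogonal to $\langle \Phi_{M_\alpha}^{\vee}\rangle$, so it descends to a functional on $X_*(Z(M_\alpha))_{\Q}^{\Gamma}$; since $\tilde\omega_\alpha\in \langle\Phi\rangle_{\Q}$, it vanishes on $X_*(Z(G))$; and because $X_*(Z(M_\alpha))_{\Q}^{\Gamma}/X_*(Z(G))_{\Q}^{\Gamma}$ is one-dimensional, the condition $\nu(\epsilon_M)\in X_*(Z(G))_{\Q}^{\Gamma}$ is equivalent to $\langle\tilde\omega_\alpha,\nu(\epsilon_M)\rangle=0$. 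Applying the standard compatibility between the slope morphism and the Kottwitz map to the character $\tilde\omega_\alpha$ of $M_\alpha$, this rewrites as $\langle\tilde\omega_\alpha,\epsilon_M\rangle=0$.

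Finally, the set of lifts of $\mu^{\#}$ in $\pi_1(M_\alpha)_{\Gamma}$ is a coset $\mu^{\#}_{M_\alpha}+K$, where $\mu^{\#}_{M_\alpha}$ is the image of $\mu\in X_*(T)$ and $K=\ker(\pi_1(M_\alpha)_{\Gamma}\twoheadrightarrow \pi_1(G)_{\Gamma})$. The $\Gamma$-invariance of $\tilde\omega_\alpha$ gives $\langle \tilde\omega_\alpha,\mu^{\#}_{M_\alpha}\rangle=\langle \tilde\omega_\alpha,\mu\rangle=\langle \mu^{\diamond},\tilde\omega_\alpha\rangle$. Since every coroot decomposes integrally in the simple coroots and $\tilde\omega_\alpha$ pairs to $0$ or $1$ with each simple coroot, one has $\tilde\omega_\alpha(K)\subset \Z$; conversely, for any simple $\gamma$ with $\gamma|_A=\alpha$ the class of $\gamma^{\vee}$ belongs to $K$ and $\langle\tilde\omega_\alpha,\gamma^{\vee}\rangle=1$, so $\tilde\omega_\alpha(K)=\Z$. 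Hence an admissible $\epsilon_M$ above $\mu^{\#}$ exists if and only if $\langle\mu^{\diamond},\tilde\omega_\alpha\rangle\in \Z$, which is the claim. The delicate step is the automatic basicness of reductions in the first paragraph; once it is granted, the remainder is a direct bookkeeping of pairings between $\pi_1(M_\alpha)$, $X_*(T)$ and $X^*(T)$.
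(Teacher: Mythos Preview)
Your argument is correct and follows essentially the same strategy as the paper's: translate the existence of a reduction into a condition on $\pi_1(M_\alpha)_\Gamma$ and then read that condition off via the pairing with $\tilde\omega_\alpha$. The only real difference is packaging. The paper begins by passing to $G/Z^0$ so that $\nu_b=0$; in that setting ``$\nu_{b_M}$ is $G$-central'' becomes ``$\nu_{b_M}=0$'', i.e.\ $\kappa_{M_\alpha}(b_M)$ is torsion, and the paper then writes $\mu=\sum_\beta c_\beta\beta^\vee$ and computes the torsion condition on preimages of $\mu^{\#}$ explicitly. You instead keep the center, phrase the condition as $\nu(\epsilon_M)\in X_*(Z(G))_{\Q}^{\Gamma}$, detect it by $\langle\tilde\omega_\alpha,\nu(\epsilon_M)\rangle=0$, and then use the slope--Kottwitz compatibility to rewrite this as $\langle\tilde\omega_\alpha,\epsilon_M\rangle=0$. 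Your final step, that $\tilde\omega_\alpha$ takes the value $\langle\mu^\diamond,\tilde\omega_\alpha\rangle$ on the canonical lift and the value group $\Z$ on $K=\ker(\pi_1(M_\alpha)_\Gamma\to\pi_1(G)_\Gamma)$, is exactly the paper's explicit computation in functorial dress. Either route works; yours avoids the preliminary reduction at the cost of invoking the slope--Kottwitz compatibility for $M_\alpha$, while the paper's reduction makes the torsion criterion and the connection to $H^1(F,M_\alpha)$ slightly more transparent.
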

\begin{proof}We may assume that the identity component of the center $Z$ of $G$ is trivial. To see this, set $G'=G/Z^0$. As $Z^0$ is connected, we obtain from \cite[1.9]{Kot1} an exact sequence of pointed sets
\[
B(Z^0)\longrightarrow B(G)\longrightarrow B(G')\longrightarrow 0.
\]
Let $M_{\alpha}'=M_{\alpha}/Z^0$ be the corresponding standard Levi subgroup of $G'$. We have a similar exact sequence of pointed sets
\[
B(Z^0)\longrightarrow B(M_{\alpha})\longrightarrow B(M_{\alpha}')\longrightarrow 0.
\]
As a result, one checks that the square below is cartesian
\[
\xymatrix{B(G)\ar[r] & B(G')\\ B(M_{\alpha}) \ar[u]\ar[r]& B(M_{\alpha}')\ar[u]}.
\]
Therefore, $[b]\in B(G)$ has a reduction to $M_{\alpha}$ precisely when its image in $B(G')$ has a reduction to $M_{\alpha}'$. Hence, for the remaining part of the proof, we assume that the center of $G$ is finite. In particular, since $[b]\in B(G)_{basic}$, its Newton vector $\nu_b$ is trivial.

We next claim that $[b]$ has a reduction to $M_{\alpha}$ if and only if there exists a torsion element in $ \pi_1(M_{\alpha})_{\Gamma}$ which is mapped to $\kappa_G(b)$ via $\pi_1(M_{\alpha})_{\Gamma}\rightarrow\pi_1(G)_{\Gamma}$. If $b_{\alpha}\in B(M_{\alpha})$ is a reduction of $b$ to $M_{\alpha}$, the image of $[\nu_{b_{\alpha}}]\in \mathcal N(M_{\alpha})$ in $\mathcal N(G)$ is $[\nu_b]$, hence is trivial. Therefore, $\nu_{b_{\alpha}}$ itself is trivial, and $\kappa_{M_{\alpha}}(b_{\alpha})\in \pi_1(M_{\alpha})_{\Gamma}$ is a torsion element, which is sent to $\kappa_G(b)=\mu^{\#}$. Conversely, if there exists $a\in\pi_1(M_{\alpha})_{\Gamma,tor} $ in the preimage of $\kappa_G(b)=\mu^{\#}$ via $\pi_1(M_{\alpha})_{\Gamma}\ra \pi_1(G)_{\Gamma}$, it corresponds to a unique element in $H^1(F,M_{\alpha})$ via the composed map below (see Remark \ref{rem:H1-and-pi})
\[
H^1(F,M_{\alpha})\longrightarrow B(M_{\alpha})\stackrel{\kappa_{M_{\alpha}}}{\longrightarrow} \pi_1(M_{\alpha})_{\Gamma}.
\]
The latter gives rise to an element of $B(M_{\alpha})$, which is a reduction of $b$ to $M_{\alpha}$, proving our claim.

Now, write $\mu=\sum_{\beta\in\Delta}c_{\beta}\beta^{\vee}\in X_*(T)$ with $c_{\beta}\in\Q$. The preimage of $\mu^{\sharp}$ via $\pi_1(M_{\alpha})_{\Gamma}\rightarrow \pi_1(G)_{\Gamma}$ are the images in $\pi_1(M_{\alpha})_{\Gamma}$ of the elements in $X_*(T)$ of the form
\[
\sum_{\beta\in\Delta}c_{\beta}\beta^{\vee}+\sum_{\beta\in\Delta\atop \beta|_A=\alpha}\lambda_\beta \beta^{\vee},
\]
with $\lambda_{\beta}\in\Z$ for any $\beta\in \Delta$ such that $\beta|_A=\alpha$. On the other hand, the image in $\pi_1(M_{\alpha})_{\Gamma}$ of such an element is a torsion if and only if
\[
\sum_{\beta\in\Delta\atop \beta|_A=\alpha}(\lambda_{\beta}+c_{\beta})=0.
\]
Therefore, there exists a torsion element in $\pi_1(M_{\alpha})_{\Gamma}$ which is mapped to $\kappa_G(b)=\mu^{\sharp}$ via the natural map $\pi_1(M_{\alpha})_{\Gamma}\rightarrow \pi_1(G)_{\Gamma}$ if and only if
\[
\langle\mu^{\diamond}, \tilde{\omega}_{\alpha}\rangle=\sum_{\beta\in\Delta\atop \beta|_A=\alpha}c_{\beta}\in\Z,
\]
as required.
\end{proof}

\begin{lemma}\label{lemma_reduction b dual}Suppose that $G$ is quasi-split. Let $\mu\in X_*(T)^+$ and let $[b]\in B(G)_{basic}$ such that $\kappa_G(b)=\mu^{\#}$. Then for any $\alpha\in \Delta_0$, the element $[b]$ has a reduction to $M_{\alpha}$ if and only if $[b]$ has a reduction to $M_{\alpha^*}$, where $\alpha^{*}=-w_0\alpha$.
\end{lemma}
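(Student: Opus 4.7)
The plan is to reduce this symmetry statement to the numerical criterion of Lemma~\ref{lemma_b has reduction to Levi}. By that lemma, $[b]$ has a reduction to $M_\alpha$ (resp.\ $M_{\alpha^*}$) if and only if $\langle\mu^{\diamond},\tilde{\omega}_{\alpha}\rangle\in\Z$ (resp.\ $\langle\mu^{\diamond},\tilde{\omega}_{\alpha^*}\rangle\in\Z$). So it suffices to prove the equivalence
\[
\langle\mu^{\diamond},\tilde{\omega}_{\alpha}\rangle\in\Z \quad\Longleftrightarrow\quad \langle\mu^{\diamond},\tilde{\omega}_{\alpha^*}\rangle\in\Z.
\]

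The key point is to relate $\tilde{\omega}_{\alpha^*}$ and $\tilde{\omega}_{\alpha}$. Since $G=H$ is quasi-split, $B$ is $F$-rational, so $\Delta$ is $\Gamma$-stable and the longest element $w_0\in W$ is $\Gamma$-invariant. Hence the involution $\beta\mapsto \beta^*:=-w_0\beta$ of $\Delta$ commutes with $\Gamma$ and descends via $\Delta/\Gamma\iso\Delta_0$ to the involution $\alpha\mapsto\alpha^*$ of $\Delta_0$. From the defining relation $\langle\gamma^\vee,w_\beta\rangle=\delta_{\gamma,\beta}$ of the fundamental weights one sees $-w_0 w_{\beta}=w_{\beta^*}$, and summing this identity over the $\Gamma$-orbit corresponding to $\alpha$ yields
\[
\tilde{\omega}_{\alpha^*}=-w_{0}\tilde{\omega}_{\alpha}.
\]

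Since the elements $\tilde\omega_\alpha$ and $\tilde\omega_{\alpha^*}$ are $\Gamma$-invariant by construction, the Galois averaging in $\mu^{\diamond}$ is invisible under the pairing: $\langle\mu^{\diamond},\tilde\omega_\alpha\rangle=\langle\mu,\tilde\omega_\alpha\rangle$, and similarly for $\alpha^*$. Combining this with the displayed identity and the $W$-invariance of the pairing gives
\[
\langle\mu^{\diamond},\tilde{\omega}_{\alpha^*}\rangle=\langle\mu,-w_0\tilde{\omega}_\alpha\rangle=\langle -w_0\mu,\tilde{\omega}_\alpha\rangle=\langle\mu^{*},\tilde{\omega}_{\alpha}\rangle,
\]
where $\mu^*:=-w_0\mu$. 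It remains to observe that
\[
\langle\mu^{\diamond},\tilde{\omega}_{\alpha}\rangle+\langle\mu^{\diamond},\tilde{\omega}_{\alpha^*}\rangle=\langle\mu+\mu^{*},\tilde{\omega}_{\alpha}\rangle=\langle\mu-w_0\mu,\tilde{\omega}_\alpha\rangle\in\Z,
\]
the last inclusion because $\mu-w_0\mu$ lies in the coroot lattice $\langle\Phi^{\vee}\rangle$ (a standard consequence of $\lambda-s_\alpha\lambda=\langle\alpha,\lambda\rangle\alpha^\vee$), and any element of $\langle\Phi^\vee\rangle$ pairs integrally with any fundamental weight. Thus the two rational numbers $\langle\mu^\diamond,\tilde\omega_\alpha\rangle$ and $\langle\mu^\diamond,\tilde\omega_{\alpha^*}\rangle$ are simultaneously integral or simultaneously non-integral, which is what we wanted.

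The only non-routine verification is the identity $\tilde{\omega}_{\alpha^*}=-w_0\tilde{\omega}_\alpha$, whose proof rests on the quasi-splitness of $G$ (to ensure $w_0$ is Galois-invariant) and on the compatibility between the involution $*$ on $\Delta$ and on $\Delta_0$; once this is in place, everything else is an unwinding of the definition of $\mu^\diamond$ and the dual pairing between coroots and fundamental weights.
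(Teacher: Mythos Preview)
Your proof is correct and follows essentially the same route as the paper's: both reduce via Lemma~\ref{lemma_b has reduction to Levi} to showing $\langle\mu^{\diamond},\tilde{\omega}_{\alpha}\rangle+\langle\mu^{\diamond},\tilde{\omega}_{\alpha^*}\rangle\in\Z$, rewrite the second summand as $\langle(-w_0\mu)^{\diamond},\tilde{\omega}_{\alpha}\rangle$, and conclude from $\mu-w_0\mu\in\langle\Phi^{\vee}\rangle$ (the paper phrases this last step as ``the fractional part depends only on the image in $\pi_1(G)_\Gamma$'', which is the same thing). Your explicit verification of $\tilde{\omega}_{\alpha^*}=-w_0\tilde{\omega}_{\alpha}$ and of the $\Gamma$-invariance of $w_0$ makes the argument a touch more self-contained than the paper's version.
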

\begin{proof}By Lemma \ref{lemma_b has reduction to Levi}, it suffices to show
\[\langle \mu^{\diamond}, \tilde{\omega}_{\alpha}\rangle +\langle \mu^{\diamond}, \tilde{\omega}_{\alpha^*}\rangle\in\Z.\] Indeed, the involution $*$ is equal to  $-1$ on $\pi_1(G)_{\Gamma}$ since $(w_0\mu)^{\sharp}=\mu^{\sharp}$. As the fractional part of $\langle \mu^{\diamond}, \tilde{\omega}_{\alpha*}\rangle=\langle (-w_0\mu)^{\diamond}, \tilde{\omega}_{\alpha}\rangle$ only depends on the image $(-w_0\mu)^{\sharp}$, it follows that
\[
\langle \mu^{\diamond}, \tilde{\omega}_{\alpha}\rangle +\langle \mu^{\diamond}, \tilde{\omega}_{\alpha^*}\rangle
\in \langle \mu^{\diamond}, \tilde{\omega}_{\alpha}\rangle +\langle -\mu^{\diamond}, \tilde{\omega}_{\alpha}\rangle+\Z=\Z.
\]
This completes the proof of our lemma.
\end{proof}


For the weakly fully HN-decomposibility, we have a similar minute criterion as that for the full HN-decomposability (Proposition \ref{prop:minute-for-full-HN}). To see this, recall that, by Theorem \ref{thm:Kottwitz}, the generalized Kottwitz set $B(G,\epsilon,\delta)$ can be viewed as a subset of $\mathcal N(G)$ through the Newton map $\nu_G$. Hence we can describe the elements in $B(G,\epsilon,\delta)$ in terms of their Newton vectors in $X_*(A)_{\mathbb Q}^+=\mathcal N(G)$.

\begin{proposition}[\cite{CFS}, Proposition 4.6, Corollary 4.7 and Corollary 4.8]\label{prop:Kottwitz-set-in-NG}Let $\epsilon\in\pi_1(G)_{\Gamma}$ and $\delta\in X_*(A)_{\mathbb Q}^+$. Suppose $\epsilon=\mu_1^{\sharp}$ with $\mu_1\in X_*(T)^+$ not necessarily minuscule. Then as a subset of $\mathcal N(G)$, the generalized Kottwitz set $B(G,\epsilon, \delta)$ consists of the vectors $v\in X_*(A)_{\mathbb Q}^+$ such that
\begin{enumerate}
\item $\delta-v\in \langle \Phi^{\vee}_0\rangle_{\mathbb Q}$;
\item for all $\alpha\in \Delta_0$ with $\langle v,\alpha\rangle\neq 0$, one has $\langle \delta-v,\tilde{\omega}_{\alpha}\rangle \geq 0$, and $\langle \mu_1^{\diamond}+\xi^{\diamond}-v,\tilde{\omega}_{\alpha}\rangle\in \mathbb Z$.
\end{enumerate}

In particular, let $[b]\in B(G,\mu_1)$ be the basic element, then
\begin{enumerate}
\item the Kottwitz set $B(G,\mu_1)$ consists of the vectors $v\in X_*(A)_{\mathbb Q}^+$ such that
\begin{enumerate}
\item $\mu_1^{\diamond}-v\in \langle \Phi^{\vee}_0\rangle_{\mathbb Q}$;
\item for all $\alpha\in \Delta_0$ with $\langle v,\alpha\rangle\neq 0$, one has $\langle \mu_1^{\diamond}-v,\tilde{\omega}_{\alpha}\rangle \geq 0$, and $
\langle \mu_1^{\diamond}+\xi^{\diamond}-v,\tilde{\omega}_{\alpha}\rangle\in \mathbb Z$.
\end{enumerate}
\item the Kottwitz set $B(G,0,\nu_b-\mu_1^{\diamond})$ consists of $v\in X_*(A)^+_{\mathbb Q}$ such that
\begin{enumerate}
\item $v\in \langle \Phi_0^{\vee}\rangle_{\mathbb Q}$;
\item for all $\alpha\in \Delta_0$ with $\langle v,\alpha\rangle \neq 0$, one has $\langle \nu_b-w_0\mu_1^{\diamond}-v,\tilde{\omega}_{\alpha}\rangle \geq 0$ and $\langle v-\xi^{\diamond},\tilde{\omega}_{\alpha}\rangle \in \mathbb Z$.
\end{enumerate}
\end{enumerate}
\end{proposition}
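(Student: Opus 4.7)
The plan is to characterize the image of $B(G, \epsilon, \delta)$ in $\mathcal{N}(G)$ by treating separately the Newton data, the Kottwitz data and the partial order, after a preliminary reduction to the adjoint quasi-split case.

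By the bijection $B(G, \epsilon, \delta) \simeq B(H^{\mathrm{ad}}, \epsilon^{\mathrm{ad}} + \xi, \delta^{\mathrm{ad}})$ recalled just before the proposition, and the fact that Newton vectors are unchanged under passage to the adjoint quasi-split inner form, I may replace $G$ by $H^{\mathrm{ad}}$ provided I shift the Kottwitz invariant by $\xi$; this is precisely why the term $\xi^\diamond$ appears in condition~(2). By Theorem~\ref{thm:Kottwitz} a class $[b] \in B(G)$ is determined by the pair $(\nu_b, \kappa_G([b]))$, so it suffices to characterize which vectors $v \in X_*(A)^+_{\mathbb Q}$ arise as $\nu_b$ for some $[b]$ with $\kappa_G([b]) = \epsilon$ and $v \preceq \delta$. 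Setting $M := C_H(v)$, a standard Levi subgroup with $\Delta_{M, 0} = \{\alpha \in \Delta_0 \mid \langle v, \alpha \rangle = 0\}$, any class with Newton vector $v$ must come from a basic class $[b_M] \in B(M)_{\mathrm{basic}}$; such basic classes are in turn parametrized by $\pi_1(M)_\Gamma$ via $\kappa_M$, with the Newton vector of $b_M$ corresponding to $\eta \in \pi_1(M)_\Gamma$ being the image of $\eta$ in $X_*(Z_M)_{\mathbb Q}^\Gamma$.

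Hence the existence of $[b] \in B(G)$ with $\nu_b = v$ and $\kappa_G([b]) = \epsilon$ is equivalent to the existence of a lift $\eta \in \pi_1(M)_\Gamma$ of $\epsilon$ whose image in $X_*(Z_M)_{\mathbb Q}^\Gamma$ equals $v$. Chasing through the exact sequence $\langle \Phi_M^\vee \rangle \to X_*(T) \to \pi_1(M) \to 0$ after $\Gamma$-coinvariants, such a lift exists if and only if $\mu_1^\diamond + \xi^\diamond - v$ lies in $\langle \Phi_0^\vee \rangle_{\mathbb Q}$ and $\langle \mu_1^\diamond + \xi^\diamond - v, \tilde{\omega}_\alpha \rangle \in \mathbb{Z}$ for every $\alpha \notin \Delta_{M, 0}$, since the family $\{\tilde{\omega}_\alpha\}_{\alpha \notin \Delta_{M, 0}}$ forms, up to torsion, a basis of the character lattice of $M/[M, M]$ that detects the torsion constraints in $\pi_1(M)_\Gamma$. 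To incorporate $v \preceq \delta$, I write $\delta - v = \sum_{\alpha \in \Delta_0} c_\alpha \alpha^\vee$; since $\delta - v \in X_*(Z_M)_{\mathbb Q}$ (both $\delta$ and $v$ being $W_M$-invariant), the coefficients $c_\alpha$ vanish for $\alpha \in \Delta_{M, 0}$, while for $\alpha \notin \Delta_{M, 0}$ the sign of $c_\alpha$ coincides, up to a positive constant, with that of $\langle \delta - v, \tilde{\omega}_\alpha \rangle$. Combining the rational Kottwitz condition with $\delta - v \in \langle \Phi_0^\vee \rangle_{\mathbb Q}$ gives condition~(1), and the sign statement gives the first half of condition~(2).

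The main obstacle in this plan is the combinatorial bookkeeping around the functionals $\tilde{\omega}_\alpha$: one has to verify that after Galois averaging these faithfully detect both the torsion obstruction in $\pi_1(M)_\Gamma \to \pi_1(G)_\Gamma$ and the positivity of the coefficients $c_\alpha$, and that the orbit-sum definition $\tilde{\omega}_\alpha = \sum_{\beta|_A = \alpha} w_\beta$ correctly encodes the $\Gamma$-action throughout. Once the main assertion is in place, Corollary~4.7 follows by specialization to $\epsilon = \mu_1^\sharp$, $\delta = \mu_1^\diamond$, and Corollary~4.8 by specialization to $\epsilon = 0$, $\delta = \nu_b - \mu_1^\diamond$, after using the relation between $\nu_b$ and the basic element of $B(G, \mu_1)$ to rewrite the positivity term in the form $\langle \nu_b - w_0 \mu_1^\diamond - v, \tilde{\omega}_\alpha \rangle \geq 0$ and the integrality term as $\langle v - \xi^\diamond, \tilde{\omega}_\alpha \rangle \in \mathbb{Z}$.
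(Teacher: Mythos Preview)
The paper does not supply its own proof of this proposition; it is quoted verbatim from \cite{CFS}, Proposition~4.6 and Corollaries~4.7--4.8. So I am comparing your outline against the argument one finds there. Your overall plan---reduce to $H^{\mathrm{ad}}$, realize any $[b]$ with Newton vector $v$ as the image of a basic class in $M=\mathrm{Cent}_H(v)$, then analyze the fiber of $\pi_1(M)_\Gamma\to\pi_1(G)_\Gamma$ over $\epsilon$---is exactly the right strategy and matches the reference.

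There is, however, a genuine gap in the step handling the partial order. You write that ``$\delta-v\in X_*(Z_M)_{\mathbb Q}$ (both $\delta$ and $v$ being $W_M$-invariant)'' and conclude that the coefficients $c_\alpha$ vanish for $\alpha\in\Delta_{M,0}$. This is false: $v$ is $W_M$-invariant by construction of $M$, but $\delta$ is an arbitrary element of $X_*(A)^+_{\mathbb Q}$ and there is no reason for it to be fixed by $W_M$. In particular $\delta-v$ can have nonzero components along $\Phi^\vee_{0,M}$, so your deduction that it suffices to test $\langle\delta-v,\tilde\omega_\alpha\rangle\ge0$ only for $\alpha\notin\Delta_{M,0}$ does not go through as stated.

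What actually makes the restriction to $\alpha$ with $\langle v,\alpha\rangle\neq 0$ work is a convexity argument, not a vanishing of coefficients. Since both $v$ and $\delta$ are \emph{dominant}, the condition $v\preceq\delta$ (non-negativity of all coroot coefficients of $\delta-v$) is equivalent to non-negativity of $\langle\delta-v,\tilde\omega_\alpha\rangle$ at the ``breakpoints'' of $v$, i.e.\ those $\alpha$ where $\langle v,\alpha\rangle>0$. In the $\mathrm{GL}_n$ picture this is the familiar fact that one concave polygon lies below another concave polygon with the same endpoints if and only if it does so at its own breakpoints; the general root-system version (used in \cite{CFS}) relies on the positivity of the off-diagonal entries of the inverse Cartan matrix, equivalently that each $\tilde\omega_\alpha$ lies in $\mathbb Q_{\ge0}\Delta_0$. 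You should replace the $W_M$-invariance claim with this convexity step; once that is done the rest of your outline (integrality via lifting $\epsilon$ along $\pi_1(M)_\Gamma\to\pi_1(G)_\Gamma$, and the two specializations for $B(G,\mu_1)$ and $B(G,0,\nu_b-\mu_1^\diamond)$) is correct.
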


As a direct application of the previous proposition, we get the following corollary.
\begin{corollary}\label{coro_involution_BG}Suppose $G=H$ is quasi-split. Then the bijection
\[
\begin{split}X_*(A)_{\mathbb Q}^+&\simeq X_*(A)_{\mathbb Q}^+\\
v&\mapsto v^*:=-w_0v\end{split}
\]
induces a bijection between generalized Kottwitz sets
\[
B(G, \epsilon, \delta)\simeq B(G, -\epsilon, -w_0\delta).
\]
\end{corollary}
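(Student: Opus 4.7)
My plan is to deduce the corollary directly from the explicit description of the generalized Kottwitz sets given in Proposition~\ref{prop:Kottwitz-set-in-NG}, by checking that the involution $v\mapsto v^*=-w_0 v$ carries the defining conditions of $B(G,\epsilon,\delta)$ to those of $B(G,-\epsilon,-w_0\delta)$ term by term.

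First I would make the setup. Since $G=H$ is quasi-split, we may take $b_G=1$ and $\xi=0$, so that $\xi^{\diamond}=0$ in the formulas of Proposition~\ref{prop:Kottwitz-set-in-NG}. Pick $\mu_1\in X_*(T)^+$ with $\mu_1^{\sharp}=\epsilon$. Because the Weyl group acts trivially on $\pi_1(G)=X_*(T)/\langle\Phi^{\vee}\rangle$, the cocharacter $\mu_2:=-w_0\mu_1$ lies again in $X_*(T)^+$ and satisfies $\mu_2^{\sharp}=-\mu_1^{\sharp}=-\epsilon$; moreover $\mu_2^{\diamond}=-w_0\mu_1^{\diamond}$ since $-w_0$ commutes with the Galois action on the based root datum (the longest element $w_0$ is canonically Galois-stable). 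The involution $-w_0$ preserves the dominant chamber $X_*(A)_{\mathbb Q}^+$ and the $\mathbb Q$-subspace $\langle\Phi_0^{\vee}\rangle_{\mathbb Q}$, so $-w_0\delta\in X_*(A)_{\mathbb Q}^+$ and $v^*\in X_*(A)_{\mathbb Q}^+$ whenever $v$ is. Hence both sets $B(G,\epsilon,\delta)$ and $B(G,-\epsilon,-w_0\delta)$ are defined and the map $v\mapsto v^*$ makes sense.

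Next I would translate the defining conditions. Writing out the conditions from Proposition~\ref{prop:Kottwitz-set-in-NG} for $B(G,-\epsilon,-w_0\delta)$ using $\mu_2=-w_0\mu_1$, a vector $v'\in X_*(A)_{\mathbb Q}^+$ lies in this set iff
\begin{enumerate}
\item $-w_0\delta-v'\in\langle\Phi_0^{\vee}\rangle_{\mathbb Q}$; and
\item for all $\alpha\in\Delta_0$ with $\langle v',\alpha\rangle\neq 0$, $\langle -w_0\delta-v',\tilde{\omega}_\alpha\rangle\geq 0$ and $\langle -w_0\mu_1^{\diamond}-v',\tilde{\omega}_\alpha\rangle\in\mathbb Z$.
\end{enumerate}
Setting $v'=v^*=-w_0v$, the key observation is the adjointness identity $\langle -w_0 u,\tilde{\omega}_\alpha\rangle=\langle u,\tilde{\omega}_{\alpha^*}\rangle$, where $\alpha^*:=-w_0\alpha$; this follows from $w_0^{-1}=w_0$ together with $-w_0\tilde{\omega}_\alpha=\tilde{\omega}_{\alpha^*}$ (since $-w_0$ permutes the absolute fundamental weights by $\omega_\beta\mapsto \omega_{\beta^*}$ and commutes with restriction to $A$). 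Similarly $\langle v^*,\alpha\rangle=\langle v,\alpha^*\rangle$. Substituting, the conditions above become $\delta-v\in\langle\Phi_0^{\vee}\rangle_{\mathbb Q}$, and for every $\alpha\in\Delta_0$ with $\langle v,\alpha^*\rangle\neq 0$, $\langle\delta-v,\tilde{\omega}_{\alpha^*}\rangle\geq 0$ and $\langle\mu_1^{\diamond}-v,\tilde{\omega}_{\alpha^*}\rangle\in\mathbb Z$.

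Finally I would use that $\alpha\mapsto\alpha^*$ is an involution on $\Delta_0$ to reindex, obtaining exactly the conditions of Proposition~\ref{prop:Kottwitz-set-in-NG} characterizing membership of $v$ in $B(G,\epsilon,\delta)$ (remembering $\xi^{\diamond}=0$). This proves $v\in B(G,\epsilon,\delta)\Leftrightarrow v^*\in B(G,-\epsilon,-w_0\delta)$, and since $(-w_0)^2=1$ the map $v\mapsto v^*$ is an involution, hence a bijection. I anticipate no real obstacle: the only subtlety is to confirm that the various compatibilities ($-w_0$ commutes with $\Gamma$, preserves $\langle\Phi_0^{\vee}\rangle_{\mathbb Q}$ and the dominant chamber, and satisfies $-w_0\tilde{\omega}_\alpha=\tilde{\omega}_{\alpha^*}$) all hold, which is standard for quasi-split groups.
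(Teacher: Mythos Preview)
Your proposal is correct and follows exactly the approach the paper intends: the paper simply states that the corollary is ``a direct application of the previous proposition'' (Proposition~\ref{prop:Kottwitz-set-in-NG}), and you have faithfully spelled out the details of that application, verifying term by term that the involution $v\mapsto -w_0v$ carries the defining conditions of $B(G,\epsilon,\delta)$ to those of $B(G,-\epsilon,-w_0\delta)$ via the reindexing $\alpha\mapsto\alpha^*$.
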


\begin{remark}If $G$ is not quasi-split, then in general, $B(G,\epsilon, \delta)$ is not in bijection with $B(G, -\epsilon, -w_0\delta)$. For example, let $H=\mathrm{PGL}_6$, $\mu=(1^{(2)}, 0^{(4)})$ and $\xi=\mu^{\sharp}$. Then 
\[
\begin{split}B(G, \mu)&\simeq B(H ,\mu^{\sharp}+\xi^{\sharp}, \mu)\\
&=\left\{\left(\frac{2}{3}, \frac{4}{15}^{(5)}\right), \left(\frac{2}{3}, \frac{1}{3}^{(3)}, \frac{1}{6}^{(2)}\right), \left(\frac{2}{3}^{(2)}, \frac{1}{6}^{(4)}\right), \left(\frac{2}{3}^{(3)}, 0^{(3)}\right), \left(\frac{5}{12}^{(4)}, \frac{1}{6}^{(2)}\right), \left(\frac{1}{3}^{(6)}\right)\right \}\end{split}
\]
consists of $6$ elements, while
\[
\begin{split}B(G, -\mu)&\simeq B(H,0, -\mu)\\ &=\left\{\left(\frac{2}{3}^{(6)}\right), \left(1^{(3)}, \frac{1}{3}^{(3)}\right), \left(1^{(3)}, \frac{2}{3}, \frac{1}{6}^{(2)}\right), \left(\frac{11}{12}^{(4)}, \frac{1}{6}^{(2)}\right) \right\}\end{split}
\]
consists of $4$ elements.

\end{remark}

\begin{proposition}[Minute criterion for weakly fully HN-decomposability]\label{prop_minute criterion} Let $\mu\in X_*(T)^+$.
The pair $(G, \mu)$ is weakly fully HN-decomposable if for any $\alpha\in \Delta_0$ with $\langle\mu^{\diamond}+\xi^{\diamond}, \tilde{\omega}_{\alpha}\rangle\in\Z$, we have
\begin{equation}\label{eq:inequality-minute}
\langle\mu^{\diamond}, \tilde{\omega}_{\alpha}\rangle+\{\langle \xi^{\diamond}, \tilde{\omega}_{\alpha}\rangle\} \leq 1.
\end{equation}
\end{proposition}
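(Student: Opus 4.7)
The plan is to fix a non-basic class $[b']\in B(G,\mu)$ with Newton vector $v=\nu_{b'}\in X_*(A)^+_{\mathbb Q}$, and show that whenever the basic class $[b]\in B(G,\mu)$ admits a reduction to $M_v:=\mathrm{Cent}_H(v)$, the triple $(G,\mu^{\diamond},b')$ is HN-decomposable; this is exactly the disjunctive condition required by Definition \ref{def:weakly-full-HN}. Put $I:=\{\alpha\in\Delta_0\mid \langle v,\alpha\rangle\neq 0\}$, which is non-empty since $v$ is non-central; the maximal proper standard Levi subgroups of $H$ containing $M_v$ are precisely the $M_\alpha$ for $\alpha\in I$. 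Via the identification $B(G,\mu)\simeq B(H^{ad},\mu^{\#}+\xi,\mu^{\diamond})$ recalled in \S\ref{sec: weakly fully HN}, I first reduce to the case $G=H^{ad}$ quasi-split adjoint, so that there is no central coweight to worry about and Lemma \ref{lemma_b has reduction to Levi} applies directly to the basic class.

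By Lemma \ref{lemma_b has reduction to Levi} together with the last remark after Definition \ref{def:weakly-full-HN}, the basic class $[b]$ admits a reduction to $M_v$ if and only if $\langle\mu^{\diamond}+\xi^{\diamond},\tilde\omega_\alpha\rangle\in\mathbb Z$ for every $\alpha\in I$. I assume this integrality: then by the minute hypothesis $\langle\mu^{\diamond},\tilde\omega_\alpha\rangle+\{\langle\xi^{\diamond},\tilde\omega_\alpha\rangle\}\leq 1$ for all $\alpha\in I$. A short case split on whether $\{\langle\xi^{\diamond},\tilde\omega_\alpha\rangle\}$ is $0$ or strictly positive, combined with the integrality $\langle\mu^{\diamond}+\xi^{\diamond},\tilde\omega_\alpha\rangle\in\mathbb Z$, upgrades this to the cleaner inequality $\langle\mu^{\diamond},\tilde\omega_\alpha\rangle\leq 1$ for every $\alpha\in I$.

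Write $\mu^{\diamond}-v=\sum_{\alpha'\in\Delta_0}m_{\alpha'}\alpha'^{\vee}$ with $m_{\alpha'}\in\mathbb Q_{\geq 0}$, which is possible by Proposition \ref{prop:Kottwitz-set-in-NG}(1). Using the orthogonality $\langle\gamma^{\vee,\diamond},\tilde\omega_\alpha\rangle=\delta_{\gamma|_A,\alpha}$ of \S\ref{sec: weakly fully HN}, one checks that $\langle\alpha'^{\vee},\tilde\omega_\alpha\rangle$ equals a strictly positive constant if $\alpha'=\alpha$ and vanishes otherwise, so HN-decomposability of $(G,\mu^{\diamond},b')$ with respect to $M_\alpha$ is equivalent to $\langle\mu^{\diamond}-v,\tilde\omega_\alpha\rangle=0$. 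Now fix any $\alpha\in I$. Proposition \ref{prop:Kottwitz-set-in-NG}(1b) yields $\langle\mu^{\diamond}-v,\tilde\omega_\alpha\rangle\in\mathbb Z_{\geq 0}$, which in view of the integrality assumption gives $\langle v,\tilde\omega_\alpha\rangle\in\mathbb Z$. After the reduction to $H^{ad}$ the element $v$ lies in the cone spanned by the fundamental coweights of the relative root datum, with coefficient along $\omega_\alpha^{\vee}$ strictly positive for $\alpha\in I$; since the inverse of the (irreducible) relative Cartan matrix has strictly positive entries, this forces $\langle v,\tilde\omega_\alpha\rangle>0$, hence $\langle v,\tilde\omega_\alpha\rangle\geq 1$. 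Combined with $\langle\mu^{\diamond},\tilde\omega_\alpha\rangle\leq 1$ and $\langle\mu^{\diamond}-v,\tilde\omega_\alpha\rangle\geq 0$, one is forced into $\langle v,\tilde\omega_\alpha\rangle=1=\langle\mu^{\diamond},\tilde\omega_\alpha\rangle$, and therefore $m_\alpha=0$.

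The main obstacle is verifying the strict positivity $\langle v,\tilde\omega_\alpha\rangle>0$ for $\alpha\in I$ in the (possibly non-split) adjoint quasi-split setting; this rests on positivity of the entries of the inverse of each irreducible relative Cartan matrix, which is classical but deserves a uniform argument, perhaps by reducing to the absolutely simple adjoint case factor by factor. A secondary subtlety is the fractional-part bookkeeping tying the minute bound to the integrality condition, together with pinning down the positive constant $c_\alpha$ in $\langle\alpha^{\vee},\tilde\omega_\alpha\rangle=c_\alpha$ so as to convert the integer pairing equality into the vanishing of the rational coefficient $m_\alpha$.
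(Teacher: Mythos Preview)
Your argument is correct and follows essentially the same route as the paper's sufficiency proof: reduce to the adjoint quasi-split form, translate the reduction hypothesis on $[b]$ into the integrality $\langle\mu^{\diamond}+\xi^{\diamond},\tilde\omega_\alpha\rangle\in\mathbb Z$ via Lemma~\ref{lemma_b has reduction to Levi}, and then combine the minute bound with Proposition~\ref{prop:Kottwitz-set-in-NG} and the strict positivity $\langle\nu_{b'},\tilde\omega_\alpha\rangle>0$ to force $\langle\mu^{\diamond}-\nu_{b'},\tilde\omega_\alpha\rangle=0$. The paper packages the last step a bit more cleanly: rather than your case split upgrading to $\langle\mu^{\diamond},\tilde\omega_\alpha\rangle\leq 1$ and then squeezing with $\langle v,\tilde\omega_\alpha\rangle\in\mathbb Z_{>0}$, it observes directly that
\[
\{\langle\xi^{\diamond},\tilde\omega_\alpha\rangle\}+\langle\mu^{\diamond}-\nu_{b'},\tilde\omega_\alpha\rangle
\]
is a non-negative integer strictly less than $1$, hence equals $0$; both summands being non-negative then gives the vanishing you want in one stroke.

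Your flagged ``main obstacle'' is not actually a gap: the paper uses $\langle\nu_{b'},\tilde\omega_\alpha\rangle>0$ for $\alpha\in I$ just as you do, and it follows immediately from the fact that $\tilde\omega_\alpha$ is a strictly positive rational combination of the relative simple roots in the irreducible component of $\alpha$ (positivity of the inverse Cartan matrix), together with $\langle\nu_{b'},\alpha\rangle>0$ and dominance of $\nu_{b'}$. Your secondary worry about the constant $c_\alpha$ is also a non-issue: you do not need its value, only that $\tilde\omega_\alpha$ annihilates $\langle\Phi_{0,M_\alpha}^\vee\rangle_{\mathbb Q}$ and does not annihilate the relative coroot $\alpha^\vee$, so $\langle\mu^{\diamond}-v,\tilde\omega_\alpha\rangle=0$ is exactly the HN-decomposability condition with respect to $M_\alpha$. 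One small correction: Proposition~\ref{prop:Kottwitz-set-in-NG}(1b) gives $\langle\mu^{\diamond}-v,\tilde\omega_\alpha\rangle\geq 0$ and $\langle\mu^{\diamond}+\xi^{\diamond}-v,\tilde\omega_\alpha\rangle\in\mathbb Z$ separately, not $\langle\mu^{\diamond}-v,\tilde\omega_\alpha\rangle\in\mathbb Z_{\geq 0}$; your deduction of $\langle v,\tilde\omega_\alpha\rangle\in\mathbb Z$ is nonetheless correct once you combine the second condition with the assumed integrality of $\langle\mu^{\diamond}+\xi^{\diamond},\tilde\omega_\alpha\rangle$.
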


\begin{proof} Without loss of generality, assume $G=G^{ad}$ and thus $H$ are adjoint. Recall that there is a natural identification between the generalized Kottwitz set $B(G,\mu)=B(G,\mu^{\#},\mu^{\diamond})$ for $G$, and the generalized Kottwitz set $B(H, \mu^{\#}+\xi^{\#}, \mu^{\diamond})$ for $H$. 

\underline{Necessity}. Suppose that $\langle\mu^{\diamond}, \tilde{\omega}_\alpha\rangle+\{\langle \xi^{\diamond}, \tilde{\omega}_\alpha\rangle\}>1$ is an integer for some $\alpha\in\Delta_0$. Let $v\in \langle \Phi^{\vee}_0\rangle_{\Q}=X_*(A)_{\mathbb Q}$ such that $\langle v, \alpha'\rangle=0$ for all $\alpha'\in\Delta_0\setminus \{\alpha\}$ and
\[
\langle v, \tilde{\omega}_{\alpha}\rangle= \langle\mu^{\diamond}, \tilde{\omega}_\alpha\rangle+\{\langle \xi^{\diamond}, \tilde{\omega}_\alpha\rangle\}-1\in \mathbb Z_{>0}.
\]
As $\tilde{\omega}_{\alpha}\in \mathbb Q_{\geq 0}\Delta_0$, $v$ is dominant. Moreover, $\langle \mu^{\diamond}-v,\omega_{\alpha}\rangle =1-\{\langle \xi^{\diamond}, \tilde{\omega}_\alpha\rangle\} \geq 0$, and
\[
\langle \mu^{\diamond}+\xi^{\diamond}-v,\tilde{\omega}_{\alpha}\rangle=1+ \langle \xi^{\diamond},\tilde{\omega}_{\alpha}\rangle-\{\langle \xi^{\diamond},\tilde{\omega}_{\alpha}\rangle\} \in \mathbb Z
\]
So by Proposition \ref{prop:Kottwitz-set-in-NG}, there exists  $[b']\in B(G,\mu)$ such that $\nu_{b'}=v$. Since $\langle v,\tilde{\omega}_{\alpha}\rangle =\langle\nu_{b'},\tilde{\omega}_{\alpha}\rangle\neq 0$, $v\neq 0$ and the element $[b']$ is not basic. Moreover, $M_{\alpha}$ is the centralizer of $v=\nu_{b'}$ in $H$, and
\[
\langle \mu^{\diamond}-v,\tilde{\omega}_{\alpha}\rangle =1-\{\langle \xi^{\diamond}, \tilde{\omega}_\alpha\rangle\}>0.
\]
Hence $\mu^{\diamond}-v\notin \langle \Phi^{\vee}_{0,M_{\alpha}}\rangle_{\mathbb Q}$, and $(G, \mu,b')$ is not HN-decomposable. Moreover, as \[
\langle \mu^{\diamond}+\xi^{\diamond},\tilde{\omega}_{\alpha}\rangle \in \mathbb Z,\]
according to Lemma \ref{lemma_b has reduction to Levi}, the basic element $[b]$ in $B(G,\mu)=B(H,\mu^{\#}+\xi,\mu^{\diamond})$ has a reduction to $\mathrm{Cent}_{H}(\nu_{b'})=M_{\alpha}$. This contradicts the weakly fully HN-decomposability condition.

\underline{Sufficiency}. Suppose that $[b']\in B(G,\mu)=B(H, \mu^{\#}+\xi, \mu^{\diamond})$ is a non-basic element which is HN-indecomposable with respect to $\mu$. We want to show that $[b]$ does not have reduction to the maximal standard Levi $M_{\alpha}\supset \mathrm{Cent}_{H}([\nu_{b'}])$ of $H$ for some $\alpha\in \Delta_0$, or equivalently, that for some $\langle \alpha, \nu_{b'}\rangle\neq 0$ we have $\langle \mu^{\diamond}+\xi^{\diamond}, \tilde{\omega}_{\alpha}\rangle \notin \Z$: see Lemma \ref{lemma_b has reduction to Levi}. Take $\alpha\in \Delta_0$ with $\langle \nu_{b'},\alpha\rangle \neq 0$, such that $\langle \mu^{\diamond}+\xi^{\diamond}, \tilde{\omega}_{\alpha}\rangle \in \Z$. Then, $M_{\alpha}$ contains the centralizer of $\nu_{b'}$ and $\langle \nu_{b'}, \tilde{\omega}_{\alpha}\rangle>0$. Moreover, the inequality \eqref{eq:inequality-minute} gives $\langle \mu^{\diamond},\tilde{\omega}_{\alpha}\rangle +\{\langle\xi^{\diamond}, \tilde{\omega}_{\alpha}\rangle\}\leq 1$.  As $[b']\in B(G,\mu)$, again by Proposition \ref{prop:Kottwitz-set-in-NG}, we find
\[
\underbrace{\{\langle \xi^{\diamond}, \tilde{\omega}_{\alpha}\rangle\}}_{\geq 0}+\underbrace{\langle \mu^{\diamond}, \tilde{\omega}_{\alpha}\rangle- \langle \nu_{b'}, \tilde{\omega}_{\alpha}\rangle}_{\geq 0}=\underbrace{\{\langle \xi^{\diamond}, \tilde{\omega}_{\alpha}\rangle\}+\langle \mu^{\diamond}, \tilde{\omega}_{\alpha}\rangle}_{\leq 1}- \underbrace{\langle \nu_{b'}, \tilde{\omega}_{\alpha}\rangle}_{> 0}\in\Z.
\]
This implies that $\langle \mu^{\diamond}-\nu_{b'}, \tilde{\omega}_{\alpha}\rangle= 0$ since $\nu_{b'}\preceq  \mu^{\diamond}$. In other words, $\mu^{\diamond}-\nu_{b'}\in \langle\Phi_{0,M_{\alpha}}^{\vee}\rangle_{\mathbb Q}$. This contradicts to the condition that $(G, \mu, b')$ is HN-indecomposable.
\end{proof}
\label{Miaofen: characterization of weakly accessiblity in term of minute criterion}

\begin{remark}\label{rem_minute criterion}Since $-w_0$ induce a bijection on $\Delta_0$ and $w_0\xi-\xi\in \langle \Phi^{\vee}\rangle$, the inequality \eqref{eq:inequality-minute} given by the minute criterion (Proposition \ref{prop_minute criterion}) is equivalent to the following: for $\alpha\in \Delta_0$ with $\langle-w_0\mu^{\diamond}, \tilde{\omega}_{\alpha}\rangle+\{\langle -\xi^{\diamond}, \tilde{\omega}_{\alpha}\rangle\}\in\Z$, we have
\[
\langle-w_0\mu^{\diamond}, \tilde{\omega}_{\alpha}\rangle+ \{\langle -\xi^{\diamond}, \tilde{\omega}_{\alpha}\rangle\}\leq 1.
\]
In particular, when $G$ is quasi-split, $(G,\mu)$ is weakly fully HN-decomposable if and only if
\[
\langle -w_0\mu^{\diamond},\tilde w_{\alpha}\rangle \notin \mathbb Z_{>1}, \quad \forall \ \alpha\in \Delta_0.
\]
So $(G,\mu)$ is weakly fully HN-decomposable if and only if this is the case for $(G,\mu^{-1})$.
\end{remark}

\begin{remark}\label{rem:example-of-weakly-full-HN} Let $G=\mathrm{GL}_n$, and $\mu$ a minuscule cocharacter of $G$. Replacing $\mu$ by $-w_0\mu$ if needed, we assume further that $\mu=(1^{(r)}, 0^{(n-r)})$. Then, using the minute criterion, it is easy to see that, the pair $(G, \mu)$ is weakly fully HN-decomposable if it is fully HN-decomposable, or is one of the following forms:
\begin{enumerate}
\item \emph(weakly accessible case, cf. Definition \ref{def:maximal-wa} below\emph), $\mu$ is central or $\mu=(1^{(r)},0^{(n-r)})$ with $\mathrm{gcd}(n, r)=1$;
\item $n$ even and $\mu=(1,1, 0,\cdots,0)$;
\item $n$ even and $\mu=(1,\cdots, 1, 0, 0)$;
\item $n=6$ and $\mu=(1,1,1,0,0,0)$.
\end{enumerate}
In the next subsection, we shall give the complete classification of weakly fully HN-decomposable pairs when the group $G$ is absolutely simple and adjoint. 
\end{remark}

\subsection{Classification of weakly fully HN-decomposable pairs}
Notations are the same as the last subsection. By definition, we know that the weakly fully HN-decomposability condition (Definition \ref{def:weakly-full-HN}) implies the fully HN-decomposable case (Definition \ref{def_HN decomp}). G\"ortz, He and Nie classify in \cite{GoHeNi} the fully HN-decomposable pairs. In this section, we classify the weakly fully HN-decomposable pairs $(G, \mu)$ when $G$ is absolutely simple and adjoint.

The main tool for us to do the classification is the minute criterion for weakly admissibility (Proposition \ref{prop_minute criterion}): the pair $(G, \mu)$ is weakly fully HN-decomposable if and only if for any $\alpha\in \Delta_0$ with $\langle\mu^{\diamond}+\xi^{\diamond}, \tilde{\omega}_{\alpha}\rangle\in\Z$, we have
\[
\langle\mu^{\diamond}, \tilde{\omega}_{\alpha}\rangle+ \{\langle \xi^{\diamond}, \tilde{\omega}_{\alpha}\rangle\}\leq 1.
\]

Suppose $G$ and hence $H$ are absolutely simple and adjoint. Note that in the minute criterion, whether $(G, \mu)$ is weakly fully HN-decomposable only depend on the quadruple $(H, |\mathrm{Im}\Gamma|, \mu, \xi)$, where $|\mathrm{Im}\Gamma|$ is the order of the image of the natural homomorphism $\Gamma\rightarrow \mathrm{Aut}(\Phi, \Delta)$ for $H$. So in order to classify the weakly fully HN-decomposable pairs $(G, \mu)$, it suffices classify the possible quadruples $(H, |\mathrm{Im}\Gamma|, \mu, \xi)$.

We will write the type of Dynkin diagram to represent $H$. Here the labeling of the Dynkin diagram are as in \cite{Bo}.
Moreover, as $H$ is adjoint, there is a bijection
\[\langle\Phi\rangle^\vee/\langle\Phi^\vee\rangle \simeq \{\text{minuscule dominant cocharacter of } T\}. \]Via this identification, we will take minuscule dominant cocharacter of $T$ as a representative of $\xi\in (\langle\Phi\rangle^\vee/\langle\Phi^\vee\rangle)_{\Gamma}$. We also write $w_0^\vee:=0$ for $\xi$ as a convention.

\begin{proposition}\label{prop_classification}Suppose $G$ is absolutely simple and adjoint. Let $\mu$ be a minuscule cocharacter and $\mu\neq 0$. Then $(G, \mu)$ is weakly fully HN-decomposable if and only if  $(G, \mu)$ is fully HN-decomposable or the associated quadruple $(H, |\mathrm{Im}\Gamma|, \mu, \xi)$ is one of the following up to isomorphism:
 \begin{center}
  \begin{tabular}{|c|c|c|c|}
\hline\ \ \  $H$\ \ \  & \  $|\mathrm{Im}\Gamma|$\  &\ \ \  $\mu$ \ \ \ & $\xi$\\
\hline $A_n$ & 1 & $\omega_i^\vee$ &$\omega_{i'}^\vee$, s.t. $\gcd(i+i', n+1)=1$ \\
\hline $A_n$ &1   &$\omega_1^\vee$ & arbitrary\\
\hline $A_n$ &1  &$\omega_2^\vee$  & $\omega_{i'}^\vee$, s.t. $\gcd(2+i', n+1)=2$\\
\hline $A_5$ &1  &$\omega_3^{\vee}$  & 0\\
\hline $A_3$ &2 &$\omega_2^\vee$ &$\omega_1^\vee$\\
\hline $C_3$ &1 &$\omega_3^\vee$ & $0$\\
\hline $D_n$ &1 &$\omega_1^\vee$ & $\omega_n^\vee$\\
\hline $D_5$ &1 &$\omega_5^\vee$  &$0$, $\omega_1^\vee$\\
\hline $D_n$ &2 &$\omega_1^\vee$ & $\omega_n^\vee$\\
\hline $D_4$ &2 &$\omega_4^\vee$ & $0$ \\
\hline
\end{tabular}
\end{center}
Moreover, $(G, \mu)$ is weakly accessible if and only if the quadruple is  the first row in the above table.
\end{proposition}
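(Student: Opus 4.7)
The plan is to reduce the problem to a finite enumeration via the minute criterion (Proposition \ref{prop_minute criterion}) and then carry out a case-by-case analysis over the classification of absolutely simple adjoint groups. Since $(G,\mu)$ being weakly fully HN-decomposable depends only on the quadruple $(H,|\mathrm{Im}\,\Gamma|,\mu,\xi)$, where $|\mathrm{Im}\,\Gamma|$ records the order of the image of $\Gamma$ in $\mathrm{Aut}(\Phi,\Delta)$ (hence captures the $F$-form of $H$) and $\xi$ represents the inner twist class of $G$ in $\pi_1(H)_{\Gamma,\mathrm{tor}}$, I would first enumerate all such quadruples: for each Dynkin type among $A_n$, $B_n$, $C_n$, $D_n$, $E_6$, $E_7$, $E_8$, $F_4$, $G_2$, take every possible non-trivial action of $\Gamma$ on the diagram (yielding the usual outer forms $^2A_n$, $^2D_n$, $^3D_4$, $^2E_6$), list the minuscule dominant cocharacters $\mu$ using the standard tables (e.g.\ $\omega_i^\vee$ for $A_n$; $\omega_1^\vee$, $\omega_{n-1}^\vee$, $\omega_n^\vee$ for $D_n$; and the short lists for $B_n$, $C_n$, $E_6$, $E_7$), and list the representatives of $(\pi_1(H))_\Gamma$.

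Next, for each quadruple I would compute the pairings $\langle\mu^\diamond,\tilde\omega_\alpha\rangle$ and $\langle\xi^\diamond,\tilde\omega_\alpha\rangle$ for every $\alpha\in\Delta_0$, using the standard realization of the fundamental coweights and the explicit description of the averaging over the Galois orbit on $\Delta$. With these numerical data in hand, the minute criterion reduces to a purely combinatorial check: $(G,\mu)$ is weakly fully HN-decomposable exactly when, for every $\alpha\in\Delta_0$ with $\langle\mu^\diamond+\xi^\diamond,\tilde\omega_\alpha\rangle\in\mathbb{Z}$, the inequality $\langle\mu^\diamond,\tilde\omega_\alpha\rangle+\{\langle\xi^\diamond,\tilde\omega_\alpha\rangle\}\le 1$ holds. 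Since the classification of the fully HN-decomposable pairs is already known (Proposition \ref{prop:minute-for-full-HN} and the tables in \cite{GoHeNi}), I would focus on isolating the \emph{extra} pairs: those quadruples for which the fully HN inequality fails at some simple root $\alpha$, but every such offending $\alpha$ satisfies $\langle\mu^\diamond+\xi^\diamond,\tilde\omega_\alpha\rangle\notin\mathbb{Z}$. The main bookkeeping is organized by Dynkin type and then by $\mu$, with the Galois action and choice of $\xi$ running through a short finite list in each case.

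The hardest case is type $A_n$ (both split and $^2A_n$), because there are $n$ minuscule fundamental coweights $\omega_i^\vee$, each pairs non-trivially with every $\tilde\omega_\alpha$, and the possible $\xi$'s are again indexed by $1\le i'\le n$ modulo the Galois action; the congruence condition $\gcd(i+i',n+1)\mid (n+1)$ that appears in the first three rows of the table drops out of analyzing when $\langle\mu^\diamond+\xi^\diamond,\tilde\omega_\alpha\rangle=\frac{j(i+i')}{n+1}\in\mathbb{Z}$ simultaneously forces $\langle\mu^\diamond,\tilde\omega_\alpha\rangle+\{\langle\xi^\diamond,\tilde\omega_\alpha\rangle\}\le 1$ for all relevant $\alpha$. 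The remaining types are significantly shorter: for $B_n$, $C_n$, $F_4$, $G_2$ there are at most one or two minuscule coweights and the direct numerical check is routine; for $D_n$, the half-spin weights $\omega_{n-1}^\vee$, $\omega_n^\vee$ together with the order-2 outer action require careful attention but produce only the $D_n$, $D_5$, $D_4$ lines in the table; for $E_6$, $E_7$ one verifies that the only new weakly fully HN cases beyond the fully HN ones are those already listed (in fact there are none for $E_7$, and the $E_6$ cases reduce to the fully HN situation). I would conclude by checking that the quadruples surviving the enumeration are precisely those appearing in the table of the proposition.

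For the final assertion on weak accessibility, I would recall that $(G,\mu)$ is weakly accessible exactly when $B(G,\mu)$ reduces to its basic element, equivalently when Proposition \ref{prop:Kottwitz-set-in-NG} admits only the trivial Newton vector in $B(G,\mu)$. Going through the table row by row, I would verify by a direct computation (most easily visible in the first row where the $\gcd(i+i',n+1)=1$ forces the only Newton vector to be $\frac{i+i'}{n+1}(1^{(n+1)})$, a central one) that the weakly accessible quadruples are exactly those in the first row, and that every other row contains a genuinely non-basic Newton stratum which remains inside $\mathcal F(G,\mu,b)^{wa}$ by the weakly fully HN-decomposability.
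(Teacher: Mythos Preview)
Your overall plan for the classification is essentially the same as the paper's: both reduce to the minute criterion (Proposition \ref{prop_minute criterion}) and then run through the Dynkin types, with the most work in type $A$ and short checks elsewhere. The paper organizes the $A_n$ case by introducing the function $\theta_i(j)=\langle\omega_i^\vee,\omega_j\rangle$ and analyzing for which $j$ the integrality constraint $(n+1)\mid (i+i')j$ forces $\theta_i(j)\le 1$, which is exactly the mechanism you describe. So for the main body of the proposition, your proposal is correct and matches the paper.

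There is, however, a genuine gap in your treatment of the final assertion on weak accessibility. You assert that $(G,\mu)$ is weakly accessible precisely when $B(G,\mu)$ consists only of its basic element, and you propose to verify this row by row. This characterization is false: already for $H=\mathrm{PGL}_2$, $\mu=\omega_1^\vee$, $\xi=0$ (so $i=1$, $i'=0$, $\gcd(1,2)=1$, first row of the table), one has $B(\mathrm{PGL}_2,\omega_1^\vee)=\{0,\omega_1^\vee\}$, two elements, yet the pair is weakly accessible. Your proposed verification would therefore give the wrong answer on the very row you single out. The paper does not attempt a direct computation here; it simply invokes Rapoport's classification \cite[A.13]{Ra2}. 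If you want an intrinsic argument, the correct translation is that weak accessibility (for $G$ $F$-simple, $\mu\neq 0$) is equivalent to $J_b$ being anisotropic, i.e.\ to the basic $b^H$ admitting no reduction to any proper standard Levi of $H$; by Lemma \ref{lemma_b has reduction to Levi} this reads $\langle\mu^\diamond+\xi^\diamond,\tilde\omega_\alpha\rangle\notin\mathbb{Z}$ for every $\alpha\in\Delta_0$, which in split type $A_n$ is exactly $\gcd(i+i',n+1)=1$ and fails in every other row of the table.
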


As a corollary of this proposition, we can list all the weakly fully HN-decomposable pairs using the notation of Tits' table (\cite{Ti}).
\begin{theorem}\label{thm:classification-weak-HN-dec}Suppose $G$ is absolutely simple and adjoint. Let $\mu$ be a minusucle cocharacter and $\mu\neq 0$. Then $(G, \mu)$ is weakly fully HN-decomposable if and only if  $(G, \mu)$ is fully HN-decomposable or weakly accessible or $(G, \mu)$ is one of the following up to isomorphism:
\begin{tabular}{|c|c|c|c|}
\hline $( ^dA_n, \omega_1^\vee)$ with $d|n+1$     &$( ^dA_n, \omega_2^\vee)$ with $n$ odd, $d|n+1$  (*) & $(A_5, \omega_3^\vee)$ & $( ^4A_3, \omega_2^\vee)$\\
\hline $( ^2C_n, \omega_1^\vee)$ & $(^2C-B_n, \omega_1^{\vee})$ & $(C_3, \omega_2^\vee)$  &   \\
\hline $(^2 D''_n, \omega_1^{\vee})$ & $(D_5, \omega_5^\vee)$ & $(^2D'_5, \omega_5^{\vee})$ & $( ^2D_4, \omega_4^\vee)$ \\
\hline
\end{tabular}

where (*) after $( ^dA_n, \omega_2^\vee)$ means that not all groups of type $^dA_n$ are allowed, but only the ones with Frobenius acting on the $n+1$-cycle by a clockwise rotation of $i'$-steps with $\gcd(2+i', n+1)=2$ are allowed.
\end{theorem}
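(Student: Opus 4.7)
The plan is to deduce Theorem~\ref{thm:classification-weak-HN-dec} as a direct corollary of Proposition~\ref{prop_classification}. The minute criterion (Proposition~\ref{prop_minute criterion}) shows that whether a pair $(G,\mu)$ is weakly fully HN-decomposable depends only on the quadruple $(H, |\mathrm{Im}\,\Gamma|, \mu, \xi)$, where $H$ is the quasi-split inner form of $G$, $|\mathrm{Im}\,\Gamma|$ is the order of the image of $\Gamma = \Gal(\bar F/F)$ in $\mathrm{Aut}(\Phi, \Delta)$, and $\xi \in \pi_1(H^{ad})_{\Gamma, \mathrm{tor}} \simeq H^1(F, H^{ad})$ classifies $G$ as an inner form of $H$. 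Since Proposition~\ref{prop_classification} already enumerates all such quadruples with $\mu\neq 0$ minuscule for which weakly fully HN-decomposability holds, the content of the theorem is purely the translation of that list into Tits' notation.

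Concretely, I would partition the rows of the table in Proposition~\ref{prop_classification} into three groups. The rows corresponding to fully HN-decomposable pairs are already translated into Tits' notation in~\cite{GoHeNi}, and give the ``fully HN-decomposable'' piece of the statement. The first row $(A_n, 1, \omega_i^\vee, \omega_{i'}^\vee)$ with $\gcd(i+i', n+1)=1$ gives precisely the weakly accessible pairs $(G,\mu)$ by~\cite[A.4--A.5]{Ra2}, as recalled in the introduction; this is declared to be the second bullet of the final statement. For each of the remaining ``exceptional'' rows, I would identify the Tits label $^{d}X_{n,r}^t$ of the adjoint $F$-group $G$ corresponding to $(H, |\mathrm{Im}\,\Gamma|, \xi)$, using the Tits classification tables~\cite{Ti}. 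This is a purely mechanical lookup: $X_n$ is the type of $H$, the left exponent $d$ is read off from $|\mathrm{Im}\,\Gamma|$ together with the cohomology class $\xi$, and the subscripts record the anisotropic kernel.

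The main technical obstacle is the bookkeeping in type $D$. Here $\pi_1(H^{ad})$ is $(\mathbb{Z}/2)^2$ for $n$ even and $\mathbb{Z}/4$ for $n$ odd, and the two distinct inner form families $^{2}D_n'$ and $^{2}D_n''$ are distinguished by which Galois-invariant torsion class $\xi$ represents. One must verify that the rows $(D_n, 1, \omega_1^\vee, \omega_n^\vee)$, $(D_5, 1, \omega_5^\vee, 0 \text{ or }\omega_1^\vee)$, $(D_n, 2, \omega_1^\vee, \omega_n^\vee)$ and $(D_4, 2, \omega_4^\vee, 0)$ match respectively $(^{2}D''_n, \omega_1^\vee)$, $(D_5, \omega_5^\vee)$ together with $(^{2}D'_5, \omega_5^\vee)$, the ``$^{2}D''_n$'' family, and $(^{2}D_4, \omega_4^\vee)$, using the explicit description of $\xi$ on each side. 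A minor labeling discrepancy for $C_3$, where Proposition~\ref{prop_classification} writes $\omega_3^\vee$ while the theorem writes $\omega_2^\vee$, is a harmless relabeling of the unique minuscule coweight of $C_3$ between the Bourbaki and Tits conventions. Once these row-by-row verifications are carried out, the theorem follows.
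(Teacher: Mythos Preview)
Your proposal is correct and follows exactly the paper's approach: the paper states Theorem~\ref{thm:classification-weak-HN-dec} as a direct corollary of Proposition~\ref{prop_classification}, obtained by translating the classified quadruples $(H,|\mathrm{Im}\,\Gamma|,\mu,\xi)$ into Tits' notation. Your outline of that translation (partitioning into fully HN-decomposable, weakly accessible, and exceptional rows, with the type-$D$ bookkeeping as the only nontrivial step) is precisely the intended argument, and in fact supplies more detail than the paper itself does.
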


\begin{remark}In \cite{GoHeNi}, G\"ortz, He and Nie give characterizations of basic affine Delgine-Lusztig varieties associated to $(G, \mu)$ when it is fully HN-decomposable. For example, under this assumption, the affine Deligne-Lusztig variety is a union of Deligne-Lusztig varieties. It will also be an interesting  question to investigate the basic affine Delgine-Lusztig varieties associated to a weakly fully HN-decomposable pair.  For the case $(C_3, \omega_2^\vee)$, it is studied in \cite{Ri}. In a joint work with Viehmann in preparation, we study the case $(A_n, \omega_2^\vee)$.
\end{remark}

\begin{proof}[Proof of Proposition \ref{prop_classification}]
Note that by minute criterion (Proposition \ref{prop_minute criterion}), the condition that $(H, \mu)$ is fully HN-decomposable implies that $(G, \mu)$ is weakly fully HN-decomposable for any inner form $G$ of $H$.  In the following, we only consider $(H, \mu)$ which is NOT fully HN-decomposable. We use the same notation as in Bourbaki \cite{Bo}. We will discuss by the Dynkin diagram of $H$. Sometimes for simplicity,  we will also write $\tilde{\omega}_i$ for $\tilde{\omega}_{\alpha_i}=\omega_{i}$ when $H$ is split. 

\emph{Case $A_n$}: Note that for $1\leq i, j\leq n$,
\begin{eqnarray}\label{eqn_computat An}\langle \omega_i^\vee, \omega_j\rangle=\min\{i, j\}-\frac{ij}{n+1}.\end{eqnarray} Consider
\[\theta_i(j):=\begin{cases} j-\frac{ij}{n+1},  &j\leq i\\ i-\frac{ij}{n+1},  &j>i,\end{cases}\] as a function on $\mathbb{R}\cap [0, n+1]$. Then $\theta_i(j)=\langle \omega_i^\vee, \omega_j\rangle$ for $1\leq i, j\leq n$. Moreover, as a function on $j$, $\theta_i(j)$ is strictly increasing for $j\leq i$ and strictly decreasing for $j\geq i$.

\emph{Subcase $^1A_n$}: $|\mathrm{Im}\Gamma|=1$.

By (\ref{eqn_computat An}),
\[\langle \omega_i^\vee, \tilde{\omega}_j\rangle+\{\langle \omega_{i'}^\vee, \tilde{\omega}_j\rangle\}\equiv -\frac{(i+i')j}{n+1} \mod \mathbb{Z}.\] It follows that
\[\langle \omega_i^\vee, \tilde{\omega}_j\rangle+\{\langle \omega_{i'}^\vee, \tilde{\omega}_j\rangle\}\in\Z \Longleftrightarrow n+1| (i+i')j.\] Suppose $(A_n, 1, \omega_i^\vee, \omega_{i'}^\vee)$ is weakly fully HN-decomposable.  By minute criterion (Proposition \ref{prop_minute criterion}),  it's equivalent to say
\begin{eqnarray}\label{eqn_A_1}\forall 1\leq j\leq n, \frac{(i+i')j}{n+1}\in\Z\Longrightarrow \theta_i(j)=\langle \omega_i^\vee, \tilde{\omega}_j\rangle\leq 1.\end{eqnarray} Obviously, this condition is satisfied if $\gcd(i+i', n+1)=1$.  Now assume $d:=\gcd(i+i', n+1)>1$.
By symmetry, we may further assume $i\leq \frac{n+1}{2}$. Moreover $(A_n, \omega_1^\vee)$ is fully HN-decomposable by \cite{GoHeNi}. It remains to consider  $2\leq i\leq \frac{n+1}{2}$.

\emph{Claim:  $i=3$ with $n$=5 or $i=2$.}

If $i\geq 3$, then
\[\begin{split}\theta_i(\frac{n+1}{2})=\frac{i}{2}&>1 \\
\theta_i(2)=2-\frac{2i}{n+1}&\geq 1 \end{split}\]
where the second inequality becomes an equality if and only if $i=\frac{n+1}{2}$. The weakly fully HN-decomposability condition (\ref{eqn_A_1}) for $(A_n, 1, \omega_i^\vee, \omega_{i'}^\vee)$ is equivalent to
\begin{eqnarray}\label{eqn_A_2}\theta_i(\frac{n+1}{d}r)\leq 1,\  \forall 1\leq r\leq d-1.
\end{eqnarray}
If $i\neq \frac{n+1}{2}$, then $\theta_i(j)>1$ for all $j\in [2, \frac{n+1}{2}]$. On the other hand, note that
\[\underbrace{\{\frac{n+1}{d}r| r\in\N\cap [1,d-1]\}}_{\text{value of }\theta_i\leq 1 \text{ by } (\ref{eqn_A_2})}\cap \underbrace{[2, \frac{n+1}{2}]}_{\text{value of } \theta_i>1}\neq \emptyset\]
which leads to a contradiction.

If $i=\frac{n+1}{2}$, then $\theta_i(j)>1$ for all $j\in]2, \frac{n+1}{2}]$. Again by (\ref{eqn_A_2}),
\[\{\frac{n+1}{d}r| r\in\N\cap [1,d-1]\}\cap [2, \frac{n+1}{2}]=\{2\}\]
which implies that $\frac{n+1}{d}=2$ and $4>\frac{n+1}{2}$. Hence $n=5$ and $i=3$ as we ignore the fully HN-decomposable pairs. Now the Claim follows.

For $i=2$, $\theta_2(j)=2-\frac{2j}{n+1}$ for $j\geq 2$. By (\ref{eqn_A_2}), it follows that
\[j\in \{\frac{n+1}{d}r|r\in\N\cap [1,d-1]\}\Longrightarrow \theta_2(j)\leq 1\Longleftrightarrow j\geq \frac{n+1}{2}\text{ or }j=1.\] This implies $d=2$ or $n=3$.

For $i=3$ with $n=5$,  $\theta_3(j)\leq 1\Leftrightarrow j\neq 3$. Hence (\ref{eqn_A_2}) implies that $\frac{n+1}{d}\nmid 3$. It follows that $d=3$.

\emph{Subcase $^2A_n$}: $|\mathrm{Im}\Gamma|=2$.
If $n$ is even, then $\langle\mu, \tilde{\omega}_{\alpha}\rangle\in\Z$ for any $\mu$ and any $\alpha\in\Delta_0$. Hence $(G, \mu)$ is weakly fully HN-decomposable if and only if it's fully HN-decomposable.

If $n=2m-1$ is odd. Note that \[\langle \omega_i^{\vee}, \tilde{\omega}_{\alpha_j}\rangle\in\Z, \ \forall 1\leq j\leq m-1, \forall 1\leq i\leq m\] where
$\tilde{\omega}_{\alpha_j}=\omega_j+\omega_{n-j}=(1^{(j)}, 0^{(2m-2j)}, -1^{(j)})$.
Suppose $(A_n, 2, \omega_i^\vee, \omega_{i'}^\vee)$ is weakly fully HN-decomposable. It follows that
\[\langle \omega_i^\vee, \tilde{\omega}_{\alpha_j}\rangle \leq 1,\  \forall 1\leq j\leq m-1.\] It follows that $m=2$ if $i=2$. We can easily verify $(A_3, 2, \omega_2^\vee, \omega_1^\vee)$ is the only new case up to isomorphism.

\emph{Case $B_n$}: $\mu=\omega_1^{\vee}$ is the only non-trivial minuscule cocharacter. We can check that the weakly fully HN-decomposable pairs are all fully HN-decomposable.

\emph{Case $C_n$}: $\mu=\omega_n^{\vee}$ is the only non-trivial minuscule cocharacter. Note that \[\langle \omega_n^\vee, \tilde{\omega}_j\rangle =\frac{j}{2}, \forall 1\leq j\leq n.\] So $(C_3, 1, \omega_3^\vee, 0)$ is the only new case.

\emph{Case $D_n$}: $\omega_{1}^\vee$, $\omega_{n-1}^\vee$ and $\omega_n^{\vee}$ are the non-trivial minuscule cocharacters.
Note that
\[\langle\omega_{1}^{\vee}, \tilde{\omega}_j\rangle=\begin{cases}1, &j\leq n-2\\ \frac{1}{2}, &j=n-1, n\end{cases},\ \ \   \langle \omega_{n}^\vee, \tilde{\omega}_j\rangle=\begin{cases} \frac{j}{2},  & j\leq n-2\\ \frac{n-2}{4}, &j=n-1\\ \frac{n}{4}, &j=n. \end{cases}  \]

\emph{Subcase $^1D_n$}: $|\mathrm{Im}(\Gamma)|=1$. If $\mu=\omega_1^{\vee}$, then $\xi$ can be taken arbitrarily. If $n=5$, we can also take $\mu=\omega_n^\vee$ with $\xi=0$ or $\omega_1^{\vee}$.

\emph{Subcase $^2D_n$}: $|\mathrm{Im}(\Gamma)|=2$. Note that
\[\langle \omega_1^{\vee}, \tilde{\omega}_{\alpha_j}\rangle=1,  \langle \omega_n^{\vee}, \tilde{\omega}_{\alpha_j}\rangle=\frac{j}{2}, \ \forall 1\leq j\leq n-1.\] It follows that if $\mu=\omega_1^{\vee}$, then $\xi$ can be chosen arbitrarily. If $\mu=\omega_n^{\vee}$, then $n=4$ and $\xi=0$ or $\omega_1^{\vee}$ (which in fact have the same image in $(\langle \Phi\rangle^\vee/\langle\Phi^\vee\rangle)_{\Gamma}$)

\emph{Subcase $^3D_4$}: $|\mathrm{Im}(\Gamma)|=3$.
\[ \langle \omega_1^{\vee}, \tilde{\omega}_{\alpha_i}\rangle=\begin{cases}2, & i=1,\\ 1 &i=2. \end{cases}\]No new case is possible.

\emph{Case $E_6$, $E_7$}: Same computation shows there are NO new cases.

\emph{Case $E_8$, $F_4$, $G_2$}: There is no non-trivial minuscule cocharacter.

The last assertion of the proposition follows directly from \cite[A.13]{Ra2}.

\end{proof}

\if false

\begin{proposition}\label{prop_weakly HN-decomp}The following three conditions are equivalent:
\begin{enumerate}
\item $(G,\mu)$ is weakly fully HN-decomposable; and
\item $B(G, 0, \nu_b\mu^{-1})$ is weakly fully HN-decomposable.
\end{enumerate}
\end{proposition}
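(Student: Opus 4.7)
The plan is to treat both conditions through the minute criterion machinery of Proposition~\ref{prop_minute criterion}. By that proposition, condition (1) is equivalent to the combinatorial inequality that for every $\alpha \in \Delta_0$ with $\langle \mu^{\diamond}+\xi^{\diamond},\tilde{\omega}_\alpha\rangle \in \mathbb{Z}$ one has $\langle \mu^{\diamond}, \tilde{\omega}_\alpha\rangle + \{\langle \xi^{\diamond}, \tilde{\omega}_\alpha\rangle\}\leq 1$. My goal is to show that condition (2) is equivalent to the same inequality.

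For this I first develop an analogous minute criterion for the generalized Kottwitz set $B(G, 0, \nu_b - \mu^{\diamond})$, parallel to the proof of Proposition~\ref{prop_minute criterion}. The ingredients are the explicit description of $B(G, 0, \nu_b-\mu^{\diamond})$ given in Proposition~\ref{prop:Kottwitz-set-in-NG}(2) (noting that the dominant representative in $\mathcal{N}(G)$ of the vector denoted $\nu_b - \mu^{\diamond}$ is actually $\nu_b - w_0\mu^{\diamond}$), together with Lemma~\ref{lemma_b has reduction to Levi} which determines when the relevant basic element reduces to a maximal standard Levi subgroup $M_\alpha$. For the necessity direction I construct a non-basic Newton vector $v$ concentrated in a single simple-root direction $\alpha\in\Delta_0$; the constraints force $\langle v, \tilde{\omega}_\alpha\rangle$ to be a positive rational strictly below $\langle \nu_b - w_0\mu^{\diamond}, \tilde{\omega}_\alpha\rangle$ in a prescribed residue class modulo $\mathbb{Z}$, and the existence of such a value witnesses the failure of weak full HN-decomposability. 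For the sufficiency direction the argument runs in reverse: the inequality prevents any HN-indecomposable non-basic $[b'']\in B(G, 0, \nu_b-\mu^{\diamond})$ from having $\mathrm{Cent}_H(\nu_{b''})$ contained in a standard maximal Levi to which the basic element reduces.

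The last step is to identify the resulting minute criterion for (2), naturally expressed through pairings against $\nu_b - w_0 \mu^{\diamond}$, with the one for (1) expressed through pairings against $\mu^{\diamond}$. This matching relies on three ingredients: the involution $\alpha \leftrightarrow \alpha^* := -w_0 \alpha$ on $\Delta_0$ together with the identity $\langle -w_0\chi^{\diamond}, \tilde{\omega}_\alpha\rangle = \langle \chi^{\diamond}, \tilde{\omega}_{\alpha^*}\rangle$; the integrality relation $\langle \chi^{\diamond},\tilde{\omega}_\alpha\rangle + \langle \chi^{\diamond}, \tilde{\omega}_{\alpha^*}\rangle \in \mathbb{Z}$ for $\chi \in \{\mu, \xi\}$, which is exactly the key of Lemma~\ref{lemma_reduction b dual}; and the basic-ness of $[b]$, which gives the required control on the central contribution $\nu_b$. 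Combining these with the dual form of the minute criterion provided in Remark~\ref{rem_minute criterion}, a short case analysis according to whether $\{\langle \xi^{\diamond},\tilde{\omega}_\alpha\rangle\}$ vanishes brings both inequalities to the same form. The main technical obstacle is precisely this bookkeeping around the $-w_0$ duality on the dominant chamber, which is however purely combinatorial once both minute criteria have been put in place.
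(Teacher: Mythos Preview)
Your proposal is correct and follows essentially the same approach as the paper: both reduce (1) to the minute inequality of Proposition~\ref{prop_minute criterion}, establish an analogous minute criterion for $B(G,0,\nu_b-\mu^{\diamond})$ by rerunning the argument of Proposition~\ref{prop_minute criterion} with Proposition~\ref{prop:Kottwitz-set-in-NG}(2) in place of~(1), and then match the two criteria via the $-w_0$ duality (Remark~\ref{rem_minute criterion} and the integrality relation underlying Lemma~\ref{lemma_reduction b dual}). The paper is terser---it simply says the equivalence with (2) ``is similar to that of Proposition~\ref{prop_minute criterion}'' and points to \cite[Proposition~4.14]{CFS}---but your more explicit sketch is the same argument fleshed out.
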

\begin{proof} By the minute criterion for weakly fully HN-decomposability above, the statement (1) is equivalent to the condition that, for any $\alpha\in \Delta_0$ with $\langle\mu^{\diamond}+\xi^{\diamond}, \tilde{\omega}_{\alpha}\rangle\in\Z$, we have
\[
\langle\mu^{\diamond}, \tilde{\omega}_{\alpha}\rangle+\{\langle \xi^{\diamond}, \tilde{\omega}_{\alpha}\rangle\}\leq 1.
\]
On the other hand, since $-w_0$ induce a bijection on $\Delta_0$ and $w_0\xi-\xi\in \langle \Phi^{\vee}\rangle$, the condition above given by the minute criterion (Proposition \ref{prop_minute criterion}) is equivalent to the following: for $\alpha\in \Delta_0$ with $\langle-w_0\mu^{\diamond}, \tilde{\omega}_{\alpha}\rangle+\{\langle -\xi^{\diamond}, \tilde{\omega}_{\alpha}\rangle\}\in\Z$, we have
\[
\langle-w_0\mu^{\diamond}, \tilde{\omega}_{\alpha}\rangle+ \{\langle -\xi^{\diamond}, \tilde{\omega}_{\alpha}\rangle\}\leq 1.
\]
So we only need to check the equivalence between this last condition and (2). The proof of this equivalence is similar to that of Proposition \ref{prop_minute criterion}. See also the proof of \cite[Proposition 4.14]{CFS}.\end{proof}\mar{Miaofen: check details}
\fi

\section{Maximal weakly admissible locus}\label{Sec_maximal wa}
In this section, we address the question when the weakly admissible locus $\mathcal F(G,\mu,b)^{wa}$ is maximal (in the sense that it is a union of Newton strata, see Definition \ref{def:maximal-wa}). We shall show in Theorem \ref{thm_main} that the previous condition is fulfilled if and only if the pair $(G,\mu)$ is weakly fully HN-decomposable, which is further equivalent to the fact that the Newton stratification is finer than the Harder-Narasimhan stratification.

Recall that there is a Newton decomposition for the flag variety $\mathcal F(G,\mu)$:
\[
\Fc(G, \mu)=\coprod_{[b']\in B(G, 0, \nu_b-\mu^{\diamond})}\Fc(G, \mu, b)^{[b']}
\]
where $\mathcal F(G,\mu,b)^{[b']}$ is a locally closed adic subspace of $\Fc(G, \mu)$, such that
\[
\Fc(G, \mu, b)^{[b']}(C):=\{x\in \Fc(G, \mu)(C)| \E_{b, x}\simeq \E_{b'}\}
\]
for any complete algebraically closed field extension $C/\Fbar$. Moreover, $\Fc(G,\mu,b)^{[b']}$ defines a locally spatial subdiamond of $\Fc(G, \mu)^{\diamond}$.

\begin{proposition}\label{prop_Newton and wa} Let $\{\mu\}$ be the geometric conjugacy class of a minuscule cocharacter $\mu$, and $[b]\in A(G,\mu)$ which is basic. Suppose $[b']\in B(G, 0, \nu_b-\mu^{\diamond})$.\begin{enumerate}
\item \emph(\cite[Theorem 5.1]{CFS}\emph) If $[b']$ is HN-decomposable with respect to $\nu_b-\mu^{\diamond}$,
then \[\Fc(G, \mu, b)^{wa}\cap \Fc(G, \mu, b)^{[b']}=\emptyset;\]

\item \emph(\cite[Theorem 1.3]{Vi}\emph) If $[b']$ is HN-indecomposable with respect to $\nu_b-\mu^{\diamond}$,
then 
\[
\Fc(G, \mu, b)^{wa}\cap \Fc(G, \mu, b)^{[b']}\neq \emptyset.
\]
\end{enumerate}
\end{proposition}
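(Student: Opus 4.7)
The two parts split along the two directions of the dichotomy between Hodge--Newton decomposability and indecomposability.

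For (1), I would argue by contradiction: suppose $x \in \mathcal{F}(G,\mu,b)^{[b']}(C) \cap \mathcal{F}(G,\mu,b)^{wa}(C)$, so $\mathcal{E}_{b,x} \simeq \mathcal{E}_{b'}$. The HN-decomposability hypothesis gives a proper standard Levi $M \subsetneq H$ containing the centralizer of $\nu_{b'}$ and with $(\nu_b - \mu^{\diamond}) - \nu_{b'} \in \langle \Phi_{0,M}^\vee \rangle_{\mathbb{Q}}$. Since $b$ is basic, its Newton vector is central and the centralizer condition lets us choose a reduction $b_M$ of $b$ to $M$ (reduction to $M$ for basic $b$ is controlled by $\kappa$, which is compatible with the Levi embedding). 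The parabolic $P$ with Levi $M$ then gives a reduction $\mathcal{E}_{b_M} \times^M P$ of $\mathcal{E}_b$ to $P$, which transfers through the generic isomorphism on $X \setminus \{\infty\}$ to a reduction of $\mathcal{E}_{b,x}$ to $P$. I would then pick $\chi \in X^*(P/Z_G)^+$ adapted to the direction $(\nu_b-\mu^{\diamond}) - \nu_{b'}$ and compute $\deg(\chi_*(\mathcal{E}_{b,x})_P)$ using Theorem \ref{thm:Fargues} (which gives $\nu_{\mathcal{E}_{b'}} = -w_0 \nu_{b'}$ and $c_1^G(\mathcal{E}_{b'}) = -\kappa_G([b'])$), arriving at a strictly positive degree. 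This contradicts the Rapoport--Zink criterion of Proposition \ref{prop:criterion-for-wa}.

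For (2), the task is existential: given HN-indecomposability of $(G,\nu_b-\mu^{\diamond}, b')$, exhibit at least one weakly admissible point in the (non-empty) Newton stratum. My plan would follow Viehmann's strategy. First, parametrise modifications through the Bialynicki--Birula identification $\pi_{G,\mu}: \mathrm{Gr}_{G,\mu}^{B_{dR}}(C) \iso \mathcal{F}(G,\mu)(C)$, so that a point $x$ is described by a class $g \mu(t)^{-1} G(B_{dR}^+)$ modifying $\mathcal{E}_b$ to a bundle in the class $[b']$. Second, unpack weak admissibility via Proposition \ref{prop:criterion-for-wa} as the requirement that, for every standard parabolic $P$ with Levi $M$, every reduction $b_M$ of $b$ to $M$, and every $\chi \in X^*(P/Z_G)^+$, the degree of the induced reduction of $\mathcal{E}_{b,x}$ is non-positive. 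Third, show that the locus in $\mathcal{F}(G,\mu,b)^{[b']}$ where any such inequality fails is a proper closed (or at least not full) subspace, and conclude by a dimension or density argument that the complement is non-empty. The HN-indecomposability is the geometric content that makes this possible: it prevents the polygon decomposition of $\mathcal{E}_{b'}$ from forcing a positive-degree reduction through any single Levi, so no single $\chi$ obstructs weak admissibility generically.

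The main obstacle is part (2). Part (1) is a targeted degree computation once the right parabolic reduction is in hand. In contrast, (2) requires simultaneously controlling every standard parabolic reduction of the modified bundle and producing one point at which all Rapoport--Zink inequalities become non-strict; this is the non-trivial geometric input supplied by Viehmann's work. A natural intermediate step I would expect to need is an HN-indecomposability characterisation of when $\mathcal{F}(G,\mu,b)^{[b']}$ itself admits a dense open on which the Harder--Narasimhan vector of Proposition \ref{prop_NGVi} is constant and equal to $[\nu_{b'}]$, for which Proposition \ref{prop_NGVi}(2) already suggests a refined comparison $\mathrm{HN}_b(x) \preceq_M [\nu_{b'}]$ that can be leveraged to locate the weakly admissible locus inside the stratum.
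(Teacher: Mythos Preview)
The paper does not supply its own proof of this proposition: both parts are stated with attribution (part (1) to \cite[Theorem 5.1]{CFS}, part (2) to \cite[Theorem 1.3]{Vi}) and used as black boxes. So there is no in-paper argument to compare against; the question is whether your sketch reproduces the cited arguments.

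Your outline for (1) has a genuine gap at the step ``the centralizer condition lets us choose a reduction $b_M$ of $b$ to $M$''. Hodge--Newton decomposability of $(G,\nu_b-\mu^{\diamond},b')$ is a condition on $\nu_{b'}$ relative to $\nu_b-\mu^{\diamond}$; it says nothing directly about whether the basic element $b$ admits a reduction to that particular Levi $M$, and the vague appeal to ``$\kappa$ compatible with the Levi embedding'' does not supply one (compare Lemma~\ref{lemma_b has reduction to Levi}, where the existence of a reduction is an arithmetic condition on $\langle \mu^{\diamond},\tilde\omega_\alpha\rangle$ that need not be implied by decomposability of $b'$). The actual mechanism in \cite{CFS} runs in the opposite direction: the Harder--Narasimhan filtration of $\mathcal E_{b'}\simeq\mathcal E_{b,x}$ furnishes a canonical reduction of the \emph{modified} bundle to a parabolic $P$, and transporting this back along the generic isomorphism on $X\setminus\{\infty\}$ produces a reduction of $\mathcal E_b$ to $P$, hence (since $b$ is basic) a reduction $b_M$ of $b$. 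Only then does the degree computation against a suitable $\chi\in X^*(P/Z_G)^+$ go through. Without this input your contradiction does not close.

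For (2) you correctly flag that this is the substantive direction and that it is Viehmann's theorem; your description of the strategy (genericity/dimension argument showing the non-weakly-admissible locus does not exhaust the stratum) is at the right level of vagueness for a citation, though the actual argument in \cite{Vi} is considerably more delicate than a straightforward ``proper closed subspace'' count.
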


As a corollary, we deduce
\begin{equation}\label{eq:maximal-wa}
\mathcal F(G,\mu,b)^{wa}\subseteq \coprod_{\substack{ [b']\in B(G, 0, \nu_b-\mu^{\diamond})\textrm{ such that } \\ (b', \nu_b-\mu^{\diamond}) \textrm{ is HN-indecomposable} } } \mathcal F(G,\mu,b)^{[b']} \subseteq \mathcal F(G,\mu).
\end{equation}

\begin{definition}\label{def:maximal-wa} Let $\{\mu\}$ a geometric conjugacy class of a cocharacter of $G$. Let $[b]\in A(G,\mu)$, so that the period domain $\mathcal F(G,\mu,b)^{wa}$ is non-empty.
\begin{enumerate}
\item \emph(\cite[Definition A1]{Ra2}\emph) We say that the triple $(G, b, \mu)$ is \emph{weakly accessible} if
\[
\Fc(G, \mu, b)^{wa}=\Fc(G, \mu).
\]
We say that $(G, \mu)$ is \emph{weakly accessible} if this is the case for $(G, b', \mu)$, where $[b']\in B(G,\mu)$ is the unique basic element.
\item We say that the weakly admissible locus $\Fc(G, \mu, b)^{wa}$ is \emph{maximal} if the first inclusion in \eqref{eq:maximal-wa} is an equality, or equivalently,
\[
\Fc(G, \mu, b)^{wa}(C)=\coprod_{\substack{ [b']\in B(G, 0, \nu_b-\mu^{\diamond})\textrm{ s.t. } \\ (b', \nu_b-\mu^{\diamond}) \textrm{ is HN-indecomposable} } }\Fc(G, \mu, b)^{[b']}(C)
\]
for any complete algebraically field extension $C/\Fbar$.
\end{enumerate}
\end{definition}

If $G=\mathrm{GL}_n$, the weakly accessible pairs $(G,\mu)$ are given as in Remark \ref{rem:example-of-weakly-full-HN} (1). In the general case,  after some reduction steps (\cite[Lemma A.11]{Ra2}), the following proposition gives the complete classification of all weakly accessible triples $(G,\mu,b)$.

\begin{proposition}[\cite{Ra2} Proposition A.12]Suppose that $(G, \mu, b)$ defines a non-empty period domain where $G$ is $F$-simple and $\mu$ is non-trivial. Then $(G, \mu, b)$ is weakly accessible if and only if the $F$-group $J_b$ is anisotropic, in which case $b$ is basic.
\end{proposition}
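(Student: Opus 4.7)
The plan is to unpack weak accessibility via the degree criterion of Proposition~\ref{prop:criterion-for-wa} (applied to the quasi-split inner form $H$ of $G$) and then to match the absence of all potential obstructions with the anisotropy of $J_b$. Recall that $\Fc(G,\mu,b)^{wa}$ is the complement in $\Fc(G,\mu)$ of a $J_b(F)$-stable union of Zariski closed Schubert subvarieties, each labelled by a pair $(b_M,\chi)$ consisting of a reduction of $b$ to a proper standard Levi $M\subsetneq H$ and a dominant character $\chi\in X^*(P/Z_G)^+$ of the associated parabolic. Thus weak accessibility is equivalent to the non-existence of any such pair producing a non-empty obstruction.

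For $(\Leftarrow)$, and simultaneously for the side assertion that $b$ must be basic, I would first observe that $\nu_b$ lies in the centre of its centralizer $M_{\nu_b}$, hence defines an $F$-rational central cocharacter of $J_b$ (which is an inner form of $M_{\nu_b}$). Its image is an $F$-split central torus of $J_b$, which must be trivial if $J_b$ is anisotropic; therefore $\nu_b$ is central in $G$ and $b$ is basic. Using the standard correspondence between reductions of $b$ to a proper Levi $M\subsetneq H$ and proper $F$-parabolic subgroups of $J_b$ (which rests on the realisation of $J_b$ as an inner form of $M_{\nu_b}$, cf.\ \cite{Kot1,RZ}), anisotropy of $J_b$ further excludes every reduction of $b$ to a proper standard Levi. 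Hence the criterion in Proposition~\ref{prop:criterion-for-wa} is vacuous, so $\Fc(G,\mu,b)^{wa}=\Fc(G,\mu)$.

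For $(\Rightarrow)$, I would proceed by contrapositive: if $J_b$ admits a proper $F$-parabolic, the correspondence above supplies a reduction $b_M\in M(\breve F)$ of $b$ to a proper standard Levi $M\subsetneq H$. Pick a non-trivial $\chi\in X^*(P/Z_G)^+$, for instance an integer multiple of the fundamental weight of $H$ dual to some simple root outside $M$. Starting from the $P$-reduction $\E_{b_M}\times^M P$ of $\E_b$, a minuscule modification at $\infty$ along a Weyl conjugate $w\mu\notin M$ of $\mu$ changes $\deg(\chi_*(\E_{b,x})_P)$ by $\langle w\mu,\chi\rangle$. The $F$-simplicity of $G$ forces the root system of $H$ to be irreducible, and together with the non-triviality of $\mu$ it ensures that some conjugate $w\mu$ pairs strictly positively with $\chi$; the corresponding affine Schubert cell then yields the desired bad point $x$ violating weak admissibility.

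The main obstacle is in this converse step: one must arrange not merely that the modification contributes positively to the $\chi$-degree, but that the resulting total $\deg(\chi_*(\E_{b,x})_P)$ is strictly positive, precluding cancellation with the initial degree $-\langle\chi,\kappa_M(b_M)\rangle$ of $\E_{b_M}\times^M P$. This requires a careful quantitative analysis of $\kappa_M(b_M)$ relative to $\mu$ and the choice of $\chi$; the classical $\mathrm{GL}_n$ classification recorded in Remark~\ref{rem:example-of-weakly-full-HN}(1), where weak accessibility corresponds exactly to $\mu=(1^{(r)},0^{(n-r)})$ with $\gcd(n,r)=1$ (in which case $J_b$ is the unit group of a division algebra and hence anisotropic), serves both as a sanity check and as a blueprint for the construction of the bad point in general.
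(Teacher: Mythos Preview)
The paper does not prove this proposition; it is quoted from \cite{Ra2}, so there is no paper proof to compare against. I will comment on your plan on its own terms.

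Your $(\Leftarrow)$ argument is essentially correct, granting the dictionary between $F$-parabolics of $J_b$ and reductions of $b$ to proper standard Levis of $H$ (for $b$ basic this amounts to the statement that the inner form $J_b$ of $H$ admits an $F$-parabolic of a given type exactly when $b$ is $\sigma$-conjugate into the corresponding Levi).

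For $(\Rightarrow)$, the ``cancellation'' obstacle you flag is largely illusory once one reduces to $G$ adjoint. If $b$ is basic then $\nu_b=0$, and for \emph{any} reduction $b_M$ one has $\nu_{b_M}=0$ as well, whence $\kappa_M(b_M)\in\pi_1(M)_{\Gamma,\mathrm{tor}}$ and its pairing with any $\Gamma$-invariant $\chi$ vanishes. Thus $\deg\chi_*(\E_{b,x})_P=\langle w\mu,\chi\rangle$ with no correction term, and $w=1$, $\chi=\tilde{\omega}_\alpha$ for any $\alpha\notin\Delta_{0,M}$ already gives $\langle\mu,\tilde{\omega}_\alpha\rangle>0$ by $F$-simplicity and $\mu\ne 0$.

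The actual gap is when $b$ is non-basic. Here not every reduction produces a bad point: with $G=\mathrm{GL}_4$, $\mu=\nu_b=(1,1,0,0)$, the reduction to $M_{\nu_b}=\mathrm{GL}_2\times\mathrm{GL}_2$ with $\nu_{b_M}=((1,1),(0,0))$ yields $\langle w\mu-\nu_{b_M},\chi\rangle\le 0$ for all $w$ and all dominant $\chi\in X^*(P/Z_G)^+$. One must instead take the reduction with $\nu_{b_M}$ anti-$G$-dominant (here $((0,0),(1,1))$), for which $\langle\nu_{b_M},\tilde{\omega}_\alpha\rangle\le 0$ and hence $\langle\mu-\nu_{b_M},\tilde{\omega}_\alpha\rangle>0$. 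Your sketch does not make this choice, and the non-basic case is therefore incomplete.
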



\begin{remark}Clearly, the fully HN-decomposability or the weak accessibility implies the maximality of the weakly admissible locus.
\end{remark}



The following theorem gives several characterizations for the maximality of the weakly admissible locus $\mathcal F(G,\mu,b)^{wa}$, which is the first main result of our paper.

\begin{theorem}\label{thm_main}Suppose that $b\in G(\breve F)$ is basic and that $\mu\in X_*(T)^{+}$ is minuscule. Then the following assertions are equivalent:
\begin{enumerate}
\item $\Fc(G, \mu, b)^{wa}$ is maximal;

\item $(G,\mu)$ is weakly fully HN-decomposable;

\item the Newton stratification is finer than the Harder-Narasimhan stratification in the sense that every Harder-Narasimhan stratum is union of some Newton strata. \end{enumerate}

\end{theorem}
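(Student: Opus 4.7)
My plan is to prove the cycle $(3)\Rightarrow(1)\Rightarrow(2)\Rightarrow(3)$. The implication $(3)\Rightarrow(1)$ is immediate from the remark preceding Proposition \ref{prop_NGVi}: for $[b_0]\in B(G,0,\nu_b-\mu^{\diamond})$ the basic element, one has $\Fc(G,\mu)^{\mathrm{HN}_b=[\nu_{b_0}]}=\Fc(G,\mu,b)^{wa}$, so the weakly admissible locus is itself a single Harder--Narasimhan stratum. Under (3), this HN-stratum is a union of Newton strata, which is exactly the maximality of $\Fc(G,\mu,b)^{wa}$.

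For $(1)\Rightarrow(2)$ I argue by contraposition. Suppose $(G,\mu)$ fails to be weakly fully HN-decomposable. The necessity direction in the proof of Proposition \ref{prop_minute criterion} supplies an $\alpha\in\Delta_0$ such that $\langle\mu^{\diamond}+\xi^{\diamond},\tilde{\omega}_\alpha\rangle\in\Z$ and $\langle\mu^{\diamond},\tilde{\omega}_\alpha\rangle+\{\langle\xi^{\diamond},\tilde{\omega}_\alpha\rangle\}>1$. Lemma \ref{lemma_b has reduction to Levi} then provides a reduction $b_M$ of $b$ to the maximal standard Levi $M:=M_\alpha$, while the same necessity argument produces a non-basic, HN-indecomposable class; via Proposition \ref{prop:Kottwitz-set-in-NG} this corresponds to some $[b']\in B(G,0,\nu_b-\mu^{\diamond})$ whose Newton stratum $\Fc(G,\mu,b)^{[b']}$ is HN-indecomposable. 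From $b_M$ together with a suitable $\mu$-negative Weyl-group element for $M$ (in the spirit of Theorem \ref{thm:criterion-for-a-singule-stratum}), I will construct an explicit minuscule modification $x\in\Fc(G,\mu,b)^{[b']}(C)$ whose filtered isocrystal carries a non-trivial $P_\alpha$-reduction violating the slope inequality of Proposition \ref{prop:criterion-for-wa}. This exhibits a non-weakly-admissible point in an HN-indecomposable Newton stratum, contradicting (1).

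For $(2)\Rightarrow(3)$, I pin down $\mathrm{HN}_b(x)$ on each Newton stratum using Proposition \ref{prop_NGVi}. Given $x\in\Fc(G,\mu,b)^{[b']}(C)$, the Harder--Narasimhan filtration of the filtered isocrystal $\mathcal I_{b,x}$ provides a canonical reduction of $b$ to $\mathrm{Cent}_H(\mathrm{HN}_b(x))$. When $(G,b',\nu_b-\mu^{\diamond})$ is HN-indecomposable, weak full HN-decomposability, once translated from $B(G,\mu)$ to the Newton-strata indexing set $B(G,0,\nu_b-\mu^{\diamond})$, precludes such a reduction unless $\mathrm{HN}_b(x)=[\nu_{b_0}]$, yielding constancy of the HN-vector. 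When $(G,b',\nu_b-\mu^{\diamond})$ is HN-decomposable with respect to a Levi $M'$, Proposition \ref{prop_NGVi}(iii) gives $\mathrm{HN}_b(x)\preceq_{M'}[\nu_{b'}]$; choosing compatible reductions of $b$ and $b'$ to $M'$, the maximality question is inherited by $M'$, which itself satisfies weak full HN-decomposability, and an induction on the semisimple rank of $H$ forces $\mathrm{HN}_b(x)=[\nu_{b'}]$. In both cases the HN-vector is constant on the Newton stratum, whence (3).

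The main obstacle will be the construction step in $(1)\Rightarrow(2)$: turning a combinatorial failure of the minute inequality into a genuine non-weakly-admissible $C$-point of a prescribed HN-indecomposable Newton stratum. This requires the dictionary between the Kottwitz sets $B(G,\mu)$ and $B(G,0,\nu_b-\mu^{\diamond})$ afforded by Proposition \ref{prop:Kottwitz-set-in-NG}, and a careful realization of the extension of $M$-bundles dictated by $b_M$ as a minuscule modification of $\mathcal E_b$ via the Beauville--Laszlo construction of \S\ref{sec:modification-of-G-bundles}.
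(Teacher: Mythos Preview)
Your cycle $(3)\Rightarrow(1)\Rightarrow(2)\Rightarrow(3)$ and your treatment of $(3)\Rightarrow(1)$ are fine and match the paper. The implication $(1)\Rightarrow(2)$ is also argued by contraposition in the paper, but be careful: the element produced by the necessity half of Proposition~\ref{prop_minute criterion} lives in $B(G,\mu)$, not in $B(G,0,\nu_b-\mu^{\diamond})$, so you cannot simply transport it over ``via Proposition~\ref{prop:Kottwitz-set-in-NG}''. The paper instead works with the Levi $M_{\alpha^*}$ (for $\alpha^*=-w_0\alpha$), picks an explicit $M$-dominant $\mu_1\in W\mu$ and a simple root $\beta$ with $\langle\mu_1,\beta\rangle=-1$, and defines $[b'_M]\in B(M)_{basic}$ by $\kappa_M(b'_M)=(-\beta^\vee+\xi)^\#$; only after computing $\nu_{b'^H}=w_0\nu_{b'_M}$ does one check directly that $(H,b'^H,-w_0\mu)$ is HN-indecomposable. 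Your invocation of Theorem~\ref{thm:criterion-for-a-singule-stratum} is also circular in the paper's logical order, since that theorem appears later; the paper does the construction by hand here.

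Your argument for $(2)\Rightarrow(3)$ has a genuine error. In the HN-decomposable case you assert $\mathrm{HN}_b(x)=[\nu_{b'}]$, but this is false in general: the correct value is $\mathrm{HN}_b(x)=\mathrm{pr}_M\nu_{b'}$, the $M$-central part, which typically differs from $[\nu_{b'}]$. The paper proves this by contradiction using the minute criterion directly: if $v:=\mathrm{HN}_b(x)$ were not $M$-central, there would be $\alpha\in\Delta_{M,0}$ with $\langle v,\alpha\rangle>0$; then $b$ has a reduction to $M_\alpha$, the minute inequality forces $\langle -w_0\mu^\diamond,\tilde\omega_\alpha\rangle+\{\langle-\xi^\diamond,\tilde\omega_\alpha\rangle\}\in\{0,1\}$, and combining with $v\in B(H,\xi^\#,-w_0\mu^\diamond)$ and $\langle-w_0\mu^\diamond-v,\tilde\omega_\alpha\rangle>0$ yields $\langle v,\tilde\omega_\alpha\rangle=0$, a contradiction. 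Your proposed induction on semisimple rank is not justified: there is no reason why $(M',\mu_{M'})$ should inherit weak full HN-decomposability from $(G,\mu)$, and indeed the minute inequalities for $M'$ involve different fundamental weights. Likewise, in the HN-indecomposable case your argument only shows that the element of $B(G,0,\nu_b-\mu^\diamond)$ corresponding to $\mathrm{HN}_b(x)$ must be HN-decomposable (if non-basic and admitting a reduction of $b$ to its centralizer), not that it is basic; the paper closes this by first proving $(2)\Rightarrow(1)$ separately, which immediately gives $\mathrm{HN}_b(x)=[\nu_{b_0}]$ on every HN-indecomposable Newton stratum.
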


\begin{proof}We may assume $G$ and thus $H$ adjoint. Indeed, this follows from the observation that, if we denote by
\[
\pi:\mathcal F(G,\mu)\longrightarrow \mathcal F(G^{ad},\mu^{ad})
\]
the natural map, then the preimage of a Newton stratum (resp. Harder-Narasimhan stratification) for the triple $(G^{ad}, \mu^{ad}, b^{ad})$ is the corresponding stratum for the triple $(G, \mu, b)$. Therefore, for the rest of the proof, $G$ is supposed to be adjoint. In particular, $\nu_b=0$. Furthermore, we suppose $\mu\neq 0$: otherwise our theorem is trivially true.

Recall that we have the identification \eqref{eq:BG-and-BH} between $B(G)$ and $B(H)$ induced by the class
\[
\xi\in H^1(F,H)\simeq \pi_1(H)_{\Gamma, tor}
\]
of $G$ as an inner form of $H$, under which $B(G,\mu)$ is identified with $B(H,\mu^{\#}+\xi,\mu^{\diamond})$. Moreover, using a fixed inner twisting
\[
G_{\breve F}\stackrel{\sim}{\longrightarrow} H_{\breve F},
\]
we have an identification
\[
\mathcal F(G,\mu)\stackrel{\sim}{\longrightarrow}\mathcal F(H,\mu)
\]
of flag varieties over $\breve E$, under which the Newton stratification (resp. the Harder-Narasimhan stratification) on $\mathcal F(G,\mu)$ for the triple $(G, \mu, b)$ is identified with the corresponding stratification on $\mathcal F(H,\mu)$ for the triple $(H, \mu, b^H)$, where $[b^H]\in B(H)$ is the image of $[b]$ via $B(G)\simeq B(H)$. In the following,  we will deal with $(H, \mu, b^H)$ instead of $(G, \mu, ,b)$.

Moreover, we may assume that $H$ is simple. Indeed, if $H=H_1\times H_2$ with $\mu=(\mu_1, \mu_2)$ and $b^H=(b_1^H, b_2^H)$. Then we have a natural isomorphism
\[\mathcal{F}(H, \mu, b^{H})\simeq \mathcal{F}(H_1, \mu_1, b_1^{H})\times \mathcal{F}(H_2, \mu_2, b_2^{H}),\]under which the Newton stratification and Harder-Narasimhan stratification on both sides are compatible.

\underline{$(1)\Rightarrow (2)$}. Suppose that $(G, \mu)$ is not weakly fully HN-decomposable. By the minute criterion (Proposition \ref{prop_minute criterion} and Remark \ref{rem_minute criterion}), there exists $\alpha\in\Delta_0$ such that
\[
\langle \mu^{\diamond},\tilde{\omega}_{-w_0\alpha}\rangle +\{\langle \xi^{\diamond}, \tilde{\omega}_{-w_0\alpha}\rangle\}=\langle-w_0\mu^{\diamond}, \tilde{\omega}_{\alpha}\rangle+\{\langle -w_0 \xi^{\diamond},\tilde w_{\alpha}\rangle \} \in\Z_{>1}.
\]
We want to show that there exists a point in a HN-indecomposable Newton stratum that is not weakly admissible.
Let $\alpha^* =-w_0\alpha$ and $M=M_{\alpha^*}$ the standard Levi such that $\Delta_{0,M}=\Delta_0\setminus \{\alpha^*\}$. Let $P$ be the standard parabolic subgroup of $H$ corresponding to $M$. Since  $\langle \mu^{\diamond}+\xi^{\diamond},\tilde{\omega}_{\alpha*}\rangle \in \mathbb Z$ and $[b^H]\in B(H,\mu^{\#}+\xi,\mu^{\diamond})$, the element
 $[b^H]$ has a reduction $[b_M]$ to $M$ (Lemma \ref{lemma_b has reduction to Levi}).  By \cite[Lemma 1.8]{Ch1},  \[
\xi\in \mathrm{Im}(\pi_1(M)_{\Gamma,tor}\ra \pi_1(G)_{\Gamma,tor}),
\]
and there exists $\mu_1\in W\mu\subset X_*(T)$ which is $M$-dominant, such that
\[
[b_M]\in B(M, \mu_1^{\#}+\xi, \mu_1).
\]
Here, $\xi$ is view as an element in $\pi_1(M)_{\Gamma, tor}$. As $\langle \mu^{\diamond},\tilde w_{\alpha^*}\rangle >0$ while $\mu_1^{\diamond}=\mu_1^{\diamond}-\nu_{b_M}\in \langle \Phi_{M,0}^{\vee}\rangle_{\mathbb Q}$, it follows that $\mu\neq\mu_1$, and $\mu_1$ is $M$-dominant but not $H$-dominant. So there exists $\beta\in\Delta$ with $\beta|_A=\alpha^*$, such that
\[
\langle\mu_1, \beta\rangle<0.
\]
Therefore $\langle \mu_1,\beta\rangle=-1$ as $\mu$ and hence $\mu_1$ are minuscule. Let $[b'_M]\in B(M)_{basic}$ such that
\[
\kappa_M(b'_M)=\textrm{image of }-\beta^{\vee}+\xi \text{ in }\pi_1(M)_{\Gamma}.
\]
Then
\[
\nu_{b_M'}=\mathrm{pr}_{M}(-\beta^{\vee}+\xi)^{\diamond}=\mathrm{pr}_{M}(-\beta^{\vee})^{\diamond},
\]
with $\mathrm{pr}_{M}$ the projection of $X_*(T)=\langle \Phi_{M}^{\vee}\rangle_{\mathbb Q}\oplus \langle \Phi_{M}\rangle_{\mathbb Q}^{\perp}$ to the direct factor $\langle \Phi_{M}\rangle_{\mathbb Q}^{\perp}$. Let
\[
\mu_2=(s_{\beta}\mu_1)_{M\textrm{-dom}}=(\mu_1+\beta^{\vee})_{M\textrm{-dom}}.
\]
We check easily that
\[
[b'_M]\in B(M, (\mu_1-\mu_2)^\#+\xi, \nu_{b_M}-\mu_2^{\diamond}).
\]
Hence there exists a point $x_M\in\Fc(M, \mu_2)(C)$ such that $\E_{b'_M}\simeq \E_{b_M, x_M}$.  Let $[b'^{H}]$ be the image of $[b'_M]$ in $B(H)$. It follows that
\[[b'^{H}]\in B(H, \xi, \nu_b-\mu^{\diamond}).\]  In particular, $\mathcal{F}(H, \mu, b^H)^{[b'^{H}]}$ is a non-empty Newton stratum of the flag variety. Let $x\in\Fc(H, \mu)(C)$ the image of $x_M$ via the natural map $\Fc(M, \mu_2)\rightarrow \Fc(H, \mu)$.
In order to show the weakly admissible locus is not maximal, it suffices to prove the following Claim.

\textit{Claim: $(H, b'^H, -w_0\mu)$ is HN-indecomposable and $x\in \Fc(H, \mu, b^H)^{[b'^{H}]}(C)$ is not weakly admissible.}

 As $\nu_{b_M'}$ is $H$-antidominant, $\nu_{b'^H}=(\nu_{b_M'})_{H\textrm{-dom}}=w_0\nu_{b_M'}$. It follows that  $(H,b'^H,\mu^{-1})$ is HN-indecomposable as
\begin{eqnarray*}
\langle -w_0\mu^{\diamond}-w_0\nu_{b_M'},\tilde{\omega}_{\alpha}\rangle & =& \langle -w_0\mu^{\diamond},\tilde{\omega}_{\alpha}\rangle +\langle \nu_{b_M'},\tilde{\omega}_{\alpha^*}\rangle \\& = &  \langle -w_0\mu^{\diamond},\tilde{\omega}_{\alpha}\rangle -1 >0.
\end{eqnarray*}since $H$ is simple.
Moreover, $\E_{b, x}\simeq \E_{b'}$, and $x$ is not weakly admissible by definition: for the dominant root $\chi=mw_{\beta}\in X^*(P/Z_H)^+$ with $m>0$, we have \[\deg\chi_{*}((\mathcal E_{b,x})_P)=\deg(\chi_*(\mathcal{E}_{b'_M}))=\langle -w_0\nu_{b'_M}, mw_\beta\rangle>0,\]since $\nu_{b'_M}$ is $H$-anti-dominant and non trivial.

\underline{$(2)\Rightarrow (1)$}. Suppose that $(G, \mu)$ is weakly fully HN-decomposable. Let
\[
x\in \Fc(H,\mu,b^H)^{[b'^{H}]}(C)
\]
with $(H, b'^{H}, -w_0\mu)$ HN-indecomposable. So $\mathcal E_{b^H,x}=\mathcal E_{b'^H}$. We want to show that $x$ is weakly admissible. Suppose that $x$ is not weakly admissible, then there exist a maximal standard Levi subgroup $M_{\alpha}$, a reduction of $b^H$ to $M_{\alpha}$, and thus a reduction $(\E_{b^H, x})_{P_{\alpha}}$ of $\mathcal E_{b^H,x}=\mathcal E_{b'^H}$ to the corresponding standard parabolic $P_{\alpha}$, such that for some $\chi\in X^*(P_{\alpha}/Z_H)^+=X^*(P_{\alpha})^+$
\begin{equation}\label{eq:inequality-for-slope-vector}
\mathrm{deg}\chi_*((\E_{b, x})_{P_{\alpha}})>0.
\end{equation}
In particular, $\langle \mu^{\diamond}+\xi^{\diamond},\tilde w_{\alpha}\rangle\in \mathbb Z$ by Lemma \ref{lemma_b has reduction to Levi}. Let $v\in X_*(A)_{\mathbb Q}$ be the slope vector for the reduction $(\E_{b, x})_{P_{\alpha}}$ of $\mathcal E_{b,x}=\mathcal E_{b'}$ to $P_{\alpha}$. Indeed,
\[
v\in \Hom(X^*(P_{\alpha}),\mathbb Z)^{\Gamma}= \Hom(X^*(M_{\alpha}^{ab}),\mathbb Z)^{\Gamma}\stackrel{\sim}{\longrightarrow} X_*(M^{ab}_{\alpha})^{\Gamma}\subseteq X_*(A_{M_{\alpha}})_{\mathbb Q}\subset X_*(A)_{\mathbb Q}
\]
where $M^{ab}_{\alpha}$ is the cocenter of $M_{\alpha}$ and $A_{M_{\alpha}}\subseteq A$ is a maximal split central torus of $M_{\alpha}$.


 In other words, let $\mathcal E_{b'_{M_{\alpha}}}:=(\mathcal E_{b,x})_{P_{\alpha}}\times^{P_{\alpha}} M_{\alpha}$ with $[b'_{M_\alpha}]\in B(M_{\alpha})$, then \[[b'_{M_\alpha}]\in B(M_\alpha, \kappa_{M_\alpha}(b'_{M_\alpha}), \nu_{b'_{M_\alpha}}).\] Moreover, as \[
v=\mathrm{pr}_{M_{\alpha}}(\nu_{\mathcal E_{b'_{M_{\alpha}}}})^{\diamond}=-\mathrm{pr}_{M_{\alpha}}(\nu_{b'_{M_\alpha}})^{\diamond},
\]
with $\mathrm{pr}_{M_{\alpha}}$ the projection of $X_*(T)=\langle \Phi_{M_{\alpha}}^{\vee}\rangle_{\mathbb Q}\oplus \langle \Phi_{M_{\alpha}}\rangle_{\mathbb Q}^{\perp}$ to the direct factor $\langle \Phi_{M_{\alpha}}\rangle_{\mathbb Q}^{\perp}$, It follows that \[-v\in B(M_\alpha, \kappa_{M_\alpha}(b'_{M_\alpha}), \nu_{b'_{M_\alpha}}).\]

The fact that $M_{\alpha}=\mathrm{Cent}_{G}(v)$ and the inequality \eqref{eq:inequality-for-slope-vector} implies
\begin{eqnarray}\label{eqn_thm 2.4}\langle v,\tilde w_{\alpha}\rangle =\langle \nu_{\mathcal E_{b'_{M_{\alpha}}}},\tilde w_{\alpha}\rangle>0.
\end{eqnarray}
Hence $v$ is $H$-dominant. Furthermore, as
\[
\kappa_H(b'_{M_{\alpha}})=-c_1^H(\mathcal E_{b'_{M_{\alpha}}}\times^{M_{\alpha}}H)=-c_1^H(\mathcal E_{b,x})=\kappa_H(b')=\xi\in\pi_1(H)_{\Gamma} 
\]
we deduce that $-w_0v\in B(H, \xi, (\nu_{b'_{M_\alpha}})_{H-dom})$. In particular, $-w_0v\in B(H, \xi, -w_0v)$.
On the other hand, $v\in X_*(A)_{\mathbb Q}^+$ and $v\preceq \nu_{\E_{b, x}}=-w_0\nu_{b'}$ by \cite[Theorem 1.8]{CFS}. Hence,
\[
-w_0v\preceq \nu_{b'}\preceq -w_0\mu^{\diamond},
\]
and then \[-w_0v\in B(H,\xi,-w_0\mu^{\diamond}).\] By proposition \ref{prop:Kottwitz-set-in-NG}, it follows that
\begin{eqnarray}\label{eqn_thm 2.1}\langle -w_0v, \tilde{\omega}_{\alpha^*}\rangle+\{\langle -\xi^{\diamond}, \tilde{\omega}_{\alpha^*}\rangle\}\in\mathbb{Z}_{\geq 0}
\end{eqnarray}

As $(G, \mu)$ is weakly fully HN-decomposable, by Remark \ref{rem_minute criterion}, and the fact that $b^H$ has reduction to $M_{\alpha^*}$, we have
\begin{eqnarray}\label{eqn_thm 2.2}\langle -w_0\mu^{\diamond}, \tilde{\omega}_{\alpha^*}\rangle+\{\langle -\xi^{\diamond}, \tilde{\omega}_{\alpha^\vee}\rangle\}= 0 \text{ or }1
\end{eqnarray}

Moreover,
\begin{eqnarray}\label{eqn_thm 2.3}
\langle -w_0\mu^{\diamond}-(-w_0v),\tilde{\omega}_{\alpha^*}\rangle \geq \langle -w_0\mu-\nu_{b'},\tilde w_{\alpha^*}\rangle >0.
\end{eqnarray}
Here we have the last inequality because the centralizer of $\nu_{b'}=-w_0\nu_{\mathcal E_{b'}}$ is contained in $M_{\alpha^*}$ and $\nu_{b'}$ is HN-indecomposable relative to $-w_0\mu$. Combined with (\ref{eqn_thm 2.1}), (\ref{eqn_thm 2.2}), (\ref{eqn_thm 2.3}),
it follows that  $\langle -w_0v,\tilde w_{\alpha^*}\rangle =0$ which contradicts with (\ref{eqn_thm 2.4}).

\underline{$(3)\Rightarrow (1)$}. It follows directly from a result of Viehmann (cf. Proposition \ref{prop_Newton and wa} (2)).

\underline{$(2)\Rightarrow (3)$}. Suppose $(G, \mu)$ is weakly fully HN-decomposable. 
As we have proved $(2)\Rightarrow (1)$, for any $[b']\in B(G, 0, -w_0\mu)$ which is HN-indecomposable with respect to $-w_0\mu$, the Newton stratum $\mathcal{F}(G, \mu, b)^{[b']}$ is contained in the weakly admissible locus $\mathcal{F}(G, \mu, b)^{wa}$. Therefore it suffices to consider $[b']\in B(G, 0, -w_0\mu)$ which are HN-decomposable. Let $[b'^{H}]\in B(H)$ be the image of $[b']$ via inner twist. Suppose that $[b'^{H}]\in B(H, \xi^{\sharp}, -w_0\mu)$ is HN-decomposable with respect to a standard Levi subgroup $M$ of $H$ (i.e. $\nu_{b'^H}\preceq_{M}-w_0\mu^{\diamond}$ and $M$ is the smallest standard Levi subgroup with this property). For any $x\in \mathcal{F}(H, \mu, b^H)^{[b'^{H}]}(C)$, we want to show that the Harder-Narasimhan vector $\mathrm{HN}_{b^H}(x)$ does not depend on $x$. By Proposition \ref{prop_NGVi} (2), $\mathrm{HN}_{b^H}(x)\preceq_M \nu_{b'^H}$. We claim that 
\[
\mathrm{HN}_{b^H}(x)=\mathrm{pr}_M\nu_{b'^H},
\]
or equivalently, that $\mathrm{HN}_{b^H}(x)$ is central in $M$. Write $v:=\mathrm{HN}_{b^H}(x)$.

Suppose $v$ that is NOT central in $M$. Then there exists $\alpha\in\Delta_{M, 0}$ such that $\langle v, \alpha\rangle>0$. As $-w_0\mu^{\diamond}-v$ is a nonnegative combination of coroots in $\Delta_{M,0}^{\vee}$ and $\alpha\in \Delta_{M, 0}$, we have
\begin{eqnarray}\label{eqn_thm_1}
\langle -w_0\mu^{\diamond}-v, \tilde{\omega}_{\alpha}\rangle>0.
\end{eqnarray}
Moreover, $\langle \mathrm{HN}_{b^H}(x), \alpha\rangle=\langle v, \alpha\rangle\neq 0$ implies that $b_H$ has reduction to $M_{\alpha}$. Therefore, by Lemma \ref{lemma_b has reduction to Levi} and Lemma \ref{lemma_reduction b dual}, we have
\begin{eqnarray*}\langle\mu^{\diamond}+\xi^{\diamond}, \tilde{\omega}_\alpha\rangle\in\mathbb{Z}.\end{eqnarray*}
Then by the minute criterion for the weakly fully HN-decomposability (Proposition \ref{prop_minute criterion} and Remark \ref{rem_minute criterion}), \begin{eqnarray}\label{eqn_thm_2}\langle -w_0\mu^{\diamond}, \tilde{\omega}_\alpha\rangle+\{\langle-\xi^{\diamond}, \tilde{\omega}_\alpha\rangle\}=0 \text{ or }1. \end{eqnarray}

On the other hand, by Proposition \ref{prop_NGVi} (1) and Corollary \ref{coro_involution_BG}, $v\in B(H, \xi^\sharp, -w_0\mu^{\diamond})$. In particular, by Proposition \ref{prop:Kottwitz-set-in-NG},
\begin{eqnarray}\label{eqn_thm_3}\langle \xi^{\diamond}-v, \tilde{\omega}_{\alpha}\rangle\in\mathbb{Z}.\end{eqnarray} Combined with (\ref{eqn_thm_2}) and (\ref{eqn_thm_3}), we deduce
\begin{eqnarray}\label{eqn_thm_4}
\langle -w_0\mu^{\diamond}, \tilde{\omega}_\alpha\rangle+\{\langle-v, \tilde{\omega}_\alpha\rangle\}=0 \text{ or }1. 
\end{eqnarray}
As $v$ is dominant, $\langle v, \tilde{\omega}_{\alpha}\rangle\geq 0$. It follows that
\begin{eqnarray*}
\underbrace{\langle -w_0\mu^{\diamond}, \tilde{\omega}_\alpha\rangle+\{\langle-v, \tilde{\omega}_\alpha\rangle\}}_{=0\text{ or }1}-\underbrace{\langle -w_0\mu^{\diamond}-v, \tilde{\omega}_{\alpha}\rangle}_{>0 \textrm{ by }\eqref{eqn_thm_1}}\in\N.
\end{eqnarray*}
Therefore the left hand side is 0 and it follows that $\langle v, \tilde{\omega}_\alpha\rangle=0$ which implies $v=0$ since $H$ is simple. This contradicts to the fact that $v$ is not central in $M$.
\end{proof}

\section{Newton strata completely contained in the weakly admissible locus}\label{Sec_single Newton}
In this section, we study the question when a single Newton stratum is contained in the weakly admissible locus of the flag variety. We shall start with some general observation which works for a quasi-split reductive group, and give an answer to the above question by establishing a criterion for a Newton stratum completely contained in the weakly admissible locus. At the end of this section, we illustrate our criterion by an explicit example in the $\mathrm{GL}_n$-case.

\subsection{Extensions and weakly admissible locus} Recall that $H$ is the quasi-split inner form of $G$ over $F$, and $T$ is a maximal $F$-torus of $H$, with
\[
W=\left(N_{H}(T)/T\right)(\Fbar)
\]
its Weyl group. Let $\mu\in X_*(T)$ be a minuscule cocharacter. Let $M$ be a standard Levi subgroup of $H$, with corresponding parabolic subgroup $P$. We have the decomposition in Schubert cells of $\mathcal F(H,\mu)_{\Fbar}$ according to the $P_{\Fbar}$-orbits given as follows. For $w\in W$, let $\mathcal F(H,\mu)_{P}^{w}$ be the schematic image of the map
\[
P_{\Fbar}\longrightarrow \mathcal F(H,\mu)_{\bar F}, \quad g\mapsto gP_{\mu^{w}}g^{-1}.
\]
So $\mathcal F(H,\mu)_{P}^w$ is the $P_{\Fbar}$-orbit of $P_{\mu^{w}}\in \mathcal F(H,\mu)(\Fbar)$, and
\[
\mathcal F(H,\mu)_P^w=P_{\bar F}/P_{\Fbar}\cap P_{\mu^{w}}.
\]
Moreover, every $P_{\Fbar}$-orbit of $\mathcal F(H,\mu)_{\bar F}$ is obtained in this way: this follows from the relative Bruhat decomposition
\[
H_{\Fbar}=\coprod_{[w]\in W_{P}\backslash W/W_{P_{\mu}} }P_{\Fbar}wP_{\mu}
\]
Here $W_{P}$ denotes the stabilizer of the standard parabolic subgroup $P_{\Fbar}\subset H$ under the action of Weyl group $W$ (on the set of standard parabolic subgroups of $H_{\Fbar}$), and $[w]$ is the double coset in $W_P\backslash W\slash W_{P_{\mu}}$ of an element $w$ in the Weyl group $W$. By taking the projection to the Levi quotient $M$, we obtain a map
\begin{equation}\label{eq:affine-fibration}
\mathrm{pr}_{P,w}: \Fc(H, \mu)_P^w\rightarrow \Fc(M, \mu^w),
\end{equation}
It is known that the above map $\mathrm{pr}_{P,w}$ above is an affine fibration. 

\begin{definition}\label{def_mu negative}Let $\mu$ be a minuscule cocharacter of $H$, $[b]\in A(H,\mu)_{basic}$. Let $M$ be a standard Levi subgroup of $H$. An element $w\in W$ is called \emph{$\mu$-negative for $M$} if $\langle \nu_{b}-w\mu, \chi\rangle <0$ for some $\chi\in X^*(P/Z_H)^+$.
\end{definition}

\begin{proposition}\label{prop:non-wa-locus}Let $\mu$ be a minuscule cocharacter of $H$, $[b]\in A(H,\mu)_{basic}$. 
\begin{enumerate}
\item Let $M$ be a standard Levi subgroup of $H$, with $P$ its corresponding standard parabolic subgroup. Assume that $w\in W$ is $\mu$-negative and that $b$ has a reduction $(b_M,h)$ to $M$ with $b_M\in M(\breve{F})$ and $h\in H(\breve{F})$, then
\[
h\cdot\mathcal F(H,b)_{P}^w\subset  \mathcal F(H,\mu)_{\Fbar}\setminus \mathcal F(H,\mu,b)_{\Fbar}^{wa}.
\]
\item The complement in $\mathcal F(H,\mu)$ of the weakly admissible locus can be described as follows: for $C/\Fbar$ a complete algebraically closed field extension of $\Fbar$,
\[
\left(\Fc(H, \mu)\backslash \Fc(H, \mu, b)^{wa}\right)(C)=\bigcup_{\alpha\in \Delta_0}\left(\bigcup_{\substack{(b_M,h) \textrm{ is a reduction} \\ \textrm{of } b \textrm{ to }M_{\alpha} }}\left(\bigcup_{\substack{ w\in W\text{ is } \mu\text{-negative}\\ \textrm{for } M_{\alpha}}}h\cdot \Fc(H, \mu)_{P_{\alpha}}^w(C)\right)\right).
\]
\end{enumerate}
\end{proposition}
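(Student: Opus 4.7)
The plan is to combine Proposition \ref{prop:criterion-for-wa} with a careful description of the reduction of $\mathcal{E}_{b,x}$ to $P$ induced by a given reduction $(b_M,h)$ of $b$, expressed via the relative Bruhat stratification. For any $x \in \mathcal{F}(H,\mu)(C)$ and any reduction $(b_M,h)$, the point $h^{-1}x$ lies in a unique Schubert cell $\mathcal{F}(H,\mu)_P^{w}(C)$. I would first show that the natural $P$-reduction of $\mathcal{E}_{b,x}$ induced by the reduction $\mathcal{E}_{b_M}\times^M P$ of $\mathcal{E}_b$ has as its $M$-part the modified bundle $\mathcal{E}_{b_M,y_M}$, where $y_M := \mathrm{pr}_{P,w}(h^{-1}x) \in \mathcal{F}(M,\mu^w)(C)$. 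After using Remark \ref{rem:Ebx-depends-on-b} to reduce to $h=1$, the minuscule Bialynicki-Birula bijection (\cite[3.4.4]{CS}) lifts $h^{-1}x$ to $\tilde{p}\,\mu^w(t)^{-1}G(B_{dR}^+) \in \mathrm{Gr}_{H,\mu}^{B_{dR}}(C)$ for some $\tilde{p} \in P(B_{dR}^+)$, and the gluing datum $\tilde{p}\,\mu^w(t)^{-1} \in P(B_{dR})$ manifestly preserves the $P$-reduction.

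Next, combining Theorem \ref{thm:Fargues} applied to the $M$-bundle $\mathcal{E}_{b_M}$ with the fact that a minuscule modification of type $w\mu$ shifts the first Chern class of the Levi bundle by $(w\mu)^{\#}$, one obtains for any $\chi \in X^*(P/Z_H)^+$ the formula
\[
\deg \chi_*(\mathcal{E}_{b,x})_P \;=\; \deg \chi_*(\mathcal{E}_{b_M,y_M}) \;=\; \langle \chi,\, w\mu - \nu_{b_M}\rangle \;=\; -\langle \chi,\, \nu_b - w\mu\rangle,
\]
where the last equality uses that $b$ is basic in $H$ so $\nu_b$ is central and therefore agrees with $\nu_{b_M}$ in $X_*(T)_{\mathbb{Q}}$. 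Part (1) then follows: if $w$ is $\mu$-negative, the right-hand side is strictly positive for some $\chi$, violating the criterion of Proposition \ref{prop:criterion-for-wa}.

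For part (2), the inclusion $\supseteq$ is a direct consequence of (1). For $\subseteq$, given $x$ not weakly admissible, Proposition \ref{prop:criterion-for-wa} yields a standard parabolic $P$ with Levi $M$, a reduction $(b_M,h)$, and $\chi \in X^*(P/Z_H)^+$ with $\deg \chi_*(\mathcal{E}_{b,x})_P > 0$. I would reduce to $M=M_\alpha$ maximal by decomposing $\chi$ as a non-negative combination of the fundamental dominant characters $\chi_\alpha$ attached to the maximal standard parabolics $P_\alpha \supseteq P$: at least one summand must still give positive degree. Since the line bundle attached to $\chi_\alpha$ depends only on the $P_\alpha$-reduction of $\mathcal{E}_{b,x}$ obtained by enlarging the $P$-reduction, and this $P_\alpha$-reduction is in turn induced by the reduction $b_{M_\alpha}$ of $b$ got from $b_M$ via $M \hookrightarrow M_\alpha$ (keeping the same $h$), we may replace $(P,M,b_M,\chi)$ by $(P_\alpha,M_\alpha,b_{M_\alpha},\chi_\alpha)$. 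Applying the formula displayed above to the unique Schubert cell $\mathcal{F}(H,\mu)_{P_\alpha}^{w}(C)$ containing $h^{-1}x$ then gives $\langle \chi_\alpha,\, w\mu - \nu_b\rangle > 0$, i.e., $w$ is $\mu$-negative for $M_\alpha$.

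The main technical obstacle is the degree computation, specifically verifying that the natural $P$-reduction of $\mathcal{E}_{b,x}$ produced by Proposition 2.4 of \cite{CFS} really has Levi part $\mathcal{E}_{b_M,y_M}$, and correctly tracking the shift of the first Chern class under the minuscule modification. This rests on the compatibility of the Bialynicki-Birula map (in the minuscule case) with the relative Bruhat stratification of $\mathcal{F}(H,\mu)$ and on Fargues' computation of Newton and Chern invariants of $\mathcal{E}_{b_M}$ in Theorem \ref{thm:Fargues}.
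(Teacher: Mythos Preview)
Your proposal is correct and follows essentially the same route as the paper: the key identification $(\mathcal{E}_{b,x})_P\times^P M\simeq \mathcal{E}_{b_M,\mathrm{pr}_{P,w}(h^{-1}x)}$ that you derive via the Bialynicki--Birula lift is exactly \cite[Lemma~2.6]{CFS}, which the paper invokes directly, and your degree formula and reduction to a maximal Levi $M_\alpha$ match the paper's argument almost verbatim. The paper is slightly terser in part~(2), simply noting that a reduction to $M$ yields one to every $M_\alpha\supseteq M$ and that the strict inequality survives for some $\chi=\tilde\omega_\alpha$, but this is the same content as your decomposition of $\chi$ into fundamental characters.
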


\begin{remark}Viehmann told us that this proposition is a reformulation of \cite[Corollary 4.5]{NGVi}. 
\end{remark}

\begin{proof} (1) Since $b=hb_M\sigma(h)^{-1}$, the element $h\in H(\breve F)$ induces an isomorphism of $H$-bundles 
\[
\iota=\iota_h:\mathcal E_{b_M}\times^{M}H\stackrel{\sim}{\longrightarrow}\mathcal E_{b}, 
\]  
such that, under the natural identification in Remark \ref{rem:Eb-canonically-trivialized}, the restriction of $\iota$ to $\Spec(\hat{\cO}_{X,\infty})=\Spec(B_{dR}^+)$ is the right-multiplication by $h$. 
So for $C$ a complete algebraically closed field extension of $\Fbar$ and for $x\in \mathcal F(H,\mu)(C)$, the isomorphism $\iota$ above induces a compatible isomorphism between the modifications of $H$-bundles (see also Remark \ref{rem:Ebx-depends-on-b}): 
\[
\left(\mathcal E_{b_M}\times^{M}H\right)_{h^{-1}x}\stackrel{\sim}{\longrightarrow}\mathcal E_{b,x}. 
\]
If furthermore $x\in h\cdot \mathcal F(H,\mu)_{P}^{w}$, and by \cite[Lemma 2.6]{CFS} there is an isomorphism
\[
(\E_{b,x})_P\times ^P M\simeq \E_{b_M, \mathrm{pr}_{P,w}(h^{-1}x)}.
\]
On the other hand, $w$ is $\mu$-negative for $M$, so $\langle w\mu-\nu_b,\chi\rangle >0$ for some $\chi\in X(P/Z_{G})^+$. Thus
\[
\deg(\chi_*(\mathcal E_{b,x})_P)=\deg(\chi_*(\mathcal E_{b_M,\mathrm{pr}_{P,w}(h^{-1}x)}))=\langle w\mu-\nu_{b_M},\chi\rangle >0.
\]
Therefore $x$ is not weakly admissible by Proposition \ref{prop:criterion-for-wa}.

(2) In view of (1) and the criterion for the weak admissibility in the quasi-split case (Proposition \ref{prop:criterion-for-wa}), this follows from the the following two facts: once an element $b$ has a reduction to a proper standard Levi $M$, it has a reduction to every maximal standard Levi containing $M$; moreover, if $\langle \nu_b-w\mu,\chi\rangle <0$ for some $\chi\in X^*(P/Z_H)^+$, then the same holds  with $\chi=\tilde w_{\alpha}$ for a certain $\alpha\notin \Delta_{0,M}$.
\end{proof}

Suppose $[b]\in A(H,\mu)_{basic}$ with $b_M$ a reduction of $b$ to a standard Levi $M$ of $H$. Recall that we have Newton stratifications
\[
\begin{split}\Fc(H, \mu)=&\coprod_{[b']\in B(H, \kappa(b)-\mu^{\#}, \nu_b-\mu^{\diamond})} \Fc(H, \mu, b)^{[b']}, \quad \textrm{and}\\
\Fc(M, \mu^w)=&\coprod_{[b'_M]\in B(M, \kappa_M(b_M)-\mu^{w,\#}, \nu_{b_M}-\mu^{w, \diamond})}\Fc(M, \mu^w, b_M)^{[b'_M]}.
\end{split}
\]
Let $P$ be the standard parabolic subgroup corresponding to $M$. We would like to compare these two Newton stratifications via the affine fibration $\mathrm{pr}_{P,w}$ in \eqref{eq:affine-fibration} above.

\begin{definition}\label{defn_extension}Let $M$ be a standard Levi subgroup of $H$, with $P$ the corresponding standard parabolic subgroup. Let $[b'^H]\in B(H)$ and $[b'_M]\in B(M)$. We say that $[b'^H]$ is an \emph{extension} of $[b'_M]$ if $\E_{b'^H}$ has a reduction $(\E_{b'^H})_P$ to $P$ such that $(\E_{b'^H})_P\times^P M\simeq \E_{b'_M}$. Let $[b']\in B(G)$ be the preimage of $[b'^H]$ via the identification $B(G)\simeq B(H)$. We also say that $[b']$ is an \emph{extension} of $[b'_M]$ if $[b'^H]$ is an extension of $[b'_M]$. 
\end{definition}

\begin{remark} \begin{enumerate}
   \item Suppose $H=\mathrm{GL}_n$ and $M=\mathrm{GL}_r\times \mathrm{GL}_{n-r}$ a standard Levi of $H$. An element $[b']\in B(H)$ is an extension of
\[
[b'_M]=([b_1''],[b_2''])\in B(M)=B(\mathrm{GL}_r)\times B(\mathrm{GL}_{n-r})
\]
if and only if the rank $n$ vector bundle $\mathcal E_{b'}$ on the Fargues-Fontaine curve $X$ is an extension of the vector bundle $\mathcal E_{b_2''}$ of rank $n-r$ by the vector bundle $\mathcal E_{b_1''}$ of rank $r$. 

 \item When $[b'_M]\in B(M)$ is basic with $\nu_{b'_M}$ anti-$H$-dominant, then the extensions $[b']\in B(G)$ of $[b'_M]$ are classified by \cite{BFH} for $\mathrm{GL}_n$ and by \cite{Vi} for arbitrary $G$. 
\end{enumerate}
\end{remark}

Suppose that $[b]\in A(H,\mu)_{basic}$ has a reduction $(b_M,h)$ to a proper standard Levi subgroup $M$ of $H$. Let $P$ be the standard parabolic subgroup corresponding to $M$. Then, for every $w\in W$ and for every $[b']\in B(H,\kappa(b)-\mu^{\sharp},\nu_b-w_0\mu^{\diamond})$, we have

\begin{equation}\label{eqn_comparison Newton stratification}
\mathrm{pr}_{P,w}(\Fc(H, \mu)_P^w\cap h^{-1}\Fc(H, \mu, b)^{[b']}) \subseteq  \coprod_{\substack{[b'_M]\in B(M, \kappa(b_M)-\mu^{w,\#}, \nu_{b_M}-\mu^{w,\diamond})\\ \textrm{such that } [b']\text{ is an  extension of }[b_M'] \\}}\Fc(M, \mu^w, b_M)^{[b'_M]}.
\end{equation}

\begin{proposition}\label{prop:modification M to G} The inclusion in \eqref{eqn_comparison Newton stratification} is an equality.
\end{proposition}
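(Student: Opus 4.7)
The inclusion in \eqref{eqn_comparison Newton stratification} is immediate from \cite[Lemma 2.6]{CFS} (compatibility of modifications with Levi reductions), so the content of the proposition is the reverse inclusion. Fix
$$[b'_M] \in B\bigl(M, \kappa_M(b_M) - \mu^{w,\#}, \nu_{b_M} - \mu^{w,\diamond}\bigr)$$
such that $[b']$ is an extension of $[b'_M]$, and fix a point $y \in \mathcal F(M, \mu^w, b_M)^{[b'_M]}(C)$. We must produce $x \in \mathcal F(H, \mu)_P^w(C)$ with $\mathrm{pr}_{P,w}(x) = y$ and $\mathcal E_{b, hx} \simeq \mathcal E_{b'}$. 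Our plan is to work in the $B_{dR}^+$-affine Grassmannian, build a natural candidate modification from the given data, and then perform a \emph{minuscule adjustment} to place the outcome in the correct Schubert cell.

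First, we build the candidate. Via Bialynicki--Birula for the minuscule cocharacter $\mu^w$, the point $y$ corresponds to a coset $m(t) \mu^w(t)^{-1} M(B_{dR}^+) \in \mathrm{Gr}_{M, \mu^w}^{B_{dR}}(C)$ encoding an isomorphism $\alpha : \mathcal E_{b_M}|_{X \setminus \{\infty\}} \xrightarrow{\sim} \mathcal E_{b'_M}|_{X \setminus \{\infty\}}$. The extension hypothesis supplies a $P$-reduction $(\mathcal E_{b'})_P$ of $\mathcal E_{b'}$ whose Levi quotient is $\mathcal E_{b'_M}$. Since $U$ is unipotent and $X \setminus \{\infty\}$ is affine, one may choose an isomorphism of $P$-bundles
$$(\mathcal E_{b'})_P|_{X \setminus \{\infty\}} \simeq (\mathcal E_{b'_M} \times^M P)|_{X \setminus \{\infty\}}$$
lifting the identity on Levi quotients. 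Composing with the $P$-induction of $\alpha$ and then inducing from $P$ to $H$ gives a modification $\mathcal E_b \rightsquigarrow \mathcal E_{b'}$ at $\infty$, represented via Beauville--Laszlo by an element $\tilde x \in \mathrm{Gr}_H^{B_{dR}}(C)$. By construction $\tilde x$ lies in $\mathrm{Gr}_P^{B_{dR}}(C)$ and projects to $y$ under the Levi quotient, so explicitly $\tilde x = \tilde u(t) m(t) \mu^w(t)^{-1} H(B_{dR}^+)$ for some $\tilde u(t) \in U(B_{dR})$.

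The second step, which is the technical heart, is the \emph{minuscule adjustment}: a priori $\tilde x$ need not lie in the minuscule Schubert cell $\mathrm{Gr}_{H, \mu}^{B_{dR}}(C)$, whereas we need an element lying in both $\mathrm{Gr}_{H, \mu}^{B_{dR}}(C)$ and $\mathrm{Gr}_P^{B_{dR}}(C)$ so that the inverse of Bialynicki--Birula returns a point of $\mathcal F(H, \mu)_P^w(C)$. The available freedom is the choice of the trivialization $(\mathcal E_{b'})_P|_{X \setminus \{\infty\}} \simeq (\mathcal E_{b'_M} \times^M P)|_{X \setminus \{\infty\}}$, which can be altered by any global automorphism of the associated unipotent group scheme on $X \setminus \{\infty\}$; at $\infty$ this translates into the freedom to multiply $\tilde u(t)$ on the left by a suitable element of $U(B_{dR})$. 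Using the root decomposition of $U$ and the fact that the weights $\langle \alpha, \mu^w \rangle$ for $\alpha \in \Phi(U)$ all lie in $\{-1, 0, 1\}$---a direct consequence of $\mu$ being minuscule---one can adjust $\tilde u(t)$ in each root direction, in turn, to bring $\tilde u(t) m(t) \mu^w(t)^{-1} H(B_{dR}^+)$ into $\mathrm{Gr}_{H, \mu}^{B_{dR}}(C)$. This root-by-root argument, modelled on a rank-one computation in $\mathrm{SL}_2$, is where the minuscule hypothesis becomes indispensable and is the main obstacle of the proof: without it the induced $H$-modification is typically of higher type than $\mu$, and no amount of adjustment will correct this, which is consistent with the intuition recorded in the introduction. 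Once the adjustment has been performed, Bialynicki--Birula returns the required $x \in \mathcal F(H, \mu)_P^w(C)$, completing the reverse inclusion.
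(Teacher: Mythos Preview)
Your overall architecture---build a candidate $P$-modification from the extension data and then correct it so that the induced $H$-modification lands in the minuscule Schubert cell---is exactly the paper's. Two points, however, deserve correction.

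First, the ``minuscule adjustment'' you flag as the main obstacle is much simpler than you suggest, and the minuscule hypothesis plays \emph{no} role in it. Once you have $\tilde x = \tilde u\, m\, \mu^{w}(t)^{-1} H(B_{dR}^+)$ with $\tilde u\in R_u(P)(B_{dR})$ and $m\in M(B_{dR}^+)$, the paper simply observes that one may factor
\[
\tilde u = u'\,u'',\qquad u'\in R_u(P)(B_e),\quad u''\in R_u(P)(B_{dR}^+),
\]
where $B_e=H^0(X\setminus\{\infty\},\cO_X)$. This is immediate for $\mathbb G_a$ from the surjectivity of $B_e\oplus B_{dR}^+\to B_{dR}$ (the fundamental exact sequence), and passes to $R_u(P)$ by the usual d\'evissage through a filtration by additive groups. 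Composing the $P$-trivialization over $X\setminus\{\infty\}$ with $(u')^{-1}$ replaces $\tilde u$ by $u''\in H(B_{dR}^+)$, so the new class $u''m\,\mu^{w}(t)^{-1}H(B_{dR}^+)$ lies in $H(B_{dR}^+)\mu(t)^{-1}H(B_{dR}^+)/H(B_{dR}^+)=\Gr_{H,\mu}^{B_{dR}}(C)$ automatically. No bound on the weights $\langle\alpha,\mu^w\rangle$ is used, and no rank-one $\mathrm{SL}_2$ computation is needed. Your assertion that the minuscule condition is ``indispensable'' at this step is therefore incorrect; minuscule enters only through the bijectivity of the Bialynicki--Birula map, which you need at the end to convert the adjusted class back into a point of $\Fc(H,\mu)_P^w(C)$.

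Second, a point you pass over silently is that the available freedom is precisely $R_u(P)(B_e)$, not some twisted form. The paper secures this by invoking Ansch\"utz's theorem that the $M$-bundle $\mathcal E_{b_M}$ is trivial over $X\setminus\{\infty\}$; this both produces the section of $\mathcal E_{b',P}\times^P M$ over $U$ and guarantees that the unipotent automorphism group over $U$ is the constant group $R_u(P)$, so that the factorization $\tilde u=u'u''$ above really does correspond to a global adjustment of the $P$-trivialization. Your sketch (``$U$ unipotent, $X\setminus\{\infty\}$ affine'') suffices for the mere existence of a $P$-lift, but not obviously for identifying the freedom with $R_u(P)(B_e)$.
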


\begin{proof} Since $(b_M,h)$ is a reduction of $b$ to $M$, we have $b=hb_M\sigma(h)^{-1}$. So by Remark \ref{rem:Ebx-depends-on-b}, we have \[
h^{-1}\cdot \mathcal F(G,\mu,b)^{[b']}=\mathcal F(G,\mu,b_M)^{[b']}\subset \mathcal F(G,\mu). 
\]
Here for the second term above, we view $b_M\in M(\breve F)$ as an element of $G(\breve{F})$. Therefore, replacing $b\in H(\breve F)$ by $b_M\in M(\breve F)\subset H(\breve F)$, we shall assume $h=1$ for the remaining part of the proof.  

Let $[b'_M]\in B(M, \kappa(b_M)-\mu^{w,\#}, \nu_{b_M}-\mu^{w,\diamond})$ such that $[b']$ is an  extension of $[b_M']$, and let $x'\in \mathcal F(M,\mu^{w},b_M)^{[b_M']}$. We need to find $x\in \mathcal F(H,\mu)^{w}_P\cap \mathcal F(H,\mu,b)^{[b']}$ such that $\mathrm{pr}_{P,w}(x)=x'$. Write $U=X\setminus\{\infty\}$. Recall that $\mathcal E_{b_M'}=\mathcal E_{b_M,x'}$ is a modification of $\mathcal E_{b_M}$: so there is an isomorphism
\begin{equation}\label{eq:iso-M-bundles-over-U}
\mathcal E_{b_M}|_U\stackrel{\sim}{\longrightarrow} \mathcal E_{b_M,x'}|_U=\mathcal E_{b'_M}|_U\simeq\mathcal E_{b',P}|_U\times^P M
\end{equation}
of $M$-bundles over $U$, where $\mathcal E_{b',P}$ is defined in Definition \ref{defn_extension} as $[b']$ is an extension of $[b'_M]$. We claim that the isomorphism \eqref{eq:iso-M-bundles-over-U} can be extended to a modification of $P$-bundles over $U$
\begin{equation}\label{eq:iso-P-bundles-over-U}
\mathcal E_{b_M}|_U\times^M P \stackrel{\sim}{\longrightarrow} \mathcal E_{b',P}|_U.
\end{equation}
Indeed, by a result of Ansch\"utz (\cite[Theorem 4]{Ans}), the $M$-bundle $\mathcal E_{b_M}$ is trivial over $U$. So the isomorphism \eqref{eq:iso-M-bundles-over-U} corresponds to a section $s$ of $\mathcal E_{b',P}\times^PM$ over $U$. We want to lift the section $s$ to a section of $\mathcal E_{b',P}$ over $U$ through the canonical map
\[
\mathcal E_{b',P}\longrightarrow \mathcal E_{b',P}\times^{P}M.
\]
The latter makes $\mathcal E_{b',P}$ a $R_u(P)$-bundle over $\mathcal E_{b',P}\times^{P}M$, where $R_u(P)$ denotes the unipotent radical of $P$. Consider the cartesian diagram below
\[
\xymatrix{\mathcal U\ar[rr]\ar[d] & & \mathcal E_{b',P}\ar[d] \\ U\ar[rr]^s &  & \mathcal E_{b',P}\times^PM}.
\]
In particular, $\mathcal U$ is a $R_u(P)$-bundle over $U$. Since $U$ is affine and $R_u(P)$ is unipotent over $F$ (thus a successive extension of $\mathbb G_a$), every $R_u(P)$-bundle must be trivial.  So $\mathcal U$ has a section over $U$, giving a section of $\mathcal E_{b',P}$ over $U$ lying above $s$, as claimed.

On the other hand, the restriction $
\mathcal E_{b_M}|_{\Spec(\hat{\cO}_{X,\infty})}$ is canonically trivialized (Remark \ref{rem:Eb-canonically-trivialized}), so the modification \eqref{eq:iso-P-bundles-over-U} corresponds to a coset
\[
x_PP(B_{dR}^+)\in \mathrm{Gr}^{B_{dR}}_{P}(C)=P(B_{dR})/P(B_{dR}^+),
\]
whose image in $\mathrm{Gr}_{M}^{B_{dR}}(C)=M(B_{dR})/M(B_{dR}^+)$ is
\[
\pi_{M,\mu^{w}}^{-1}(x')=m\mu^{w,-1}(t)M(B_{dR}^+)\in M(B_{dR}^+)\mu^{w,-1}(t)M(B_{dR}^+)/M(B_{dR}^+)=\mathrm{Gr}_{M,\mu^{w}}^{B_{dR}}(C).
\]
Here
\[
\pi_{M,\mu}:\mathrm{Gr}_{M,\mu^{w}}^{B_{dR}}(C)\stackrel{\sim}{\longrightarrow} \mathcal F(M,\mu^{w})(C)
\]
is the Bialynicki-Birula map (for $M$) recalled in \S~\ref{sec:modification-of-G-bundles}. Hence the element $x_P$ is of the form
\[
um\mu^{w,-1}(t)m'\in P(B_{dR}),
\]
with $u\in R_u(P)(B_{dR})$, $m,m'\in M(B_{dR}^+)$. But we can further write $u$ as
\[
u=u 'u ''
\]
with $u'\in R_u(P)(U)$ and $u''\in P(B_{dR}^+)$: indeed, using the fact that $R_u(P)$ is a successive extension of $\mathbb G_a$, by a standard d\'evissage, one reduces to the similar assertion for the addition group $\mathbb{G}_a$, which is clear. Now, composing \eqref{eq:iso-P-bundles-over-U} with ${u'}
^{-1}\in R_u(P)(U)\subset G(U)$, we get a new modification $\mathcal E_{b_M}|_U\times^M P \stackrel{\sim}{\rightarrow} \mathcal E_{b',P}|_U$ compatible with \eqref{eq:iso-M-bundles-over-U}, which is now minuscule of type $\mu^{w}$. From this new modification, we deduce a minuscule modification of type $\mu$
\[
\mathcal E_{b}|_U\stackrel{\sim}{\longrightarrow} \mathcal E_{b'},
\]
or equivalently, a point $x\in \mathcal F(H,\mu,b)_P^{[b']}$ lying above $x'$. This completes the proof of our proposition.
\end{proof}

\begin{corollary}\label{cor:modification M to G} Let $\mu$ be a minuscule cocharacter of $H$, and $[b]\in A(H,\mu)_{basic}$. Assume that $[b]$ has a reduction $(b_M,h)$ to some standard Levi subgroup $M$ of $H$, with $b_M\in M(\breve F)$ and $h\in H(\breve F)$. Let $[b']\in B(H, \kappa(b)-\mu^{\#}, \nu_{b}-\mu^{\diamond})$ and $w\in W$. Then
\[
\left(h\cdot \Fc(H, \mu)_P^{w}\right)\cap \Fc(H, \mu, b)^{[b']}\neq \emptyset.
\]
if and only if $[b']$ is an extension of a certain $[b_M']\in B(M,\kappa(b_M)-\mu^{w,\#},\nu_{b_M}-\mu^{w,\diamond})$.
\end{corollary}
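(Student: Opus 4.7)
The corollary is a direct unwinding of Proposition \ref{prop:modification M to G}; the only real ingredient needed beyond that proposition is handling the translation by $h$. I would proceed as follows.

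First, I absorb the element $h$. Since $b = h b_M \sigma(h)^{-1}$, Remark \ref{rem:Ebx-depends-on-b} supplies a canonical isomorphism $\E_{b_M, y} \stackrel{\sim}{\to} \E_{b, hy}$ for every point $y \in \Fc(H, \mu)(C)$. Consequently, left multiplication by $h^{-1}$ induces a bijection
\[
(h \cdot \Fc(H, \mu)_P^w) \cap \Fc(H, \mu, b)^{[b']} \stackrel{\sim}{\longrightarrow} \Fc(H, \mu)_P^w \cap \Fc(H, \mu, b_M)^{[b']},
\]
where $b_M$ is now viewed as an element of $H(\breve F)$. Thus we are reduced to the case $h = 1$.

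Second, I invoke the equality in Proposition \ref{prop:modification M to G}. The affine fibration $\mathrm{pr}_{P, w}\colon \Fc(H, \mu)_P^w \to \Fc(M, \mu^w)$ restricts to a surjection from $\Fc(H, \mu)_P^w \cap \Fc(H, \mu, b_M)^{[b']}$ onto the disjoint union
\[
\coprod_{[b'_M]} \Fc(M, \mu^w, b_M)^{[b'_M]},
\]
where $[b'_M]$ runs through those classes in $B(M, \kappa(b_M) - \mu^{w, \#}, \nu_{b_M} - \mu^{w, \diamond})$ such that $[b']$ is an extension of $[b'_M]$. Hence the intersection is non-empty iff this disjoint union is non-empty, which yields the claimed equivalence once both implications are unpacked.

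The $(\Rightarrow)$ direction is immediate: for any $x$ in the intersection, setting $x' := \mathrm{pr}_{P, w}(x)$, the canonical identification $(\E_{b_M, x})_P \times^P M \simeq \E_{b_M, x'}$ from \cite[Lemma 2.6]{CFS} together with $\E_{b_M, x} \simeq \E_{b'}$ shows that $[b']$ is an extension of $[b'_M] := [\text{Newton class of } \E_{b_M, x'}]$, and the Newton/Kottwitz computation of a minuscule modification places $[b'_M]$ in the required Kottwitz set. For the $(\Leftarrow)$ direction, the extension data $\E_{b', P} \times^P M \simeq \E_{b'_M}$ combined with the canonical trivialization of $\E_{b_M}$ at $\infty$ (Remark \ref{rem:Eb-canonically-trivialized}) realizes an $M$-modification of $\E_{b_M}$ of type $\mu^w$ yielding $\E_{b'_M}$, hence a point $x' \in \Fc(M, \mu^w, b_M)^{[b'_M]}$; the lifting procedure in the proof of Proposition \ref{prop:modification M to G} — glue over $U = X \setminus \{\infty\}$ using the $P$-reduction $\E_{b', P}$, then correct the gluing element by a factor in $R_u(P)(U)$ to make the resulting $H$-modification of type exactly $\mu$ — produces the desired $x$ lying above $x'$.

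The main technical step, namely adjusting a possibly non-minuscule $H$-modification into an honestly minuscule one of type $\mu$ through a unipotent correction along $P$, is already done inside Proposition \ref{prop:modification M to G}, so no substantive new obstacle arises at the level of the corollary.
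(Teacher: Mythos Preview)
Your proposal is correct and follows the same approach as the paper, which simply says the corollary follows directly from Proposition \ref{prop:modification M to G}; you have merely unpacked the reduction to $h=1$ (already implicit in the proof of the proposition) and spelled out how the equality of \eqref{eqn_comparison Newton stratification} yields both implications. One minor imprecision: your construction of $x'$ in the $(\Leftarrow)$ direction is slightly misstated---the extension data alone does not produce an $M$-modification of $\E_{b_M}$; rather, $x'$ exists simply because $[b'_M]$ lies in the relevant Kottwitz set and $b_M$ is basic in $M$ (as $\nu_{b_M}=\nu_b$ is central), so the corresponding Newton stratum on $\Fc(M,\mu^w)$ is non-empty---but this does not affect the argument, since you then correctly invoke the lifting procedure from the proposition.
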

\begin{proof} This follows directly from Proposition \ref{prop:modification M to G}.
\end{proof}

This corollary allows us to have a criterion about whether a single Newton stratum is completely contained in the weakly admissible locus for a general connected reductive group.
\begin{theorem} \label{thm:criterion-for-a-singule-stratum}Let $\mu$ be a minuscule cocharacter of the \emph(not necessarily quasi-split\emph) connected reductive group $G$, and $b\in G(\breve F)$ such that $[b]\in B(G,\mu)_{basic}$. Let $[b']\in B(G, 0, \nu_b-\mu)$. Then
\[
\Fc(G, \mu, b)^{[b']}\nsubseteq\Fc(G, \mu, b)^{wa}
\]
if and only if there exists some maximal proper standard Levi subgroup $M$ of $H$, the quasi-split inner form of $G$ over $F$, and $w\in W$ satisfying the following two properties:
\begin{enumerate}
\item $b$ has a reduction $b_M$ to $M$ and $w$ is $\mu$-negative for $M$; and
\item $[b']$ is an extension of some $[b'_M]\in B(M, \kappa(b_M)-\mu^{w,\#}, \nu_{b_M}-\mu^{w,\diamond})$.
\end{enumerate}
\end{theorem}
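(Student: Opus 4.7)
The plan is to combine Proposition \ref{prop:non-wa-locus}(2), which describes the complement of the weakly admissible locus as a union of translated Schubert cells, with Corollary \ref{cor:modification M to G}, which translates the intersection of such a cell with a Newton stratum into an extension condition. Almost everything is already prepared.

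First, I would reduce to the quasi-split case $G=H$. A choice of inner twisting $G_{\breve F}\iso H_{\breve F}$ gives compatible identifications $G(\breve F)=H(\breve F)$, $B(G)\simeq B(H)$ (sending $[b]\mapsto [b^H]$, $[b']\mapsto [b'^H]$) and $\Fc(G,\mu)\simeq \Fc(H,\mu)$ of adic spaces over $\breve E$. Under these identifications the Newton stratification, the weakly admissible locus, and the notion of ``$b$ has a reduction $b_M$ to a standard Levi $M$ of $H$'' (as well as the notion of extension of Definition \ref{defn_extension}, which is defined via $H$) all correspond. Hence it suffices to prove the theorem for the triple $(H,\mu,b^H)$, and from now on I assume $G=H$.

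Next, the condition $\Fc(H,\mu,b)^{[b']}\nsubseteq \Fc(H,\mu,b)^{wa}$ is equivalent to the stratum meeting the complement of the weakly admissible locus. Applying Proposition \ref{prop:non-wa-locus}(2), this complement is
\[
\bigcup_{\alpha\in\Delta_0}\ \bigcup_{(b_{M_\alpha},h)}\ \bigcup_{w\ \mu\text{-negative for }M_\alpha} h\cdot \Fc(H,\mu)_{P_\alpha}^w,
\]
where the middle union is over reductions of $b$ to the maximal proper standard Levi $M_\alpha$ and the inner union is over $w\in W$ as indicated. Therefore the Newton stratum is not contained in the weakly admissible locus if and only if there exist $\alpha\in\Delta_0$, a reduction $(b_{M_\alpha},h)$ of $b$ to $M_\alpha$, and $w\in W$ which is $\mu$-negative for $M_\alpha$, such that
\[
\bigl(h\cdot \Fc(H,\mu)_{P_\alpha}^w\bigr)\cap \Fc(H,\mu,b)^{[b']}\neq\emptyset.
\]

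Finally, Corollary \ref{cor:modification M to G} says that the non-emptiness of this last intersection is equivalent to $[b']$ being an extension of some $[b'_{M_\alpha}]\in B(M_\alpha,\kappa(b_{M_\alpha})-\mu^{w,\#},\nu_{b_{M_\alpha}}-\mu^{w,\diamond})$. Splicing these equivalences together yields exactly the pair of conditions (1) and (2) in the statement, with $M=M_\alpha$ ranging over the maximal proper standard Levis of $H$. The main content is therefore already carried by the two preparatory results, and no additional work beyond the bookkeeping above and the inner-twist transfer is needed; the only conceptual point to be careful about is that the ``$\mu$-negativity'' of $w$ and the Kottwitz set that appears in the extension clause must both be read off from the same reduction $(b_M,h)$ and the same $w$, which is built into the formulation of Corollary \ref{cor:modification M to G}.
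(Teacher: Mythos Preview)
Your proposal is correct and follows essentially the same approach as the paper: reduce to the quasi-split case via a fixed inner twisting (so that Newton strata and the weakly admissible locus transfer), then combine Proposition~\ref{prop:non-wa-locus}(2) with Corollary~\ref{cor:modification M to G}. The paper's own proof is just a terser version of exactly this argument.
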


\begin{proof} Recall the identification \eqref{eq:BG-and-BH} between $B(G)$ and $B(H)$ induced by the class
\[
\xi\in H^1(F,H)\simeq \pi_1(H)_{\Gamma, tor}
\]
of $G$ as an inner form of $H$, under which $B(G,\mu)$ is identified with $B(H,\mu^{\#}+\xi,\mu^{\diamond})$. Moreover, using a fixed inner twisting
$G_{\breve F}\stackrel{\sim}{\longrightarrow} H_{\breve F}$, we have an identification
\[
\mathcal F(G,\mu)\stackrel{\sim}{\longrightarrow}\mathcal F(H,\mu)
\]
of flag varieties over $\breve E$, under which the Newton stratification (resp. the Harder-Narasimhan stratification) on $\mathcal F(G,\mu)$ for the triple $(G, \mu, b)$ is identified with the corresponding stratification on $\mathcal F(H,\mu)$ for the triple $(H, \mu, b^H)$, where $[b^H]\in B(H)$ is the image of $[b]$ via $B(G)\simeq B(H)$. Therefore the theorem follows immediately from Corollary \ref{cor:modification M to G} and Proposition \ref{prop:non-wa-locus} (2). 
\end{proof}

\subsection{An examples in the $\mathrm{GL}_n$-case}  In this \S, we illustrate an applications of Theorem \ref{thm:criterion-for-a-singule-stratum}, and we refer to the next section for some similar but more complicated applications of the same result.

Consider the case where $G=\mathrm{GL}_{10}$ and $\mu=(1^{(4)},0^{(6)})$. Let $b\in B(G)_{basic}$ with $\kappa(b)=\mu^{\#}$. In particular,
\[
\nu_b=\left(\frac{2}{5}^{(10)}\right)=\left(\underbrace{\frac{2}{5},\frac{2}{5},\cdots,\frac{2}{5}}_{10}\right)\in \mathcal N(\mathrm{GL}_{10}).
\]
The element $b$ has a reduction to only one proper maximal standard Levi
\[
M=\mathrm{GL}_5\times \mathrm{GL}_5\hookrightarrow G= \mathrm{GL}_{10},
\]
and if $w\in W$ is such that $\langle \nu_b-w\mu,\chi\rangle <0$ for some $\chi\in X^*(P/Z_G)^+$, then
\[
w\mu \in \left\{\left((1^{(3)},0^{(2)}),(1,0^{(4)})\right),\left((1^{(4)},0),(0^{(5)})\right)\right\}\in \mathcal N(M)=\mathcal N(\mathrm{GL}_5)\times \mathcal N(\mathrm{GL}_5).
\]
So
\[
\nu_{b_M}\mu^{w,-1}\in \left\{\left(\left(\frac{2}{5}^{(2)},-\frac{3}{5}^{(3)}\right),\left(\frac{2}{5}^{(4)},-\frac{3}{5}\right)\right),\left(\left(\frac{2}{5},-\frac{3}{5}^{(4)}\right),\left(\frac{2}{5}^{(5)}\right)\right)\right\}\in \mathcal N(M).
\]
Therefore, by Theorem \ref{thm:criterion-for-a-singule-stratum}, for $[b']\in B(G,0,\nu_b\mu^{-1})$, the Newton stratum $\mathcal F(G,\mu,b)^{[b']}$ contains a point that is not weakly admissible if and only if $\mathcal E_{b'}$ is an extension of $\mathcal E'$ by $\mathcal E''$, where the pair $(\mathcal E',\mathcal E'')$ is either contained in
\[
\left\{\cO\left(-\frac{1}{3}\right)\oplus \cO^2, \cO\left(-\frac{1}{4}\right)\oplus \cO, \cO\left(-\frac{1}{5}\right)\right\}\times \left\{\cO\left(\frac{1}{5}\right),\cO\oplus \left(\frac{1}{4}\right), \cO^2\oplus \cO\left(\frac{1}{3}\right), \cO^3\oplus \cO\left(\frac{1}{2}\right)\right\},
\]
or contained in
\[
\left\{\cO\left(-\frac{2}{5}\right)\right\}\times \left\{\cO\left(\frac{2}{5}\right),\cO\oplus \cO\left(\frac{1}{2}\right)^2,\cO\left(\frac{1}{3}\right)\oplus \cO\left(\frac{1}{2}\right)\right\}.
\]
Observe that, for each choice of $(\mathcal E',\mathcal E'')$ above, the slopes of $\mathcal E'$ are less or equal to those of $\mathcal E''$. In particular, every extension of $\mathcal E'$ by $\mathcal E''$ splits. On the other hand,
\[
\nu_b\mu^{-1}=\left(\frac{2}{5}^{(6)},\frac{-3}{5}^{(4)}\right),
\]
so there are $26$ non-empty Newton strata, listed as follows:
\begin{itemize}
\item $\nu_{b'}=\left(\frac{2}{5}^{(5)}, 0^{(i)}, -\frac{2}{5-i}^{(5-i)}\right)$ for $i=0,1$; or $\left(\frac{2}{5}^{(5)}, -\frac{1}{3}^{(3)},-\frac{1}{2}^{(2)}\right)$. We have $3$ possibilities in this case, and the polygons are HN-decomposable, in the sense that they all touch the polygon for $\nu_b\mu^{-1}$. As a result, the corresponding Newton strata do not contain any weakly admissible point.

\item $\nu_{b'}=\left(\frac{1}{3}^{(2)},-\frac{1}{2}^{(2)}\right)$. So $\mathcal E_{b'}$ is not a direct sum of $\mathcal E'\oplus \mathcal E'' $ for all possible choices $(\mathcal E',\mathcal E'')$, and the corresponding Newton stratum is completely contained in $\mathcal F(G,\mu,b)^{wa}$.

\item $\nu_{b'}=\left(\frac{1}{i}^{(i)},0^{(j)},-\frac{1}{10-i-j}^{(10-i-j)}\right)$ for $3\leq i\leq 8$ and $0\leq j\leq 8-i$. We have $21$ possibilities for this case. It is easy to see that, $\mathcal E_{b'}=\mathcal E'\oplus \mathcal E''$ for some choice $(\mathcal E',\mathcal E'')$ if and only if $i\leq 5$ and $i+j\geq 5$. So in this case, we get $9$ Newton strata that are completely contained in the weakly admissible locus.

\item $\nu_{b'}=\left(0^{(6)}\right)$, which corresponds to the admissible locus $\mathcal F(G,\mu,b)^{a}$ thus is completely contained in $\mathcal F(G,\mu,b)^{wa}$.
\end{itemize}
In summary, there are $26$ non-empty Newton strata in the Newton stratification of $\mathcal F(G,\mu)$, and $11$ of them are completely contained in the weakly admissible locus.

\begin{remark} The reason why we can determine all the Newton strata completely contained in the weakly admissible locus in this example is that all the involved extensions of $M$-bundles to $G$-bundles are trivial. In general, it's a difficult question to determine whether a $G$-bundle is an extension of a $M$-bundle. In the next section, we will study this question for $\mathrm{GL}_n$.
\end{remark}

\section{Extensions of Vector bundles over Fargues-Fontaine curve}\label{Sec_extension}

As seen in the previous section, to determine if a single Newton stratum $\mathcal F(G,\mu,b)^{[b']}$ is completely contained in the weakly admissible locus $\mathcal F(G,\mu,b)^{wa}$, we need to have a classification of extensions of $G$-bundles over the Fargues-Fontaine curve. In this section, we give such a classification for $\mathrm{GL}_n$ in an inductive way: see also the appendix below for a direct classification in some special cases. As an application this result, this gives an algorithm to determine which Newton strata are completed contained in the weakly addmissible locus for $G=\mathrm{GL}_n$.

\subsection{Inductive classification of extensions of vectors bundles}

For $n\in\N$, let
\[
\cN(n):=\cN(\mathrm{GL}_n)\subset \mathbb Q^n.
\]
Let $T\subset \mathrm{GL}_n$ be the diagonal torus, and $B\subset \mathrm{GL}_n$ the Borel subgroup of upper-triangular matrices. So $\mathcal N(n)$ can be identified with the set of rational cocharacters of $T$ which are dominant relative to the positive coroots defined by $B$. In other words,
\[
\mathcal N(n)=X_*(T)_{\mathbb Q}^+=\{(a_1,\ldots, a_n)\in \mathbb Q^n| a_1\geq a_2\geq \cdots \geq a_n\}.
\]
For $\ua\in \cN(n)$, let $\cO(\ua)$ be the corresponding vector bundle of rank $n$ over the Fargues-Fontaine curve $X$. We say that $\ua$ is semistable (resp. stable) if the corresponding vector bundle $\cO(\ua)$ is semistable (resp. stable). In general, the stable (resp. semi-stable) blocks in $\cO(\ua)$ are also called the stable (resp. semistable) blocks in $\ua$.
For $\ua\in \cN(n)$ and $\ub\in \cN(m)$, their direct sum
\[
\ua\oplus \ub\in \cN(n+m)
\]
is defined in such a way that $\cO(\ua\oplus \ub)=\cO(\ua)\oplus \cO(\ub)$. For $\ua=(a_1,\ldots, a_n)\in \cN(n)$, we define its dual $\ua^{\vee}$ by
\[
\ua^{\vee}:=(-a_n,\ldots, -a_1)\in \cN(n).
\]
In other words, $\ua^{\vee}=-w_0\ua$. Clearly $\cO(\ua^{\vee})=\cO(\ua)^{\vee}$. Finally, for $\uc\in \mathbb Q^n$, set
\[
S_n\uc:=\{(c_{\sigma(1)},\ldots, c_{\sigma(n)})\in \mathbb Q^n | \sigma\in S_n\}.
\]

\begin{definition}\label{Def_tildeExt} Let $r,s\in \N$, $\uc\in \cN(r)$ and $\ud\in \cN(s)$. Set $n=r+s$ and define the following two subsets $\Ext^1(\uc,\ud)$ and $\widetilde{\Ext}^1(\uc,\ud)$ of $\cN(n)$:
\begin{enumerate}
\item $\Ext^1(\uc, \ud)$ is the set of $\ua\in \cN(n)$ such that there exists a short exact sequence
\[
0\longrightarrow \cO(\ud)\longrightarrow \cO(\ua)\longrightarrow \cO(\uc)\longrightarrow 0,
\]
or equivalently, that $\cO(\ua)$ is an extension of $\cO(\uc)$ by $\cO(\ud)$;
\item $
\widetilde{\Ext}^1(\uc, \ud)$ is the set of $\ua\in \cN(n)$ satisfying the following condition: there exists a partition
\[
\{1,\ldots, n\}=H\coprod K
\]
of $\{1,\ldots, n\}$ with
\[
H=\{h_1<h_2<\ldots <h_r\} \quad \textrm{and}\quad K=\{k_1<k_2\ldots<k_s\},
\]
and $\ub\in \mathbb Q^n$ with
\[
(b_{h_1},\ldots, b_{h_r})\in S_r\uc\quad  \textrm{and} \quad (b_{k_1},\ldots,b_{k_s})\in S_s\ud,
\]
such that the following properties hold:
\begin{enumerate}
\item for any $i\in H$, $b_i\geq a_i$;
\item for any $i\in K$, $b_i\leq a_i$;
\item for any $1\leq l\leq n$, $\sum_{i=1}^{l}b_i\geq \sum_{i=1}^{l}a_i$, with equality if $l=n$.
\end{enumerate}
\end{enumerate}
\end{definition}

\begin{remark}\begin{enumerate}
\item The combinatorial definition of $\widetilde{\Ext}^1$ is motivated by \cite{Schl} in which the extensions of vector bundles on $\mathbb{P}^1$ over an algebraically closed field are classified in a similar way. After we finish our work, we noticed that similar condition is also considered independently by Hong in \cite{Ho2}.
\item In the definition of $\widetilde{\Ext}^1$, if an element $\ub\in \mathbb Q^n$ satisfies the conditions (2.a)-(2.c) above and if we take $\tilde{\ub}\in \mathbb Q^n$ such that
\[
(\tilde{\ub}_{h_1},\ldots,\tilde{\ub}_{h_r})=(c_1,\ldots,c_r), \quad \textrm{and}\quad  (\tilde{\ub}_{k_1},\ldots,\tilde{\ub}_{k_s})=(d_1,\ldots,d_s),
\]
then $\tilde{\ub}$ satisfies equally the conditions (2.a)-(2.c) above.
\end{enumerate}
\end{remark}

These two subsets of $\cN(n)$ are closely related by the following proposition:

\begin{proposition}\label{Prop_tildeExt is Necessary condition} For $r,s\in \mathbb N$, $\uc\in \cN(r)$ and $\ud\in \cN(s)$, we have $\Ext^1(\uc,\ud)\subset \widetilde{\Ext}^1(\uc,\ud)$.
\end{proposition}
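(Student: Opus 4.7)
The plan is to derive the combinatorial condition defining $\widetilde{\Ext}^1(\uc,\ud)$ directly from slope estimates on the Fargues--Fontaine curve, exploiting the two structural constraints imposed by the extension $0\to\cO(\ud)\to\cO(\ua)\to\cO(\uc)\to 0$: the subbundle $\cO(\ud)\subset\cO(\ua)$ and the quotient bundle $\cO(\uc)$. The two key inputs are the classical Harder--Narasimhan subbundle/quotient bounds: any rank-$l$ subbundle of $\cO(\ua)$ has degree $\le a_1+\cdots+a_l$, and dually for quotients. Recall also that on the Fargues--Fontaine curve, the Harder--Narasimhan filtration of a vector bundle splits canonically as a direct sum of its semistable isotypic pieces.

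My construction of the partition $\{1,\ldots,n\}=H\coprod K$ and the auxiliary vector $\ub$ proceeds by induction on $n=r+s$, splitting off the top HN piece of $\cO(\ua)$ at each step. Consider the maximal destabilizing subbundle $\mathcal{A}_1\subset\cO(\ua)$, which is semistable of slope $a_1$ and has rank equal to the multiplicity of $a_1$ in $\ua$. Let $\mathcal{D}'_1\subset\cO(\ud)$ be the saturation of $\mathcal{A}_1\cap\cO(\ud)$ in $\cO(\ud)$, and let $\mathcal{C}'_1\subset\cO(\uc)$ be the saturation of the image of $\mathcal{A}_1$ in $\cO(\uc)$. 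Their ranks partition the top $\mathrm{rank}(\mathcal{A}_1)$ positions of $\ua$ into positions in $K$ (where $b_i$ is drawn from the slopes of $\mathcal{D}'_1$, bounded above by $a_1$ by semistability of $\mathcal{A}_1$) and positions in $H$ (where $b_i$ is drawn from the slopes of $\mathcal{C}'_1$, bounded below by $a_1$ using the quotient bound on the image). Quotienting by $\mathcal{A}_1$, $\mathcal{D}'_1$, and $\mathcal{C}'_1$ reduces to a smaller extension, to which the inductive hypothesis applies.

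With this setup, conditions (2.a) and (2.b) of Definition~\ref{Def_tildeExt} are essentially built in: positions in $H$ are assigned slopes from $\cO(\uc)$-subquotients (hence $\ge a_i$), while positions in $K$ receive slopes from $\cO(\ud)$-subquotients sitting inside $\mathcal{A}_l/\mathcal{A}_{l-1}$ (hence $\le a_i$). Condition (2.c) on partial sums follows from the HN subbundle bound applied to the rank-$l$ subbundle of $\cO(\ua)$ generated by the first $l$ positions of the construction: its degree is at most $\sum_{i\le l}a_i$, which translates into $\sum_{i\le l}b_i\ge\sum_{i\le l}a_i$ via a telescoping identity involving the degrees of the saturated pieces. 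Equality at $l=n$ holds by additivity of degree in the exact sequence.

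The main obstacle lies in ensuring that the slopes assigned to positions in $K$ (resp.\ $H$) form a permutation of $\ud$ (resp.\ $\uc$). On the Fargues--Fontaine curve, a semistable piece $\cO(\lambda)^m$ of rational slope $\lambda=d/h$ in lowest terms consists of rank-$h$ indecomposable blocks, and such a piece does not admit rank-one subbundles of the same slope when $h>1$; consequently, the slopes of $\ud$ that arise within a single semistable piece of $\cO(\ua)$ need not all equal $\lambda$, and the matching must be performed block by block. I expect to handle this by carrying out the induction one \emph{semistable piece} at a time rather than one slope at a time, reducing within each piece to a base case in which the ambient bundle is isotypic semistable---a case which can be analysed by a direct combinatorial argument reminiscent of Schlesinger's treatment on $\mathbb{P}^1$.
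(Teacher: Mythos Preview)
Your proposal has a genuine gap at exactly the point you flag: ensuring that the slopes you assign to positions in $K$ and $H$ assemble into permutations of $\ud$ and $\uc$. Your inductive construction is essentially Schlesinger's argument for $\mathbb{P}^1$ transplanted to the Fargues--Fontaine curve, and it runs into trouble precisely where the two settings differ: a saturated subbundle $\mathcal{D}'_1$ of $\cO(\ud)$ need not have HN type equal to a sub-multiset of $\{d_1,\ldots,d_s\}$, and $\cO(\ud)/\mathcal{D}'_1$ need not carry the complement. (For instance, a saturated line subbundle of the stable bundle $\cO(1/2)$ is $\cO$, with quotient $\cO(1)$; the pair $\{0,1\}$ is not a partition of $\{1/2,1/2\}$.) Your suggested workaround---reduce to $\cO(\ua)$ isotypic semistable and then run a Schlesinger-style argument---does not close the gap, because Schlesinger's proof relies throughout on every bundle splitting as a sum of line bundles, which is exactly what fails for stable bundles of non-integral slope. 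The isotypic semistable base case is not easier; it is where the whole difficulty is concentrated.

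The paper's proof avoids the problem with a single reduction. Choose $N\in\mathbb N$ clearing all denominators of $\ua,\uc,\ud$ and pull back the short exact sequence along the degree-$N$ cyclic cover $f:X_N\to X$ of Fargues--Fontaine curves. Then $f^*\cO(\ua)$, $f^*\cO(\uc)$, $f^*\cO(\ud)$ are direct sums of line bundles of integer slopes $Na_i$, $Nc_j$, $Nd_k$, and Schlesinger's argument applies verbatim to yield $N\ua\in\widetilde{\Ext}^1(N\uc,N\ud)$. Since conditions (2.a)--(2.c) of Definition~\ref{Def_tildeExt} are homogeneous of degree one, dividing by $N$ gives $\ua\in\widetilde{\Ext}^1(\uc,\ud)$.
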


\begin{proof}Take $\ua\in \Ext^1(\uc,\ud)$. Let $N\in \mathbb N$ such that all the components of $N\ua, N\uc,N\ud$ are integers. Consider the cyclic covering $f:X_N\longrightarrow X$ of order $N$ between Fargues-Fontaine curves.
It follows that $f^*\cO(\ua),f^*\cO(\uc)$ and $f^*\cO(\ud)$ are sum of line bundles: for example, if $\ua=(a_1,\ldots,a_n)\in \cN(n)\subset \mathbb Q^n$, $f^*\cO(\ua)$ is the direct sum of line bundles $\cO(Na_i)$, $1\leq i\leq n$. Then the same argument as in the proof of \cite[Proposition 3.1]{Schl} shows that $N\ua\in \widetilde{\Ext}^1(N\uc,N\ud)$, which implies $\ua\in \widetilde{\Ext}^1(\uc,\ud)$.
\end{proof}

Let $\uc\in \cN(r)$, $\ud\in \cN(s)$. Proposition \ref{Prop_tildeExt is Necessary condition} gives a necessary condition for the property that $\cO(\ua)$ can be realized as an extension of $\cO(\uc)$ by $\cO(\ud)$. One could find some other conditions in the literature for the last property. For example, if $\uc$ and $\ud$ are semistable, one can reformulate the conditions in Definition \ref{Def_tildeExt} (2) in a more direct way, which allows us to relate the set $\widetilde{\Ext}^1(\uc,\ud)$ with the description of extensions in \cite{BFH}.

\begin{lemma} Let $\uc\in \cN(r)$, $\ud\in \cN(s)$ and $\ua\in \cN(n)$. Assume $n=r+s$, and that $\uc$ and $\ud$ are both semistable. Set $c:=c_1=\cdots=c_r$ and $d:=d_1=\ldots=d_s$. Then $\ua\in \widetilde{\Ext}^1(\uc,\ud)$ if and only if one of the following two conditions is verified:
\begin{enumerate}
\item $c\leq d$ and $\ua=\uc\oplus \ud$; or
\item $c>d$, and $\ua\leq \uc\oplus \ud$.
\end{enumerate}
\end{lemma}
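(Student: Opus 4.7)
The plan is to start from the observation that since $\uc$ and $\ud$ are constant vectors, the orbits $S_r\uc$ and $S_s\ud$ reduce to single points $\{\uc\}$ and $\{\ud\}$. Hence the auxiliary vector $\ub$ in the definition of $\widetilde{\Ext}^1(\uc,\ud)$ is completely determined by the partition $\{1,\ldots,n\} = H \coprod K$: one has $b_i = c$ for $i\in H$ and $b_i = d$ for $i\in K$. Moreover, the equality at $l = n$ in condition (2.c) of Definition \ref{Def_tildeExt} is automatic, since $\sum_{i=1}^n b_i = rc + sd = \sum_{i=1}^n a_i$ (both sides are the degree of the corresponding vector bundle). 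Thus the problem reduces to deciding when a partition $H\coprod K$ with $|H| = r$, $|K| = s$ exists so that $a_i\leq c$ for $i\in H$, $a_i\geq d$ for $i\in K$, and $\sum_{i=1}^l b_i \geq \sum_{i=1}^l a_i$ for every $l$.

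For case (1), when $c\leq d$, conditions (2.a) and (2.b) read $a_i\leq c\leq d\leq a_j$ for every $i\in H$ and $j\in K$. Since $\ua$ is non-increasing, if some $i\in H$ precedes some $j\in K$ then $a_i\geq a_j$, and combining with $a_i\leq c\leq d\leq a_j$ forces $a_i = a_j = c = d$. When $c < d$ this rules out such configurations entirely, so necessarily $K = \{1,\ldots,s\}$ and $H = \{s+1,\ldots,n\}$; the partial sum condition at $l = s$ then reads $sd \geq \sum_{i=1}^s a_i$, while $a_i\geq d$ for $i\leq s$ gives the reverse inequality, hence $a_i = d$ for $i\leq s$ and consequently $a_i = c$ for $i > s$. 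When $c = d$, the same argument collapses to $\ua = (c,\ldots,c)$. In both subcases one obtains $\ua = \uc\oplus\ud$, while the converse direction is trivial with the same partition.

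For case (2), when $c > d$, set $k_l := |H\cap [1,l]|$, so $\sum_{i=1}^l b_i = k_l c + (l-k_l) d = ld + k_l(c-d)$. Since $k_l \leq \min(l,r)$ and $c - d > 0$, this is bounded above by $lc$ if $l\leq r$ and by $rc + (l-r)d$ if $l > r$, i.e. by $\sum_{i=1}^l (\uc\oplus\ud)_i$. Combined with condition (2.c) this yields $\ua\leq \uc\oplus\ud$, proving the forward direction. For the converse, the key observation is that $\ua\leq\uc\oplus\ud$ forces $d\leq a_i\leq c$ for every $i$: indeed, the dominance relation at $l = 1$ gives $a_1\leq c$, and at $l = n-1$ together with the equality at $l = n$ gives $a_n\geq d$; monotonicity of $\ua$ then propagates both bounds. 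Once this is established I would simply take $H := \{1,\ldots,r\}$ and $K := \{r+1,\ldots,n\}$, so that $b_i = c$ for $i\leq r$ and $b_i = d$ for $i > r$; conditions (2.a) and (2.b) hold by the bounds just proved, and condition (2.c) is exactly the dominance hypothesis $\ua\leq \uc\oplus\ud$. The argument is essentially bookkeeping, and I do not expect any serious obstacle; the only mildly delicate point is extracting the uniform bounds $d\leq a_i\leq c$ from the polygon inequality in the converse of case (2).
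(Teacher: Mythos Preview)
Your proposal is correct and follows essentially the same direct combinatorial verification as the paper. The only noteworthy difference is in the forward direction of case~(2): the paper argues by first establishing $a_1\le c$ and $a_n\ge d$ and then deducing the dominance $\ua\le\uc\oplus\ud$, whereas you bound the partial sums of $\ub$ directly via the counting quantity $k_l=|H\cap[1,l]|$; both approaches are elementary and yield the same inequality.
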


\begin{proof} If one of the two conditions is verified, it is clear that $\ua\in \widetilde{\Ext}^1(\uc,\ud)$: we can take $H=\{1<\ldots <r\}$ and $K=\{r+1<\ldots <r+s=n\}$ in both cases.

Conversely, suppose that $\ua\in \widetilde{\Ext}^1(\uc,\ud)$, with $H=\{h_1<\ldots<h_r\}$ and $K=\{k_1<\ldots<k_s\}$ a partition of $\{1,\ldots, n\}$ given in Definition \ref{Def_tildeExt}.  We shall distinguish two different cases:
\begin{itemize}
\item Assume first $c\leq d$. If $h_1<k_s$, by the conditions (2.a) and (2.b) of Definition \ref{Def_tildeExt} we have
\[
c=b_{h_1}\geq a_{h_1}\geq a_{k_s}\geq b_{k_s}=d.
\]
So we get $c=d$. Combing (2.c) in Definition \ref{Def_tildeExt}, we deduce moreover $a_1=\ldots=a_n$, and thus $\ua=\uc\oplus \ud$ as asserted in (1). If $h_1>k_s$, then $H=\{s+1<\ldots< n\}$ and $K=\{1<\ldots<s\}$. By (2.c) of Definition \ref{Def_tildeExt}, for all $1\leq l\leq s$, we have
\[
\sum_{i=1}^ld_i=\sum_{i=1}^{l}b_i\geq \sum_{i=1}^la_i
\]
while $d_i=b_i\leq a_i$ for all $1\leq i\leq s$ by (2.b) of Definition \ref{Def_tildeExt}, therefore $a_1=\ldots=a_s=d$. On the other hand, as
\[
\sum_{i=1}^sd_i+\sum_{i=s+1}^nc_i=\sum_{i=1}^{n}b_i\geq \sum_{i=1}^la_i=\sum_{i=1}^sa_i+\sum_{i=s+1}^na_i,
\]
we find
\[
\sum_{i=s+1}^nc_i\geq \sum_{i=s+1}^na_i.
\]
But $c_i=b_i\geq a_i$ for every $s+1\leq i\leq n$ by (2.a) of Definition \ref{Def_tildeExt}. Thus $a_i=c_i=c$ for all $s+1\leq i\leq n$. Consequently we still obtain $\ua=\uc\oplus \ud$ as asserted in (1).

\item It remains to consider the case where $c>d$. Necessarily $a_1\leq c$: otherwise $1\in K$ and we obtain $d=b_1\leq a_1\leq c$ which is absurd. Similarly $a_n\geq d$. Since $\sum_{i=1}^na_i=\sum_{i=1}^nb_i$ by (2.c) of Definition \ref{Def_tildeExt}, it follows that $\uc\oplus \ud\geq \ua$, as claimed by (b).
\end{itemize}
This completes the proof of our lemma.
\end{proof}

\begin{corollary}\label{coro_Hansen} Let $\uc\in \cN(r)$ and $\ud\in \cN(s)$. Assume that $\uc$ and $\ud$ are both semistable. Then $\Ext^1(\uc,\ud)=\widetilde{\Ext}^1(\uc,\ud)$.
\end{corollary}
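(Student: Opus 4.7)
The plan is to combine the lemma immediately preceding the corollary with a classical result of \cite{BFH}. The inclusion $\Ext^1(\uc,\ud)\subseteq \widetilde{\Ext}^1(\uc,\ud)$ is already given by Proposition \ref{Prop_tildeExt is Necessary condition}, so only the reverse inclusion requires work. The lemma tells us that when $\uc,\ud$ are semistable of respective slopes $c$ and $d$, an element $\ua \in \widetilde{\Ext}^1(\uc,\ud)$ satisfies exactly one of the following: (i) $c\leq d$ and $\ua = \uc\oplus \ud$, or (ii) $c > d$ and $\ua \leq \uc\oplus \ud$ in the dominance order on $\mathcal N(n)$.

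First I would dispose of case (i): here $\mathcal O(\ua) = \mathcal O(\uc)\oplus \mathcal O(\ud)$ splits as a direct sum, which is trivially an extension of $\mathcal O(\uc)$ by $\mathcal O(\ud)$ via the canonical inclusion and projection. Hence $\ua \in \Ext^1(\uc,\ud)$ in this case.

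For case (ii), where $c>d$ means the slope of the would-be sub-bundle $\mathcal O(\ud)$ is strictly less than the slope of the would-be quotient $\mathcal O(\uc)$, I would invoke the classification of extensions between semistable bundles over the Fargues–Fontaine curve established in \cite{BFH}. Their main theorem says precisely that, under the slope inequality $c > d$ between semistable bundles, a bundle $\mathcal O(\ua)$ arises as an extension of $\mathcal O(\uc)$ by $\mathcal O(\ud)$ if and only if its Harder–Narasimhan polygon lies on or below the polygon of $\mathcal O(\uc) \oplus \mathcal O(\ud)$ (with matching endpoints), i.e., $\ua \leq \uc\oplus \ud$. This is exactly the combinatorial condition produced by the lemma, so $\ua \in \Ext^1(\uc,\ud)$.

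There is no serious obstacle here beyond correctly matching the combinatorial conditions of Definition \ref{Def_tildeExt} with the hypotheses of the \cite{BFH} classification; this matching is essentially the content of the preceding lemma. The corollary therefore follows by combining Proposition \ref{Prop_tildeExt is Necessary condition}, the lemma, and the Birkbeck–Feng–Hansen (et al.) classification in a purely formal way.
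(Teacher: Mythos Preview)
Your proposal is correct and follows essentially the same approach as the paper: both split into the two cases furnished by the preceding lemma, dispose of the case $c\leq d$ via the split extension, and in the case $c>d$ invoke \cite[Theorem 1.1.2]{BFH} to realize any $\ua\leq \uc\oplus\ud$ as an extension.
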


\begin{proof} Let $\ua\in \widetilde{\Ext}^1(\uc,\ud)$. We must check that $\cO(\ua)$ is an extension of $\cO(\uc)$ by $\cO(\ud)$. Set $c:=c_1=\ldots=c_r$ and $d:=d_1=\ldots=d_s$. According to the lemma above, we only need to consider the following two cases.
\begin{itemize}
\item If $c\leq d$ and $\ua=\uc\oplus \ud$, then $\cO(\ua)=\cO(\uc)\oplus \cO(\ud)$, which is the split extension of $\cO(\uc)$ by $\cO(\ud)$.
\item If $c>d$ and $\ua\leq \uc\oplus \ud$, then the vector bundles $\mathcal F_1:=\cO(\ud)$, $\mathcal F_2:=\cO(\uc)$, and $\mathcal E:=\cO(\ua)$ satisfy the condition of \cite[Theorem 1.1.2]{BFH}. So according to loc. cit., $\cO(\ua)$ is an extension of $\cO(\uc)$ by $\cO(\ud)$, as required.
\end{itemize}
\end{proof}

We have the following property of $\widetilde{\mathrm{Ext}}^1$:

\begin{lemma}\label{Lem_Hong}Let $\uc\in \cN(r)$, $\ud\in \cN(s)$ and $\ua\in \cN(n)$. Assume $n=r+s$, and $\ua\in \widetilde{\Ext}^1(\uc,\ud)$. Then the following assertions hold.
\begin{enumerate}
\item[(i)] $\ua$ strongly slopewise dominates $\ud$: for every $\mu\in \mathbb Q$,
\[
n_{\mu}:=\# \{a_i |a_i\geq \mu\} \geq \#\{d_i | d_i\geq \mu \},
\]
with equality if and only if $(a_1,\ldots, a_{n_{\mu}})=(d_1,\ldots, d_{n_{\mu}})$;
\item[(ii)] $\ua^{\vee}$ strongly slopewise dominates $\uc^{\vee}$, or equivalently, for every $\mu\in \mathbb Q$,
\[
m_{\mu}:=\# \{a_i |a_i\leq \mu\} \geq \#\{c_i | c_i\leq\mu \},
\]
with equality if and only if $(a_1,\ldots, a_{m_{\mu}})=(c_1,\ldots, c_{m_{\mu}})$; and
\item[(iii)] $\ua\leq \uc\oplus \ud$.
\end{enumerate}
\end{lemma}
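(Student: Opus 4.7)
The plan is to work directly with the combinatorial data $H=\{h_1<\cdots<h_r\}$, $K=\{k_1<\cdots<k_s\}$ and $\ub\in\mathbb Q^n$ furnished by the hypothesis $\ua\in\widetilde{\mathrm{Ext}}^1(\uc,\ud)$, and to deduce the three assertions in the order (iii), (i), (ii).

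For (iii), I would observe that by the definition of $\widetilde{\mathrm{Ext}}^1$ the multiset $\{b_1,\ldots,b_n\}$ is exactly the multiset of entries of $\uc\oplus\ud$. Since $\uc\oplus\ud$ is the decreasing rearrangement of this multiset, its partial sums majorize those of any other arrangement; hence $\sum_{i=1}^l b_i\leq\sum_{i=1}^l(\uc\oplus\ud)_i$ for every $l$. Combining with condition (2.c) of Definition \ref{Def_tildeExt} gives $\sum_{i=1}^l a_i\leq\sum_{i=1}^l(\uc\oplus\ud)_i$, with equality at $l=n$, which is precisely $\ua\leq\uc\oplus\ud$.

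For (i), the key observation is that conditions (2.b) and the fact that $(b_{k_1},\ldots,b_{k_s})$ permutes $\ud$ give the chain
\[
\#\{d_j\geq\mu\}=\#\{i\in K:b_i\geq\mu\}\leq\#\{i\in K:a_i\geq\mu\}\leq n_\mu.
\]
For the equality case, assume $\#\{d_j\geq\mu\}=n_\mu$. Both inequalities above become equalities: the second forces the $n_\mu$ indices $i$ with $a_i\geq\mu$, namely $\{1,\ldots,n_\mu\}$, to be contained in $K$, so $k_j=j$ for $j\leq n_\mu$; the first then forces $b_j\geq\mu$ for these indices, and thus $\{b_1,\ldots,b_{n_\mu}\}=\{d_1,\ldots,d_{n_\mu}\}$ as multisets. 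Now (2.b) gives $b_j\leq a_j$ for $j\leq n_\mu$, while (2.c) at $l=n_\mu$ gives the reverse inequality on partial sums, forcing $b_j=a_j$ for all $j\leq n_\mu$. Since $a_1\geq\cdots\geq a_{n_\mu}$ and the $d_j$ are also sorted decreasingly, we conclude $(a_1,\ldots,a_{n_\mu})=(d_1,\ldots,d_{n_\mu})$.

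For (ii), I would avoid a separate combinatorial argument by showing that dualization interchanges the two sides, i.e.\ $\ua^\vee\in\widetilde{\mathrm{Ext}}^1(\ud^\vee,\uc^\vee)$, and then invoking (i). Concretely, set $H^*:=\{n+1-k:k\in K\}$, $K^*:=\{n+1-h:h\in H\}$, and $b^*_i:=-b_{n+1-i}$. All of conditions (2.a)--(2.c) transfer under this reversal; the partial-sum condition uses
\[
\sum_{i=1}^l b^*_i-\sum_{i=1}^l a^*_i=\sum_{j=1}^{n-l}b_j-\sum_{j=1}^{n-l}a_j\geq 0,
\]
which follows from (2.c) for $\ua$ together with $\sum_{j=1}^n a_j=\sum_{j=1}^n b_j$. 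Applying (i) to $\ua^\vee\in\widetilde{\mathrm{Ext}}^1(\ud^\vee,\uc^\vee)$ (so that the role of $\ud$ is now played by $\uc^\vee$) yields (ii) verbatim. No substantial obstacle is expected; the only thing that requires care is the bookkeeping of indices under dualization, and the use of the decreasing order on $\ua$ in the identification step in (i).
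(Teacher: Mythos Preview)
Your proposal is correct and follows essentially the same approach as the paper: part (iii) via the rearrangement inequality combined with condition (2.c), part (i) directly from (2.b) (you make the use of (2.c) in the equality case more explicit than the paper does), and part (ii) by the duality $\ua\in\widetilde{\Ext}^1(\uc,\ud)\Leftrightarrow\ua^\vee\in\widetilde{\Ext}^1(\ud^\vee,\uc^\vee)$ reducing to (i). The only difference is that you treat the parts in the order (iii), (i), (ii) and spell out the duality in (ii) more carefully, but the substance is identical.
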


\begin{proof} Suppose for some $\mu\in\mathbb Q$,
\[
n_{\mu}=\# \{a_i |a_i\geq \mu\} \leq \#\{d_i | d_i\geq\mu \}.
\]We may assume without lossing generality that $\mu=a_i$ or $d_i$ for some $i$. By (2.b) of Definition \ref{Def_tildeExt}, the inequality above must be an equality and $\{1,\ldots, n_{\mu}\}\subset K$. Therefore $(a_1,\ldots, a_{n_{\mu}})=(d_1,\ldots, d_{n_{\mu}})$, showing (i) above. For (ii), note that
\[
\ua\in \widetilde{\Ext}^1(\uc, \ud)\Longleftrightarrow \ua^\vee\in \widetilde{\Ext}^1(\ud^\vee, \uc^{\vee}).
\]
So (ii) is just the dual version of (i).

It remains to check (iii). Write $\uc\oplus \ud=(b_1',\ldots, b_n')\in \cN(n)$. So the $b_1'\geq \ldots\geq b_n'$ is just the permutation by order of $b_1,\ldots, b_n$. So, combing (2.c) of Definition \ref{Def_tildeExt}, we obtain
\[
\sum_{i=1}^lb_i'\geq \sum_{i=1}^{l}b_i\geq \sum_{i=1}^la_i, \quad \textrm{for all } 1\leq l\leq n.
\]
In other words, $\ua\leq \uc\oplus \ud$, as claimed by (iii).
\end{proof}

\begin{remark}\label{rem:compared-with-Hong} We keep the notations of Lemma \ref{Lem_Hong}.
\begin{enumerate}
\item According to \cite{Ho3},  the condition that $\ua$ strongly slopewise dominates $\ud$ is equivalent to the fact that $\cO(\ud)$ is a subbundle (i.e. locally direct factor) of $\cO(\ua)$ and the condition that $\ua^{\vee}$ strongly slopewise dominates $\uc^{\vee}$ is equivalent to the fact that $\cO(\uc)$ is a quotient of the vector bundle $\cO(\ua)$.
\item  In general, as shown by the following example, the combination of the above conditions (i)-(iii) is weaker than the conditions defining $\widetilde{\Ext}^1(\uc,\ud)$ even when one of  $\ua$ and $\ub$ is semistable. Consider
\[
\ua=(6,5,2,1)\in \cN(4), \quad \uc=(10,4)\in \cN(2), \quad \textrm{and} \quad \ud=(0,0)\in \cN(2).
\]
Then the triple $(\ua,\uc,\ud)$ does verify the conditions (i)-(iii) above. But $\ua\notin \widetilde{\Ext}^1(\uc,\ud)$. Otherwise, let $\{1,2,3,4\}=H\coprod K$ be the partition given in Definition \ref{Def_tildeExt}. Since $0<6$, we must have $1\in H$ and thus $b_1=c_1=10$. Then as
\[
a_1+a_2=11 >b_1+0,
\]
by (2.c) of Definition \ref{Def_tildeExt}, $2\in H$ and thus $b_2=c_2=4$. But this contradicts to the fact that $b_2\geq c_2$. In particular, by Proposition \ref{Prop_tildeExt is Necessary condition}, $\cO(\ua)=\cO(6)\oplus \cO(5)\oplus \cO(2)\oplus \cO(1)$ is not an extension of $\cO(\uc)=\cO(10)\oplus \cO(4)$ by $\cO(\ud)=\cO\oplus \cO$.
\end{enumerate}
\end{remark}

One would probably expect that
\[
\Ext^1(\uc,\ud)=\widetilde{\Ext}^1(\uc,\ud),
\]
or equivalently, for $\ua\in \widetilde{\Ext}^1(\uc,\ud)$, there exists a short exact sequence of the form
\[
0\longrightarrow \cO(\ud)\longrightarrow \cO(\ua)\longrightarrow \cO(\uc)\longrightarrow 0.
\]
However this fails in general by the following example.

\begin{example}\label{ex:counter-example-for-ext} Let
\[
\ua=\left(1,\frac{5}{7}^{(7)}, \frac{4}{7}^{(7)},0\right), \quad \uc=\left(3,\frac{3}{5}^{(5)}\right), \quad \textrm{and}\quad \ud=\left(\frac{5}{9}^{(9)}, -1\right).
\]
In particular, $\ua\in \cN(16)$, $\uc\in \cN(6)$ and $\ud\in \cN(10)$. Then we have $\ua\in \widetilde{\Ext}^1(\uc,\ud)$, with
\[
H=\{1,9,10,11,12,13\}\subset \{1,2,\ldots,16\}.
\]
However, $\cO(\ua)$ is not an extension of $\cO(\uc)$ by $\cO(\ud)$, or equivalently, there does not exist a short exact sequence as follows
\[
0\longrightarrow \cO\left(\frac{5}{9}\right)\oplus \cO(-1)\stackrel{\phi}{\longrightarrow} \cO(1)\oplus \cO\left(\frac{5}{7}\right)\oplus \cO\left(\frac{4}{7}\right)\oplus \cO\stackrel{\psi}{\longrightarrow}\cO(3)\oplus \cO\left(\frac{3}{5}\right)\longrightarrow 0
\]
Suppose that such an extension exists. Consider the subbundle $\cO(\ua')\subset \cO(\ua)$, with
\[
\ua'=\left(1,\frac{5}{7}^{(7)}, \frac{4}{7}^{(7)}\right)\in \cN(15).
\]
Write $\cO(\uc'):=\psi(\cO(\ua'))\subset \cO(\ua)$ and $\cO(\ud')=\phi^{-1}(\cO(\ua'))$. So we have the following commutative diagram with exact rows
\[
\xymatrix{0\ar[r]& \mathrm{Coker}(\alpha')\ar[r] & \cO\ar[r] & \mathrm{Coker}(\alpha'')\ar[r] & 0 \\
0\ar[r] & \cO\left(\frac{5}{9}\right)\oplus \cO(-1)\ar[r]^<<<<<{\phi} \ar[u]& \cO(1)\oplus \cO\left(\frac{5}{7}\right)\oplus \cO\left(\frac{4}{7}\right)\oplus \cO\ar[r]^<<<<<{\psi} \ar[u]& \cO(3)\oplus \cO\left(\frac{3}{5}\right)\ar[r] \ar[u]&  0 \\ 0\ar[r] & \cO(\ud')\ar[r] \ar[u]^{\alpha'}& \cO(\ua')\ar[r]\ar[u]^{\alpha} & \cO(\uc')\ar[r]\ar[u]^{\alpha''} & 0}
\]
We claim that $
\cO(\ud')=\cO(\frac{5}{9})$. To see this, observe first that $\alpha'$ is not surjective: otherwise $\cO\stackrel{\simeq}{\ra}\mathrm{Coker}(\alpha'')$ and thus $\cO$ would be a direct factor of $\cO(3)\oplus \cO(\frac{3}{5})$, which is absurd. So $\mathrm{Coker}(\alpha')$ is a line bundle contained in $\cO$, and its degree $\leq 0$. In particular, there is no non-zero morphism $\cO(\frac{5}{9})\ra \mathrm{Coker}(\alpha')$. As a result, being a quotient of $\cO(\frac{5}{9})\oplus \cO(-1)$, $\mathrm{Coker}(\alpha')\simeq \cO(-1)$, and thus $\cO(\ud')=\cO(\frac{5}{9})\subset \cO(\ud)$. Furthermore, we claim that $\cO(\uc')=\cO(2)\oplus \cO(\frac{3}{5})$. As $\mathrm{Coker}(\alpha'')$ is torsion, $\alpha''$ is generically an isomorphism. It follows that the induced map
\[
\cO(\ua')\stackrel{\psi}{\longrightarrow} \cO(3)\oplus \cO\left(\frac{3}{5}\right)\longrightarrow \cO\left(\frac{3}{5}\right)
\]
is generically an epimorphism, hence must be an epimorphism since $\frac{2}{5}<\frac{4}{7}$. In particular, the composition
\[
\beta:\cO(\uc')\stackrel{\alpha''}{\longrightarrow} \cO(3)\oplus \cO\left(\frac{3}{5}\right)\longrightarrow \cO\left(\frac{3}{5}\right)
\]
must be surjective. On the other hand, $\cO(\uc')$ is a vector bundle of rank $6$ and of degree $5$, so $\ker(\beta)\simeq \cO(2)$. As $2>\frac{3}{5}$, we get $\cO(\uc')=\cO(2)\oplus \cO(\frac{3}{5})$. So
\[
\uc'=\left(2,\frac{3}{5}^{(5)}\right), \quad \textrm{and}\quad \ud'=\left(\frac{5}{9}^{(9)}\right).
\]
Moreover, $\cO(\ua')$ is an extension of $\cO(\uc')$ by $\cO(\ud')$, so $\ua'\in \widetilde{\Ext}^1(\uc',\ud')$. But one could check directly that this is not the case. In fact, since
\[
2>1>\frac{5}{9}>\frac{5}{7}>\frac{3}{5}>\frac{4}{7}
\]
if $\ua'\in \widetilde{\Ext}^1(\uc',\ud')$, the corresponding partition $\{1,\ldots, 15\}=H'\coprod K'$ would satisfy $1\in H'$ and $\{2,3,\ldots,8\}\subset K'$. But this is impossible as
\[
2+\frac{5}{9}\times 7 < 1+ \frac{5}{7}\times 7=6.
\]
\end{example}

Next we want to give an inductive criterion for an element $\ua\in \widetilde{\Ext}^1(\uc,\ud)$ to be contained in $\Ext^1(\uc,\ud)$.

\begin{proposition} \label{Prop_classification of extension by Ext^1} Let $\uc\in \cN(r)$, $\ud\in \cN(s)$. Let $n:=r+s$, and $\ua\in \widetilde{\Ext}^1(\uc,\ud)$. Write $c_r=q/p$ with $p\in \mathbb Z_{\geq 1}$ and $q \in \mathbb Z$ such that $(p,q)=1$, so $\uc=\left(c_1,\ldots, c_{r-p}, \frac{q}{p},\ldots,\frac{q}{p}\right)$. Set
\[
\uc':=(c_1,\ldots, c_{r-p})\in \cN(r-p),  \quad \textrm{and}\quad
\uc''=c_r^{(p)}:=\left(\frac{q}{p},\ldots,\frac{q}{p}\right)\in \cN(p).
\]
Then $\ua\in \Ext^1(\uc,\ud)$ if and only if there exists some $\ue\in \cN(n-p)$ such that
\begin{itemize}
\item $\ue\in \Ext^1(\uc',\ud)$; and
\item $\ua\in \Ext^1(\uc'',\ue)$, or equivalently, $\ua\in \widetilde{\Ext}^1(\uc'',\ue)$: see Proposition \ref{Prop_one_semistable} below or \cite[Theorem 1.1]{Ho2}.
\end{itemize}
\end{proposition}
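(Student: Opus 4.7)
The plan is to prove both implications by ``peeling off'' the last stable block $\cO(\uc'')$ of $\cO(\uc)$. Since $c_r$ is the minimum slope appearing in $\uc$ (and $\uc'$ collects the strictly larger slopes, together with any remaining occurrences of $c_r$), in any extension of $\cO(\uc'')$ by a vector bundle whose slopes all dominate $c_r$, the short exact sequence is forced to split by the standard cohomological vanishing on the Fargues--Fontaine curve. This splitting is what will allow us to glue two extensions into one.

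For the forward implication, starting from a chosen presentation $0 \to \cO(\ud) \to \cO(\ua) \to \cO(\uc) \to 0$, I would compose the surjection $\cO(\ua) \twoheadrightarrow \cO(\uc) = \cO(\uc') \oplus \cO(\uc'')$ with the projection onto the second factor, and take $\cO(\ue)$ to be the kernel of the resulting surjection $\cO(\ua) \twoheadrightarrow \cO(\uc'')$. Then $\cO(\ue)$ is a vector bundle of rank $n-p$ and the defining sequence exhibits $\ua \in \Ext^1(\uc'', \ue)$ directly. Since $\cO(\ud)$ lies in $\cO(\ue)$ (being killed by $\cO(\ua) \twoheadrightarrow \cO(\uc'')$), the induced map $\cO(\ue)/\cO(\ud) \to \cO(\uc)$ factors through $\cO(\uc') = \ker(\cO(\uc) \twoheadrightarrow \cO(\uc''))$; a short diagram chase shows it is an isomorphism, producing the exact sequence $0 \to \cO(\ud) \to \cO(\ue) \to \cO(\uc') \to 0$, i.e.\ $\ue \in \Ext^1(\uc', \ud)$.

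For the converse, given the two exact sequences
\[
0 \to \cO(\ud) \to \cO(\ue) \to \cO(\uc') \to 0 \quad \text{and} \quad 0 \to \cO(\ue) \to \cO(\ua) \to \cO(\uc'') \to 0,
\]
the composed inclusion $\cO(\ud) \hookrightarrow \cO(\ua)$ yields a quotient $\cO(\ua)/\cO(\ud)$ fitting in a short exact sequence $0 \to \cO(\uc') \to \cO(\ua)/\cO(\ud) \to \cO(\uc'') \to 0$. The main step is to show this sequence splits, by verifying
\[
\Ext^1(\cO(\uc''), \cO(\uc')) = H^1(X, \cO(\uc') \otimes \cO(\uc'')^{\vee}) = 0.
\]
Indeed, the Harder--Narasimhan slopes of $\cO(\uc') \otimes \cO(\uc'')^{\vee}$ are of the form $c_i - c_r$ for $1 \leq i \leq r-p$, all $\geq 0$ by the choice of $c_r$ as the smallest slope of $\uc$, and a vector bundle on the Fargues--Fontaine curve with non-negative slopes has vanishing $H^1$. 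The resulting splitting $\cO(\ua)/\cO(\ud) \simeq \cO(\uc') \oplus \cO(\uc'') = \cO(\uc)$ then produces the desired exact sequence realizing $\ua \in \Ext^1(\uc, \ud)$.

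The conceptual content of the proof is concentrated in this cohomological vanishing; once it is in hand, the rest is routine bookkeeping with short exact sequences. The equivalence $\ua \in \Ext^1(\uc'', \ue) \Leftrightarrow \ua \in \widetilde{\Ext}^1(\uc'', \ue)$ stated in the second bullet is then supplied by Proposition~\ref{Prop_one_semistable}, which applies because $\uc''$ is semistable by construction. The main practical interest of this peeling procedure is that it strictly reduces $\uc$ to the smaller tuple $\uc'$ without changing $\ud$, so iterating it provides an inductive algorithm that tests membership in $\Ext^1(\uc, \ud)$ against successively simpler combinatorial conditions.
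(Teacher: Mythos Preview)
Your proof is correct and follows essentially the same approach as the paper: in the forward direction both arguments define $\cO(\ue)$ as the preimage of $\cO(\uc')\subset\cO(\uc)$ (equivalently, the kernel of the composed projection onto $\cO(\uc'')$), and in the converse both form the cokernel $\cO(\ua)/\cO(\ud)$, obtain it as an extension of $\cO(\uc'')$ by $\cO(\uc')$, and split it using $c_{r-p}\geq c_r$. Your justification of the splitting via $H^1(X,\cO(\uc')\otimes\cO(\uc'')^{\vee})=0$ is slightly more explicit than the paper's one-line appeal to slope comparison, but the content is identical.
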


\begin{remark}Proposition \ref{Prop_classification of extension by Ext^1} is also proved independently by Hong in \cite[Theorem 1.2]{Ho2}.
\end{remark}

\begin{proof} Assume first that $\ua\in \Ext^{1}(\uc,\ud)$. So we have a short exact sequence of vector bundles over the Fargues-Fontaine curve $X$:
\[
0\longrightarrow \cO(\ud)\longrightarrow \cO(\ua)\longrightarrow \cO(\uc)\longrightarrow 0.
\]
Let $\E\subset \cO(\ua)$ be the inverse image of the subbundle $\cO(\uc')\subset \cO(\uc)$ by the surjective morphism $\cO(\ua)\ra \cO(\uc)$, and write $\E=\cO(\ue)$. Then $\E=\cO(\ue)$ is an extension of $\cO(\uc')$ by $\cO(\ud)$, or equivalently, $\ue\in \Ext^1(\uc',\ud)$. Furthermore, by Snake Lemma,
\[
\cO(\ua)/\E\stackrel{\simeq}{\longrightarrow}\cO(\uc)/\cO(\uc')\simeq \cO(\uc'').
\]
So $\cO(\ua)$ is an extension of $\cO(\uc'')$ by $\E=\cO(\ue)$. In other words, $\ua\in \Ext^1(\uc'',\ue)$.

Conversely, suppose that there exists $\ue\in \cN(n-p)$ such that  $\ue\in \Ext^1(\uc',\ud)$, and that $\ua\in \Ext^1(\uc'',\ue)$. Then we get two short exact sequences of vector bundles
\[
0\longrightarrow \cO(\ud)\longrightarrow \cO(\ue)\longrightarrow \cO(\uc')\longrightarrow 0,
\]
and
\[
0\longrightarrow \cO(\ue)\longrightarrow \cO(\ua)\longrightarrow \cO(\uc'')\longrightarrow 0.
\]
We can insert them into the following commutative diagram
\[
\xymatrix{& & 0& &  \\ & & \cO(\uc'')\ar[u] & &  \\  0\ar[r] & \cO(\ud)\ar[r] & \cO(\ua)\ar[u] \ar@{.>}[r]& \mathcal G \ar@{.>}[r]& 0  \\ 0\ar[r] & \cO(\ud)\ar[r]\ar@{=}[u] & \cO(\ue)\ar[r] \ar[u]& \cO(\uc')\ar[r]\ar@{.>}[u] & 0 \\ & & 0\ar[u]& &   },
\]
where $\mathcal G$ is the cokernel of the injective morphism $\cO(\ud)\ra \cO(\ua)$. By Snake Lemma, the coherent sheaf $\mathcal G$ on the Fargues-Fontaine curve $X$ can be put into the following short exact sequence
\[
0\longrightarrow \cO(\uc')\longrightarrow \mathcal G\longrightarrow \cO(\uc'')\longrightarrow 0.
\]
As $c_{r-p}\geq c_r$, all the extension of $\cO(\uc'')$ by $\cO(\uc')$ splits. In particular, $\mathcal G$ is again a vector bundle, and $\mathcal G=\cO(\uc')\oplus \cO(\uc'')=\cO(\uc)$. It follows that $\ua\in \Ext^1(\uc,\ud)$.
\end{proof}

Proposition \ref{Prop_classification of extension by Ext^1} can be used as an algorithm to compute $\Ext^1(\uc, \ud)$ inductively on the number of stable blocks in $\uc\oplus\ud$ using duality combined with Corollary \ref{coro_Hansen}. 

\begin{example}\label{ex:inductive-computation-of-ext0} We have 
\[
\Ext^1\left(\left(0,-\frac{1}{6}^{(6)}\right),\left(-\frac{1}{3}^{(3)}\right)\right)=\left\{\substack{\left(-\frac{1}{5}^{(10)}\right),\left(-\frac{1}{6}^{(6)},-\frac{1}{4}^{(4)}\right), \left(0,-\frac{2}{9}^{(9)}\right)\\ \left(0,-\frac{1}{5}^{(5)},-\frac{1}{4}^{(4)}\right), \left(0,-\frac{1}{6}^{(6)},-\frac{1}{3}^{(3)}\right)}\right\}.
\]
Indeed, by Proposition \ref{Prop_classification of extension by Ext^1}, we have 
\[
\Ext^1\left(\left(0,-\frac{1}{6}^{(6)}\right),\left(-\frac{1}{3}^{(3)}\right)\right)=\bigcup_{\ue\in \Ext^1\left((0),\left(-\frac{1}{3}^{(3)}\right)\right)} \Ext^1\left(\left(-\frac{1}{6}^{(6)}\right),\ue\right).
\]
Using the main result of \cite{BFH} or Corollary \ref{coro_Hansen} above, we have 
\begin{equation}\label{eq:choice-for-ue}
\Ext^1\left((0),\left(-\frac{1}{3}^{(3)}\right)\right)=\left\{\left(-\frac{1}{4}^{(4)}\right),\left(0,-\frac{1}{3}^{(3)}\right)\right\}.
\end{equation}
So it remains to compute, for $\ue$ one of the two elements in \eqref{eq:choice-for-ue}, the set $\Ext^1\left(\left(-\frac{1}{6}^{(6)}\right),\ue\right)$:
\begin{itemize}
    \item if $\ue=\left(-\frac{1}{4}^{(4)}\right)$, again by the main result of \cite{BFH},
    \[
    \Ext^1\left(\left(-\frac{1}{6}^{(6)}\right),\ue\right)=\left\{\left(-\frac{1}{5}^{(10)}\right),\left(-\frac{1}{6}^{(6)},-\frac{1}{4}^{(4)}\right)\right\};
    \]
    \item if $\ue=\left(0,-\frac{1}{3}^{(3)}\right)$, then as $0>-1/6$ any extension of $\cO(-1/6)$ by $\cO$ splits, so we reduce to computing $\Ext^1\left(\left(-\frac{1}{6}^{(6)}\right),\left(-\frac{1}{3}^{(3)}\right)\right)$, and finally we get 
    \[
    \Ext^1\left(\left(-\frac{1}{6}^{(6)}\right),\ue\right)=\left\{\left(0,-\frac{2}{9}^{(9)}\right),\left(0,-\frac{1}{5}^{(5)},-\frac{1}{4}^{(4)}\right),\left(0,-\frac{1}{6}^{(6)},-\frac{1}{3}^{(3)}\right)\right\}.
    \]
\end{itemize}
\end{example}

\begin{remark}\label{rem:direct-computation-of-ext0} As the vector bundle $\cO(-1/3)$ is (semi-)stable, by Proposition \ref{Prop_one_semistable} we have  
\[
\Ext^1\left(\left(0,-\frac{1}{6}^{(6)}\right),\left(-\frac{1}{3}^{(3)}\right)\right)=\widetilde{\Ext}^1\left(\left(0,-\frac{1}{6}^{(6)}\right),\left(-\frac{1}{3}^{(3)}\right)\right).
\]
So we can also compute the extension set in Example \ref{ex:inductive-computation-of-ext0} by exploring directly the combinatorial condition in Definition \ref{Def_tildeExt}.
\end{remark}


\subsection{Applications} As we see at the end of last section, the key for verifying if a single Newton stratum is not contained in the weakly admissible locus is the existence of certain particular extensions of vector bundles on the Fargues-Fontaine curve. In this \S, we shall give further applications of Theorem \ref{thm:criterion-for-a-singule-stratum} for the general linear group $G=\mathrm{GL}_n$. The new input here is our discussions in the previous subsection, which allows us to handle more complicated extensions of vector bundles. In the following, to simplify the notation, for an element 
\[
v=(\lambda_{1}^{(n_1)},\ldots,\lambda_s^{(n_s)})\in \mathcal N(n), \quad \textrm{with} \quad  \lambda_1>\lambda_2>\ldots>\lambda_s, \textrm{ and } n_i\in \mathbb Z_{>0},  
\]
if the integer $\lambda_in_i$ is coprime to $n_i$, then we omit the exponent $(n_i)$ from the notation.

\subsubsection{} We take $G=\mathrm{GL}_{14}$, $\mu=(1^{(6)},0^{(8)})$, and $b\in B(G)_{basic}$ with $\nu_b=\left(\frac{3}{7}^{(14)}\right)\in \mathcal N(14)$. Then $b$ has a reduction $b_M$ to only one proper maximal standard Levi subgroup 
\[
M=\mathrm{GL}_7\times \mathrm{GL}_7\hookrightarrow \mathrm{GL}_{14}.
\] 
The cocharacters $w\mu$, with $w$ an element in the Weyl group, such that $\langle \nu_b-w\mu,\chi\rangle<0$ for some $\chi\in X^*(P_1/Z_G)^+$ are 
\begin{equation}\label{eq:choice-for-wmu}
w\mu\in \left\{\substack{\left((1^{(4)},0^{(3)}),(1^{(2)},0^{(5)})\right), \  \left((1^{(5)},0^{(2)}),(1,0^{(6)})\right)\\  \left((1^{(6)},0),(0^{(7)})\right)}\right\} \subset \mathcal N(M).
\end{equation}
Thus
\[
\nu_{b_{M}}-w\mu \in \left\{\substack{\left(\left(\frac{3}{7}^{(3)},-\frac{4}{7}^{(4)}),(\frac{3}{7}^{(5)},-\frac{4}{7}^{(2)}\right)\right), \  \left(\left(\frac{3}{7}^{(2)},-\frac{4}{7}^{(5)}),(\frac{3}{7}^{(6)},-\frac{4}{7}\right)\right)\\  \left(\left(\frac{3}{7},-\frac{4}{7}^{(6)}\right),\left(\frac{3}{7}^{(7)}\right)\right)}\right\} \subset \mathcal N(M).
\]
We want to describe explicitly the generalized Kottwitz set 
\[
B(M,(k_1,k_2),\nu_{b_M}-w\mu)=B(\mathrm{GL}_7,k_1,v_1)\times B(\mathrm{GL}_7,k_2,v_2)
\]
for the rational cocharacter $\nu_{b}-\mu^w=v_1\times v_2\in \mathcal N(M)=\mathcal N(7)\times \mathcal N(7)$ as above (here $k_i:=|v_i|$). It suffices to do this for $B(\mathrm{GL}_7,k_i,v_i)$ ($i=1,2$) respectively.
\begin{itemize}
\item[(i)] \underline{$w\mu=((1^{(4)},0^{(3)}),(1^{(2)},0^{(5)}))$}. So $v_1=((3/7)^{(3)},(-4/7)^{(4)})$ and $v_2=((3/7)^{(5)},(-4/7)^{(2)})$. The generalized Kottwitz set $B\left(\mathrm{GL}_7,-1,\left((3/7)^{(3)},(-4/7)^{(4)}\right)\right)\subset \mathcal N(7)$ have $7$ elements:
\[
\left(\frac{1}{3}^{(3)}, -\frac{1}{2}^{(4)}\right); \quad \left(0^{(i)}, -\frac{1}{7-i}\right), \ 0\leq i\leq 5,
\]
and $B(\mathrm{GL}_7,1,((3/7)^{(5)},(-4/7)^{(2)}))\subset \mathcal N(7)$ consists of the following $6$ elements:
\[
\left(\frac{2}{5}^{(5)}, -\frac{1}{2}^{(2)}\right); \quad \left(\frac{1}{7-i}^{(7-i)}, 0^{(i)}\right), \ 0\leq i\leq 4. 
\]

\item[(ii)] \underline{$w\mu=((1^{(5)},0^{(2)}),(1,0^{(6)}))$}. So $v_1=((3/7)^{(2)},(-4/7)^{(5)})$ and $v_2=((3/7)^{(6)},-4/7)$. The generalized Kottwitz set $B\left(\mathrm{GL}_7,-2,\left((3/7)^{(2)},(-4/7)^{(5)}\right)\right)\subset \mathcal N(7)$ have $8$ elements:
\[
\left(0^{(i)}, -\frac{2}{7-i}^{(7-i)}\right),\ 0\leq i\leq 3;  \quad \left(0^{(j)}, -\frac{1}{5-j}^{(5-j)},-\frac{1}{2}^{(2)}\right), \ 0\leq j\leq 2; \quad \left(-\frac{1}{4}^{(4)},-\frac{1}{3}^{(3)}\right), 
\]
and $B(\mathrm{GL}_7,2,((3/7)^{(6)},-4/7))\subset \mathcal N(7)$ consists of the following $6$ elements:
\[
\left(\frac{2}{5}^{(5)}, 0^{(2)}\right); \quad \left(\frac{1}{3}^{(6)}, 0\right); \quad \left(\frac{1}{3}^{(3)}, \frac{1}{4}^{(4)}\right); \quad \left(\frac{2}{7}^{(7)}\right). 
\]

\item[(iii)] \underline{$w\mu=((1^{(6)},0),(0^{(7)}))$}. So $v_1=(3/7,(-4/7)^{(6)})$, and $v_2=((3/7)^{(7)})$. The generalized Kottwitz set $B\left(\mathrm{GL}_7,-3,\left(3/7,(-4/7)^{(6)}\right)\right)\subset \mathcal N(7)$ have $4$ elements:
\[
\left(0, -\frac{1}{2}^{(6)}\right), \quad \left(-\frac{1}{3}, -\frac{1}{2}^{(4)}\right);\quad \left(-\frac{2}{5}^{(5)},-\frac{1}{2}^{(2)}\right);\quad \left(-\frac{3}{7}^{(7)}\right),
\]
and $B(\mathrm{GL}_7,3,((3/7)^{(7)}))\subset \mathcal N(7)$ consists of one single element $((3/7)^{(7)})$. 
\end{itemize}

We would like to give a complete list of the non-empty Newton strata
\[
\mathcal F(\mathrm{GL}_{15},\mu,b)^{[b']}, \quad [b']\in B(G,0,\nu_b-w_0\mu)
\]
that are contained in the weakly admissible locus $\mathcal F(\mathrm{GL}_{14},\mu,b)^{wa}$. As before, it is enough to consider those $[b']\in B(G,0,\nu_b-w_0\mu)$ which are HN-indecomposable relative to $\nu_b-w_0\mu$. By Theorem \ref{thm:criterion-for-a-singule-stratum}, the Newton stratum $\mathcal F(G,\mu,b)^{[b']}\not\subset \mathcal F(G,\mu,b)^{wa}$ if and only if the vector bundle $\mathcal E_{b'}$ can be written as an extension of the form 
\begin{equation}\label{eq:extension-for-GL14}
    0\longrightarrow \mathcal E_1\longrightarrow \mathcal E_{b'}\longrightarrow \mathcal E_2\longrightarrow 0,
\end{equation}
with $\mathcal E_1$ and $\mathcal E_2$ two vector bundles, or equivalently, if 
\[
-\nu_{b'}\in \Ext^1(v_{\mathcal E_2},v_{\mathcal E_1})\subset \mathcal N(14), 
\]
such that the pair $(-v_{\mathcal E_1},-v_{\mathcal E_2})\in 
B(M,(k_1,k_2), \nu_{b_M}-w\mu)$ for the cocharacters $w\mu$ in \eqref{eq:choice-for-wmu}. 
Write 
\[
\mathsf E=\bigcup_{(-v_{\mathcal E_1},-v_{\mathcal E_2})\in 
B(M,(k_1,k_2), \nu_{b_M}-w\mu) \atop  \textrm{for a cocharacter } w\mu \textrm{ in } \eqref{eq:choice-for-wmu}. } \Ext^1(v_{\mathcal E_2},v_{\mathcal E_1})\subset \mathcal N(14)
\]

\if false
\begin{example}\label{ex:inductive-computation-of-ext} We have 
\[
\Ext^1\left(\left(0,-\frac{1}{6}\right),\left(\frac{1}{2}^{(2)},-\frac{1}{3}\right)\right)=\left\{\substack{\left(\frac{1}{2}^{(2)},-\frac{1}{5}^{(2)}\right),\left(\frac{1}{2}^{(2)},-\frac{1}{6},-\frac{1}{4}\right), \left(\frac{1}{2}^{(2)},0,-\frac{2}{9}\right)\\ \left(\frac{1}{2}^{(2)},0,-\frac{1}{5},-\frac{1}{4}\right), \left(\frac{1}{2}^{(2)},0,-\frac{1}{6},-\frac{1}{3}\right)}\right\}.
\]
Indeed, as $1/2$ is bigger than $0$ and $-1/6$, there is no non-trivial extension of $\cO\oplus \cO(-1/6)$ by $\cO(1/2)^2$. So, if a vector bundle $\mathcal E$ is an extension of $\cO\oplus \cO(-1/6)$ by $\cO(1/2)^2\oplus \cO(-1/3)$, then $
\mathcal E=\cO(1/2)^2\oplus \mathcal E'$ with $\mathcal E'$ an extension of $\cO\oplus \cO(-1/6)$ by $\cO(-1/3)$. So it suffices to compute \[
\Ext^1\left(\left(0,-\frac{1}{6}\right),\left(-\frac{1}{3}\right)\right).
\] 
Let $\mathcal E'$ be an extension of $\cO\oplus \cO(-1/6)$ by $\cO(-1/3)$. By Proposition \ref{Prop_classification of extension by Ext^1}, we have 
\[
\Ext^1\left(\left(0,-\frac{1}{6}\right),\left(-\frac{1}{3}\right)\right)=\bigcup_{\ue\in \Ext^1\left((0),\left(-\frac{1}{3}\right)\right)} \Ext^1\left(\left(-\frac{1}{6}\right),\ue\right).
\]
Using the main result of \cite{BFH} or Corollary \ref{coro_Hansen} above, we have 
\begin{equation}\label{eq:choice-for-ue}
\Ext^1\left((0),\left(-\frac{1}{3}\right)\right)=\left\{\left(-\frac{1}{4}\right),\left(0,-\frac{1}{3}\right)\right\}.
\end{equation}
So it remains to compute, for $\ue$ one of the two elements in \eqref{eq:choice-for-ue}, the set $\Ext^1\left(\left(-\frac{1}{6}\right),\ue\right)$:
\begin{itemize}
    \item if $\ue=\left(-\frac{1}{4}\right)$, again by the main result of \cite{BFH},
    \[
    \Ext^1\left(\left(-\frac{1}{6}\right),\ue\right)=\left\{\left(-\frac{1}{5}^{(2)}\right),\left(-\frac{1}{6},-\frac{1}{4}\right)\right\};
    \]
    \item if $\ue=\left(0,-\frac{1}{3}\right)$, then as $0>-1/6$ we reduce as above to computing $\Ext^1\left(\left(-\frac{1}{6}\right),\left(-\frac{1}{3}\right)\right)$, and finally we get 
    \[
    \Ext^1\left(\left(-\frac{1}{6}\right),\ue\right)=\left\{\left(0,-\frac{2}{9}\right),\left(0,-\frac{1}{5},-\frac{1}{4}\right),\left(0,-\frac{1}{6},-\frac{1}{3}\right)\right\}.
    \]
\end{itemize}
\end{example}
\fi

\begin{proposition}\label{prop:all-extensions} Let $v\in \mathcal N(14)$. Then $v\in \mathsf E$ if and only if one of the following holds: 
\begin{enumerate}
    \item $v=v_{\mathcal E_1\oplus \mathcal E_2}$, with $(-v_{\mathcal E_1},-v_{\mathcal E_2})\in B(M,(k_1,k_2), \nu_{b_M}-w\mu)$ for a certain cocharacter $w\mu$ in \eqref{eq:choice-for-wmu}; or 
    \item $v\in \mathcal N(14)$ is one of the following $10$ elements:
    \begin{eqnarray*}
    \begin{aligned}
    \left(\frac{1}{2}^{(4)},-\frac{1}{5}^{(10)}\right),& \left(\frac{1}{2}^{(4)},-\frac{1}{6}, -\frac{1}{4}\right), & \left(\frac{1}{2}^{(4)},0,-\frac{2}{9}\right), & \left(\frac{1}{2}^{(4)},0, -\frac{1}{5},-\frac{1}{4}\right), & \left(\frac{1}{2}^{(4)},0^{(2)}, -\frac{1}{4}^{(8)}\right) \\ \left(\frac{1}{3},\frac{1}{6},-\frac{2}{5}\right),& \left(\frac{1}{4},\frac{1}{5},-\frac{2}{5}\right), & \left(\frac{1}{3},\frac{1}{5},0,-\frac{2}{5}\right), & \left(\frac{2}{9},-\frac{2}{5}\right),&  \left(\frac{2}{5},0,-\frac{2}{5}\right).  
    \end{aligned}
    \end{eqnarray*}
\end{enumerate}
Moreover, the last $5$ elements are all contained in $\Ext^1\left(\left(\frac{1}{2},-\frac{2}{5}\right), \left(\frac{1}{6},0\right)\right)$.
\end{proposition}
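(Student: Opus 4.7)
Here is my plan.

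By construction, $\mathsf{E}$ is the finite union
\[
\mathsf{E} \;=\; \bigcup_{w,\, ([b'_1],[b'_2])} \Ext^1\bigl(v_{\mathcal{E}_2}, v_{\mathcal{E}_1}\bigr),
\]
indexed by the three cocharacters $w\mu$ in \eqref{eq:choice-for-wmu} and, for each of them, by the pairs in the product of generalized Kottwitz sets tabulated in (i), (ii), (iii); here $v_{\mathcal{E}_i}=-w_0\nu_{b'_i}$. The plan is to walk through each such pair, compute the associated $\Ext^1$-set, and record what shows up.

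The key preliminary reduction is the vanishing $\Ext^1(\mathcal{O}(\mu),\mathcal{O}(\lambda))=0$ as soon as $\lambda\geq\mu$ (because $\mathrm{Hom}(\mathcal{O}(\mu),\mathcal{O}(\lambda))$ is a semistable vector bundle on $X$ of slope $\lambda-\mu\geq 0$): consequently, whenever every slope of $\mathcal{E}_1$ dominates every slope of $\mathcal{E}_2$, every extension splits and the only contribution to $\mathsf{E}$ is the direct-sum vector $v_{\mathcal{E}_1\oplus\mathcal{E}_2}$. A direct inspection of the enumerated lists shows that this slope-domination holds for every pair in case (iii), for every pair in case (ii), and for all but twelve of the $7\times 6=42$ pairs in case (i). The direct sums produced by these dominated pairs account precisely for item (1) of the proposition.

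The twelve remaining pairs in case (i) are: the six pairs with $\mathcal{E}_1=\mathcal{O}(1/2)^{2}\oplus\mathcal{O}(-1/3)$ (coming from $\nu_{b'_1}=(1/3^{(3)},-1/2^{(4)})$) and $\mathcal{E}_2$ arbitrary; and the six pairs with $\mathcal{E}_2=\mathcal{O}(1/2)\oplus\mathcal{O}(-2/5)$ (coming from $\nu_{b'_2}=(2/5^{(5)},-1/2^{(2)})$) and $\mathcal{E}_1$ equal to one of the six remaining options $\mathcal{O}(1/k)\oplus\mathcal{O}^{\,7-k}$ for $k\in\{2,3,4,5,6,7\}$. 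For each of these I would apply Proposition \ref{Prop_classification of extension by Ext^1} to peel off the smallest stable block of $\mathcal{E}_2$, iterate until one of the two sides becomes semistable so that Corollary \ref{coro_Hansen} or Proposition \ref{Prop_one_semistable} applies, and then read off all solutions from the combinatorial set $\widetilde{\Ext}^1$. After dropping the direct sums already absorbed into (1) and using Lemma \ref{Lem_Hong} to eliminate any combinatorial ghost of the type exhibited in Example \ref{ex:counter-example-for-ext}, the remaining non-split outputs are precisely the ten vectors displayed in item (2). The ``moreover'' clause then follows because the pair $(\mathcal{E}_1,\mathcal{E}_2)=(\mathcal{O}(1/6)\oplus\mathcal{O},\mathcal{O}(1/2)\oplus\mathcal{O}(-2/5))$ alone produces exactly the five vectors in the second row of the table.

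The main technical obstacle will be the explicit computation of $\Ext^1((1/2,-2/5),(1/6,0))$: neither side is semistable, so Corollary \ref{coro_Hansen} is not directly available, and the inductive reduction of Proposition \ref{Prop_classification of extension by Ext^1} has to be carried out carefully, peeling the stable block $(-2/5)^{(5)}$ first and then handling the subsidiary problem $\Ext^1((1/2),(1/6,0))$ via $\widetilde{\Ext}^1$. Once this one computation is done, the remaining eleven non-dominated pairs are no more complicated, and the rest of the verification is routine bookkeeping.
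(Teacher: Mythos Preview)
Your plan is correct and follows essentially the same approach as the paper's own proof: the paper likewise observes that all pairs in cases (ii) and (iii), and all but twelve of the case-(i) pairs, have every slope of $\mathcal E_1$ dominating every slope of $\mathcal E_2$ and hence contribute only split extensions; it then computes the same twelve residual $\Ext^1$-sets via the inductive reduction of Proposition~\ref{Prop_classification of extension by Ext^1} (illustrated on the pair $(\mathcal E_1,\mathcal E_2)=(\mathcal O(1/2)^2\oplus\mathcal O(-1/3),\,\mathcal O\oplus\mathcal O(-1/6))$ exactly as you describe), noting in a remark that Proposition~\ref{Prop_one_semistable} gives an alternative route since one factor can always be reduced to a semistable piece. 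Your invocation of Lemma~\ref{Lem_Hong} is unnecessary here, as no combinatorial ghosts arise in these particular computations.
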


\begin{proof} Observe that, for most of the pairs $(\mathcal E_1,\mathcal E_2)$ as above, the slopes of $\mathcal E_2$ are less or equal to those of $\mathcal E_1$, so the extension \eqref{eq:extension-for-GL14} is trivial and hence 
\[
\Ext^1(v_{\mathcal E_2},v_{\mathcal E_1})=\{v_{\mathcal E_1\oplus \mathcal E_2}\}\subset \mathcal N(14). 
\]
It remains for us to consider the pairs $(\mathcal E_1,\mathcal E_2)$ for which there exist non-trivial extensions. In other words, we only need to compute the following sets 
\[
\Ext^1\left(\left(\frac{1}{2},-\frac{2}{5}\right),\left(\frac{1}{2}^{(4)},-\frac{1}{3}\right)\right), \quad \Ext^1\left(\left(0^{(i)},-\frac{1}{7-i}\right),\left(\frac{1}{2}^{(4)},-\frac{1}{3}\right)\right), \ 0\leq i\leq 4, 
\]
and 
\[
\Ext^1\left(\left(\frac{1}{2},-\frac{2}{5}\right),\left(\frac{1}{7-j},0^{(j)}\right)\right), \ 0\leq j\leq 5.
\]
Using the inductive criterion in Proposition \ref{Prop_classification of extension by Ext^1}, we can determine explicitly these $12$ sets. For example, we have 
\[
\Ext^1\left(\left(0,-\frac{1}{6}\right),\left(\frac{1}{2}^{(4)},-\frac{1}{3}\right)\right)=\left\{\substack{\left(\frac{1}{2}^{(4)},-\frac{1}{5}^{(10)}\right),\left(\frac{1}{2}^{(4)},-\frac{1}{6},-\frac{1}{4}\right), \left(\frac{1}{2}^{(4)},0,-\frac{2}{9}\right)\\ \left(\frac{1}{2}^{(4)},0,-\frac{1}{5},-\frac{1}{4}\right), \left(\frac{1}{2}^{(4)},0,-\frac{1}{6},-\frac{1}{3}\right)}\right\}.
\]
Indeed, as $1/2$ is bigger than $0$ and $-1/6$, there is no non-trivial extension of $\cO\oplus \cO(-1/6)$ by $\cO(1/2)^2$. So, if a vector bundle $\mathcal E$ is an extension of $\cO\oplus \cO(-1/6)$ by $\cO(1/2)^2\oplus \cO(-1/3)$, then $
\mathcal E=\cO(1/2)^2\oplus \mathcal E'$ with $\mathcal E'$ an extension of $\cO\oplus \cO(-1/6)$ by $\cO(-1/3)$. So it suffices to compute \[
\Ext^1\left(\left(0,-\frac{1}{6}\right),\left(-\frac{1}{3}\right)\right),
\]
which is done in Example \ref{ex:inductive-computation-of-ext0}. The other extension sets can be computed in a similar way. To list all the non-trivial extensions, one just keeps in mind that a vector bundle $\mathcal E$ may be realized as an extension of $\mathcal E_2$ by $\mathcal E_1$ for different pairs of vector bundles $(\mathcal E_1,\mathcal E_2)$ as above. 
\end{proof}

\begin{remark} As in Example \ref{ex:inductive-computation-of-ext0} above (cf. Remark \ref{rem:direct-computation-of-ext0}), we can also prove Proposition \ref{prop:all-extensions} by using Proposition \ref{Prop_one_semistable}. 
\end{remark}

With the help of Proposition \ref{prop:all-extensions}, we can now easily list all the non-empty Newton strata that are contained in the admissible locus. In the sequel, we shall distinguish the following different cases according to the explicit form of $\nu_{b'}$.   
\begin{enumerate}
\item \underline{$\nu_{b'}=(\frac{3}{8},\ldots)$}. So $\nu_{b'}=(\frac{3}{8},-\frac{1}{2}^{(6)})$, and the only Newton stratum here is contained in the weakly admissible locus. 
\item \underline{$\nu_{b'}=(\frac{2}{5},\ldots )$}. We have $16$ non-empty Newton strata in this case, and none of them is contained in the weakly admissible locus.

\item \underline{$\nu_{b'}=(\frac{1}{3}^{(6)},\ldots )$}. We have $11$ non-empty Newton strata in this case, and $3$ of them are contained in the weakly admissible locus: $(\frac{1}{3}^{(6)},-\frac{1}{4}^{(8)})$, $\left(\frac{1}{3}^{(6)},-\frac{1}{5},-\frac{1}{3}\right)$ and $ \left(\frac{1}{3}^{(6)},-\frac{1}{6},-\frac{1}{2}\right)$.

\item \underline{$\nu_{b'}=(\frac{1}{3},\frac{1}{4},\ldots)$}. We find $8$ non-empty Newton strata in this case, and none of them is contained in the weakly admissible locus.   

\item \underline{$\nu_{b'}=(\frac{1}{3},\frac{1}{5},\ldots)$}. We find $5$ non-empty Newton strata in this case, and $4$ of them are contained in the weakly admissible locus: $(\frac{1}{3},\frac{1}{5},-\frac{1}{3}^{(6)})$, $\left(\frac{1}{3},\frac{1}{5},-\frac{1}{4},-\frac{1}{2}\right)$, $\left(\frac{1}{3},\frac{1}{5},0,-\frac{2}{5}\right)$, $ \left(\frac{1}{3},\frac{1}{5},0,-\frac{1}{3},-\frac{1}{2}\right)$.

\item \underline{$\nu_{b'}=(\frac{1}{3},\frac{1}{6},\ldots )$}. We have $3$ non-empty Newton strata in this case, and $2$ of them are in the wa locus: $
\nu_{b'}=\left(\frac{1}{3},\frac{1}{6},-\frac{2}{5}\right)$, $\left(\frac{1}{3},\frac{1}{6},-\frac{1}{3},-\frac{1}{2}\right)$.

\item \underline{$\nu_{b'}=(\frac{1}{3},\frac{1}{7},\ldots)$}. So $\nu_{b'}=(\frac{1}{3},\frac{1}{7},-\frac{1}{2}^{(4)})$ and this Newton stratum is not contained in the weakly admissible locus. 


\item \underline{$\nu_{b'}=(\frac{2}{7},...)$}. We have $8$ non-empty Newton strata in this case, and none of them is contained in the weakly admissible locus.

\item \underline{$\nu_{b'}=(\frac{1}{4}^{(8)},...)$}. We have $5$ non-empty Newton strata in this case, and all of them are contained in the weakly admissible locus. 
 
\item \underline{$\nu_{b'}=(\frac{1}{4},\frac{1}{5},\ldots)$}. We have $3$ non-empty Newton strata in this case, and $2$ of them are contained in the weakly admissible locus: $(\frac{1}{4},\frac{1}{5},-\frac{2}{5})$ or $(\frac{1}{4},\frac{1}{5},-\frac{1}{3},-\frac{1}{2})$. 
\item \underline{$\nu_{b'}=(\frac{1}{4},\frac{1}{6},...)$}. So $\nu_{b'}=(\frac{1}{4},\frac{1}{6},-\frac{1}{2}^{(4)})$, and the corresponding stratum is not contained in the weakly admissible locus. 


\item \underline{$\nu_{b'}=(\frac{2}{9},\ldots)$}. We have $3$ non-empty Newton strata, and $2$ of them are contained in the weakly admissible locus: $(\frac{2}{9},-\frac{2}{5})$ or $(\frac{2}{9},-\frac{1}{3},-\frac{1}{2})$. 

\item \underline{$\nu_{b'}=(\frac{1}{5}^{(2)},...)$}. So that $\nu_{b'}=(\frac{1}{5}^{(2)},-\frac{1}{2}^{(2)})$, and the only one Newton stratum here is not contained in the weakly admissible locus. 




\item \underline{$\nu_{b'}=(\frac{1}{i},0^{(j)},-\frac{1}{14-i-j})$, $3\leq i\leq 12$, $0\leq j\leq 12-i$}. Such a stratum is contained in the weakly admissible locus if and only if $i\geq 8$, or $i\leq 7$ and $0\leq j\leq 6-i$.  
\end{enumerate}
To summarize, we have $121$ non-empty HN-indecomposable Newton strata, and $44$ of them are contained in the weakly admissible locus. 

\subsubsection{} We take $G=\mathrm{GL}_{21}$, $\mu=(1^{(9)},0^{(12)})$ and thus $b\in B(G)_{basic}$ with $
\nu_b=\left(\frac{3}{7}^{(21)}\right)\in \mathcal N(21)$. Then $b$ has a reduction to two proper maximal standard Levi subgroups
\[
M_1=\mathrm{GL}_7\times \mathrm{GL}_{14}, \ M_2=\mathrm{GL}_{14}\times \mathrm{GL}_7\hookrightarrow G.
\]
For the first Levi subgroup, the cocharacters $w\mu$ such that $\langle \nu_b-w\mu,\chi\rangle<0$ for some $\chi\in X^*(P_1/Z_G)^+$ are
\[
w\mu\in \left\{\substack{\left((1^{(4)},0^{(3)}),(1^{(5)},0^{(9)})\right), \  \left((1^{(5)},0^{(2)}),(1^{(4)},0^{(10)})\right)\\  \left((1^{(6)},0),(1^{(3)},0^{(11)})\right), \   \left((1^{(7)}),(1^{(2)},0^{(12)})\right)}\right\} \subset \mathcal N(M_1).
\]
Thus
\[
\nu_{b_{M_1}}\mu^{w,-1} \in \left\{\substack{\left(\left(\frac{3}{7}^{(3)},-\frac{4}{7}^{(4)}),(\frac{3}{7}^{(9)},-\frac{4}{7}^{(5)}\right)\right), \  \left(\left(\frac{3}{7}^{(2)},-\frac{4}{7}^{(5)}),(\frac{3}{7}^{(10)},-\frac{4}{7}^{(4)}\right)\right)\\  \left(\left(\frac{3}{7},-\frac{4}{7}^{(6)}),(\frac{3}{7}^{(11)},-\frac{4}{7}^{(3)}\right)\right), \  \left(\left(-\frac{4}{7}^{(7)}\right),\left(\frac{3}{7}^{(12)},-\frac{4}{7}^{(2)}\right)\right)}\right\} \subset \mathcal N(M_1).
\]
Similarly, for the Levi subgroup $M_2$, , the cocharacters $w\mu$ such that $\langle \nu_b-w\mu,\chi\rangle<0$ for some $\chi\in X^*(P_1/Z_G)^+$ are
\[
w\mu\in \left\{\substack{\left((1^{(7)},0^{(7)}),(1^{(2)},0^{(5)})\right), \  \left((1^{(8)},0^{(6)}),(1,0^{(6)})\right) \\  \left((1^{(9)},0^{(5)}),(0^{(7)})\right)}\right\} \subset \mathcal N(M_2),
\]
and hence
\[
\nu_{b_{M_2}}\mu^{w,-1} \in \left\{\substack{\left(\left(\frac{3}{7}^{(7)},-\frac{4}{7}^{(7)}),(\frac{3}{7}^{(5)},-\frac{4}{7}^{(2)}\right)\right), \  \left(\left(\frac{3}{7}^{(6)},-\frac{4}{7}^{(8)}),(\frac{3}{7}^{(6)},-\frac{4}{7}\right)\right)\\  \left(\left(\frac{3}{7}^{(5)},-\frac{4}{7}^{(9)}),(\frac{3}{7}^{(7)}\right)\right)}\right\} \subset \mathcal N(M_2).
\]
Let $b'\in G(\breve{F})$ with slope vector $\nu_{b'}=\left(\frac{5}{12},-\frac{5}{9}\right)$. So $\mathcal E_{b'}=\cO(\frac{5}{9})\oplus \cO(-\frac{5}{12})$. But for each $w\mu\in \mathcal N(M_i)$ as above, and for each possible pair $(\mathcal E',\mathcal E'')$ with
\[
-w_{0,M_i}\nu_{\mathcal E'\oplus \mathcal E''}\preceq \nu_{b_{M_i}}\mu^{w,-1}
\]
one checks (by a simple drawing for example) that the maximal slope of $\mathcal E'$ is less or equal to $\frac{1}{2}<\frac{5}{9}$. But on the other hand, none of the slope of $\mathcal E''$ is $\frac{5}{9}$. Therefore, $\mathcal E_{b'}$ can not be an extension of $\mathcal E'$ by $\mathcal E''$ by our discussions in the previous \S: see for example Lemma \ref{Prop_tildeExt is Necessary condition} and the combinatorial condition in Definition \ref{Def_tildeExt}. Consequently, by Theorem \ref{thm:criterion-for-a-singule-stratum}, $\mathcal F(G,\mu,b)^{[b']}$ is entirely contained in the weakly admissible locus.


\appendix

\renewcommand{\thesection}{Appendix~\Alph{section}}

\section{Direct description of $\Ext^1$ for $\mathrm{GL}_n$ in some cases}\label{subsection_Ext=tildExt}

\renewcommand{\thesection}{\Alph{section}}

In \S~\ref{Sec_extension}, we describe  $\Ext^1$ in an inductive way. In this appendix, we want to prove $\Ext^1=\widetilde{\Ext}^1$ in some cases. We first need some combinatorial lemmas.

\begin{definition}Let $n\in\N$ and $\epsilon\in \Q\cap [0, 1)$. Recall that
\[
\cN(n)=\{\ua=(a_1,\cdots, a_n)\in \Q^n|a_1\geq a_2\geq \cdots \geq a_n\}.
\]
\begin{enumerate}
\item An element $\ua=(a_1, \cdots, a_n)\in\Q^n$ is said to have \emph{$\epsilon$-breakpoints} if the following two conditions are verified:
\begin{itemize}
\item $|\ua|:=a_1+\cdots+a_n\in\Z+\epsilon$; and
\item for any $0<i<n$ with $a_i\neq a_{i+1}$, we have $\sum_{j=1}^i a_j\in\Z+\epsilon$.
\end{itemize}
We say that $\ua$ has \emph{integral breakpoints} if it has $0$-breakpoints.
\item Set
\[
\cN(n,\epsilon):=\{\ua=(a_1,\cdots, a_n)\in\cN(n)| \ua \text{ has } \epsilon-\text{breakpoints}
\}.
\]
\item For $\ua\in \Q^n$, let $P_{\ua}: [0, n]\rightarrow \R$ be the piecewise linear function such that
\begin{itemize}
\item $P_{\ua}(0)=0$;
\item $P_{\ua}(i)=a_1+a_2+\cdots+a_i$ for $i=1,\cdots, n$; and
\item $P_{\ua}$ is linear on the segment $[i-1, i]$ for $i=1,\cdots, n$.
\end{itemize}

\end{enumerate}
\end{definition}

\begin{remark}
\begin{enumerate}
\item $\cN(n, 0)=\cN(n)$.
\item Let $0<d<n$. For $\ua\in \cN(n)$, let
\[
\tau_{>d}(\ua):=(a_{d+1},\cdots, a_n)\in \cN(n-d).
\]
Then $\tau_{>d}(\ua)\in \cN(n-d, \epsilon)$ for $\epsilon=\{-\sum_{j=1}^d a_j\}\in [0,1)$.
\end{enumerate}
\end{remark}

\begin{lemma}\label{lemma_combinatoric_A}Let $\epsilon\in [0,1)$. Let $\ua\in\cN(n, \epsilon)$ and $\uc\in\cN(n)$, such that $a_i\geq c_i$ for all $1\leq i\leq n$. Then for any $m\in\Z$ with $|\uc|\leq m\leq |\ua|$, there exists $\ub\in\cN(n)$ such that $c_i\leq b_i\leq a_i$ for all $1\leq i\leq n$ and $|\ub|=m$.
\end{lemma}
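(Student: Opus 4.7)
The plan is to construct $\ub$ as a rational convex combination of $\uc$ and $\ua$. In the non-degenerate case $|\ua| > |\uc|$, I will set
\[
t := \frac{m - |\uc|}{|\ua| - |\uc|} \in \Q \cap [0,1]
\]
and define $b_i := (1-t)c_i + t a_i$ for $1 \leq i \leq n$. The choice of $t$ will directly give $|\ub| = m$, and the entrywise bounds $c_i \leq b_i \leq a_i$ are immediate from $c_i \leq a_i$ combined with $t \in [0,1]$. Dominance of $\ub$ will follow from the termwise identity $b_i - b_{i+1} = (1-t)(c_i - c_{i+1}) + t(a_i - a_{i+1})$, both of whose summands are non-negative since $\ua$ and $\uc$ are themselves dominant; rationality of the $b_i$ is inherited from rationality of $t$, a ratio of rational numbers.

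The degenerate case $|\ua| = |\uc|$ needs separate comment but is trivial: the hypothesis $a_i \geq c_i$ together with equality of sums forces $a_i = c_i$ for every $i$, so $\ua = \uc$, necessarily $m = |\ua|$, and one takes $\ub := \ua$.

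There is no real obstacle here; geometrically, the segment joining $\uc$ to $\ua$ inside $\Q^n$ remains inside both the product box $\prod_{i=1}^n [c_i, a_i]$ and the dominant cone $\cN(n)$, and its coordinate sum sweeps linearly in $t$ through every rational value, in particular every integer, between $|\uc|$ and $|\ua|$. Notably, this argument does not invoke the $\epsilon$-breakpoint hypothesis on $\ua$; that hypothesis is presumably recorded in the statement because it is relevant to companion lemmas and to the broader combinatorial setting in which this result will subsequently be applied, rather than to the existence assertion itself.
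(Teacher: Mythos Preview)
Your argument has a genuine gap: you have misread what membership in $\cN(n)$ requires. In the appendix the paper records (Remark after the definition of $\cN(n,\epsilon)$) that $\cN(n,0)=\cN(n)$; that is, elements of $\cN(n)$ are required to have \emph{integral breakpoints}, not merely to be decreasing rational sequences. This is consistent with how the lemma is subsequently applied (the output $\ub$ must be the Harder--Narasimhan polygon of an actual vector bundle), and it explains why the paper bothers to impose the $\epsilon$-breakpoint hypothesis on $\ua$ and to give a nontrivial inductive proof.

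Under the correct reading your convex combination fails. Take $n=3$, $\epsilon=0$, $\ua=(1,\tfrac12,\tfrac12)\in\cN(3,0)$, $\uc=(0,0,0)\in\cN(3)$, and $m=1$. Your construction gives $t=\tfrac12$ and $\ub=(\tfrac12,\tfrac14,\tfrac14)$, which has a breakpoint at $i=1$ with partial sum $\tfrac12\notin\Z$, so $\ub\notin\cN(3)$. (A valid choice here is $\ub=(1,0,0)$ or $\ub=(\tfrac13,\tfrac13,\tfrac13)$.) More generally, interpolating between $\uc$ and $\ua$ mixes their block structures and there is no reason for the partial sums at the new breakpoints to be integral.

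The paper's proof handles exactly this difficulty: it proceeds by induction on $|\ua|-|\uc|$, and at each step builds a sequence $\tilde{\ub}$ by concatenating an initial segment of $\uc$, a single constant block, and a terminal segment of $\ua$, with the block boundaries chosen at indices where $P_{\uc}$ (resp.\ $P_{\ua}$) already takes integral (resp.\ $\epsilon$-integral) values. This guarantees that every block of $\tilde{\ub}$ has integral total, so the decreasing rearrangement (Lemma~\ref{lemma_reorder b}) lies in $\cN(n)$. Your observation that the $\epsilon$-breakpoint hypothesis ``is presumably recorded \ldots\ rather than [needed for] the existence assertion itself'' is precisely where the argument goes wrong: that hypothesis is what makes the integrality bookkeeping possible.
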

\begin{proof}We prove by induction on $|\ua|-|\uc|\in\N+\epsilon$. If $|\ua|-|\uc|\leq 1$, there is nothing to prove. So we may assume $|\ua|-|\uc|>1$. Let
\[
\begin{split}
m_0&:=\mathrm{max}\{0\leq m\leq n|m\in\Z,  P_{\ua}(m)\in\Z+\epsilon, P_{\ua}(m)-P_{\uc}(m)<\delta+\epsilon\}\\
m_1&:=\mathrm{min}\{0\leq m\leq n|m\in\Z,  P_{\ua}(m)\in\Z+\epsilon, P_{\ua}(m)-P_{\uc}(m)\geq \delta+\epsilon\}\\
n_0&:=\mathrm{max}\{0\leq m\leq n|m\in\Z,  P_{\uc}(m)\in\Z, P_{\ua}(m)-P_{\uc}(m)<\delta+\epsilon\}\\
n_1&:=\mathrm{min}\{0\leq m\leq n|m\in\Z,  P_{\uc}(m)\in\Z, P_{\ua}(m)-P_{\uc}(m)\geq \delta+\epsilon\}
\end{split}
\]
where
\[
\delta=\begin{cases}0, &\text{if }\epsilon\neq 0\\ 1, &\text{if }\epsilon=0\end{cases}.
\]
Then $a_i$ is constant for $m_0<i\leq m_1$ and $c_i$ is constant for $n_0< i\leq n_1$. Moreover, \[\mathrm{max}(m_0, n_0)<\mathrm{min}(m_1, n_1)\] as $P_{\ua}-P_{\uc}$ is an increasing function. It suffices to find $\ub\in\cN(n)$ such that $|\uc|< |\ub|<|\ua|$.

Let
\[
\tilde{b}_i:=\begin{cases}c_i, & i\leq n_0\\ \frac{P_{\ua}(m_1)-\delta-\epsilon-P_{\uc}(n_0)}{m_1-n_0}, &n_0< i\leq m_1\\
a_i, &i>m_1\end{cases}
\]
By definition, $|\tilde{\ub}|=|\ua|-\delta-\epsilon$ and therefore $|\uc|< \tilde{\ub}< |\ua|$.

\emph{Claim: $c_i\leq \tilde{b}_i\leq a_i$ for any $n_0< i\leq m_1$.}

Then the existence of $\ub$ follows from Lemma \ref{lemma_reorder b}. Now it remains to prove the Claim.

 For any $n_0<i \leq m_1$, it suffices to show \begin{eqnarray}\label{eqn_two inequalities}c_{n_1}=\frac{P_{\uc}(n_1)-P_{\uc}(n_0)}{n_1-n_0}\leq \tilde{b}_i\leq a_{m_1}=\frac{P_{\ua}(m_1)-P_{\ua}(m_0)}{m_1-m_0}.\end{eqnarray}
  We first prove the first inequality. If $m_1=n_1$, then it holds by the definition of $m_1$.

 If $m_1>  n_1$, the first inequality is equivalent to $c_{n_1}\leq  \frac{P_{\ua}(m_1)-\delta-\epsilon-P_{\uc}(n_1)}{m_1-n_1}$. This follows from  \[c_{n_1}\leq a_{n_1}= \frac{P_{\ua}(m_1)-P_{\ua}(n_1)}{m_1-n_1}\leq  \frac{P_{\ua}(m_1)-\delta-\epsilon-P_{\uc}(n_1)}{m_1-n_1}\] by the definition of $n_1$.

 If $m_1< n_1$, the first inequality is equivalent to
 \[\frac{P_{\uc}(n_1)+\delta+\epsilon-P_{\ua}(m_1)}{n_1-m_1}\leq \frac{P_{\ua}(m_1)-\delta-\epsilon-P_{\uc}(n_0)}{m_1-n_0}.\] This follows from  \[\frac{P_{\uc}(n_1)+\delta+\epsilon-P_{\ua}(m_1)}{n_1-m_1}\leq \frac{P_{\ua}(n_1)-P_{\ua}(m_1)}{n_1-m_1}\leq\frac{P_{\ua}(m_1)-P_{\ua}(n_0)}{m_1-n_0}\leq  \frac{P_{\ua}(m_1)-\delta-\epsilon-P_{\uc}(n_0)}{m_1-n_0},\] where the inequality in the middle holds because $\ua$ is decreasing.

 For the second inequality in (\ref{eqn_two  inequalities}), we again distinguish three subcases.

If $m_0=n_0$, then it's obvious by the definition of $m_0$.

If $m_0> n_0$, then the second inequality is equivalent to \[\frac{P_{\ua}(m_0)-\delta-\epsilon-P_{\uc}(n_0)}{m_0-n_0}\leq \frac{P_{\ua}(m_1)-P_{\ua}(m_0)}{m_1-m_0}=a_{n_1}.\] This holds because the left hand side is bounded by $c_{n_1}$ by the defintion of $m_0$.

If $m_0< n_0$, then the second inequality is equivalent to
\[\frac{P_{\ua}(m_1)-\delta-\epsilon-P_{\uc}(n_0)}{m_1-n_0}\leq \frac{P_{\uc}(n_0)+\delta+\epsilon-P_{\ua}(m_0)}{n_0-m_0}.\] This follows from
\[\frac{P_{\ua}(m_1)-\delta-\epsilon-P_{\uc}(n_0)}{m_1-n_0}\leq \frac{P_{\ua}(m_1)-P_{\ua}(n_0)}{m_1-n_0} \leq\frac{P_{\ua}(n_0)-P_{\ua}(m_0)}{n_0-m_0}\leq \frac{P_{\uc}(n_0)+\delta+\epsilon-P_{\ua}(m_0)}{n_0-m_0}.\]

\end{proof}

\begin{lemma}\label{lemma_reorder b}Suppose $\ua\in\cN(n, \epsilon)$, $\uc\in\cN(n)$. Let $\tilde{\ub}=(\tilde{b}_1,\cdots,\tilde{b}_n)\in\Q^n$ has integral breakpoints such that $a_i\geq \tilde{b}_i \geq c_i$ for all $1\leq i\leq n$. Then there exists $\ub:=(b_1,\cdots, b_n)\in S_n\tilde{\ub}$ such that $\ub\in\cN(n)$ and $a_i\geq b_i\geq c_i$ for all $1\leq i\leq n$.
\end{lemma}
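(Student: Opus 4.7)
The plan is to take $\ub$ to be simply the weakly decreasing rearrangement of $\tilde{\ub}$, i.e. choose a permutation $\sigma \in S_n$ with $\tilde{b}_{\sigma(1)} \geq \tilde{b}_{\sigma(2)} \geq \cdots \geq \tilde{b}_{\sigma(n)}$ and set $b_i := \tilde{b}_{\sigma(i)}$. Then $\ub \in S_n \tilde{\ub}$ and $\ub \in \cN(n)$ by construction. The whole content of the lemma is therefore to verify that sorting preserves the pointwise sandwich $c_i \leq b_i \leq a_i$, and this will use crucially the fact that the enveloping sequences $\ua$ and $\uc$ are themselves weakly decreasing.

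For the upper bound, I will argue by contradiction: suppose $b_i > a_i$ for some index $i$. The values $b_1 \geq b_2 \geq \cdots \geq b_i$ are the $i$ largest entries of $\tilde{\ub}$, so there are $i$ distinct indices $j_1, \ldots, j_i \in \{1, \ldots, n\}$ with $\tilde{b}_{j_k} \geq b_i > a_i$. The hypothesis $\tilde{b}_{j_k} \leq a_{j_k}$ then forces $a_{j_k} > a_i$, and since $\ua$ is weakly decreasing this gives $j_k < i$ for every $k$. But then the $i$ distinct indices $j_1, \ldots, j_i$ must all fit in the set $\{1, \ldots, i-1\}$, which is absurd. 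The lower bound $b_i \geq c_i$ follows by the symmetric argument: if $b_i < c_i$, the $n-i+1$ smallest entries $b_i \geq b_{i+1} \geq \cdots \geq b_n$ sit at $n-i+1$ distinct indices $j$, each satisfying $c_j \leq \tilde{b}_j \leq b_i < c_i$, forcing $j > i$ by monotonicity of $\uc$, again impossible.

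This covers the full assertion of the lemma. I do not expect any serious obstacle: the argument is a clean rearrangement/pigeonhole exercise once one notices that sorting can only decrease the largest entries at early positions and increase the smallest entries at late positions, relative to any weakly decreasing envelope. The hypothesis that $\tilde{\ub}$ has integral breakpoints plays no role in producing $\ub$; it is merely an invariant (inherited from the construction of $\tilde{\ub}$ in the proof of Lemma \ref{lemma_combinatoric_A}) and is automatically preserved under permutation in the weak sense that $|\ub| = |\tilde{\ub}| \in \Z$, so it may simply be carried along without further comment.
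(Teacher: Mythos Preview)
Your proposal is correct. The paper's own ``proof'' of this lemma is the single sentence ``This can be checked directly,'' so your argument is an explicit verification of what the paper leaves to the reader. The approach you take---sort $\tilde{\ub}$ into decreasing order and use a pigeonhole contradiction against the monotonicity of $\ua$ and $\uc$---is the natural one and is presumably what the authors had in mind; you are also right that the breakpoint hypotheses on $\ua$ and $\tilde{\ub}$ play no role here.
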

\begin{proof}This can be checked directly.
\end{proof}


\begin{lemma}\label{lemma_combinatoric_C} Suppose $\epsilon_1, \epsilon_2\in[0, 1)$. Let $\ua\in\cN(n, \epsilon_1)$ and $\uc\in\cN(n, \epsilon_2)$ such that $a_i\geq c_i$ for all $1\leq i\leq n$. Assume that $c_1=\cdots=c_n$. Then for any $m\in\Z$ such that $|\uc|\leq m\leq |\ua|$, there exists $\ub\in\cN(n)$ such that $c_i\leq b_i\leq a_i$ for all $1\leq i\leq n$ and $|\ub|=m$.
\end{lemma}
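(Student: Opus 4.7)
The plan is to construct $\ub$ directly by a continuous one-parameter deformation, exploiting the hypothesis that $\uc$ is constant. Set $c := c_1 = \cdots = c_n$ and, for each $t \in [c,\,\max_i a_i]$, define a vector $\ub(t) \in \Q^n$ by
\[
b_i(t) := \max\bigl(c,\, \min(a_i,\, t)\bigr), \quad 1 \leq i \leq n.
\]
The first step is to verify that $\ub(t) \in \cN(n)$ with $c_i = c \leq b_i(t) \leq a_i$ for each $i$. The coordinate-wise inequalities are immediate from the definition, and the non-increasing property $b_1(t) \geq \cdots \geq b_n(t)$ follows because $c$ is constant and $(a_i)$ is non-increasing, so $\min(a_i, t)$ and hence $b_i(t)$ are non-increasing in $i$.

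The next step is to study the total sum $f(t) := |\ub(t)| = \sum_{i=1}^{n} b_i(t)$. It is continuous and piecewise-linear in $t$, with integer slopes (on the interior of a linear piece the slope equals $\#\{i : c < t < a_i\}$) and breakpoints located at $t = c$ or $t = a_i$, all of which are rational. At the endpoints one has $f(c) = nc = |\uc|$ and $f(\max_i a_i) = |\ua|$. Since the given integer $m$ satisfies $|\uc| \leq m \leq |\ua|$, the intermediate value theorem supplies some $t^* \in [c,\, \max_i a_i]$ with $f(t^*) = m$; and because $f$ is an affine function with rational coefficients on each linear piece, such a $t^*$ can be chosen in $\Q$ (picking any rational point inside a horizontal piece in the degenerate case where the slope vanishes there).

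Finally, setting $\ub := \ub(t^*)$ produces the required element of $\cN(n) \cap \Q^n$ with $c_i \leq b_i \leq a_i$ for all $i$ and $|\ub| = m$. The breakpoint hypotheses on $\ua$ and $\uc$ play no essential role beyond ensuring that the endpoints of the relevant interval take the form $|\uc| \in \Z + \epsilon_2$ and $|\ua| \in \Z + \epsilon_1$; the key simplification is that the constancy of $\uc$ allows one to bypass the delicate combinatorial case analysis used in the proof of Lemma \ref{lemma_combinatoric_A}, and I do not anticipate any substantive obstacle in executing this plan.
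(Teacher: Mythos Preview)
There is a genuine gap: you verify only that $\ub(t^*)$ is non-increasing, but membership in $\cN(n)$ demands more. By the paper's Remark following the definition, $\cN(n) = \cN(n,0)$, so $\ub \in \cN(n)$ must have \emph{integral breakpoints}: whenever $b_i \neq b_{i+1}$ one needs $\sum_{j \le i} b_j \in \Z$ (this is what makes $\ub$ the Harder--Narasimhan vector of an actual vector bundle). Your assertion that ``the breakpoint hypotheses on $\ua$ and $\uc$ play no essential role'' is therefore wrong --- the $\epsilon_1$-breakpoint hypothesis on $\ua$ is exactly what forces your $\ub(t^*)$ into $\cN(n)$, and you must use it. The fix is short. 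Set $k := \max\{i : a_i \ge t^*\}$, so $\ub(t^*) = ({t^*}^{(k)}, a_{k+1}, \ldots, a_n)$. If $k < n$, then $a_k \ge t^* > a_{k+1}$ shows that $k$ is already a breakpoint of $\ua$, hence $\sum_{j \le k} a_j \in \Z + \epsilon_1$ and thus $\sum_{j > k} a_j \in \Z$; combining with $m = k t^* + \sum_{j>k} a_j$ gives $k t^* \in \Z$. Any further breakpoint $\ell > k$ of $\ub(t^*)$ is also a breakpoint of $\ua$, so its partial sum $k t^* + \bigl(\sum_{j \le \ell} a_j - \sum_{j \le k} a_j\bigr)$ lies in $\Z$ as well.

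With this repair your argument is correct and takes a genuinely different route from the paper. The paper argues by induction on $n$: it first reduces, via Lemma~\ref{lemma_combinatoric_A}, to the single target value $m_0 = \lceil |\uc| \rceil$, and then peels off the lowest stable block of $\ua$ (writing $a_n = s/r$ in lowest terms) to descend to rank $n-r$. Your one-parameter family $\ub(t) = (\min(a_i,t))_i$ handles all $m$ at once, with no induction and no appeal to Lemma~\ref{lemma_combinatoric_A}; the price is the breakpoint verification above, which is where the hypothesis $\ua \in \cN(n,\epsilon_1)$ actually enters.
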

\begin{remark}If we do not require all the coordinates in $\uc$ are equal, then Lemma \ref{lemma_combinatoric_C} does not hold in general. For example, take $n=3$, $\ua=(\frac{4}{7}, \frac{4}{7}, 0)$ and $\uc=(\frac{5}{9}, \frac{5}{9}, -1)$. Then $|\ua|=\frac{8}{7}>1>|\uc|=\frac{1}{9}$, but there doesn't exist $\ub\in\cN(3)$ such that $a_i\geq b_i\geq c_i$ for $i=1, 2, 3$ and $|\ub|=1$.
\end{remark}

\begin{proof}[Proof of Lemma \ref{lemma_combinatoric_C}] We prove by induction on $n$. If $n=1$, it's obvious. Now we deal with general $n$. Suppose $|\ua|\geq \lceil|\uc|\rceil=:m_0$. According to Lemma \ref{lemma_combinatoric_A}, it suffices to find $\ub\in \cN(n)$ such that $a_i\geq b_i\geq c_i$ for all $i$ and $|\ub|=m_0$. Let
\[
\uc'=(a_{n},\cdots, a_{n})\in \cN(n, \epsilon_3)
\]
where $\epsilon_3=\{na_n\}$. If $na_n\geq m_0$, then $\ub=(\frac{m_0}{n}, \cdots, \frac{m_0}{n})\in\cN(n)$ is the desired element. If $|\uc'|=na_n< m_0$. Then $|\uc'|<m_0\leq |\ua|$. Write $a_n=\frac{s}{r}$ with $r$ and $s$ coprime and $r\geq 1$. Let
\[
\tau_{\leq n-r}(\ua):=(a_1, \cdots a_{n-r})\in\cN(n-r, \epsilon_1),
\]
and $\tau_{\leq n-r}(\uc')\in\cN(n-r, \epsilon_3)$ is defined in the same way. Then
\[
|\tau_{\leq n-r}(\uc')|=|\uc'|-s< m_0-s\leq |\tau_{\leq n-r}(\ua)|=|\ua|-s.
\]
By the induction hypothesis for $n-r$, we may find $(b_1, \cdots, b_{n-r})\in \cN(n-r)$ such that $a_n\leq b_i\leq a_i$ for $1\leq i\leq n-r$ and $\sum_{i=1}^{n-r}b_i=m_0-s$. Then \[b=(b_1,\cdots, b_{n-r}, \underbrace{a_n,\cdots, a_n}_r)\in\cN(n)\] satisfies the desired properties.
\end{proof}

\begin{lemma}\label{lemma_combinatoric_D} Let $0<d<n$. Suppose $\epsilon_1, \epsilon_2\in[0, 1)$. Let $\ua\in\cN(n, \epsilon_1)$ and $\uc\in\cN(n-d, \epsilon_2)$ such that $a_{i+d}\geq c_i$ for all $1\leq i\leq n-d$. Assume that either $c_1=\cdots=c_{n-d}$ or $\uc\in \Z^{n-d}$. Then for any $m\in\Z$ such that $|\uc|+dc_1\leq m\leq |\ua|$, there exists $\ub\in\cN(n)$ such that $|\ub|=m$, $b_i\leq a_i$ for all $1\leq i\leq n$, $c_i\leq b_{i+d}$ and for $1\leq i\leq n-d$.
\end{lemma}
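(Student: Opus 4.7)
The plan is to reduce Lemma \ref{lemma_combinatoric_D} to Lemmas \ref{lemma_combinatoric_A} and \ref{lemma_combinatoric_C} by padding $\uc$ with $d$ copies of $c_1$. Concretely, I would introduce
\[
\tilde\uc := (\underbrace{c_1,\ldots,c_1}_{d \textrm{ times}}, c_1,c_2,\ldots,c_{n-d}) \in \Q^n,
\]
which is weakly decreasing (as $\uc$ is) and satisfies $|\tilde\uc| = dc_1 + |\uc|$. First I would verify componentwise that $\tilde c_i \leq a_i$ for every $1 \leq i \leq n$: for $i \leq d$ this follows from the monotonicity of $\ua$ together with the given inequality $c_1 \leq a_{d+1}$, while for $i > d$ it is exactly the hypothesis $c_{i-d} \leq a_i$.

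Next I would split on the two alternative hypotheses on $\uc$. If $c_1 = \cdots = c_{n-d}$, then all coordinates of $\tilde\uc$ coincide, so $\tilde\uc$ trivially lies in $\cN(n, \epsilon_3)$ for $\epsilon_3 = \{nc_1\}$, and I would invoke Lemma \ref{lemma_combinatoric_C} applied to the triple $(\ua,\tilde\uc,m)$; the required bound $|\tilde\uc| \leq m \leq |\ua|$ is exactly the hypothesis $|\uc| + dc_1 \leq m \leq |\ua|$. If instead $\uc \in \Z^{n-d}$, then $\tilde\uc \in \Z^n$ has automatically integral breakpoints, so that $\tilde\uc \in \cN(n,0) \subseteq \cN(n)$, and I would invoke Lemma \ref{lemma_combinatoric_A} under the same quantitative bound. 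In either case, the output is some $\ub \in \cN(n)$ with $|\ub| = m$ and $\tilde c_i \leq b_i \leq a_i$ for all $1 \leq i \leq n$. Discarding the lower bound for $i \leq d$ (which is not demanded by the lemma) and translating indices for $i > d$ recovers $c_i \leq b_{i+d}$ for $1 \leq i \leq n-d$, which is the remaining claim.

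The argument above is essentially a bookkeeping reduction, so I do not expect a serious obstacle; the only subtlety is that the dichotomy in the hypothesis of Lemma \ref{lemma_combinatoric_D} is precisely what makes it possible to land $\tilde\uc$ in the domain of one of the two earlier lemmas, since Lemma \ref{lemma_combinatoric_A} requires $\uc \in \cN(n)$ (integral breakpoints) whereas Lemma \ref{lemma_combinatoric_C} drops that requirement at the cost of demanding that $\uc$ be constant. If both hypotheses on $\uc$ were dropped, the padded vector $\tilde\uc$ need not satisfy either set of assumptions and the reduction would collapse, which is consistent with the counterexample recorded in the remark after Lemma \ref{lemma_combinatoric_C}.
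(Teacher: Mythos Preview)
Your proposal is correct and follows essentially the same approach as the paper: the paper also pads $\uc$ by $d$ copies of $c_1$ to form an $n$-tuple, then applies Lemma~\ref{lemma_combinatoric_C} in the constant case and Lemma~\ref{lemma_combinatoric_A} in the integral case. Your write-up in fact supplies more of the routine verifications (the componentwise inequality $\tilde c_i\le a_i$ and the breakpoint conditions on $\tilde\uc$) than the paper's terse proof does.
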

\begin{proof}Let \[\uc':=\begin{cases}(\underbrace{c_1,\cdots, c_1}_n), &\text{ if } c_1=\cdots=c_{n-d}\\(\underbrace{c_1,\cdots, c_1}_{d}, \uc), &\text{ if } \uc\in\Z^{n-d}\end{cases}.\] In the first case, we apply Lemma \ref{lemma_combinatoric_C} to the pair $(\ua, \uc')$ and in the second case, we apply Lemma \ref{lemma_combinatoric_A}.
\end{proof}

\begin{proposition}\label{Prop_one_semistable}Suppose $0<r<n$. Let $\uc\in \cN(r)$, $\ud\in \cN(n-r)$ and $\ua\in\cN(n)$. Suppose  $\uc$ or $\ud$ is semistable, then $\cO(\ua)$ is an extension of $\cO(\uc)$ by $\cO(\ud)$ if and only if $\ua\in\widetilde{\Ext}^1(\uc, \ud)$. Equivalently, $\Ext^1(\uc, \ud)=\widetilde{\Ext}^1(\uc, \ud)$.
\end{proposition}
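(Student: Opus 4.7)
The inclusion $\Ext^1(\uc,\ud)\subseteq \widetilde{\Ext}^1(\uc,\ud)$ is Proposition \ref{Prop_tildeExt is Necessary condition}, which holds without any semistability hypothesis. For the reverse inclusion I would first exploit the self-duality of both sides: dualising a short exact sequence of vector bundles yields
\[
\ua\in \Ext^1(\uc,\ud)\ \Longleftrightarrow\ \ua^\vee\in \Ext^1(\ud^\vee,\uc^\vee),
\]
and directly from Definition \ref{Def_tildeExt}, swapping the roles of $H$ and $K$ and reversing the orderings gives the analogous equivalence for $\widetilde{\Ext}^1$. Hence it is enough to prove $\widetilde{\Ext}^1(\uc,\ud)\subseteq \Ext^1(\uc,\ud)$ under the assumption that $\ud$ is semistable.

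The plan is then to induct on the number $k(\uc)$ of stable blocks of $\uc$. The base case $k(\uc)=1$ says that both $\uc$ and $\ud$ are semistable and is exactly Corollary \ref{coro_Hansen}. For the inductive step, write $c_r=q/p$ with $(p,q)=1$ and decompose $\uc=\uc'\oplus\uc''$ with $\uc''=(q/p)^{(p)}$ the last semistable block and $k(\uc')=k(\uc)-1$. By Proposition \ref{Prop_classification of extension by Ext^1} it suffices to produce an intermediate $\ue\in\cN(n-p)$ with $\ue\in \Ext^1(\uc',\ud)$ and $\ua\in \Ext^1(\uc'',\ue)$.

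The combinatorial heart of the argument is the construction of $\ue$. Given the data $(H=\{h_1<\cdots<h_r\},\,K=\{k_1<\cdots<k_s\},\,\ub)$ witnessing $\ua\in \widetilde{\Ext}^1(\uc,\ud)$, split $H=H'\coprod H''$ with $H''=\{h_{r-p+1},\ldots,h_r\}$ so that the entries of $\ub$ on $H''$ form a permutation of $\uc''$. I would produce $\ue$ as a dominant rearrangement of a vector supported on the complement of $H''$, whose entries sit in the windows dictated by $(\uc',\ud)$ on the one hand and by $\ua$ (shifted by the constant value $q/p$) on the other, and whose partial sums land on the prescribed rational grid. This is precisely the situation to which the appendix lemmas apply: the semistability of $\ud$ provides the integral (more generally, $\epsilon$-) breakpoints required by Lemmas \ref{lemma_combinatoric_A}, \ref{lemma_combinatoric_C} and \ref{lemma_combinatoric_D}, so that the single block of $\ud$ can be freely redistributed along $K$ to realise the correct sums. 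By construction $\ue\in\widetilde{\Ext}^1(\uc',\ud)$, and since $\ud$ is still semistable and $k(\uc')<k(\uc)$, the inductive hypothesis upgrades this to $\ue\in \Ext^1(\uc',\ud)$. Dually, $\ua\in\widetilde{\Ext}^1(\uc'',\ue)$, and since $\uc''$ is semistable, passing to duals reduces the second condition to an instance of the statement with $\uc''^\vee$ playing the role of the semistable factor on the $\ud$-side.

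The main obstacle is making the inductive closure work on the second factor $\ua\in \Ext^1(\uc'',\ue)$: because $\uc''$ has the same rank $n$ as $\uc$, the only way to apply the inductive hypothesis after dualising is to arrange that $k(\ue)<k(\uc)$, which forces the semistable block $\ud$ to be absorbed into one of the blocks of $\ue$ rather than appearing as a separate slope. This absorption is precisely the balancing that the combinatorial lemmas of the appendix allow one to perform, and it is the reason the argument requires at least one of $\uc$, $\ud$ to be semistable (recall Example \ref{ex:counter-example-for-ext}, where neither is). The bulk of the work will thus lie in verifying that the construction of $\ue$ can be carried out so that the two $\widetilde{\Ext}^1$-conditions hold simultaneously and $k(\ue)$ is strictly less than $k(\uc)$.
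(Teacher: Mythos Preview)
Your overall plan (reduce to $\ud$ semistable by duality, peel off the last stable block $\uc''$ of $\uc$ via Proposition~\ref{Prop_classification of extension by Ext^1}, and build $\ue$ using the appendix lemmas) matches the architecture of the paper's proof. However, your proposed induction parameter $k(\uc)$ does \emph{not} make the loop close, and the paper uses a different scheme for exactly this reason.

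The problem is the second condition $\ua\in\Ext^1(\uc'',\ue)$. After dualising you need the proposition for the pair $(\ue^\vee,\uc''^\vee)$ with $\uc''^\vee$ semistable, so your induction would require $k(\ue)<k(\uc)$. You assert that the combinatorial lemmas let you ``absorb'' the block $\ud$ so as to force $k(\ue)<k(\uc)$, but this is false in general. Take $\uc=(3,2,1)$, $\ud=(0)$ and $\ua=(3,2,1,0)$. Then $\uc''=(1)$, $\uc'=(3,2)$, and one checks directly from Definition~\ref{Def_tildeExt} that the \emph{only} $\ue\in\cN(3)$ with both $\ue\in\widetilde{\Ext}^1((3,2),(0))$ and $\ua\in\widetilde{\Ext}^1((1),\ue)$ is $\ue=(3,2,0)$, which has $k(\ue)=3=k(\uc)$. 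No choice of $\ue$ reduces the block count, so the induction stalls.

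The paper's fix is to replace the single induction on $k(\uc)$ by a \emph{double} induction on $n$ and on the spread $c_1-d_1\in\tfrac{1}{(n!)^2}\mathbb N$. After using the Claim to arrange $d_1\le c_r$, one lets $m=\min\{s:a_{s+1}\le c_r\}$ and peels off not the fixed block $c_r^{(p)}$ but a block $c_r^{(h_r-l-m)}$ whose length is read off from the witness $\ub$. Simultaneously one strips the top part $(a_1,\dots,a_m)$ of $\ua$, which lies above $c_r$, from both sides of $\widetilde{\Ext}^1(\uc'',\ue)$. There are then two cases: if $m>0$, the stripped problem lives in rank $n-m<n$ and the induction on $n$ applies; if $m=0$, then every $a_i\le c_r$, and the new pair has spread $c_r-e'_{\text{last}}\le c_r-d_1<c_1-d_1$ (using that $\uc$ is not semistable, so $c_r<c_1$), and the induction on $c_1-d_1$ applies. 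In the example above one has $m=2>0$, so the rank drops; more generally, one of the two parameters always decreases, whereas $k(\ue)$ need not.
\medskip

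In short: your construction of $\ue$ is on the right track, but you need a different well-founded order. Replacing $k(\uc)$ by the pair $(n,\,c_1-d_1)$ ordered lexicographically (and tying the size of the peeled block to the witness $\ub$ rather than fixing it to $p$) is what makes the induction go through.
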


\begin{remark} Proposition \ref{Prop_one_semistable} is also obtained independently by Hong in \cite[Theorem 1.1]{Ho2}.
\end{remark}

\begin{proof}[Proof of Proposition \ref{Prop_one_semistable}]By Proposition \ref{Prop_tildeExt is Necessary condition}, it suffices to prove the if part. By duality, we may assume that $\ud=d_1^{(n-r)}$ is semistable.

\textit{Claim: We may assume that $d_1\leq c_r$.}

 Indeed, if $d_1> c_r$, then there exists $1\leq m< r$ such that $c_{m+1}<d_1\leq c_m$. There exists natural bijections:
\[\Ext^1(\uc, \ud)\simeq \Ext^1(\tau_{\leq m}\uc, \ud) \text{ and }\widetilde{\Ext}^1(\uc, \ud)\simeq \widetilde{\Ext}^1(\tau_{\leq m}\uc, \ud).\]
We may replace $\uc$ by $\tau_{\leq m}\uc$. The Claim follows.

We prove by induction on $n$. If $n=1$, then it's trivial. Now assume that the proposition holds for $<n$. We also use induction on $c_1-d_1\in \frac{\N}{(n!)^2}$. If $c_1=d_1$, then $c_1=\ldots=c_r=d_1$, and $\uc$ is also semistable, then the result follows from Corollary \ref{coro_Hansen}. Now we consider the general case $c_1>d_1$. Moreover, we assume $\uc$ non semi-stable: otherwise it follows again from Corollary \ref{coro_Hansen}.

Take $\ua\in \widetilde{\Ext}^1(\uc, \ud)$. Without loss of generality, we assume that the slope $d_1$ of $\ud$ is not bigger than $a_n$. Then there exists a partition of $\{1, 2, \cdots, n\}$ into two disjoint subsets
\[
H=\{h_1< \cdots <h_r\}, \quad  K=\{k_1<\cdots <k_{n-r}\},
\]
and $b\in \mathbb Q^n$ such that
\begin{itemize}
\item $(b_{h_1}, \cdots, b_{h_r})\in S_r \uc$, and $(b_{k_1},\cdots, b_{k_{n-r}})\in S_{n-r}\ud$;
\item $b_i\geq a_i$ if $i\in H$, and $b_i\leq a_i$ if $i\in K$; and
\item $\ub\geq \ua$.
\end{itemize}
Let
\[
m:=\mathrm{min}\{0\leq s\leq n| a_{s+1}\leq c_{r}\}.
\]
In particular, $P_{\ua}(m)\in \mathbb Z$. Moreover, as $a_{m+1}\leq c_r$ and $d_1\leq a_n$, we may assume that
\begin{itemize}
\item[-] $
H\cap (m, n]=\{i\in\N| m+1\leq i\leq h_r\}$;
\item[-]
$b_{m+1}=\cdots= b_{h_r-l}=c_{r}$ for some $0\leq l< h_r-m$ and $b_{i}>c_r$ for $h_r-l<i\leq h_r$.
\end{itemize}
As $a_m>c_r$, we have $c_r\cdot (h_r-l-m)\in \mathbb Z$. Let
\[
\ua':=\tau_{>h_r-l}(\ua)\in \cN(n-h_r+l, \epsilon_1)\quad \textrm{and}\quad \uc':=\tau_{>h_r}(\ub)\in\cN(n-h_r, \epsilon_2)
\]
for some $\epsilon_1,\epsilon_2\in \mathbb Q\cap [0,1)$. So $\uc'=d_1^{(n-h_r)}$. By Lemma \ref{lemma_combinatoric_D}, there exists $\ue'\in \cN(n-h_r+l)$ such that
\[
c'_{i}\leq e'_{i+l}\leq a'_{i+l}
\]
for all $1\leq i\leq n-h_r$, and
\begin{eqnarray*}
 |\ue'|& = & |\ua'|+(a_{m+1}+\ldots+a_{h_r-l})-c_r(h_r-l-m)\\ & =& P_{\ua}(n)-P_{\ua}(m)-c_r(h_r-l-m)\in \mathbb Z.
\end{eqnarray*}
Note that
\[
|\uc'|+ld_1\leq |\ue'|=|\ua'|+(a_{m+1}+\ldots+a_{h_r-l})-c_r(h_r-l-m)\leq |\ua'|.
\]
Here the first inequality holds because
\begin{eqnarray*}
a_{m+1}+\ldots+a_n & \geq &  b_{m+1}+\ldots+b_n \\ & \geq & (b_{m+1}+\ldots+b_{h_r-l})+(b_{h_r-l+1}+\ldots+b_n) \\ & \geq &  (h_r-l-m)c_r+(n-h_r+l)d_1.
\end{eqnarray*}
In particular,
\[
|\ua'|-|\ue'|=P_{\ub}(h_r-l)-P_{\ub}(m)-P_{\ua}(h_r-l)+P_{\ua}(m).
\]

Let
\[
e:=(a_1, \cdots, a_m, e'_{1}, \cdots, e'_{n-h_r+l})\in \cN(n-h_r+l+m).
\]
Note that
\[
\uc=(\tau_{\leq r-h_r+l+m}(\uc), c_r^{(h_r-l-m)}).
\]
By Proposition \ref{Prop_classification of extension by Ext^1}, to complete the proof, it suffices to verify
\[
\ua\in\Ext^{1}(c_{r}^{(h_r-l-m)}, \ue)\text{ and }\ue\in\Ext^1(\tau_{\leq r-h_r+l+m}(\uc), \ud).
\]
But we can check
\[
\ue\in\widetilde{\Ext}^1(\tau_{\leq r-h_r+l+m}(\uc), \ud)=\Ext^1(\tau_{\leq r-h_r+l+m}(\uc), \ud)
\]
where the equality follows from the induction hypothesis, and

\[\ua\in\widetilde{\Ext}^{1}(c_{r}^{(h_r-l-m)}, \ue)\simeq\widetilde{\Ext}^{1}(c_{r}^{(h_r-l-m)}, \ue')=\Ext^{1}(c_{r}^{(h_r-l-m)}, \ue')\simeq\Ext^{1}(c_{r}^{(h_r-l-m)}, \ue)
\]
where the first and third bijection follows from the proof of the Claim and the equality in the middle follows from induction on $n$ if $m>0$ or the fact that
$c_r-e'_{n-h_r+l}< c_1-d_1$ as $\uc$ is non semi-stable and the induction hypothesis on $c_1-d_1$ if $m=0$.
\end{proof}

\begin{proposition}\label{Prop_two is sum of line bundles} Suppose $0<r<n$. Let $\uc\in \cN(r)$, $\ud\in \cN(n-r)$ and $\ua\in\cN(n)$. Suppose that two elements among $\ua$, $\ub$ and $\uc$ are with all coordinates in $\Z$, then $\cO(\ua)$ is an extension of $\cO(\uc)$ by $\cO(\ud)$ if and only if $\ua\in\widetilde{\Ext}^1(\uc, \ud)$. In particular, $\Ext^1(\uc, \ud)=\widetilde{\Ext}^1(\uc, \ud)$ if $\ud\in \Z^{n-r}$ and $\uc\in\Z^{r}$.
\end{proposition}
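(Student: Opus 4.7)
The necessity is Proposition \ref{Prop_tildeExt is Necessary condition}. For the sufficiency, the plan is to induct on the rank $r$ of $\uc$, exploiting the inductive structure provided by Proposition \ref{Prop_classification of extension by Ext^1}. By duality (replacing $(\ua,\uc,\ud)$ with $(\ua^{\vee},\ud^{\vee},\uc^{\vee})$), I may assume $\uc \in \Z^r$; then either $\ud$ is integral (\emph{case A}) or $\ua$ is integral (\emph{case B}). The base case $r=1$ is immediate from Proposition \ref{Prop_one_semistable}, since a single line bundle is semistable.

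For the inductive step, set $\uc'':=(c_r)$ and $\uc':=(c_1,\ldots,c_{r-1}) \in \Z^{r-1}$; since $c_r \in \Z$, the integer $p$ appearing in Proposition \ref{Prop_classification of extension by Ext^1} equals $1$. By that proposition, it suffices to construct $\ue \in \cN(n-1)$ such that $\ue \in \Ext^1(\uc',\ud)$ and $\ua \in \Ext^1(\uc'',\ue)$. The second inclusion follows from Proposition \ref{Prop_one_semistable}, as $\uc''$ is semistable, provided one shows $\ua \in \widetilde{\Ext}^1(\uc'',\ue)$. For the first inclusion, the induction hypothesis applied to the pair $(\uc',\ud)$ gives $\Ext^1(\uc',\ud) = \widetilde{\Ext}^1(\uc',\ud)$, \emph{provided} two of the three tuples $\ue,\uc',\ud$ lie in $\Z$. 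In case A this is automatic from $\uc',\ud \in \Z$; in case B, $\uc'$ is integral but $\ud$ may not be, so one must additionally arrange $\ue \in \Z^{n-1}$.

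Everything therefore reduces to the following combinatorial statement: \emph{given a witness $(H,K,\ub)$ of $\ua \in \widetilde{\Ext}^1(\uc,\ud)$ and the position $h \in H$ with $b_h=c_r$, there exists $\ue \in \cN(n-1)$ --- integral in case B --- such that $\ue \in \widetilde{\Ext}^1(\uc',\ud)$ and $\ua \in \widetilde{\Ext}^1(\uc'',\ue)$.} The natural starting point is the tuple obtained by deleting the $h$-th entry of $\ub$, which already satisfies most of the numerical constraints. One then rearranges and adjusts the remaining entries using Lemmas \ref{lemma_combinatoric_A}, \ref{lemma_reorder b}, \ref{lemma_combinatoric_C} and \ref{lemma_combinatoric_D}; the integrality assumptions on two of $\ua,\uc,\ud$ are exactly what guarantees that the partial sums occurring in the hypotheses of those lemmas lie in $\Z$.

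The hard part will be case B, namely forcing $\ue \in \Z^{n-1}$: one cannot simply inherit the coordinates of $\ud$ on the $K$-part of $\ue$, since $\ud$ is no longer integral. The strategy is to use the integrality of $\ua$ to redistribute values across successive stable blocks of $\ub$, bundling fractional contributions into integer totals. This is precisely the situation handled by Lemma \ref{lemma_combinatoric_D} applied to suitable truncations of $\ua$ and $\ud$; the resulting $\ue$ may fail to inherit the stable block structure of $\ud$, but it will still belong to $\widetilde{\Ext}^1(\uc',\ud)$ and, being integral, permits the inductive hypothesis to apply to the pair $(\uc',\ud)$ through $\ue$, closing the induction.
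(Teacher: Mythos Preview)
Your overall plan is correct and matches the paper's: induction on $r$ via Proposition~\ref{Prop_classification of extension by Ext^1}, with the base case supplied by Proposition~\ref{Prop_one_semistable}. The paper organizes the cases slightly differently --- it treats $\uc,\ud\in\Z$ and $\ua,\ud\in\Z$ directly, deducing $\ua,\uc\in\Z$ by duality --- but this is cosmetic.

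There is, however, a genuine gap in your case~B. After fixing $e_i=a_i$ for $i<h_r$, the remaining coordinates of $\ue$ must be integers sandwiched between a tail of $\ua$ (integral) from above and a tail of $\ud$ (not integral, by hypothesis in case~B) from below, with a prescribed integral total. Lemma~\ref{lemma_combinatoric_D} does not apply here: its hypothesis requires the lower-bound sequence to be either constant or in $\Z^{n-d}$, and $\ud$ is neither. Lemma~\ref{lemma_combinatoric_A} produces an $\ue'\in\cN$, but elements of $\cN$ have integral \emph{breakpoints}, not integral \emph{entries}, so this does not force $\ue'\in\Z$. Rounding the lower bounds up to $\lceil d_j\rceil$ is not safe either, since the resulting sum may overshoot the target.

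The paper sidesteps exactly this difficulty by staying in the dual frame for this case: with $\ua,\ud\in\Z$ and $\uc$ arbitrary, one peels off the entire last stable block $c_r^{(r-m)}$ of $\uc$ (so $p$ need not be $1$). After deleting the positions $h_{m+1},\ldots,h_r$, the remaining tail of $\ub$ consists only of entries of $\ud$ and is therefore integral; sandwiching an integral $\ue'$ between two integral sequences with prescribed integral sum is then elementary, and no combinatorial lemma is needed. The cleanest fix for your case~B is thus to dualize once more rather than attack it head-on; alternatively, you could run the induction in the $(\ua,\ud\in\Z)$ frame from the start, as the paper does.
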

\begin{proof} 

The proof is similar to that of Proposition \ref{Prop_one_semistable}. By Proposition \ref{Prop_tildeExt is Necessary condition}, it suffices to prove the if part.

Assume first $\ud\in \Z^{n-r}$ and $\uc\in\Z^{r}$.  We prove by induction on $r$. When $r=1$, it's proved in Proposition \ref{Prop_one_semistable}. Next, suppose $r\geq 2$ and take $\ua\in \widetilde{\Ext}^1(\uc, \ud)$. Then there exist a partition
\[
\{1, 2, \cdots, n\}=H\coprod K
\]
with
$
H=\{h_1< \cdots <h_r\}$, and $K=\{k_1<\cdots <k_{n-r}\}$, and $\ub\in \mathbb Q^n$ such that
\begin{itemize}
\item $b_i\geq a_i$ if $i\in H$, $b_i\leq a_i$ if $i\in K$;
\item  $\ub\geq \ua$;
\item $(b_{h_1}, \cdots, b_{h_r})=(c_1,\ldots,c_r)$, and $(b_{k_1},\cdots, b_{k_{n-r}})=(d_1,\ldots,d_{n-r})$.
\end{itemize}
Let
\[
m:=\mathrm{min}\{0\leq s\leq n| a_{s+1}=a_{h_r}\}.
\]
In particular $P_{\ua}(m)\in\Z$. Without loss of generality, assume $H\cap (m, n]=\{i\in\N| m+1\leq i\leq h_r\}$.
Let
\[
\ua':=\tau_{>m+1}(\ua)\in \cN(n-m-1, \epsilon),\quad \textrm{and} \quad \ub':=\tau_{>h_r}(\ub)\in\cN(n-h_r)
\]
for some $\epsilon$. By Lemma \ref{lemma_combinatoric_C}, there exists $\ue'\in\cN(n-m-1)$ such that $e'_i\leq a'_i$ for all $i$, $b_j\leq e'_{h_r-m-1+j}$ for all $1\leq j\leq n-h_r$, and
\[
P_{\ua}(n)-P_{\ua}(m)=|\ue'|+b_{h_r}.
\]
Let $\ue=(a_1,\cdots, a_m, \ue')\in \cN(n-1)$.
Note that $\uc=(\tau_{\leq r-1}(\uc), c_r)$. We can check
\[
\ua\in \widetilde{\Ext}^{1}( (c_r), \ue)=\Ext^{1}( (c_r), \ue)
\]
and
\[
\ue\in \widetilde{\Ext}^{1}(\tau_{\leq r-1}(\uc), \ud)=\Ext^{1}(\tau_{\leq r-1}(\uc), \ud)\]
Indeed, the first equality follows from Proposition \ref{Prop_one_semistable}, while the second equality follows from induction hypothesis. Then we conclude with Proposition \ref{Prop_classification of extension by Ext^1}.

Finally suppose that all the coordinates of $\ua$ and $\ud$ are integers. As above, we prove by induction on $r$. When $r=1$, it's proved in Proposition \ref{Prop_one_semistable}. Next, suppose $r\geq 2$. Then there exist a partition $
\{1, 2, \cdots, n\}=H\coprod K$ with
\[
H=\{h_1< \cdots <h_r\},\quad \textrm{and}\quad K=\{k_1<\cdots <k_{n-r}\},
\]
and $\ub\in \Q^n$ satisfying
\begin{itemize}
\item $(b_{h_1},\ldots, b_{h_r})=(c_1,\ldots, c_r)$; and $(b_{k_1},\ldots, b_{k_{n-r}})=(d_1,\ldots,d_{n-r})$;
\item $b_i\geq a_i$ if $i\in H$, $b_i\leq a_i$ if $i\in K$;
\item $\ub\geq \ua$.
\end{itemize}
Let
\[
m:=\mathrm{min}\{0\leq s\leq r| c_{s+1}=c_{r}\}.
\]
In particular $(r-m)c_r\in\Z$, and $b_{h_{m+1}}=b_{h_{m+2}}=\ldots=b_{h_r}=c_r$. Let $\ua'\in \cN(n-h_{m+1}+1-r+m)$ be the element obtained from
\[
\tau_{\geq h_{m+1}}\ua\in  \cN(n-h_{m+1}+1)
\]
by removing the coordinates $a_{h_{m+1}},a_{h_{m+2}},\ldots, a_{h_r}$, and $\ub'\in \cN(n-h_{m+1}+1-r+m)$ defined from $\ub$ in a similar way. Then $b_i'\leq a_i'$ and all the coordinates of $\ub'$ are integers. As $\ub\geq \ua$,
\[
|\ua'|+(a_{h_{m+1}}+a_{h_{m+2}}+\ldots+a_{h_r})\geq |\ub'|+(r-m)c_r.
\]
So it is easy to see that there exists $\ue'\in \cN(n-h_{m+1}+1-r+m)$ with all the coordinates in $\mathbb Z$, such that $b_i'\leq e_i'\leq a_i'$ and that
\[
|\ua'|-|\ue'|=(r-m)c_r-(a_{h_{m+1}}+a_{h_{m+2}}+\ldots+a_{h_r})\in \mathbb Z.
\]
Let $\ue=(a_1,\cdots, a_{h_{m+1}-1}, \ue')\in \cN(n-r+m)$.
Note that $\uc=(\tau_{\leq m}(\uc), c_r^{(r-m)})$. We can check that
\[
\ua\in \widetilde{\Ext}^{1}( c_r^{(r-m)}, \ue)=\Ext^{1}( c_r^{(r-m)}, \ue)
\]
and
\[
\ue\in \widetilde{\Ext}^{1}(\tau_{\leq m}(\uc), \ud)\cap \mathbb Z^{n-r+m}=\Ext^{1}(\tau_{\leq m}(\uc), \ud)\cap \mathbb Z^{n-r+m}.
\]
Indeed, the first equality follows from Proposition \ref{Prop_one_semistable}, while the second equality follows from induction hypothesis. This concludes the proof by Proposition \ref{Prop_classification of extension by Ext^1}.
\end{proof}

\begin{remark} In Proposition \ref{Prop_two is sum of line bundles}, if all the coordinates of $\ua,\uc$ and $\ud$ are integers, then the same proof as that of the main result of \cite{Schl} shows that $\ua\in \Ext^1(\uc,\ud)$ if and only if $\widetilde{\Ext}^1(\uc,\ud)$. The argument adopted above is inspired from the proof of Schlesinger in \cite{Schl}. This result is also mentioned in the remark after Example 4.5 in \cite{Ho2}.
\end{remark}

\bibliographystyle{amsalpha}

\end{document}